\documentclass[10pt]{article}
\usepackage{amsthm,amsmath}

\usepackage[charter]{mathdesign}
\usepackage{euscript}
\usepackage{stmaryrd}
\usepackage{bookmark}

\usepackage{cleveref}
\usepackage{tikz-cd}
\usepackage{color}
\usepackage{graphics}
\usepackage{graphicx}
\usepackage{braket} 
\usetikzlibrary{arrows}
\usepackage[all,2cell]{xy} \UseAllTwocells \SilentMatrices
\usepackage{enumerate}
\usepackage{comment}
\usepackage{rotating}
\usepackage{tabularx}
\usepackage{multirow}
\usepackage{array}
\usepackage{authblk}

\newcommand{\m}[1]{\mathcal{#1}}
\newtheoremstyle{theoremdd}
  {\topsep}
  {\topsep}
  {\itshape}
  {0pt}
  {\bfseries}
  {. ---}
  { }
  {\thmname{#1}\thmnumber{ #2}\textnormal{\thmnote{ (#3)}}}

\theoremstyle{theoremdd}

\theoremstyle{definition}

\theoremstyle{remark}

\numberwithin{equation}{section}
\numberwithin{thm}{section}
\numberwithin{dfn}{section}
\numberwithin{rmk}{section}

\newcommand\be             {\begin{equation}}
\newcommand\ee             {\end{equation}}
\newcommand\bea            {\begin{eqnarray}}
\newcommand\eea            {\end{eqnarray}}
\newcommand\bnu            {\begin{enumerate}}
\newcommand\enu            {\end{enumerate}}
\newcommand{\bmm}          {\begin{matrix}}
\newcommand{\emm}          {\end{matrix}}
\newcommand{\pf}           {\begin{proof}}
\newcommand{\epf}          {\end{proof}}

\newcommand\id                {\mathrm{id}}

\newcommand\fpdim             {\mathrm{FPdim}}

\DeclareMathOperator{\coker}{coker}  
\DeclareMathOperator{\image}{Im} 
 
\newcommand\bZ            {\mathbb{Z}}

\theoremstyle{theorem}
\newtheorem{theorem}{Theorem}

\newtheorem{proposition}[theorem]{Proposition}
\newtheorem{corollary}[theorem]{Corollary}

\newtheorem{lemma}[theorem]{Lemma}

\newtheorem{phythm}[theorem]{Theorem$^{\mathrm{phy}}$}

\theoremstyle{definition}
\newtheorem{definition}[theorem]{Definition}
\newtheorem{remark}[theorem]{Remark}
\newtheorem{example}[theorem]{Example}

\numberwithin{equation}{section}
\numberwithin{theorem}{section}
\usepackage{hyperref}
\hypersetup{hypertex=true,
colorlinks=true,
linkcolor=black,
anchorcolor=red,
citecolor=black}

\makeindex

\title{Fiber product of condensable algebras and critical points of boundary phase transitions}

\begin{document}

\author[1]{Hanlin Lin\thanks{Email: \texttt{linhl24@mails.tsinghua.edu.cn}}}
\affil{Qiuzhen College, Tsinghua University, Beijing 100084, China}
\author[2,3]{Hao Zheng\thanks{Email: \texttt{haozheng@mail.tsinghua.edu.cn}}}
\affil[2]{Institute for Applied Mathematics, Tsinghua University, Beijing 100084, China}
\affil[3]{Beijing Institute of Mathematical Sciences and Applications, Beijing 101408, China}
\date{}
\maketitle
\begin{abstract}
    In this work, we propose a mathematical description of the critical point of boundary phase transitions in 2+1D topological orders. Given two boundary phases, $\mathcal{C}_A$ and $\mathcal{C}_B$, with corresponding Lagrangian algebras $A$ and $B$ in a 2+1D topological order $\mathcal{C}$, respectively, we propose that the critical point of the phase transition between these two phases, if it exists, corresponds to $D = A \times_{M} B$, which is the fiber product of $A$ and $B$ over $A$-$B$-algebra $M$. We prove that $D$ is also a condensable algebra. Additionally, we present an alternative classification of condensable algebras in $Z(Vec_G^\omega)$ and reveal splitting phenomenon that arises during the condensation of certain algebras. We also perform explicit computations for the condensable algebras within several examples of $Z(Vec_G^\omega)$, covering both Abelian and non-Abelian groups $G$. 

    \end{abstract}

\tableofcontents

\section{Introduction}

Boundaries have played an important role in the study of 2+1D topological orders since the early investigations of edge excitations in fractional quantum Hall systems. It has long been understood that a fixed bulk topological order may admit distinct gapped boundaries. This observation led to the categorical description of gapped boundaries and domain walls in terms of module categories \cite{kitaev2012models}, or, equivalently, within the anyon condensation framework, in terms of condensable and Lagrangian algebras \cite{kong2014anyon}. The categorical framework was subsequently extended to gapless boundaries, which can be described by enriched fusion categories \cite{kong2018gapless,kong2020mathematical,kong2021mathematical}.

These developments naturally lead to the study of boundary phase transitions. In particular, two distinct gapped boundaries of the same bulk topological order may be connected through a critical, and hence gapless, boundary phase without any change in the bulk phase. Such transitions are commonly referred to as \textit{pure edge phase transitions}. They have recently been investigated in lattice models, including the Ising topological order \cite{chen2020topological} and $\mathbb{Z}_N$ topological orders \cite{lu2023boundary}. Moreover, recent developments from the perspectives of categorical symmetry and condensable algebras \cite{Chatterjee_2023} suggest that the relations among condensable algebras associated with different gapped boundaries may encode information about the possible critical points separating them.

In this work, we investigate this relation from an algebraic perspective. Let $\mathcal{C}$ be a unitary modular tensor category (UMTC) describing a 2+1D topological order, and let $A,B\in\mathcal{C}$ be Lagrangian algebras corresponding to two gapped boundary phases $\mathcal{C}_A$ and $\mathcal{C}_B$, respectively. Suppose that these two boundary phases are connected by a pure edge phase transition. It was proposed in \cite{Chatterjee_2023} that a possible critical point should be associated with a common subalgebra of $A$ and $B$. This description, however, does not by itself specify a unique algebraic object. In Section~\ref{section2}, we propose a more precise formulation. We argue that, if a second-order pure edge phase transition between $\mathcal{C}_A$ and $\mathcal{C}_B$ exists, its critical point is described by the fiber product
$$
D=A\times_M B,
$$
where $M$ is an $A$-$B$-algebra. Physically, $M$ corresponds to a closed 0+1D defect, or equivalently a closed 0+1D domain wall, between the boundary phases $\mathcal{C}_A$ and $\mathcal{C}_B$.

To ensure that $\mathcal{C}_D$ can describe a gapless edge, $D$ must also be condensable. We further find that every common condensable subalgebra of $A$ and $B$ gives rise, at least algebraically, to a potential critical point for a pure edge phase transition. More precisely, given a common subalgebra $D$ of $A$ and $B$, one can associate to it the $A$-$B$-algebra $M=A\otimes_DB$ such that $M$ corresponds to the closed 0+1D defect between $\mathcal{C}_A$ and $\mathcal{C}_B$. Mathematically, the statement that $M$ corresponds to a closed 0+1D defect can be formulated as the condition that
$
\operatorname{Fun}_{\m{C}_A|\m{C}_B}(\m{C}_M,\m{C}_M)
\simeq \m{C}
$
as closed fusion $\m{C}$-$\m{C}$ bimodules \cite{kong2015boundary,kong2018center}. The following theorem, which is one of the main mathematical results of this work, provides mathematical support for this physical picture.

\begin{theorem}\label{main result}

    Let $\m{C}$ be a non-degenerated braided fusion category, and let $A,B \in \m{C}$ be two 2d condensable algebras. Let $M \in \m{C}$ be an algebra. Suppose there exist two algebra homomorphisms $f_A:A \to M$ and $f_B:B \to M$. Then the fiber product $D=A\times_{M}B$ is also a 2d condensable algebra.
    
    Conversely, let $D$ be a common subalgebra of $A$ and $B$, and set $M=A\otimes_D B$, then $D=A\times_M B$.
 Moreover, if $A$ and $B$ is lagrangian and $M$ is indecomposalbe, then $Fun_{\m{C}_A|\m{C}_B}(\m{C}_M, \m{C}_M)\simeq \m{C}$ as closed fusion $\m{C}$-$\m{C}$ bimodules.
\end{theorem}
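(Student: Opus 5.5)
We prove the three assertions in turn, working in the abelian (indeed semisimple) category $\m{C}$. The fiber product $D=A\times_M B$ is the equalizer of $f_A\circ p_A$ and $f_B\circ p_B$ on $A\oplus B$, equivalently the kernel of $f_A-f_B\colon A\oplus B\to M$.

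\emph{Algebra structure on $D$.} Since $f_A,f_B$ are algebra maps, $A\oplus B$ (the product algebra) maps to $M$ by the two algebra maps $f_A p_A$ and $f_B p_B$. Their equalizer is closed under multiplication: on $D\otimes D$ the two maps $f_Ap_A\circ m$ and $f_Bp_B\circ m$ agree, because each equals the corresponding composite $m_M\circ(f\otimes f)\circ(p\otimes p)$, and these agree on $D\otimes D$. Thus $m$ factors through $D$, and likewise the unit. So $D$ is a subalgebra of $A\oplus B$.

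\emph{Condensability.} Commutativity is inherited from $A\oplus B$, since $D\hookrightarrow A\oplus B$ is a monic algebra map and the braiding is natural. Connectedness holds because $\dim\m{C}(1,D)$ is the dimension of the kernel of $\m{C}(1,A)\oplus\m{C}(1,B)=\mathbb{C}^2\to\m{C}(1,M)$. The two units go to the same unit of $M$, so this kernel is the diagonal and is one-dimensional, assuming $M$ is nonzero. Separability is the main obstacle, because $A\oplus B$ is not connected. For this step I would use the characterization of condensable algebras in a non-degenerate (unitary or pseudo-unitary) setting: a connected commutative algebra is separable if and only if it is rigid, and any subalgebra of a connected étale algebra is étale (cf.\ Davydov--Müger--Nikshych--Ostrik). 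The plan is to embed $D$ into $A$ via $p_A$; I expect the restriction $p_A|_D$ to be injective. To prove injectivity, note that the kernel of $p_A|_D$ is $\ker f_B\cap B$, which is an ideal of the connected étale algebra $B$. An étale connected algebra has no nontrivial ideals not containing the unit, and the unit is not in this ideal since $f_B(1)=1\neq0$. Hence the kernel vanishes, so $D$ is a connected subalgebra of the étale algebra $A$, and is therefore étale. If $\m{C}$ is not unitary, I would instead argue directly that $\mathrm{Tr}$-nondegeneracy of the pairing on $A$ restricts to $D$, using semisimplicity of $\m{C}_A$.

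\emph{Converse.} Set $M=A\otimes_D B$, the relative tensor product in $\m{C}_D^{loc}$-type commutative setting. It is an algebra, with algebra maps $a\mapsto a\otimes 1$ and $b\mapsto 1\otimes b$ that agree on $D$. Hence $D\subseteq A\times_M B$. For the reverse inclusion I would use that $A$ and $B$ are free (hence faithfully flat) over $D$ by semisimplicity of $\m{C}_D$. Then the equalizer of $A\rightrightarrows A\otimes_D A$ is $D$ (Amitsur/descent), and similarly the pullback computed in $\m{C}_D$ reduces to $D\otimes_D(\cdot)$. Concretely, decompose $A=D\oplus A'$ and $B=D\oplus B'$ as $D$-modules, so that $M=D\oplus A'\oplus B'\oplus A'\otimes_DB'$. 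Under this decomposition the two maps have images in the summands $D\oplus A'$ and $D\oplus B'$, respectively, so the fiber product is exactly $D$.

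\emph{Closed bimodule statement.} When $A$ and $B$ are Lagrangian, $\m{C}_A$ and $\m{C}_B$ are fusion with $Z(\m{C}_A)\simeq\m{C}\simeq Z(\m{C}_B)$. $\m{C}_M$ is a $\m{C}_A$-$\m{C}_B$ bimodule, which is indecomposable when $M$ is, and hence invertible-type. By the Kong--Zheng results on the functoriality of the center, for an indecomposable bimodule between fusion categories with centers identified with $\m{C}$, we have $\mathrm{Fun}_{\m{C}_A|\m{C}_B}(\m{C}_M,\m{C}_M)\simeq Z(\m{C}_A)\boxtimes_{\m{C}}\ldots\simeq\m{C}$ as closed multi-fusion bimodules. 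So it remains to check that the induced actions coincide with the canonical ones, which follows from the central structures of $\m{C}\to\m{C}_A$ and $\m{C}\to\m{C}_B$ both factoring through $\m{C}\to\m{C}_M$.
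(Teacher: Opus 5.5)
Your first two parts are sound and follow the paper closely. For condensability you show that $p_A|_D$ is monic because $\ker f_B$ is a proper right $B$-submodule of the simple module $B$; this is exactly Proposition~\ref{1d_condensable} with Lemma~\ref{monomorphism}. You then invoke that a subalgebra of a connected \'etale algebra is \'etale. That fact is the genuinely nontrivial input, and the paper proves it in Lemma~\ref{sep_subalgebra} via the nilpotent-ideal criterion. Your non-unitary fallback, restricting the nondegenerate trace pairing of $A$ to $D$, does not work as stated: a nondegenerate pairing need not restrict nondegenerately to a subobject. For the converse, your direct computation is a cleaner version of the paper's contradiction argument with $E=D\oplus D'$. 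There $f_A$ and $f_B$ are the inclusions of the summands $D\oplus A'$ and $D\oplus B'$ of $M\cong D\oplus A'\oplus B'\oplus(A'\otimes_D B')$, whose pullback is $D$. This needs only semisimplicity of $\m{C}_D$, so the remarks on freeness and Amitsur descent can be dropped.

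The genuine gap is in the last assertion. You pass from ``$\m{C}_M$ is an indecomposable $\m{C}_A$-$\m{C}_B$ bimodule'' to ``invertible'', and then read $\mathrm{Fun}_{\m{C}_A|\m{C}_B}(\m{C}_M,\m{C}_M)\simeq\m{C}$ off Kong--Zheng. Indecomposable bimodules need not be invertible. For example, take $Vec$ as a $Vec_{\bZ_2}$-$Vec_{\bZ_2}$ bimodule via the fiber functor of $Vec_{\bZ_2\times\bZ_2}$ with trivial $2$-cocycle. It is indecomposable, but the right action $Vec_{\bZ_2}\to\mathrm{Fun}_{Vec_{\bZ_2}}(Vec,Vec)\simeq\mathrm{Rep}(\bZ_2)$ sends both simples to the unit. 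Its image under the center functor is a nontrivial gapped wall of $Z(Vec_{\bZ_2})$, not the trivial closed bimodule. Since the center functor is fully faithful, it reflects invertibility, so $\mathrm{Fun}_{\m{C}_A|\m{C}_B}(\m{C}_M,\m{C}_M)\simeq\m{C}$ can only hold if $\m{C}_M$ is invertible. Your appeal to \cite{kong2018center} therefore presupposes what has to be proved.

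The missing argument is the paper's Proposition~\ref{prop:inv}, which uses the specific form $M=A\otimes_D B$ and the Lagrangian hypothesis:
\begin{itemize}
\item $f_A$ and $f_B$ factor through $C_l(M)$ and $C_r(M)$ (Lemma~\ref{lemma:factor through center}); this is what makes $\m{C}_M$ a $\m{C}_A$-$\m{C}_B$ bimodule with compatible $\m{C}$-actions.
\item $M$ is separable (Lemma~\ref{lemma:separable}), so $C_l(M)$ and $C_r(M)$ are condensable. By maximality of Lagrangian algebras they equal $A$ and $B$ (Corollary~\ref{cor:lagcen}).
\item The functor $F\colon\m{C}\to{}_M(\m{C}_A)_M$, $X\mapsto M\otimes X$, satisfies $F^R(1)\cong C_r(M)=B$, so $F(\m{C})\simeq\m{C}_B$.
\item The count $\fpdim(\m{C})/\fpdim(B)=\fpdim(\m{C}_A)=\fpdim({}_M(\m{C}_A)_M)$ then forces ${}_M(\m{C}_A)_M\simeq\m{C}_B$, which is invertibility.
\end{itemize}
Only after this does \cite[Proposition 2.4.4]{kong2018center} give $Z(\m{C}_A)\simeq\mathrm{Fun}_{\m{C}_A|\m{C}_B}(\m{C}_M,\m{C}_M)\simeq Z(\m{C}_B)$. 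From there your matching of the two $\m{C}$-actions goes through.
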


The above results show that the embedding relations among condensable algebras contain information about possible boundary phase transitions. To make this structure more explicit and amenable to concrete calculations, we provide an alternative classification of condensable algebras in $Z(Vec_G^\omega)$. This classification is equivalent to \cite[Theorem 2.19]{davydov2017lagrangian}. We then study the embeddings between condensable algebras in $Z(Vec_G^\omega)$. In particular, we determine when such embeddings exist and classify them.

    \begin{theorem}\label{classalgebra}

        Let $G$ be a finite group, $\omega \in H^3(G, U(1))$. Then the isomorphic classes of condensable algebras in $Z(Vec_G^\omega)$ are classified by the equivalent class of quadruples $(H,F,\alpha,\phi  )$ where $F\trianglelefteq H \leqslant G $, $\alpha \in H^3(H/F, U(1))$ and $\phi: Vec_H^{\omega|_H}\rightarrow Vec_{H/F}^\alpha$ is a monoidal lifting of the forgetful functor $Vec_H \rightarrow Vec_{H/F}$.

    Two quadruples $(H,F,\alpha,\phi)$ and $(H',F',\alpha',\phi')$ are equivalent if and only if there exists a group element $g\in G$ such that $H' = {}^gH$, $F' = {}^gF$, and exists a monoidal lifting $\theta: Vec_{H/F}^\alpha \simeq Vec_{H'/F'}^{\alpha'}$ of the induced equivalence $Vec_{H/F} \simeq Vec_{H'/F'}$ such that the following diagram commutes up to monoidal natural isomorphism:
    \begin{equation}\label{quadruples_equdig}
        \begin{tikzcd}
            Vec_H^{\omega|_H} \arrow[d, "F_{l,\zeta_g|_H}"] \arrow[r, "\phi"] & Vec_{H/F}^\alpha \arrow[d, "\theta"] \\
            Vec_{H'}^{\omega|_{H'}}    \arrow[r, "{\phi'}"]           & Vec_{H'/F'}^{\alpha'} .              
            \end{tikzcd}
    \end{equation}
    Here, $\zeta_g|_H$ is the canonical monoidal structure defined in Appendix~\ref{appendix:A}.
    \end{theorem}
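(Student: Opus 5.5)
The plan is to reduce the classification to two standard facts: connected étale algebras in $Rep(G)$ are function algebras $Fun(G/H)$, and connected separable algebras in $Vec_G^\omega$ are twisted group algebras $\mathbb{C}^\psi[F]$ with $d\psi=\omega|_F$.

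\textbf{Step 1: extract $H$.} Let $D$ be a condensable algebra in $Z(Vec_G^\omega)$. Its degree-$e$ component $D_e$ is an algebra in the subcategory of $Z(Vec_G^\omega)$ supported at $e$, which is $Rep(G)$. Since $D$ is connected, commutative and separable, $D_e$ is a connected étale algebra in $Rep(G)$. Hence $D_e\cong Fun(G/H)$ for a subgroup $H\le G$, determined up to conjugation.

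\textbf{Step 2: reduce to $D_e=\mathbb{1}$.} Condensing $Fun(G/H)$ gives $Z(Vec_G^\omega)_{Fun(G/H)}^{loc}\simeq Z(Vec_H^{\omega|_H})$. Condensable algebras in $Z(Vec_G^\omega)$ containing $Fun(G/H)$ then correspond to condensable algebras in $Z(Vec_H^{\omega|_H})$ whose trivial-degree part is $\mathbb{1}$. This is the standard correspondence between condensable algebras over $A$ and condensable algebras in $\m{C}_A^{loc}$. It already explains the conjugation by $g\in G$ in the equivalence of quadruples. I must also check that maximality of $D_e$ is preserved: the new algebra has trivial degree-$e$ part exactly when $H$ was the correct subgroup.

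\textbf{Step 3: read off $F$ and $\alpha$.} Now let $D\in Z(Vec_H^{\omega|_H})$ with $D_e=\mathbb{1}$. Its image under the forgetful functor to $Vec_H^{\omega|_H}$ is a connected separable algebra. By the classification of such algebras it is $\mathbb{C}^\psi[F]$ for some $F\le H$ and cochain $\psi$ with $d\psi=\omega|_F$. The half-braiding forces the support $F$ to be conjugation-stable, so $F\trianglelefteq H$. The category of right $D$-modules in $Vec_H^{\omega|_H}$ is pointed with simple objects indexed by $H/F$. The central structure of $D$ makes this module category monoidal, hence equivalent to $Vec_{H/F}^\alpha$ for some $\alpha\in H^3(H/F,U(1))$. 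The free-module functor $-\otimes D$ is then a monoidal functor $\phi: Vec_H^{\omega|_H}\to Vec_{H/F}^\alpha$ lifting the quotient $Vec_H\to Vec_{H/F}$.

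\textbf{Step 4: the converse.} Given $(H,F,\alpha,\phi)$, take $D'$ to be the image of $\mathbb{1}$ under the right adjoint of the central functor $Z(Vec_H^{\omega|_H})\to Vec_{H/F}^\alpha$ built from $\phi$. This is the internal End of the unit, which is the full-center-type construction. It is condensable, and inducing it back along Step 2 produces an algebra in $Z(Vec_G^\omega)$.

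\textbf{Step 5: mutual inverses and equivalences.} Check that the two constructions are inverse to each other. Then check that isomorphisms of algebras correspond exactly to the data in \eqref{quadruples_equdig}: a choice of $g$ moving $(H,F)$ to $(H',F')$, together with a monoidal equivalence $\theta$ making the square commute up to monoidal isomorphism. The twisting $F_{l,\zeta_g|_H}$ from Appendix~\ref{appendix:A} accounts for the conjugation.

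The main obstacle is Step 3 and its inverse in Step 4: showing that a half-braiding on $\mathbb{C}^\psi[F]$ which makes it commutative is \emph{exactly} the same data as a monoidal lifting $\phi$, with no extra and no missing constraints. I would first try to verify this at the cochain level. The half-braiding is a function $\beta(h,f)$ satisfying hexagon-type identities with respect to $\omega$. Commutativity pins $\beta$ on $F\times F$ in terms of $\psi$. The remaining identities should be precisely the condition that the pair $(\psi,\beta)$ assembles into a 2-cochain $\mu$ on $H$ with $d\mu=\omega|_H\cdot\pi^*\alpha^{-1}$, where $\pi:H\to H/F$ is the quotient map. Such a $\mu$ is exactly a monoidal structure on the lifting $\phi$.
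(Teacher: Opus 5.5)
Your Steps 1--3 and 5 follow the paper's route: the degree-$e$ part is $Fun(G/H)$, you pass to $Z(Vec_H^{\omega|_H})$ via local modules, you get commutative central algebras supported on a normal $F$, the module category is $Vec_{H/F}^\alpha$ with $\phi$ the free-module functor, and conjugation is absorbed by $F_{l_g,\zeta_g|_H}$.

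The gap is Step~4: your construction does not produce the algebra with quadruple $(H,F,\alpha,\phi)$. The functor $X\mapsto\phi(U(X))$ on $Z(Vec_H^{\omega|_H})$, with $U$ the forgetful functor, has right adjoint $I\circ\phi^R$, where $I$ is right adjoint to $U$. The object $I(\phi^R(\mathbf{1}))$ has $\mathrm{FPdim}=|H|\,|F|$, and its degree-$e$ part is the regular representation of $H$, not $\mathbf{1}$.
\begin{itemize}
\item For $F=1$ your recipe returns the canonical Lagrangian algebra of $Z(Vec_H^{\omega|_H})$ instead of $\mathbf{1}$.
\item For $F\neq 1$ it exceeds the bound $\mathrm{FPdim}\le |H|$ for condensable algebras in $Z(Vec_H^{\omega|_H})$, so it is not even condensable. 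Consistently, this functor then admits no central structure at all.
\end{itemize}
The ``full center'' reading fails the same way: the full center of $Vec_{H/F}^\alpha$ as a $Vec_H^{\omega|_H}$-module is a Lagrangian algebra, of $\mathrm{FPdim}$ $|H|$.

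The correct inverse, used in the paper, is $\phi^R(\mathbf{1})$ for the right adjoint of the tensor functor $\phi$ itself. This \emph{is} the internal End of $\mathbf{1}$ in $Vec_{H/F}^\alpha$ viewed as a $Vec_H^{\omega|_H}$-module. Its half-braiding comes from the Hopf isomorphisms $\phi^R(\mathbf{1})\otimes X\cong\phi^R(\phi(X))\cong X\otimes\phi^R(\mathbf{1})$. By Proposition~\ref{exact} (Brugui\`eres--Natale), this is a commutative central algebra. Moreover, $\phi^R$ is faithful and exact because $H\to H/F$ is surjective, so there is an equivalence $K\colon Vec_{H/F}^\alpha\simeq (Vec_H^{\omega|_H})_{\phi^R(\mathbf{1})}$ with $K\circ\phi\cong F_{\phi^R(\mathbf{1})}$.

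That reconstruction statement is the key lemma your proposal lacks, and it makes the mutual-inverse check immediate:
\begin{itemize}
\item One composite returns $A$, because the right adjoint of $X\mapsto X\otimes A$ sends the unit $A$ of $(Vec_H^{\omega|_H})_A$ back to $A$.
\item The other returns $\phi$ up to an autoequivalence of $Vec_{H/F}^\alpha$ over $\mathrm{id}_{H/F}$, which is exactly what the classification quotients by.
\end{itemize}
The same equivalence drives the transport argument of Lemma~\ref{lemma:send} that your Step~5 needs.

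Your cochain-level plan could in principle replace it; the dictionary $\gamma=\hat\nu^{-1}|_F$, $\epsilon_h(f)=\hat\nu(hfh^{-1},h)\hat\nu(h,f)^{-1}$ is recorded in Remark~\ref{gammaepslion}. But you have not carried out the hard direction: extracting $\alpha\in H^3(H/F,U(1))$ and a $2$-cochain on all of $H$ from the half-braiding data, and matching the two equivalence relations.
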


\begin{theorem}\label{embedding}

    Assume $A,B\in Z(Vec_G^\omega)$ are condensable algebras represented by quadruples $(H,F,\alpha,\phi)$ and $(K,E,\beta,\psi)$, respectively. An algebra embedding $A\to B$ is encoded by a pair $(g,\eta)$ where $g\in G$ is such that $F \trianglelefteq {}^g E \trianglelefteq {}^gK \leqslant H$, and $\eta$ is a monoidal structure on the induced functor $Vec_{^gK/F}^{\alpha|_{^gK/F}}\to Vec_{^gK/{}^gE}^{\beta^g}$ that makes the following diagram commutative up to a monoidal natural isomorphism
    \begin{equation}\label{diagram_embdig}
        \begin{tikzcd}
            Vec_{{}^g K}^{\omega|_{{}^g K}} \arrow[d, "\simeq"'] \arrow[r, "\phi|_{{}^gK}"] & Vec_{{}^gK/F}^{\alpha|_{{}^gK/F}} \arrow[d, "\eta"] \\
            Vec_{{}^gK}^{\omega|_{{}^gK}} \arrow[r, "L_g(\psi)"]                         & Vec_{{}^gK/{}^gE}^{\beta^g}  .                      
            \end{tikzcd}
    \end{equation} 

    The algebra embeddings encoded by $(g, \eta)$ and $(g', \eta')$ are equivalent if and only if the two quadruples $({}^gK/F, {}^gE/F, \beta^g, \eta)$ and $({}^{g'}K/F, {}^{g'}E/F, \beta^{g'}, \eta')$ associated to $Z(Vec_{H/F}^{\alpha})$ are equivalent.

    \end{theorem}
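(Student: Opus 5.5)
The plan is to reduce the classification of embeddings $A\to B$ to data living over $H/F$, using Theorem~\ref{classalgebra} and the local-module description of condensable algebras. Write $A$ for the algebra given by $(H,F,\alpha,\phi)$. An algebra map $A\to B$ between condensable algebras is automatically injective, since $A$ is connected and separable. By the standard correspondence for condensation (Davydov--Müger--Nikshych--Ostrik), embeddings $A\hookrightarrow B$ then correspond, up to isomorphism over $A$, to condensable algebras $B'$ in the category of local modules $Z(Vec_G^\omega)_A^{loc}$ with $B\cong B'$ as algebras in $Z(Vec_G^\omega)$; equivalently, to condensable algebras in $Z(Vec_G^\omega)_A^{loc}$ whose image under the forgetful functor is $B$. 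The first step is therefore to identify $Z(Vec_G^\omega)_A^{loc}\simeq Z(Vec_{H/F}^\alpha)$. We expect this to follow from the construction in Theorem~\ref{classalgebra}. The algebra $A$ is the composite of inducing from $H$ (which gives $Z(Vec_H^{\omega|_H})$ as local modules over $\mathrm{Fun}(G/H)$) with the algebra given by the monoidal functor $\phi$, whose local modules are the center of its target $Vec_{H/F}^\alpha$.

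Next we apply Theorem~\ref{classalgebra} inside $Z(Vec_{H/F}^\alpha)$. Condensable algebras there are quadruples $(K'/F, E'/F, \beta', \eta)$, where $F\le E'\trianglelefteq K'\le H$ and $\eta: Vec_{K'/F}^{\alpha|}\to Vec_{K'/E'}^{\beta'}$ is a monoidal lifting. Pushing such an algebra forward to $Z(Vec_G^\omega)$ gives the quadruple $(K', E', \beta', \eta\circ\phi|_{K'})$, because composing the two liftings recovers the lifting for the total algebra. We will check this composition statement carefully by comparing the underlying objects $\bigoplus_{g\in G/K'}\dots$ and their multiplications.

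Requiring the pushforward to be isomorphic to $B$ means, by the equivalence clause of Theorem~\ref{classalgebra}, that there is some $g$ with $K'={}^gK$, $E'={}^gE$ and $\beta'=\beta^g$ (after transporting by $\theta$), and that $\eta\circ\phi|_{{}^gK}\cong L_g(\psi)$ monoidally. This is exactly diagram~\eqref{diagram_embdig}, and the containments $F\trianglelefteq{}^gE\trianglelefteq{}^gK\le H$ come out automatically. Two embeddings are equivalent exactly when the corresponding algebras in $Z(Vec_G^\omega)_A^{loc}$ are isomorphic. Under the identification of the first step, this is equivalence of the quadruples in $Z(Vec_{H/F}^\alpha)$, which gives the final clause.

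The main obstacle is the first step together with the compatibility check: showing $Z(Vec_G^\omega)_A^{loc}\simeq Z(Vec_{H/F}^\alpha)$ in a way under which the forgetful pushforward of quadruples is precisely composition with $\phi$ (and conjugation by $g$ via $F_{l,\zeta_g}$). My first attempt will be to factor $A$ as an iterated condensation, $\mathrm{Fun}(G/H)$ followed by an algebra in $Z(Vec_H^{\omega|_H})$. Each stage would then be handled with the Morita/center description $Z(Vec_H)_{A}^{loc}\simeq Z(\text{target of }\phi)$, tracking the monoidal structures through the appendix's $\zeta_g$.
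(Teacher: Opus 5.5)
Your proposal follows essentially the same route as the paper. You identify embeddings with condensable algebras in $Z(Vec_G^\omega)_A^{loc}\simeq Z(Vec_{H/F}^\alpha)$. You use that the forgetful image of $(K'/F,E'/F,\beta',\eta)$ is $(K',E',\beta',\eta\circ\phi|_{K'})$, which is the paper's Proposition~\ref{prop:transrep}. You then read off diagram~\eqref{diagram_embdig} from the equivalence clause of Theorem~\ref{classalgebra}, with equivalence of embeddings being isomorphism in the local module category.

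The one difference is in how the compatibility you flag as the main obstacle gets proved. The paper first reduces to $Z(Vec_H^{\omega|_H})$, then factors further through the intermediate algebra $Fun((H/F)/(K/F))$, using a commuting square of forgetful functors together with Lemma~\ref{Lemma: for_fun} and Corollary~\ref{sameepart}. Your plan instead compares underlying objects and multiplications directly.
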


        This paper is organized as follows: in Section \ref{section2}, we explain our motivations from the
        physics of pure edge phase transition of 2+1D topological orders; In Section \ref{section3.1}, we recall some basic definitions and results about condensable algebras; In Section \ref{cen_and_cos}, we recall some results of internal hom and center; In Section \ref{section3.3}, we relate the boundary phase transitions to the cospan of centers; In Section \ref{section4}, we prove Theorem~\ref{main result}; In Section \ref{section5.class}, we prove Theorem~\ref{classalgebra}; In Section \ref{section5.embedding}, we prove Theorem~\ref{embedding}; In Section \ref{section6}, we present diagrams illustrating the classification and embedding relationships of condensable algebras in $Z(Vec_G^\omega)$ for some Abelian and non-Abelian groups $G$.

\section{Physics of boundary phase transition of 2+1D topological order}\label{section2}

It is well known that the anyon content of a 2+1D topological order is described by a unitary modular tensor category (UMTC). The particle-like excitations in topological order correspond to the objects of the UMTC. The fusion rules are encoded in the tensor product structure, while the mutual statistics are captured by the braiding structure of the UMTC. The mathematical framework for anyon condensation was systematized in \cite{kong2014anyon}. Given a 2+1D topological order $\mathcal{C}$ and a condensable algebra $A \in \mathcal{C}$, one can obtain a new 2+1D topological order $\mathcal{D}$ via 2d condensation. The vacuum of the $\mathcal{D}$-phase is identified with the condensable algebra $A$, and  $\mathcal{D}$ consists of all deconfined excitations, so that $\mathcal{D} = \mathcal{C}_A^{loc}$. The 1+1D gapped domain wall between the $\mathcal{C}$-phase and the $\mathcal{D}$-phase contains both confined and deconfined excitations and is described by the unitary fusion category (UFC) $\mathcal{C}_A$. If $\mathcal{C}_A^{loc} \simeq Vec$, then $A$ is called a {\it Lagrangian algebra}. In this case, $\mathcal{C}_A$ represents a gapped boundary of the phase $\mathcal{C}$.

A pure edge phase transition refers to a phase transition occurring on the boundary of a 2+1D topological order without altering the bulk \cite{chen2020topological,lu2023boundary}. Consider a 2+1D topological order $\mathcal{C}$ with two distinct gapped boundaries, $\mathcal{C}_A$ and $\mathcal{C}_B$. Using the language of categorical symmetry \cite{ji2020categorical,kong2020algebraic,Chatterjee_2023}, a pure edge phase transition between $\mathcal{C}_A$ and $\mathcal{C}_B$ can be viewed as a phase transition between the two 1+1D phases $\mathcal{C}_A$ and $\mathcal{C}_B$, both equipped with a categorical symmetry $\mathcal{C}$. The critical point of this phase transition, if it exists, corresponds to a gapless boundary of $\mathcal{C}$. In \cite{Chatterjee_2023}, it was proposed that this gapless phase is described by $\mathcal{C}_D$ endowed with a categorical symmetry $\mathcal{C}_D^{loc}$, where $D$ is a common subalgebra of $A$ and $B$. From the perspective developed in \cite{kong2018gapless,kong2020mathematical,kong2021mathematical}, such a gapless phase can be mathematically described by an enriched fusion category. The enriched fusion category has the same objects as $\mathcal{C}_D$, corresponding to the topological edge excitations (or boundary conditions) on the gapless boundary.

However, \cite{Chatterjee_2023} does not determine which common subalgebra of $A$ and $B$ corresponds to the critical algebra. Even if we consider the maximal common subalgebra, ambiguities remain. For example, there may exist two distinct subalgebras $D_1$ and $D_2$ of $A$ and $B$ such that both are common subalgebras and are both maximal. We present an example in Example \ref{common-subalgebras}. Moreover, a condensable algebra $D$ may admit more than one inequivalent embedding into $A$, giving rise to distinct $D$-module structures on $A$. These embeddings can therefore give rise to distinct algebras $A_1, A_2, \dots$ in $\mathcal{C}_D^{loc}$ with the same underlying algebra $A$ in $\mathcal{C}$. Consider a boundary phase transition between $(\mathcal{C}_D^{loc})_{A_1}$ and $(\mathcal{C}_D^{loc})_{A_2}$. According to the proposal in \cite{Chatterjee_2023}, the condensable algebra of this transition, if a critical point exists, is a common subalgebra of $A_1$ and $A_2$ in $\mathcal{C}_D^{loc}$. Combining the two stages of condensation, this algebra corresponds to a condensable algebra in $\mathcal{C}$ containing $D$. However, both $A_1$ and $A_2$ correspond to the same algebra $A$ in $\mathcal{C}$. Thus, selecting a maximal common subalgebra solely from the underlying algebras in $\mathcal{C}$ would yield $A$ itself, failing to find the critical point. These observations suggest that additional data is required to fully determine the critical algebra corresponding to the phase transition.

We propose that the data of the 0+1D defect between $\mathcal{C}_A$ and $\mathcal{C}_B$ is important. This defect would influence the excitations at the critical point and hence affect the corresponding category of excitations. Let $\mathcal{M}$ denote the category of 0+1D defects between $\mathcal{C}_A$ and $\mathcal{C}_B$ and $\m{M}^{\vee}$ denote the category of 0+1D defects between $\m{C}_B$ and $\m{C}_A$. In physics, when two boundaries $\mathcal{C}_A$ and $\mathcal{C}_B$ undergo a phase transition, the boundary of $\mathcal{C}$ can be viewed as a mixture of the $\mathcal{C}_A$ and $\mathcal{C}_B$ phases. In this context, certain 0+1D defects are required to connect these two phases, as shown in Figure \ref{Fig:mixture}. Here, $F \in \mathcal{M}$ and $F^{\vee} \in \mathcal{M}^{\vee}$ denote objects representing dual defects in the physical sense. (The explicit definition of $F^{\vee}$ is given in Section \ref{section3.3}.). For simplicity, we focus on the case involving only $F$ and $F^{\vee}$, without other 0+1D defects.

\begin{figure}
    \centering
\tikzset{every picture/.style={line width=0.75pt}} 

\begin{tikzpicture}[x=0.75pt,y=0.75pt,yscale=-0.8,xscale=0.8]

\draw  [color={rgb, 255:red, 255; green, 255; blue, 255 }  ,draw opacity=1 ][fill={rgb, 255:red, 80; green, 227; blue, 194 }  ,fill opacity=0.5 ] (40,12.67) -- (580,12.67) -- (580,165.67) -- (40,165.67) -- cycle ;
\draw [color={rgb, 255:red, 74; green, 144; blue, 226 }  ,draw opacity=1 ][fill={rgb, 255:red, 74; green, 144; blue, 226 }  ,fill opacity=1 ][line width=3]    (40,165.67) -- (180,165.67) ;
\draw [color={rgb, 255:red, 126; green, 211; blue, 33 }  ,draw opacity=1 ][line width=3]    (180,165.67) -- (310,165.67) ;
\draw  [color={rgb, 255:red, 245; green, 166; blue, 35 }  ,draw opacity=1 ][fill={rgb, 255:red, 245; green, 166; blue, 35 }  ,fill opacity=1 ] (174.33,165.67) .. controls (174.33,162.54) and (176.87,160) .. (180,160) .. controls (183.13,160) and (185.67,162.54) .. (185.67,165.67) .. controls (185.67,168.8) and (183.13,171.33) .. (180,171.33) .. controls (176.87,171.33) and (174.33,168.8) .. (174.33,165.67) -- cycle ;
\draw [color={rgb, 255:red, 74; green, 144; blue, 226 }  ,draw opacity=1 ][fill={rgb, 255:red, 74; green, 144; blue, 226 }  ,fill opacity=1 ][line width=3]    (310,165.67) -- (450,165.67) ;
\draw [color={rgb, 255:red, 126; green, 211; blue, 33 }  ,draw opacity=1 ][line width=3]    (450,165.67) -- (580,165.67) ;
\draw  [color={rgb, 255:red, 245; green, 166; blue, 35 }  ,draw opacity=1 ][fill={rgb, 255:red, 245; green, 166; blue, 35 }  ,fill opacity=1 ] (444.33,165.67) .. controls (444.33,162.54) and (446.87,160) .. (450,160) .. controls (453.13,160) and (455.67,162.54) .. (455.67,165.67) .. controls (455.67,168.8) and (453.13,171.33) .. (450,171.33) .. controls (446.87,171.33) and (444.33,168.8) .. (444.33,165.67) -- cycle ;
\draw  [color={rgb, 255:red, 245; green, 166; blue, 35 }  ,draw opacity=1 ][fill={rgb, 255:red, 245; green, 166; blue, 35 }  ,fill opacity=1 ] (304.33,165.67) .. controls (304.33,162.54) and (306.87,160) .. (310,160) .. controls (313.13,160) and (315.67,162.54) .. (315.67,165.67) .. controls (315.67,168.8) and (313.13,171.33) .. (310,171.33) .. controls (306.87,171.33) and (304.33,168.8) .. (304.33,165.67) -- cycle ;

\draw (314,95) node [anchor=north west][inner sep=0.75pt]   [align=left] {$\displaystyle \mathcal{C}$};
\draw (155,182) node [anchor=north west][inner sep=0.75pt]   [align=left] {$\displaystyle (\mathcal{M} ,F)$};
\draw (234,182) node [anchor=north west][inner sep=0.75pt]   [align=left] {$\displaystyle \mathcal{C}_{B}$};
\draw (95,182) node [anchor=north west][inner sep=0.75pt]   [align=left] {$\displaystyle \mathcal{C}_{A}$};
\draw (281,182) node [anchor=north west][inner sep=0.75pt]   [align=left] {$\displaystyle \left(\mathcal{M}^{\vee} ,F^{\lor }\right)$};
\draw (569,182) node [anchor=north west][inner sep=0.75pt]   [align=left] {$\displaystyle ......$};
\draw (374,182) node [anchor=north west][inner sep=0.75pt]   [align=left] {$\displaystyle \mathcal{C}_{A}$};
\draw (516,182) node [anchor=north west][inner sep=0.75pt]   [align=left] {$\displaystyle \mathcal{C}_{B}$};
\draw (428,182) node [anchor=north west][inner sep=0.75pt]   [align=left] {$\displaystyle (\mathcal{M} ,F)$};

\end{tikzpicture}
\caption{The mixture of $\m{C}_A$ and $\m{C}_B$}
\label{Fig:mixture}
\end{figure}

The $0+1$D defects $(\mathcal{M}, F)$ and $(\mathcal{M}^{\vee}, F^{\vee})$ determine an algebra $Z(F) = Z(F^{\vee})$ in $\mathcal{C}$, together with two natural algebra morphisms $f_A: A \to Z(F)$ and $f_B: B \to Z(F)$. (See Section~\ref{section3} for the definitions of the full centralizer $Z(F)$ and the morphisms $f_A$ and $f_B$.) Let $D$ be an appropriate common subalgebra of $A$ and $B$ describing the critical point. Composing the embeddings $\pi_A:D\to A$ and $\pi_B:D\to B$ with $f_A$ and $f_B$, respectively, yields two algebra morphisms from $D$ to $Z(F)$. We expect these two morphisms to coincide, since $Z(F)$ represents an excitation at the critical point and should therefore carry a unique canonical $D$-module structure.

We now impose the physical principle that, at the critical point, condensation should be maximal subject to the prescribed boundary and defect data. Since no further constraints are imposed, everything that can consistently condense should condense. Thus, among all compatible triples $(D',\pi'_A,\pi'_B)$, the critical algebra $D$ should realize the largest possible condensation compatible with the two maps to $Z(F)$. More precisely, for every compatible triple $(D',\pi'_A,\pi'_B)$, there should exist a unique algebra morphism $\pi':D'\to D$ such that $\pi_A\circ \pi'=\pi'_A$ and $\pi_B\circ \pi'=\pi'_B$. Hence $D$ is the terminal object in the category of compatible triples. This is precisely the universal property of the fiber product $D=A\times_{Z(F)}B$.

In summary, we have proposed the following physical theorem on the critical point of pure boundary phase transition of 2+1D topological order:

\begin{phythm}
    
    Let $\mathcal{C}$ be a 2+1D topological order, and let $A$ and $B$ be Lagrangian algebras in $\mathcal{C}$. Suppose $(\m{M},F)$ and $(\m{M}^{\vee},F^{\vee})$ are chosen 0+1D defects between the boundary phases $\mathcal{C}_A$ and $\mathcal{C}_B$. The critical point of the boundary phase transition associated with this boundary data, if it exists, is expected to be described by the fiber product $D = A \times_{Z(F)} B$ of the following diagram

    \begin{center}
        \begin{tikzcd}
            D \arrow[d] \arrow[r] & B \arrow[d, "f_B"] \\
            A \arrow[r, "f_A"]    & Z(F).            
            \end{tikzcd}
    \end{center}
\end{phythm}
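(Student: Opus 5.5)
Since the statement is a physical theorem, the plan is to reduce it to precise mathematical claims that the framework of Section~\ref{section3} can check, and to justify the remaining physical input separately. The mathematical skeleton has three parts.

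\textbf{Step 1: the cospan.} I will make the cospan $A \xrightarrow{f_A} Z(F) \xleftarrow{f_B} B$ precise. I will regard $\m{M}$ as a $\m{C}_A$-$\m{C}_B$-bimodule, and hence as a left $\m{C}$-module via the central functors $\m{C}\to\m{C}_A$ and $\m{C}\to\m{C}_B$. Then I define $Z(F)$ as the full centralizer of $F\in\m{M}$, namely the internal-hom-type object that represents $\m{C}$-actions commuting with $F$. The algebra structure on $Z(F)$ comes from composition of internal endomorphisms. The maps $f_A$ and $f_B$ come from identifying $A$ and $B$ as full centers $Z(\m{C}_A)$ and $Z(\m{C}_B)$, and then restricting their action to $F$. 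I will also check that $Z(F)\simeq Z(F^\vee)$, using the duality between $\m{M}$ and $\m{M}^\vee$ recalled in Section~\ref{section3.3}. All of this should follow from the results on internal hom and center recalled in Section~\ref{cen_and_cos}.

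\textbf{Step 2: the fiber product and its condensability.} I will construct $D=A\times_{Z(F)}B$ as the equalizer of $f_A\circ p_A$ and $f_B\circ p_B$ on $A\oplus B$. This is a kernel in the semisimple category $\m{C}$, so it is a direct summand and a subalgebra of the product algebra $A\oplus B$. Its universal property among algebras then follows formally from the universal property of kernels together with the algebra structure of $A\oplus B$. Condensability is exactly the first part of Theorem~\ref{main result}, which I will invoke directly. I expect this to be the main obstacle if it is carried out from scratch, for two reasons. Commutativity of $D$ is inherited because $D$ embeds into the commutative algebra $A\oplus B$. Connectedness and separability, however, are more delicate, since $A\oplus B$ itself is not connected. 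My first attempt will be to use the projections $D\to A$ and $D\to B$: I will show that the unit summand $\mathbf{1}$ appears in $D$ only once, via the diagonal unit, because $f_A$ and $f_B$ agree on units. I will then obtain separability by restricting the Frobenius (unitary) structure, exhibiting $D$ as the image of an idempotent built from the conditional expectations onto the subalgebra.

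\textbf{Step 3: the physical input and consistency.} The only non-mathematical content is the maximal-condensation principle, namely that the critical algebra is terminal among compatible triples $(D',\pi'_A,\pi'_B)$ with $f_A\pi'_A=f_B\pi'_B$. Once this principle is granted, Step 2 identifies the critical algebra with $D$, uniquely up to unique isomorphism. For consistency I will use the converse part of Theorem~\ref{main result}. If the critical point is a common subalgebra $D$ as proposed in \cite{Chatterjee_2023}, then taking $M=A\otimes_D B$ recovers $D=A\times_M B$. Moreover, $\mathrm{Fun}_{\m{C}_A|\m{C}_B}(\m{C}_M,\m{C}_M)\simeq\m{C}$, so $\m{C}_M$ really is a closed $0+1$D defect. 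This shows that the proposal both refines the common-subalgebra picture and is compatible with it, and that $\m{C}_D$ is a legitimate candidate for the underlying category of a gapless boundary enriched fusion category.
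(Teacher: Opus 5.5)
Your plan follows the paper's argument: the defect data give the cospan $A\xrightarrow{f_A}Z(F)\xleftarrow{f_B}B$ with $Z(F)\simeq Z(F^\vee)$, the maximal-condensation principle makes the critical algebra terminal among compatible triples and hence equal to the fiber product, and the condensability and converse parts of Theorem~\ref{main result} serve as mathematical support. Two remarks. Your tentative from-scratch separability argument via conditional expectations is therefore not needed; the paper instead shows that $\pi_A$ is a monomorphism and uses the fact that subalgebras of connected separable algebras are separable. Also, you leave unmotivated why compatible triples must satisfy $f_A\pi'_A=f_B\pi'_B$; the paper's reason is that $Z(F)$ is an excitation at the critical point and so should carry a unique canonical $D$-module structure.
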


In this work, we prove that $D$ is also a condensable algebra in $\m{C}$. Therefore, the gapless phase $\m{C}_D$, characterized by the categorical symmetry $\m{C}_D^{loc}$, is well-defined. Moreover, we prove that for any common condensable subalgebra $D$ of $A$ and $B$, if one defines $M = A \otimes_{D} B$, then the fiber product $A \times_{M} B$ is precisely $D$. This implies that any common condensable subalgebra $D$ could potentially serve as the critical point.

\begin{remark}
    The 0+1D defects (or domain walls) discussed here are all so-called {\it closed} or {\it anomaly-free} gapped defects, as they connect two 1+1D phases sharing the same bulk. (See \cite[Section 3.3]{kong2015boundary} for a detailed definition.)
\end{remark}
\begin{remark}
If the condensable algebras $A$ and $B$ are not Lagrangian, their fiber product $D$ is also a condensable algebra. Thus, the fiber product may also describe the critical point of boundary phase transition of gapless boundaries.
\end{remark}
\begin{remark}\label{rmk:multi F}
    Suppose we are given a collection of defects $F_i \in \mathcal{M}$ and $F^{\vee}_i \in \mathcal{M}^{\vee}$ for $i=1, 2, \dots$ between two gapped boundary phases. Then the critical point associated with this boundary data, if it exists, is expected to be described by the limit of the following diagram:

\begin{center}
    \begin{tikzcd}
        & A \arrow[ld] \arrow[d] \arrow[rd] \arrow[rrd] & B \arrow[lld] \arrow[ld] \arrow[d] \arrow[rd] &     \\
 Z(F_1) & Z(F_2)                                        & Z(F_3)                                        & ...
 \end{tikzcd}
\end{center} 

Remark~\ref{rmk:condense_limit} shows that the limit of this diagram is also a condensable algebra.
\end{remark}
\begin{remark}\label{rmk:multiphase}

    In physics, a "triple point" refers to a specific condition where three phases coexist. More generally, configurations involving the simultaneous existence of multiple phases are referred to as \textit{multiphase points}. To obtain the mathematical description of multiphase points, we only need to consider the limit of the following diagram:
    \begin{center}
        \begin{tikzcd}
            A \arrow[d] \arrow[rd] & B \arrow[ld] \arrow[rd] & C \arrow[d] \arrow[ld] & ...... \\
            M        & P          & N         & ......
            \end{tikzcd}
    \end{center}

    Here, $A, B, C$, etc., are Lagrangian algebras in a UMTC $\mathcal{C}$. $M$ is an $A$-$B$-algebra, $P$ is an $A$-$C$-algebra, $N$ is a $B$-$C$-algebra, and so on. In Remark~\ref{rmk:condense_limit}, we show that the limit of this diagram is also a condensable algebra.
\end{remark}

\section{Preliminaries}\label{section3}

\subsection{Condensable algebras}\label{section3.1}

In this subsection, we will recall some basic knowledge about condensable algebra.
We only list the notations and properties we need.
For an introduction to braided fusion category and condensable algebra in more detail, we recommend \cite{fuchs2002tft,kitaev2006anyons,kong2014anyon}.

Let $\m{C}$ be a braided fusion category.

\begin{definition}
    An {\it algebra} in $\m{C}$ is a triple $(A,m_A,u_A)$, where $A$ is an object in $\m{C}$, $m_A$ is a morphism $A\otimes A\to A$ and $u_A: 1 \to A$ satisfying the following conditions: 
    $$
    m_A \circ (m_A \otimes \id_A)  = m_A \circ (\id_A \otimes m_A),
    $$
    $$
    m_A \circ (u_A \otimes \id_A) = \id_A = m_A\circ (\id_A \otimes u_A). 
    $$

    An algebra $A$ is called {\it commutative} if $m_A = m_A \circ \beta_{A,A}$. Here $\beta_{X,Y}:X\otimes Y\to Y\otimes X$ is the braiding of $\m{C}$.
    
\end{definition}

A convenient way to visualize morphisms is using graphs in which lines stand for identity morphisms. We draw the algebra structure of $A$ as follows.

\begin{center}
    
    \tikzset{every picture/.style={line width=0.75pt}} 

    \begin{tikzpicture}[x=0.75pt,y=0.75pt,yscale=-0.8,xscale=0.8]
    
    \draw    (276.14,107.77) -- (275.91,140) ;
    \draw    (112.95,103) -- (113.23,144) ;
    \draw  [draw opacity=0] (139.05,75.03) .. controls (139.05,75.09) and (139.05,75.14) .. (139.05,75.2) .. controls (138.96,90.55) and (127.27,102.92) .. (112.93,102.83) .. controls (98.59,102.74) and (87.04,90.22) .. (87.13,74.86) .. controls (87.13,74.58) and (87.14,74.29) .. (87.15,74.01) -- (113.09,75.03) -- cycle ; \draw   (139.05,75.03) .. controls (139.05,75.09) and (139.05,75.14) .. (139.05,75.2) .. controls (138.96,90.55) and (127.27,102.92) .. (112.93,102.83) .. controls (98.59,102.74) and (87.04,90.22) .. (87.13,74.86) .. controls (87.13,74.58) and (87.14,74.29) .. (87.15,74.01) ;  
    \draw  [fill={rgb, 255:red, 255; green, 193; blue, 255 }  ,fill opacity=1 ] (108.88,103.46) .. controls (108.88,101) and (110.76,99) .. (113.08,99) .. controls (115.39,99) and (117.27,101) .. (117.27,103.46) .. controls (117.27,105.92) and (115.39,107.92) .. (113.08,107.92) .. controls (110.76,107.92) and (108.88,105.92) .. (108.88,103.46) -- cycle ;
    \draw  [draw opacity=0] (300.52,82.31) .. controls (300.54,82.7) and (300.54,83.1) .. (300.54,83.49) .. controls (300.46,97.37) and (289.82,108.56) .. (276.77,108.47) .. controls (263.72,108.39) and (253.2,97.07) .. (253.28,83.19) .. controls (253.28,82.93) and (253.29,82.67) .. (253.3,82.41) -- (276.91,83.34) -- cycle ; \draw   (300.52,82.31) .. controls (300.54,82.7) and (300.54,83.1) .. (300.54,83.49) .. controls (300.46,97.37) and (289.82,108.56) .. (276.77,108.47) .. controls (263.72,108.39) and (253.2,97.07) .. (253.28,83.19) .. controls (253.28,82.93) and (253.29,82.67) .. (253.3,82.41) ;  
    \draw  [fill={rgb, 255:red, 255; green, 193; blue, 255 }  ,fill opacity=1 ] (271.95,107.77) .. controls (271.95,105.3) and (273.83,103.31) .. (276.14,103.31) .. controls (278.46,103.31) and (280.34,105.3) .. (280.34,107.77) .. controls (280.34,110.23) and (278.46,112.23) .. (276.14,112.23) .. controls (273.83,112.23) and (271.95,110.23) .. (271.95,107.77) -- cycle ;
    \draw    (201.06,92.77) -- (201.06,136.6) ;
    \draw  [fill={rgb, 255:red, 80; green, 227; blue, 194 }  ,fill opacity=1 ] (196.87,92.77) .. controls (196.87,90.3) and (198.74,88.31) .. (201.06,88.31) .. controls (203.38,88.31) and (205.25,90.3) .. (205.25,92.77) .. controls (205.25,95.23) and (203.38,97.23) .. (201.06,97.23) .. controls (198.74,97.23) and (196.87,95.23) .. (196.87,92.77) -- cycle ;
    \draw  [draw opacity=0] (278.92,54.08) .. controls (278.94,54.48) and (278.94,54.87) .. (278.94,55.27) .. controls (278.86,69.15) and (268.22,80.33) .. (255.16,80.24) .. controls (242.11,80.16) and (231.6,68.84) .. (231.68,54.96) .. controls (231.68,54.7) and (231.69,54.44) .. (231.69,54.18) -- (255.31,55.11) -- cycle ; \draw   (278.92,54.08) .. controls (278.94,54.48) and (278.94,54.87) .. (278.94,55.27) .. controls (278.86,69.15) and (268.22,80.33) .. (255.16,80.24) .. controls (242.11,80.16) and (231.6,68.84) .. (231.68,54.96) .. controls (231.68,54.7) and (231.69,54.44) .. (231.69,54.18) ;  
    \draw  [fill={rgb, 255:red, 255; green, 193; blue, 255 }  ,fill opacity=1 ] (250.35,79.54) .. controls (250.35,77.08) and (252.22,75.08) .. (254.54,75.08) .. controls (256.86,75.08) and (258.73,77.08) .. (258.73,79.54) .. controls (258.73,82) and (256.86,84) .. (254.54,84) .. controls (252.22,84) and (250.35,82) .. (250.35,79.54) -- cycle ;
    \draw    (360.9,107.77) -- (360.67,140) ;
    \draw  [draw opacity=0] (385.28,82.31) .. controls (385.3,82.7) and (385.3,83.1) .. (385.3,83.49) .. controls (385.22,97.37) and (374.57,108.56) .. (361.52,108.47) .. controls (348.47,108.39) and (337.96,97.07) .. (338.04,83.19) .. controls (338.04,82.93) and (338.05,82.67) .. (338.05,82.41) -- (361.67,83.34) -- cycle ; \draw   (385.28,82.31) .. controls (385.3,82.7) and (385.3,83.1) .. (385.3,83.49) .. controls (385.22,97.37) and (374.57,108.56) .. (361.52,108.47) .. controls (348.47,108.39) and (337.96,97.07) .. (338.04,83.19) .. controls (338.04,82.93) and (338.05,82.67) .. (338.05,82.41) ;  
    \draw  [fill={rgb, 255:red, 255; green, 193; blue, 255 }  ,fill opacity=1 ] (356.71,107.77) .. controls (356.71,105.3) and (358.58,103.31) .. (360.9,103.31) .. controls (363.22,103.31) and (365.09,105.3) .. (365.09,107.77) .. controls (365.09,110.23) and (363.22,112.23) .. (360.9,112.23) .. controls (358.58,112.23) and (356.71,110.23) .. (356.71,107.77) -- cycle ;
    \draw  [draw opacity=0] (408.65,55.08) .. controls (408.67,55.48) and (408.67,55.87) .. (408.67,56.27) .. controls (408.59,70.15) and (397.95,81.33) .. (384.9,81.24) .. controls (371.84,81.16) and (361.33,69.84) .. (361.41,55.96) .. controls (361.41,55.7) and (361.42,55.44) .. (361.43,55.18) -- (385.04,56.11) -- cycle ; \draw   (408.65,55.08) .. controls (408.67,55.48) and (408.67,55.87) .. (408.67,56.27) .. controls (408.59,70.15) and (397.95,81.33) .. (384.9,81.24) .. controls (371.84,81.16) and (361.33,69.84) .. (361.41,55.96) .. controls (361.41,55.7) and (361.42,55.44) .. (361.43,55.18) ;  
    \draw  [fill={rgb, 255:red, 255; green, 193; blue, 255 }  ,fill opacity=1 ] (381.08,82.13) .. controls (381.08,79.66) and (382.96,77.67) .. (385.27,77.67) .. controls (387.59,77.67) and (389.47,79.66) .. (389.47,82.13) .. controls (389.47,84.59) and (387.59,86.59) .. (385.27,86.59) .. controls (382.96,86.59) and (381.08,84.59) .. (381.08,82.13) -- cycle ;
    \draw    (524.33,77) -- (524.33,135) ;
    \draw    (462.09,104.77) -- (461.86,137) ;
    \draw  [draw opacity=0] (486.47,79.31) .. controls (486.49,79.7) and (486.49,80.1) .. (486.49,80.49) .. controls (486.41,94.37) and (475.77,105.56) .. (462.71,105.47) .. controls (449.66,105.39) and (439.15,94.07) .. (439.23,80.19) .. controls (439.23,79.93) and (439.24,79.67) .. (439.24,79.41) -- (462.86,80.34) -- cycle ; \draw   (486.47,79.31) .. controls (486.49,79.7) and (486.49,80.1) .. (486.49,80.49) .. controls (486.41,94.37) and (475.77,105.56) .. (462.71,105.47) .. controls (449.66,105.39) and (439.15,94.07) .. (439.23,80.19) .. controls (439.23,79.93) and (439.24,79.67) .. (439.24,79.41) ;  
    \draw  [fill={rgb, 255:red, 255; green, 193; blue, 255 }  ,fill opacity=1 ] (457.9,104.77) .. controls (457.9,102.3) and (459.77,100.31) .. (462.09,100.31) .. controls (464.41,100.31) and (466.28,102.3) .. (466.28,104.77) .. controls (466.28,107.23) and (464.41,109.23) .. (462.09,109.23) .. controls (459.77,109.23) and (457.9,107.23) .. (457.9,104.77) -- cycle ;
    \draw  [fill={rgb, 255:red, 80; green, 227; blue, 194 }  ,fill opacity=1 ] (435.06,79.29) .. controls (435.06,76.83) and (436.93,74.83) .. (439.25,74.83) .. controls (441.57,74.83) and (443.44,76.83) .. (443.44,79.29) .. controls (443.44,81.75) and (441.57,83.75) .. (439.25,83.75) .. controls (436.93,83.75) and (435.06,81.75) .. (435.06,79.29) -- cycle ;
    \draw    (589.77,105.77) -- (589.54,138) ;
    \draw  [draw opacity=0] (614.15,80.31) .. controls (614.16,80.7) and (614.17,81.1) .. (614.17,81.49) .. controls (614.09,95.37) and (603.44,106.56) .. (590.39,106.47) .. controls (577.34,106.39) and (566.83,95.07) .. (566.91,81.19) .. controls (566.91,80.93) and (566.91,80.67) .. (566.92,80.41) -- (590.54,81.34) -- cycle ; \draw   (614.15,80.31) .. controls (614.16,80.7) and (614.17,81.1) .. (614.17,81.49) .. controls (614.09,95.37) and (603.44,106.56) .. (590.39,106.47) .. controls (577.34,106.39) and (566.83,95.07) .. (566.91,81.19) .. controls (566.91,80.93) and (566.91,80.67) .. (566.92,80.41) ;  
    \draw  [fill={rgb, 255:red, 255; green, 193; blue, 255 }  ,fill opacity=1 ] (585.57,105.77) .. controls (585.57,103.3) and (587.45,101.31) .. (589.77,101.31) .. controls (592.08,101.31) and (593.96,103.3) .. (593.96,105.77) .. controls (593.96,108.23) and (592.08,110.23) .. (589.77,110.23) .. controls (587.45,110.23) and (585.57,108.23) .. (585.57,105.77) -- cycle ;
    \draw  [fill={rgb, 255:red, 80; green, 227; blue, 194 }  ,fill opacity=1 ] (609.95,80.31) .. controls (609.95,77.85) and (611.83,75.85) .. (614.15,75.85) .. controls (616.46,75.85) and (618.34,77.85) .. (618.34,80.31) .. controls (618.34,82.77) and (616.46,84.77) .. (614.15,84.77) .. controls (611.83,84.77) and (609.95,82.77) .. (609.95,80.31) -- cycle ;
    
    \draw (90.38,134) node [anchor=north west][inner sep=0.75pt]   [align=left] {$\displaystyle A$};
    \draw (260.68,140) node [anchor=north west][inner sep=0.75pt]   [align=left] {$\displaystyle A$};
    \draw (183.87,136) node [anchor=north west][inner sep=0.75pt]   [align=left] {$\displaystyle A$};
    \draw (130.16,55) node [anchor=north west][inner sep=0.75pt]   [align=left] {$\displaystyle A$};
    \draw (78.27,55) node [anchor=north west][inner sep=0.75pt]   [align=left] {$\displaystyle A$};
    \draw (40.69,107) node [anchor=north west][inner sep=0.75pt]   [align=left] {$\displaystyle m_{A} =$};
    \draw (152.12,105) node [anchor=north west][inner sep=0.75pt]   [align=left] {$\displaystyle u_{A} =$};
    \draw (224.37,34.94) node [anchor=north west][inner sep=0.75pt]   [align=left] {$\displaystyle A$};
    \draw (293.56,59.94) node [anchor=north west][inner sep=0.75pt]   [align=left] {$\displaystyle A$};
    \draw (271.08,33.94) node [anchor=north west][inner sep=0.75pt]   [align=left] {$\displaystyle A$};
    \draw (345.44,140) node [anchor=north west][inner sep=0.75pt]   [align=left] {$\displaystyle A$};
    \draw (354.1,35.94) node [anchor=north west][inner sep=0.75pt]   [align=left] {$\displaystyle A$};
    \draw (330.75,61.94) node [anchor=north west][inner sep=0.75pt]   [align=left] {$\displaystyle A$};
    \draw (400.81,34.94) node [anchor=north west][inner sep=0.75pt]   [align=left] {$\displaystyle A$};
    \draw (304.34,91) node [anchor=north west][inner sep=0.75pt]   [align=left] {$\displaystyle =$};
    \draw (516.79,137) node [anchor=north west][inner sep=0.75pt]   [align=left] {$\displaystyle A$};
    \draw (446.63,137) node [anchor=north west][inner sep=0.75pt]   [align=left] {$\displaystyle A$};
    \draw (479.51,56.94) node [anchor=north west][inner sep=0.75pt]   [align=left] {$\displaystyle A$};
    \draw (489.42,97) node [anchor=north west][inner sep=0.75pt]   [align=left] {$\displaystyle =$};
    \draw (574.3,138) node [anchor=north west][inner sep=0.75pt]   [align=left] {$\displaystyle A$};
    \draw (559.62,55.94) node [anchor=north west][inner sep=0.75pt]   [align=left] {$\displaystyle A$};
    \draw (539.26,98) node [anchor=north west][inner sep=0.75pt]   [align=left] {$\displaystyle =$};
    \draw (515.79,51.33) node [anchor=north west][inner sep=0.75pt]   [align=left] {$\displaystyle A$};

    \end{tikzpicture}

\end{center}

Such pictures in this work are read from top to bottom. 
If an object is the tensor unit $1$, its corresponding line will be drawn as a dashed line or be omitted from the diagram. We sometimes omit object labels on lines and morphism labels on circles.

\begin{definition}
    A {\it algebra homomorphism} between two algebras $A=(A,m_A,u_A)$ and $B=(B,m_B,u_B)$ is a morphism $f: A \to B$ such that:
    $$
    u_B=f\circ u_A, 
    $$
    $$
    f\circ m_A=m_B\circ(f\otimes f).
    $$
\end{definition}

\begin{definition} \label{def:module} 
    A {\it left module} over an algebra $A=(A,m_A,u_A)$ is a pair $(M, m_M^A)$, where $M$ is an object in $\m{C}$ and $m_M^A: A\otimes M \to M$ such that 
    $$
    m_M^A \circ (m_A \otimes \id_M)  = m_M^A \circ (\id_A \otimes m_M^A)
    $$
    and $m_M^A \circ (u_A \otimes \id_M) =\id_M$. 
    The definition of a right $A$-module is similar. An $A$-$B$ bimodule is a triple $(M, m_M^{A,L},m_M^{B,R})$ such that $(M, m_M^{A,L})$ is a left $A$-module and $(M, m_M^{B,R})$ is a right $B$-module such that 
    $$
    m_M^{B,R} \circ (m_M^{A,L} \otimes \id_B)  = m_M^{A,L} \circ (\id_A \otimes m_M^{B,R}). 
    $$
    \end{definition}

    We sometimes omit superscripts or subscripts from notations such as $m_A$ and $m_M^A$ when there is no ambiguity.

    \begin{definition}
        For an algebra $A$, a {\it left $A$-module homomorphism} is a morphism $f: M \to N$ between two left $A$-modules $(M, m_M^A)$ and $(N, m_N^A)$ such that
        $$
        f \circ m_M^A = m_N^A \circ (\id_A  \otimes f).
        $$
        The definition of a right $A$-module homomorphism is similar. a {\it $A$-$A$ bimodule homomorphism} is a morphism $f: M \to N$ between two $A$-$A$ bimodules $(M, m_M^A)$ and $(N, m_N^A)$ such that $f$ is both a left $A$-module homomorphism and a right $A$-module homomorphism. We denote by ${}_A\mathcal{C}$, $\m{C}_A$, and ${}_A\m{C}_A$ the categories of left $A$-modules, right $A$-modules, and $A$-$A$ bimodules in $\m{C}$, respectively. In each case, morphisms are the corresponding module homomorphisms.
    \end{definition}
    
    \begin{definition}
    A right $A$-module $(M,m_M^{A,R})$ is called a {\it local module} if
    $$
    m_M^{A,R}\circ \beta_{A,M}\circ \beta_{M,A}=m_M^{A,R}.
    $$
    We denote by $\m{C}_A^{loc}$ the full subcategory of $\m{C}_A$ consisting of local $A$-modules.
\end{definition}

    \begin{definition}
    A \textit{(two side) ideal} of an algebra $A$ is a pair $(I,i)$ such that $I$ is an $A$-$A$ bimodule and $i:I\to A$ is a monomorphism of $A$-$A$ bimodules.

    An ideal $(I,i)$ of $A$ is called \textit{nilpotent} if there exists $n\geqslant 2$ such that $I^n:=Im(I^{\otimes n}\stackrel{i^{\otimes n}}{\rightarrow}A^{\otimes n}\stackrel{m^n}{\rightarrow}A)=0$. Here $m^n$ means the $n$-th iterated multiplication of $A$.
\end{definition}

\begin{definition}
An algebra $A$ is called {\it connected} if $\dim Hom_{\m{C}}(1,A)=1$.
\end{definition}

\begin{definition}
    An algebra $A$ is called {\it separable algebra} if there exists an $A$-$A$ bimodule morphism $\delta_A:A\to A\otimes A$ such that $m_A\circ \delta_A=id_A$. 
\end{definition}

\begin{definition}[\cite{kong2014anyon}]
    A {\it 2d condensable algebra} in a braided fusion category is a connected commutative separable algebra. A {\it 1d condensable algebra} in a fusion category is a connected separable algebra.
\end{definition}

\begin{definition}
    A 2d condensable algebra $A\in \m{C}$ is called a {\it Lagrangian algebra} if $\m{C}_A^{loc}\simeq Vec$ as braided fusion categories.
\end{definition}

In this paper, unless otherwise specified, the term "condensable algebra" always refers to 2d condensable algebra.

\begin{remark}
    The definitions above do not require a pivotal structure on the ambient braided fusion category. If the category is moreover unitary, equipped with its canonical spherical structure, then every connected separable algebra admits a unique symmetric normalized-special Frobenius algebra structure \cite{fuchs2002tft,kong2014anyon}. Conversely, every special Frobenius algebra is separable. Thus, in this setting, the definition of a 1d condensable algebra is equivalent to that of a connected symmetric normalized-special Frobenius algebra, and the definition of a 2d condensable algebra is equivalent to that of a connected commutative symmetric normalized-special Frobenius algebra.

\end{remark}

\begin{remark}
    A commutative separable algebra is also called \textit{\'etale algebra}.
\end{remark}

\subsection{Center and cospan}\label{cen_and_cos}

In this subsection, we recall the definition of center and cospan. Most of the results in this subsection can be found in \cite{davydov2010centre,davydov2015functoriality}.

Let $\m{C}$ be monoidal category with tensor product $\otimes$, $\m{M}$ be a left $\m{C}$-module with an action function $\odot: \m{C}\times \m{M} \to \m{M}$.

\begin{definition}
    For $M,N\in\m{M}$, the {\it  internal hom} $[M,N]$ is defined as the object in $\m{C}$ such that there is a natural isomorphism:
    $$
    Hom_{\m{M}}(X\odot M,N)\simeq Hom_{\m{C}}(X,[M,N]) ,\quad\forall X\in \m{C}.
    $$
\end{definition}

It is equivalent to stating that the functor $Hom_{\m{M}}(-\odot M ,N)$ is representable by $[M,N]$. Let $ev_M: [M,N]\odot M \to N$ be the  map corresponding to $id_{[M,N]}$ under the natural isomorphism. The internal hom $[M,N]$ can also be defined as the terminal object among pairs $(X,f)$, where $X \in \m{C}$ and $f: X\odot M \to N$ is a morphism in $\m{M}$. This means that for any such pair $(X,f)$, there exists a unique morphism $\underline{f}: X \to [M,N]$ making the following diagram commute:

\begin{center}
    \begin{tikzcd}
        X\odot M \arrow[rd, "f"] \arrow[rr, "\underline{f}\odot id_M"] &   & {[M,N]\odot M} \arrow[ld, "ev_M"'] \\
                                                                       & N &              .                     
        \end{tikzcd}
\end{center}

\begin{remark}
    For $M, N \in \m{M}$, the internal hom $[M,N] \in \m{C}$ does not always exist. 
In this subsection, all internal homs we write are considered under the assumption that they exist.

\end{remark}

\begin{lemma}\cite[Lemma 2.1.6]{kong2018center}\label{compute_internalhom}
    Let $\mathcal{C}$ be a rigid monoidal category that admits coequalizers, and let $\mathcal{M}=\mathcal{C}_A$ for some algebra $A\in \mathcal{C}$. Then for every pair of objects $x,y\in \mathcal{M}$, the internal hom $[x,y]$ exists, and we have $[x,y]\simeq (x\otimes_A y^R)^L$.
    
\end{lemma}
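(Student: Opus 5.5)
The plan is to verify the defining universal property directly. I will produce, for every $X\in\m{C}$, a bijection
$$
Hom_{\m{C}_A}(X\otimes x,y)\;\simeq\;Hom_{\m{C}}\big(X,(x\otimes_A y^R)^L\big)
$$
that is natural in $X$. By the definition of internal hom, this identifies $[x,y]$ with $(x\otimes_A y^R)^L$. Here $\m{C}$ acts on $\m{M}=\m{C}_A$ by $X\odot x=X\otimes x$, with the right $A$-action on $x$.

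First I fix conventions. For a right $A$-module $y$, the right dual $y^R$ comes with $ev_y:y\otimes y^R\to 1$ and $coev_y:1\to y^R\otimes y$. It carries a left $A$-action obtained by dualizing the action on $y$:
$$
A\otimes y^R\xrightarrow{coev_y}y^R\otimes y\otimes A\otimes y^R\xrightarrow{m_y}y^R\otimes y\otimes y^R\xrightarrow{ev_y}y^R,
$$
with the obvious ordering of tensor factors. The relative tensor product $x\otimes_A y^R$ is the coequalizer of the two maps $x\otimes A\otimes y^R\rightrightarrows x\otimes y^R$, given by $m_x\otimes\id$ and $\id\otimes m_{y^R}$; it exists by hypothesis. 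For the left dual I use the adjunction $Hom_{\m{C}}(X,Z^L)\simeq Hom_{\m{C}}(X\otimes Z,1)$. If the conventions are mirrored, I swap $L$ and $R$ consistently.

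The chain of bijections is as follows.
\begin{enumerate}
\item Since $y$ has a right dual, $Hom_{\m{C}}(X\otimes x,y)\simeq Hom_{\m{C}}(X\otimes x\otimes y^R,1)$, via $f\mapsto ev_y\circ(f\otimes\id)$.
\item Under this bijection, $f$ is a right $A$-module map if and only if its image $\tilde f$ equalizes the two composites $X\otimes x\otimes A\otimes y^R\rightrightarrows X\otimes x\otimes y^R\to 1$.
\item Because $\m{C}$ is rigid, $X\otimes-$ has a right adjoint and so preserves coequalizers. Hence $X\otimes(x\otimes_A y^R)$ is the coequalizer of the tensored pair, and equalizing maps $X\otimes x\otimes y^R\to 1$ correspond bijectively to maps $X\otimes(x\otimes_A y^R)\to 1$.
\item The latter correspond to maps $X\to(x\otimes_A y^R)^L$.
\end{enumerate}
Each step is natural in $X$, since every construction involved is given by precomposition with maps built from $X$, so the composite bijection is natural.

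The main obstacle is step 2: the equivalence between $A$-linearity of $f$ and the equalizing condition on $\tilde f$. I would prove it with string diagrams. Precomposing $\tilde f$ with $\id\otimes m_{y^R}$ and expanding $m_{y^R}$ via its definition produces a $coev_y$–$ev_y$ pair. A snake identity cancels this pair, turning the expression into $ev_y\circ((m_y\circ(f\otimes\id_A))\otimes\id_{y^R})$. The other composite is $ev_y\circ((f\circ(\id_X\otimes m_x))\otimes\id_{y^R})$. Since transposition along $ev_y$ is injective (step 1 is a bijection), these two maps agree exactly when $m_y\circ(f\otimes\id_A)=f\circ(\id_X\otimes m_x)$, which is $A$-linearity. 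The care needed here is only in bookkeeping the order of tensor factors and the side on which the dual is taken.
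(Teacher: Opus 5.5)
Your proposal is correct and follows essentially the same route as the source. The paper does not prove this lemma itself but cites Kong--Zheng, whose argument is the same chain $Hom_{\mathcal{C}_A}(X\otimes x,y)\simeq Hom_{\mathcal{C}}(X\otimes(x\otimes_A y^R),1)\simeq Hom_{\mathcal{C}}(X,(x\otimes_A y^R)^L)$, and you have correctly supplied the two steps that one-line argument leaves implicit: $A$-linearity corresponds to the transpose coequalizing the pair (via the snake identity), and $X\otimes-$ preserves the coequalizer because rigidity gives it a right adjoint.
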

\begin{proposition}
    For any objects $M, N \in \m{M}$, the internal hom $[M,M]$ is an algebra in $\m{C}$.  Furthermore, $[M,N]$ is a right $[M,M]$-module and $[N,M]$ is a left $[M,M]$-module.

    The unit map $1\to [M,M]$ is induced by the morphism $1\odot M \simeq M$ under the natural isomorphism $Hom_{\m{C}}(1,[M,M])\simeq Hom_{\m{M}}(1\odot M,M)$. The morphism $[N,P] \otimes [M,N] \to [M,P]$, induced by the composition $ev_N \circ (\mathrm{id} \otimes ev_M) \colon [N,P] \otimes [M,N] \odot M \to [N,P] \odot N \to P$, determines the multiplication of $[M,M]$, as well as the actions of $[M,M]$ on $[M,N]$ and $[N,M]$.
\end{proposition}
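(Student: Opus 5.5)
We verify the three pieces of structure directly from the universal property of the internal hom, using only the evaluation maps $ev_M$ and the natural bijection $Hom_{\m{M}}(X\odot M,N)\simeq Hom_{\m{C}}(X,[M,N])$. Write $\mu_{P,N,M}:[N,P]\otimes[M,N]\to[M,P]$ for the morphism corresponding to
$$ev_N\circ(\id_{[N,P]}\odot ev_M)\circ a,$$
where $a:([N,P]\otimes[M,N])\odot M\simeq [N,P]\odot([M,N]\odot M)$ is the module associativity constraint. Equivalently, $\mu$ is the unique map satisfying $ev_M\circ(\mu\odot\id_M)=ev_N\circ(\id\odot ev_M)\circ a$. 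Let $u_M:1\to[M,M]$ be the map characterized by $ev_M\circ(u_M\odot\id_M)=l_M$, where $l_M:1\odot M\simeq M$ is the unit constraint. Throughout, two maps $X\to[M,P]$ are equal iff their composites with $ev_M\circ(-\odot\id_M)$ agree; we use this uniqueness repeatedly.

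\textbf{Associativity.} We must show $\mu_{Q,P,M}\circ(\id\otimes\mu_{P,N,M})$ and $\mu_{Q,N,M}\circ(\mu_{Q,P,N}\otimes\id)$ agree as maps $[P,Q]\otimes[N,P]\otimes[M,N]\to[M,Q]$ (up to the associator of $\otimes$). Apply $ev_M\circ(-\odot\id_M)$ to each side and use the defining identity of $\mu$ twice. Both sides reduce to the iterated evaluation
$$ev_P\circ(\id\odot ev_N)\circ(\id\odot(\id\odot ev_M)),$$
precomposed with associativity constraints of the module action. The two sides use possibly different composites of these constraints. Their agreement follows from the pentagon axiom for the module category together with naturality of $a$. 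Specializing $M=N=P=Q$ gives associativity of $[M,M]$. Taking two of the arguments equal to $M$ gives the module associativity axioms for the right action $[M,N]\otimes[M,M]\to[M,N]$, which is $\mu_{N,M,M}$, and for the left action $[M,M]\otimes[N,M]\to[N,M]$, which is $\mu_{M,M,N}$.

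\textbf{Unit.} For $\mu_{N,M,M}\circ(\id\otimes u_M)=r$ (the right unitor), apply $ev_M\circ(-\odot\id_M)$ to both sides. The left side becomes $ev_M\circ(\id\odot ev_M)\circ(\id\odot(u_M\odot\id))\circ a$, which equals $ev_M\circ(\id\odot l_M)\circ a$. By the module triangle axiom this is $ev_M\circ(r\odot\id_M)$. The left unit law is the same computation with $u_N$, using naturality of $a$ and $l$ instead of the triangle axiom. These give the unit axioms for $[M,M]$ and for both modules.

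\textbf{Main obstacle.} There is no conceptual difficulty. The only delicate point is the coherence bookkeeping in the associativity step: the two reductions use different composites of the module constraints, and we must check explicitly that they agree. We handle this by invoking Mac Lane coherence for module categories, so we may suppress the constraints entirely. Alternatively, we can note that everything follows from the Yoneda lemma, since $X\mapsto Hom_{\m{M}}(X\odot M,M)$ is a monoid-valued presheaf under composition.
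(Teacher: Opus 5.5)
The paper gives no proof of this proposition; it is recalled as a standard fact from the literature (Ostrik, Davydov), so there is no argument of the paper's to compare against. Your verification via the universal property of $[M,N]$ is the standard one and is correct. Associativity reduces, after naturality of $a$ and bifunctoriality of $\odot$, exactly to the module pentagon. The right unit law reduces to the module triangle axiom $(\id_X\odot l_M)\circ a_{X,1,M}=r_X\odot\id_M$.

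Two small corrections.

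\textbf{Left unit law.} Naturality of $a$ and $l$ only takes you to $ev_M\circ l_{[M,N]\odot M}\circ a_{1,[M,N],M}$. To finish you need the identity $l_{X\odot M}\circ a_{1,X,M}=\lambda_X\odot\id_M$, where $\lambda$ is the left unitor of $\m{C}$. This is not a naturality statement. It is the module-category analogue of Kelly's lemma and follows from the pentagon and triangle axioms; the coherence theorem you invoke also covers it.

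\textbf{Closing Yoneda remark.} For fixed $X$, $Hom_{\m{M}}(X\odot M,M)$ is not a monoid under composition: combining two such maps lands in $Hom_{\m{M}}((X\otimes X)\odot M,M)$. The correct statement is that $X\mapsto Hom_{\m{M}}(X\odot M,M)$ is a lax monoidal functor $\m{C}^{op}\to\mathrm{Set}$, i.e.\ a monoid for Day convolution. Its structure maps are $(f,g)\mapsto f\circ(\id_X\odot g)\circ a_{X,Y,M}$, with unit $l_M$, and a representing object of such a presheaf inherits an algebra structure. Your direct computation verifies exactly this.
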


Let $F: \m{M}\to \m{M}'$ be a $\m{C}$-module functor. For $M,N\in \m{M}$, suppose $[M,N]$ and $[F(M),F(N)]$ exist. Then there is a natural morpism $[F]_{[M,N]}$ determined by the follows

\begin{center}
    \begin{tikzcd}
        {[M,N]\odot F(M)} \arrow[d] \arrow[rr, "{\exists ! [F]_{[M,N]}\odot id}", dashed] &  & {[F(M),F(N)]\odot F(M)} \arrow[d, "ev_{F(M)}"'] \\
        {F([M,N]\odot M)} \arrow[rr, "F(ev_M)"]                                           &  & F(N)                                       .    
        \end{tikzcd}
\end{center}
Here, the left vertical morphism is the natural isomorphism of the $\m{C}$-module functor $F$. For simplicity, we sometimes write $[F]$ for $[F]_{[M,M]}$.

We use $Z(\m{C})$ to denote the {\it  Drinfeld center} of monoidal category $\m{C}$. The {\it $\alpha$-induction} is a monoidal functor from $Z(\m{C})$ to the category $Fun_{\m{C}}(\m{M},\m{M})$ (also denoted by $\m{C}_{\m{M}}^*$) of $\m{C}$-module endofunctors of $\m{M}$:
$$
\alpha(Z): \m{M} \to \m{M} ,\quad \alpha(Z)(M)=Z\otimes M
$$
with the $\m{C}$-module structure:

\begin{center}
    \begin{tikzcd}
        \alpha(Z)(X\odot M) \arrow[d, "="] \arrow[rrr, dashed] &                                                &                               & X\odot\alpha(Z)(M) \arrow[d, "="] \\
        Z\odot(X\odot M) \arrow[r]                             & (Z\otimes X)\odot M \arrow[r, "{\beta_{Z,X}}"] & (X\otimes Z)\odot M \arrow[r] & X\odot(Z\odot M)   .             
        \end{tikzcd}
\end{center}

Here $\beta_{Z,-}$ is the half-braiding of $Z$. Due to the monoidal functor $\alpha$, we can view $\m{C}_{\m{M}}^*$ as $Z(\m{C})$-module. 

\begin{definition}(\cite[Sec.6]{davydov2010centre})
    The {\it full center} $Z(\m{M})$ of $\m{M}$ is the terminal object in the category of pairs $(Z,f)$ where $Z\in Z(\m{C})$ and $f:\alpha(Z)\to id_{\m{M}}$ is a $\m{C}$-module natural transformation. 
\end{definition}

It is equivalent to stating that $Z(\m{M})$ is the terminal object in the comma category $\alpha\downarrow id_{\m{M}}$. By definition, $Z(\m{M})$ coincides with the internal hom $[id_{\m{M}},id_{\m{M}}]$ for the $Z(\m{C})$-module $\m{C}_{\m{M}}^*$. In fact, by \cite[Prop 6.1]{davydov2010centre}, $Z(\m{M})$ is a commutative algebra in $Z(\m{C})$.

Let $\m{M},\m{N}$ be left $\m{C}$-modules. The category $Fun_{\m{C}}(\m{M},\m{N})$ consists of the $\m{C}$-module functors from $\m{M}$ to $\m{N}$. By functor composition, it is a left $\m{C}_{\m{N}}^*$-module and a right $\m{C}_{\m{M}}^*$-module. The $\alpha$-inductions $\alpha_{\m{M}}:Z(\m{C}) \to \m{C}_{\m{N}}^*$ and $\alpha_{\m{N}}:Z(\m{C}) \to \m{C}_{\m{M}}^*$ endow $Fun_{\m{C}}(\m{M},\m{N})$ with both a left and a right $Z(\m{C})$-module structure. These two structures are isomorphic, as shown in \cite[Lemma 3.3]{davydov2015functoriality}. Thus, without loss of generality, we will only consider the left $Z(\m{C})$-module structure.

\begin{definition}
    Let $F:\m{M}\to \m{N}$ be a $\m{C}$-module functor. The {\it full centralizer} $Z(F)$ of the functor $F$ is defined to be the internal hom $[F,F]$ in $Z(\m{C})$ with respect to the $Z(\m{C})$-module structure on $Fun_{\m{C}}(\m{M},\m{N})$.
\end{definition}

A $\m{C}$-module functor $F:\m{M}\to \m{N}$ induces two functors by composition:
$$
F\circ -:Fun_{\m{C}}(\m{M},\m{M}) \to Fun_{\m{C}}(\m{M},\m{N}), \quad -\circ F:Fun_{\m{C}}(\m{N},\m{N}) \to Fun_{\m{C}}(\m{M},\m{N}).
$$
It can be verified that both functors are $Z(\m{C})$-module functors. This gives rise to two induced morphisms in $Z(\m{C})$:

\begin{equation}\label{def_cospan}
    \begin{tikzcd}
        & Z(F) &                                                      \\
Z(\mathcal{M}) \arrow[ru, "{[F\circ -]_{[id,id]}}"] &      & Z(\mathcal{N}) \arrow[lu, "{[-\circ F]_{[id,id]}}"'].
\end{tikzcd}
\end{equation}
A diagram like the one above is called a {\it cospan}. By \cite[Lemma 2.9]{davydov2015functoriality}, the morphisms in this cospan are both algebra homomorphisms.

\begin{definition}
    Let $A$ be an algebra in a braided monoidal category $\m{C}$. The {\it left center} $C_l(A)$ (resp. {\it right center} $C_r(A)$) in $\m{C}$ is defined as the terminal object in the category of morphisms $f:X \to A$ such that the following diagram is commutative:

      $$
      \raisebox{2.5em}{
\xymatrix@R=1em{ X\otimes A \ar[r]^{f\otimes id} \ar[dd]_{\beta_{X,A}} & A\otimes A \ar[rd]^{m_A}\\ & & A\\ A\otimes X \ar[r]_{id \otimes f} & A\otimes A \ar[ur]_{m_A} }
}
\quad \Bigg(\text{resp.} ~~
\raisebox{2.5em}{
\xymatrix@R=1em{ A\otimes X \ar[r]^{id \otimes f} \ar[dd]_{\beta_{A,X}} & A\otimes A \ar[rd]^{m_A}\\ & & A\\ X\otimes A \ar[r]_{f\otimes id} & A\otimes A \ar[ur]_{m_A} }
}
\Bigg).
      $$
      Here $\beta$ is the braiding of $\m{C}$.
\end{definition}

By \cite[Proposition 5.1]{davydov2010centre}, the left center and the right center admit unique algebra structures such that the morphisms $C_l(A) \to A$ and $C_r(A) \to A$ are algebra homomorphisms in $\m{C}$. Moreover, these algebra structures on $C_l(A)$ and $C_r(A)$ are commutative. By \cite[Corollary 3.9]{davydov2015functoriality}, the morphisms in \ref{def_cospan} factor through the left and right centers of $Z(F)$:

\begin{center}
    \begin{tikzcd}
        C_r(Z(F)) \arrow[r]                 & Z(F) & C_l(Z(F)) \arrow[l]                 \\
        Z(\mathcal{M}) \arrow[u] \arrow[ru] &      & Z(\mathcal{N}) \arrow[u] \arrow[lu].
        \end{tikzcd}
\end{center}

\begin{definition}
    Let $A$ be an algebra in a monoidal category $\m{C}$. The {\it full center} of $A$ is an object in $Z(\m{C})$ together with a morphism $Z(A) \to A$ in $\m{C}$,terminal among pairs $(X,\zeta)$, where  $X\in Z(\m{C})$ and $\zeta:X\to A$ is a morphism in $\m{C}$ such that the following diagram commutes:
    \begin{center}
        \begin{tikzcd}
            X\otimes A \arrow[dd, "{\beta_{X,A}}"'] \arrow[r, "\zeta\otimes id"] & A\otimes A \arrow[rd, "m_A"] &   \\
                                                                                 &                              & A \\
            A\otimes X \arrow[r, "id\otimes \zeta"]                              & A\otimes A \arrow[ru, "m_A"]. &  
            \end{tikzcd}
    \end{center}
    Here $\beta_{X,A}$ is the half-braiding of $X$. It means that for any such pair $(X,\zeta)$ here exist a unique morphism $X\to Z(A)$ in $Z(\m{C})$ such that the following diagram commutes:
\begin{center}
    \begin{tikzcd}
        X \arrow[rd, "\zeta"'] \arrow[r] & Z(A) \arrow[d] \\
                                         & A            . 
        \end{tikzcd}
\end{center}

\end{definition}

By \cite[Theorem 6.2]{davydov2010centre}, the full center $Z(A)$ of the algebra $A$ is equal to the full center $Z(\m{C}_A)$ of the $\m{C}$-module category $\m{C}_A$. We also know that $Z(\m{C}_A)$ coincides with the internal hom $[id_{\m{C}_A},id_{\m{C}_A}]$ of the $Z(\m{C})$-module $Fun_{\m{C}}(\m{C}_A,\m{C}_A)$. In the case that $\m{C}$ is a fusion category, we have $Fun_{\m{C}}(\m{C}_A,\m{C}_A)\simeq {}_A\m{C}_A$, which implies that $Z(\m{C}_A)$ is also equal to the internal hom $[1_{{}_A\m{C}_A},1_{{}_A\m{C}_A}]$ of the $Z(\m{C})$-module ${}_A\m{C}_A$.

Now, let $\m{C}$ be a fusion category and $\m{M}$ be an indecomposable semisimple $\m{C}$-module category. For any objects $M,N\in \m{M}$, the internal hom $[M,N]$ always exists. Furthermore, if we choose a non-zero object in $\m{M}$, we have the following theorem.
\begin{theorem}(\cite{ostrik2003module})\label{thm:int}
    There is an equivalence:
    $$
    [M,-]:\m{M}\to \m{C}_{[M,M]}, \quad N\mapsto [M,N].
    $$
\end{theorem}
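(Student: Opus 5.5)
The plan is to prove the equivalence by a monadicity (Barr--Beck) argument for the adjunction between $\m{C}$ and $\m{M}$ given by acting on the chosen nonzero object $M$. By the definition of internal hom, the functor $-\odot M:\m{C}\to\m{M}$ is left adjoint to $[M,-]:\m{M}\to\m{C}$. By the Proposition above, each $[M,N]$ is a right $[M,M]$-module, and the action map $[M,N]\otimes[M,M]\to[M,N]$ is induced by $ev_N\circ(\id\otimes ev_M)$. So $[M,-]$ lifts to a functor $\m{M}\to\m{C}_{[M,M]}$, and the task is to show this lift is an equivalence.

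\textbf{Step 1: compatibility with the $\m{C}$-action.} I will show that $[M,-]$ is a $\m{C}$-module functor, i.e.\ that there are natural isomorphisms $[M,X\odot N]\simeq X\otimes[M,N]$. This follows from the Yoneda lemma and rigidity of $\m{C}$:
$$Hom_{\m{M}}(Y\odot M,X\odot N)\simeq Hom_{\m{M}}((X^*\otimes Y)\odot M,N)\simeq Hom_{\m{C}}(X^*\otimes Y,[M,N])\simeq Hom_{\m{C}}(Y,X\otimes[M,N]),$$
where $X^*$ denotes a suitable dual of $X$. Taking $N=M$, the monad $T=[M,-\odot M]$ on $\m{C}$ becomes $X\mapsto X\otimes[M,M]$. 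I will then check that its multiplication and unit agree with those coming from the algebra structure of $[M,M]$ in the Proposition. Granting this, algebras over $T$ are exactly right $[M,M]$-modules, and the Eilenberg--Moore comparison functor is the lift of $[M,-]$ described above. This bookkeeping, matching the monad structure with the composition-induced multiplication, is routine but must be done carefully.

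\textbf{Step 2: the monadicity hypotheses.} Barr--Beck requires that $[M,-]$ preserve the relevant coequalizers and reflect isomorphisms. Since $\m{M}$ and $\m{C}$ are semisimple, every additive functor between them is exact, so $[M,-]$ is exact. Because the categories are abelian, reflecting isomorphisms reduces to the following claim: if $[M,N]=0$, then $N=0$. This is where indecomposability enters, and I expect it to be the main obstacle. I would argue as follows.
\begin{enumerate}
\item Let $\m{M}_1\subseteq\m{M}$ be the full subcategory of direct sums of simple summands of objects $X\odot M$, for $X\in\m{C}$. It is closed under the $\m{C}$-action, so it is a $\m{C}$-submodule.
\item Let $\m{M}_2$ be the full subcategory of objects $N$ with $Hom(X\odot M,N)=0$ for all $X$. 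The rigidity isomorphism $Hom(X\odot M,Y\odot N)\simeq Hom((Y^*\otimes X)\odot M,N)$ shows that $\m{M}_2$ is also closed under the action.
\item By semisimplicity, every simple object lies in exactly one of $\m{M}_1$ and $\m{M}_2$, so $\m{M}\simeq\m{M}_1\oplus\m{M}_2$ as $\m{C}$-modules.
\item Since $M\neq0$, $\m{M}_1$ is nonzero, so indecomposability forces $\m{M}_2=0$.
\end{enumerate}
Hence $[M,N]=0$ implies $N=0$, which gives conservativity.

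With exactness and conservativity in hand, the Barr--Beck theorem, which applies because abelian categories have coequalizers, shows that the comparison functor $\m{M}\to\m{C}_T=\m{C}_{[M,M]}$ is an equivalence. As a consistency check, its inverse is $-\otimes_{[M,M]}M$, defined as a coequalizer in $\m{M}$. One can verify directly that the unit and counit of this pair are isomorphisms on the free objects $X\otimes[M,M]\leftrightarrow X\odot M$, and then extend to all objects by exactness and semisimplicity. This direct check can replace the appeal to Barr--Beck if preferred.
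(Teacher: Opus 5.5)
Your proposal is correct and follows the standard proof of Ostrik's theorem; the paper itself only cites \cite{ostrik2003module} and gives no argument. As in that proof, the module-functor isomorphism $[M,X\odot N]\simeq X\otimes[M,N]$ identifies the objects $X\odot M$ with free $[M,M]$-modules, semisimplicity makes $[M,-]$ exact, and indecomposability (via your $\mathcal{M}_1\oplus\mathcal{M}_2$ splitting) shows that $M$ generates, after which Barr--Beck or your direct presentation argument gives the equivalence.
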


Thus, for a fusion category $\m{C}$ and an indecomposable semisimple $\m{C}$-module category $\m{M}$, we can always find an algebra $A \in \m{C}$ such that $\m{M} \simeq \m{C}_A$. Furthermore, it can be proven that $Fun_{\m{C}}(\m{C}_A,\m{C}_B)\simeq {}_A\m{C}_B$. 

For indecomposable semisimple $\m{C}$-module categories $\m{M}$ and $\m{N}$, the category $Fun_{\m{C}}(\m{M},\m{N})$ is an indecomposable semisimple module over $\m{C}_{\m{M}}^*$ and $\m{C}_{\m{N}}^*$, according to \cite[Theorem 3.31]{etingof2004finite}. Via the $\alpha$-induction, $Fun_{\m{C}}(\m{M},\m{N})$ becomes a finite semisimple $Z(\m{C})$-module. Consequently, all internal homs of the $Z(\m{C})$-module $Fun_{\m{C}}(\m{M},\m{N})$ are guaranteed to exist.

Let $F, F' \in Fun_{\m{C}}(\m{M},\m{N})$. There are three internal homs between $F$ and $F'$, with values in $\m{C}_{\m{M}}^*$, $\m{C}_{\m{N}}^*$, and $Z(\m{C})$. We denote these by $[F,F']_{\m{C}_{\m{M}}^*}$, $[F,F']_{\m{C}_{\m{N}}^*}$, and $[F,F']$, respectively. Let $\alpha_{\m{M}}^\vee$ and $\alpha_{\m{N}}^\vee$ be the right adjoints of the $\alpha$-inductions $\alpha_{\m{M}}$ and $\alpha_{\m{N}}$. For any object $X \in Z(\m{C})$, we have the following relations:
$$
Hom_{Fun_{\m{C}}(\m{M},\m{N})}(F \odot X,F')\simeq Hom_{\m{C}_{\m{M}}^*}(\alpha_{\m{M}}(X),[F,F']_{\m{C}_{\m{M}}^*}) \simeq Hom_{Z(\m{C})}(X,\alpha_{\m{M}}^\vee([F,F']_{\m{C}_{\m{M}}^*})),
$$
$$
Hom_{Fun_{\m{C}}(\m{M},\m{N})}(X\odot F,F')\simeq Hom_{\m{C}_{\m{N}}^*}(\alpha_{\m{N}}(X),[F,F']_{\m{C}_{\m{N}}^*}) \simeq Hom_{Z(\m{C})}(X,\alpha_{\m{N}}^\vee([F,F']_{\m{C}_{\m{N}}^*})).
$$
Thus, we have 
\begin{equation}\label{com_internal_hom}
[F,F']\simeq \alpha_{\m{M}}^\vee([F,F']_{\m{C}_{\m{M}}^*})\simeq \alpha_{\m{N}}^\vee([F,F']_{\m{C}_{\m{N}}^*}).
\end{equation}

\subsection{Boundaries of 2+1D topological orders}\label{section3.3}

In this subsection, we recall some results about boundaries of 2+1D topological orders. In anyon condensation theory \cite{kong2014anyon}, a 2+1D topological order is described by a UMTC $\m{C}$. The gapped boundaries of this topological order are described by UFCs, which are classified by the Lagrangian algebras in $\m{C}$. Let $\m{S}$ be a UFC that serves as a boundary of $\m{C}$. Then $Z(\m{S}) \simeq \m{C}$ as UMTCs. There exists a Lagrangian algebra $S$ in $\m{C}$ such that $\m{C}_S \simeq \m{S}$. Furthermore, this algebra $S$ is identified with the internal hom $[1_{\m{S}},1_{\m{S}}]$ of the $\m{C}$-module $\m{S}$. From Section~\ref{cen_and_cos}, we know that $[1_{\m{S}},1_{\m{S}}]$ is equal to the full center $Z(\m{S})$ of $\m{S}$-module $\m{S}$ and is also equal to the full center $Z(1_{\m{S}})$ of the algebra $1_{\m{S}}$ in $\m{S}$.

The boundaries of $\m{C}$ can also be determined by the 1d condensable algebras in $\m{S}$. For a boundary $\m{A}$ of $\m{C}$, we can find a 1d condensable algebra $a$ in $\m{S}$ such that $\m{A} \simeq {}_a \m{S}_a$. Furthermore, we have $Z({}_a\m{S}_a) \simeq \m{C}$ as UMTCs and  ${}_a\m{S}_a \simeq \m{C}_{Z(a)}$ as UFCs \cite[Lemma 7.8]{davydov2015functoriality}.

The category of 0+1D defects between ${}_a\m{S}_a$ and ${}_b\m{C}_b$ is given by $_a\m{S}_b \simeq Fun_{\m{S}}(\m{S}_a,\m{S}_b)$. A given 0+1D defect between two boundaries ${}_a\m{S}_a$ and ${}_b\m{S}_b$ of $\m{C}$ is described by a pair $({}_a\m{S}_b,F)$, where $F$ is a chosen object in ${}_a\m{S}_b$. Figure~\ref{boundary_and_defect} illustrates two boundaries of a 2+1D topological order and a 0+1D defect between them. As shown in Section~\ref{cen_and_cos}, these boundary data induce a cospan in $\m{C}$:

\begin{equation}\label{critical_cospan}
    \begin{tikzcd}
        & Z(F)=[F,F] &                                                      \\
Z(\mathcal{S}_{a})= Z(a)=[id_{\m{S}_a},id_{\m{S}_a}] \arrow[ru, "{[F\circ -]_{[id,id]}}"] &      & Z(\mathcal{S}_{b})=Z(b)=[id_{\m{S}_b}, id_{\m{S}_b}] \arrow[lu, "{[-\circ F]_{[id,id]}}"'].
\end{tikzcd}
\end{equation}

\begin{figure}
    \centering
        \tikzset{every picture/.style={line width=0.75pt}} 
    
        \begin{tikzpicture}[x=0.75pt,y=0.75pt,yscale=-0.8,xscale=0.8]
        
        \draw  [color={rgb, 255:red, 255; green, 255; blue, 255 }  ,draw opacity=1 ][fill={rgb, 255:red, 80; green, 227; blue, 194 }  ,fill opacity=0.5 ] (70,40) -- (580,40) -- (580,180) -- (70,180) -- cycle ;
        \draw [color={rgb, 255:red, 126; green, 211; blue, 33 }  ,draw opacity=1 ][line width=3]    (70,180) -- (320,180) ;
        \draw [color={rgb, 255:red, 74; green, 144; blue, 226 }  ,draw opacity=1 ][line width=3]    (580,180) -- (320,180) ;
        \draw  [color={rgb, 255:red, 245; green, 166; blue, 35 }  ,draw opacity=1 ][fill={rgb, 255:red, 245; green, 166; blue, 35 }  ,fill opacity=1 ] (315,180) .. controls (315,177.24) and (317.24,175) .. (320,175) .. controls (322.76,175) and (325,177.24) .. (325,180) .. controls (325,182.76) and (322.76,185) .. (320,185) .. controls (317.24,185) and (315,182.76) .. (315,180) -- cycle ;
        \draw    (240,110) .. controls (212.89,121.76) and (201.78,131.6) .. (200.09,158.34) ;
        \draw [shift={(200,160)}, rotate = 272.73] [color={rgb, 255:red, 0; green, 0; blue, 0 }  ][line width=0.75]    (10.93,-3.29) .. controls (6.95,-1.4) and (3.31,-0.3) .. (0,0) .. controls (3.31,0.3) and (6.95,1.4) .. (10.93,3.29)   ;
        \draw    (420,110) .. controls (449.42,120.67) and (450.31,138.87) .. (450.03,158.2) ;
        \draw [shift={(450,160)}, rotate = 270.95] [color={rgb, 255:red, 0; green, 0; blue, 0 }  ][line width=0.75]    (10.93,-3.29) .. controls (6.95,-1.4) and (3.31,-0.3) .. (0,0) .. controls (3.31,0.3) and (6.95,1.4) .. (10.93,3.29)   ;
        
        \draw (291,72) node [anchor=north west][inner sep=0.75pt]   [align=left] {$\displaystyle \mathcal{C} \simeq Z(\mathcal{S})$};
        \draw (121,189) node [anchor=north west][inner sep=0.75pt]   [align=left] {$\displaystyle _{a}\mathcal{S}_{a} \simeq \mathcal{C}_{Z(a)}$};
        \draw (451,192) node [anchor=north west][inner sep=0.75pt]   [align=left] {$\displaystyle _{b}\mathcal{S}_{b} \simeq \mathcal{C}_{Z( b)}$};
        \draw (288,192) node [anchor=north west][inner sep=0.75pt]   [align=left] {$\displaystyle (_{a}\mathcal{S}_{b} ,F)$};
        \draw (93,112) node [anchor=north west][inner sep=0.75pt]   [align=left] {condense $\displaystyle Z( a)$};
        \draw (461,110) node [anchor=north west][inner sep=0.75pt]   [align=left] {condense $\displaystyle Z( b)$};

        \end{tikzpicture}
        
    \caption{boundaries of a 2+1D topological order and 0+1D defect between boundaries}
    \label{boundary_and_defect}
    \end{figure}
    
    The above cospan contains the data of pure boundary phase transitions of the 2+1D topological order $\m{C}$. We refer to such a cospan in $\m{C}$ as a \textit{critical cospan}.

    The physical picture of the critical point between the phases ${}_a\m{S}_a$ and ${}_b\m{S}_b$ can be conceptualized as a "uniform mixture" of these two phases.
    When a $0+1$D defect $({}_a\mathcal{S}_b, F)$ is introduced between the ${}_a\mathcal{S}_a$ and ${}_b\mathcal{S}_b$ phases, its dual defect $({}_b\mathcal{S}_a, F^{\vee})$ is automatically induced between the ${}_b\mathcal{S}_b$ and ${}_a\mathcal{S}_a$ phases. We view $F$ as an object in $Fun_{\m{S}}(\m{S}_a,\m{S}_b)$. We choose $F^\vee$ to be the left adjoint of $F$. By \cite[Lemma 7.6]{davydov2015functoriality}, we have $[F,F]_{{}_b\m{S}_b} \simeq F\circ F^\vee$ and $[F^\vee,F^\vee]_{{}_b\m{S}_b} \simeq (F^\vee)^R\circ F^\vee \simeq F\circ F^\vee$. Thus, by Equation~\ref{com_internal_hom}, we obtain the following isomorphism:
    $$
    [F,F]\simeq \alpha^\vee_{{}_b\m{S}_b}([F,F]_{{}_b\m{S}_b})\simeq \alpha^\vee_{{}_b\m{S}_b}([F^\vee,F^\vee]_{{}_b\m{S}_b})\simeq [F^\vee,F^\vee].
    $$
    Therefore, it suffices to consider the cospan in Diagram~\ref{critical_cospan}.

    We propose that the limit of  cospan in Diagram~\ref{critical_cospan}, i.e., the fiber product $Z(a)\times_{Z(F)} Z(b)$, is a condensable algebra in $\m{C}$. This condensable algebra determines the critical point of the phase transition between the boundaries $_a\m{S}_a$ and $_b\m{S}_b$. We will discuss the properties of this fiber product in Section~\ref{section4}.
    
    \begin{remark}
     For the physical applications considered here, we assume that $\m{C}$ is a unitary modular tensor category (UMTC). However, most of our results do not require unitarity, and some hold more generally for braided fusion categories. Accordingly, we do not impose the unitarity assumption in Sections~\ref{section4} and~\ref{section5}.
    \end{remark}

    \begin{remark}
     If we allow the 0+1D defect to be composite, i.e., the category of $0+1D$ domain walls $\m{M}$ between two boundaries $\m{A}$ and $\m{B}$ is a decomposable $\m{A}$-$\m{B}$ bimodule category, then $\m{A}$ and $\m{B}$ may become multi-fusion categories. Due to the structure of multi-fusion categories, we still maintain the equivalence $Z(\m{A})\simeq Z(\m{B})$. These composite defects correspond to the multi-phase transitions \cite{kong2015boundary}. We will elaborate on this in Section \ref{section4.bimod}.
    \end{remark}
\section{Fiber product of condensable algebras}\label{section4}

\subsection{Condensable algebra structure on fiber product}

In this subsection, we will show that the limit of Diagram~\ref{critical_cospan} is a condensable algebra. 
\begin{lemma}\label{monomorphism}
Let $\m{C}$ be a fusion category, and let $A,B$ be algebras in $\m{C}$. Let $0\neq f:A\to B$ be an algebra homomorphism. If $A$ is connected and separable, then $f$ is a monomorphism.
\end{lemma}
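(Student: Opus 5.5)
Since $\m{C}$ is fusion, hence abelian and semisimple, $I:=\ker f\hookrightarrow A$ exists, and the plan is to show $I=0$ in three steps.

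First, $I$ is an $A$-$A$ sub-bimodule of $A$, i.e.\ a two-sided ideal. Indeed, $f$ is an $A$-$A$ bimodule map $A\to B$, where $B$ carries the bimodule structure $m_B\circ(f\otimes \id_B)$ and $m_B\circ(\id_B\otimes f)$, because $f\circ m_A=m_B\circ(f\otimes f)$. Kernels of bimodule maps are sub-bimodules.

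Second, separability makes ${}_A\m{C}_A$ semisimple. The splitting $\delta_A$ of $m_A$ as bimodules gives the standard averaging argument: every bimodule monomorphism splits as a bimodule map. So $A\simeq I\oplus J$ as $A$-$A$ bimodules with $J\simeq \image f$, and the corresponding idempotent $e\in \operatorname{End}_{{}_A\m{C}_A}(A)$ projects onto $I$.

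Third, use connectedness. The standard identification is $\operatorname{End}_{{}_A\m{C}_A}(A)\simeq \operatorname{Hom}_{\m{C}}(1,A)$, sending $\varphi$ to $\varphi\circ u_A$, with inverse $x\mapsto m_A\circ(x\otimes \id_A)$. One still has to check that $x\mapsto m_A\circ(x\otimes\id_A)$ is a right module map and is compatible on the left; since $A$ need not be commutative here, this is the step I expect to need care. It suffices to use injectivity of $\varphi\mapsto\varphi\circ u_A$, which holds because $\varphi=m_A\circ((\varphi\circ u_A)\otimes \id_A)$ for any left module map $\varphi$. Then $\dim \operatorname{End}_{{}_A\m{C}_A}(A)\le \dim\operatorname{Hom}(1,A)=1$, so the endomorphism algebra is $k$ and the only idempotents are $0$ and $\id$. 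Hence $I=0$ or $I=A$. The case $I=A$ means $f=0$, which is excluded, so $I=0$ and $f$ is a monomorphism.

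The main obstacle is the second step, namely that the kernel splits off as a bimodule summand. Here the bimodule splitting $\delta_A$ is essential: for a bimodule mono $i:I\to A$, take any $\m{C}$-splitting $p$ and average it to $m_I\circ(\id_A\otimes p\otimes \id_A)\circ\ldots\circ\delta_A$-type expressions. This is routine but needs to be written out. Alternatively, one can cite semisimplicity of ${}_A\m{C}_A$ for separable $A$ in a fusion category from \cite{kong2014anyon} and skip the averaging.
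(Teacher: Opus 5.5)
Your argument is correct and is essentially the paper's proof. The paper uses the same reasoning ($A$ is simple, so a nonzero module map out of it is a monomorphism), but in the right-module category $\m{C}_A$: there $\operatorname{Hom}_{\m{C}_A}(A,A)\simeq\operatorname{Hom}_{\m{C}}(1,A)$ follows immediately from the free-module adjunction, and semisimplicity of $\m{C}_A$ replaces your bimodule averaging. One small slip: the identity $\varphi=m_A\circ((\varphi\circ u_A)\otimes\id_A)$ holds for right module maps, not left ones. This is harmless, since you only apply it to bimodule endomorphisms.
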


\begin{proof}
Since $A$ is separable, the category $\m{C}_A$ is semisimple. Moreover, $Hom{\m{C}_A}(A,A)\simeq Hom_{\m{C}}(1,A)\simeq \mathbb{C}$. Therefore, $A$ is a simple object of $\m{C}_A$. The algebra homomorphism $f$ endows $B$ with a right $A$-module structure and $f:A\to B$ is a morphism in $\m{C}_A$. Since $A$ is simple in the semisimple category $\m{C}_A$, the morphism $f\neq 0$ is a monomorphism in $\m{C}_A$, we have $Ker(f)=0\in \m{C}_A$. Forgetting the right $A$-module structures, $0=Ker(f)\in\m{C}$. Thus, $f$ is a monomorphism in $\m{C}$.
\end{proof}

\begin{lemma}\label{sep_subalgebra}
Let $\m{C}$ be a fusion category and let $A\in \m{C}$ be a connected separable algebra. Then any subalgebra of $A$ is also connected and separable.
\end{lemma}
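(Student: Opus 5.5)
Let $D\subseteq A$ be a subalgebra, i.e.\ an algebra $D$ together with an algebra monomorphism $\iota:D\to A$. We prove connectedness and separability of $D$ separately, and we may assume $D\neq 0$.

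\emph{Connectedness.} Since $\iota$ is a monomorphism, the induced map $Hom_{\m{C}}(1,D)\to Hom_{\m{C}}(1,A)\simeq\mathbb{C}$ is injective, so $\dim Hom_{\m{C}}(1,D)\le 1$. The unit $u_D$ satisfies $\iota\circ u_D=u_A\neq 0$, hence $u_D\neq 0$. Therefore $\dim Hom_{\m{C}}(1,D)=1$.

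\emph{Separability.} We use the standard criterion, available in a fusion category: an algebra $D$ is separable if and only if the multiplication $m_D:D\otimes D\to D$ splits as a morphism of $D$-$D$ bimodules. Equivalently, $D$ is projective as a $D$-$D$ bimodule.

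The plan is to produce a $D$-bimodule conditional expectation $E:A\to D$, meaning a $D$-$D$ bimodule map with $E\circ\iota=\id_D$. Given such an $E$, set
$$
\delta_D:=(E\otimes E)\circ\delta_A\circ\iota,
$$
where $\delta_A$ is the $A$-$A$ bimodule splitting of $m_A$. It remains to check two things.

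First, $\delta_D$ is a $D$-$D$ bimodule map. The outer $D$-actions on $\delta_A(\iota(d))$ become $A$-actions on the outer tensor legs, and $E$ is $D$-linear, so they pass through $E$.

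Second, $m_D\circ\delta_D=\id_D$. This is the delicate step, because $E$ is not multiplicative. To handle it we use the middle $A$-balancedness of $\delta_A$. The intended argument is to write $\delta_A=(m_A\otimes \id)\circ(\id\otimes\delta_A u_A)$, which follows from the bimodule property, and then to choose $E$ so that $(E\otimes E)\circ\delta_A\circ u_A$ is a splitting element for $D$. A cleaner route is to replace $E\otimes E$ by the composite
$$
A\otimes A\to A\otimes_D A\to\cdots
$$
If this becomes messy, the fallback is the categorical formulation below.

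\emph{Categorical formulation.} Since $A$ is separable, ${}_A\m{C}_A$ is semisimple, and $A$ is a direct summand of $A\otimes A$ there. Restriction along $\iota$ shows that $A$ is projective in ${}_D\m{C}_D$, because $A\otimes A$ restricts to $A\otimes A$ with induced $D$-actions, which is a summand of a free $D$-bimodule $D\otimes X\otimes D$ once we know $A$ is a summand of $D\otimes X$ as a left $D$-module. Given the expectation $E$, $D$ is a bimodule summand of $A$, hence projective, hence separable.

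\emph{Constructing $E$ (main obstacle).} The key step is producing $E$, i.e.\ showing that $\iota:D\to A$ splits in ${}_D\m{C}_D$. We first try the trace construction: in the pivotal/unitary setting, $A$ is special Frobenius, and we take
$$
E=\text{partial trace of } m_A \text{ over the complement},
$$
normalized by $\dim D/\dim A$. This is a $D$-bimodule map, and $E\circ\iota$ is a nonzero scalar times $\id_D$, using connectedness of $D$ and the nondegeneracy of $\epsilon_A\circ m_A$ restricted to $D$.

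Without a pivotal structure, we instead argue semisimplicity directly. Restriction makes $A$ a $D$-bimodule, and we show that the subobject $\iota(D)$ has a complement. For this we use that ${}_A\m{C}_A\to{}_D\m{C}_D$ is exact with a faithful exact left adjoint $A\otimes_D-\otimes_D A$. In characteristic zero, averaging the idempotent obtained by composing $\delta_A$ with the right $D$-action, as in the proof of Maschke's theorem, should yield the projection onto $\iota(D)$.

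We expect the verification that this averaged map lands in, and is the identity on, $\iota(D)$ to be where connectedness of $A$ must be used.
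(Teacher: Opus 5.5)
Your connectedness argument is correct. So is the reduction of separability of $D$ to projectivity of $D$ in ${}_D\mathcal{C}_D$.

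\textbf{The gap.} Everything after that rests on two facts you never prove:
\begin{itemize}
\item[(i)] a $D$-bimodule retraction $E$ of $\iota$;
\item[(ii)] projectivity of $A$ in ${}_D\mathcal{C}_D$, which you reduce to ``$A$ is a summand of some $D\otimes X$ as a left $D$-module'' without justification.
\end{itemize}
Even granting $E$, your formula does not finish the job. The map $m_D\circ(E\otimes E)\circ\delta_A\circ\iota$ is a $D$-bimodule endomorphism of the connected algebra $D$, hence a scalar, but nothing forces that scalar to be nonzero.

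Facts (i) and (ii) are not easier than the lemma. If $D$ were not separable, ${}_D\mathcal{C}_D$ would not be semisimple, and such splittings are exactly what could fail. Your two heuristics do not supply them:
\begin{itemize}
\item In the trace construction, nondegeneracy of $\epsilon_A\circ m_A$ restricted to $D$ is essentially the statement to be proved. For $A=\mathrm{End}(V)$ in $\mathrm{Rep}(G)$ with $V$ irreducible, it says the trace form of $V$ is nondegenerate on $D$, i.e.\ that $D$ is semisimple. The lemma also assumes no pivotal structure.
\item Averaging with $\delta_A$ produces $A$-bimodule maps. But $E$ has target $D$, which is not an $A$-bimodule, so there is no Maschke-type mechanism here.
\end{itemize}

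\textbf{The missing idea.} Argue by contradiction through nilpotent ideals, using a fact your plan never uses: because $A$ is connected and separable, $A$ is a simple object of the semisimple category $\mathcal{C}_A$.
\begin{itemize}
\item If $D$ were not separable, it would contain a nonzero nilpotent ideal $i:I\to D$ by \cite[Theorem 7.1]{coulembier2025simple}, since in a fusion category over $\mathbb{C}$ exact means separable.
\item Set $j=\iota\circ i$. The right $A$-module map $f=m_A\circ(j\otimes\id_A):I\otimes A\to A$ satisfies $f\circ(\id_I\otimes u_A)=j\neq 0$, so $f$ is nonzero and hence an epimorphism.
\item By exactness of $\otimes$, the iterates $f^r=f\circ(\id_I\otimes f^{r-1}):I^{\otimes r}\otimes A\to A$ are all epimorphisms.
\item Yet $f^n$ factors through $I^n=0$, a contradiction.
\end{itemize}
This is the paper's proof. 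It needs neither a conditional expectation nor a pivotal structure.
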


\begin{proof}
Since $A$ is connected, $B$ is also connected.

Let $\iota:B\to A$ be a monomorphism of algebras. Suppose that $B$ is not separable. By \cite[Theorem 7.1]{coulembier2025simple}, $B$ contains a non-zero nilpotent ideal. Let $i:I\to B$ be such an ideal and set $j=\iota\circ i$. Consider
$f=m_A\circ(j\otimes id_A):I\otimes A\to A$.
Then $f$ is a morphism of right $A$-modules. Moreover,
$f\circ(id_I\otimes u_A)=j\neq 0$, hence $f\neq 0$. Since $A$ is a simple right $A$-module, $f$ is an epimorphism.
 
Define $f^r=f\circ(id_I\otimes f^{r-1}):I^{\otimes r}\otimes A\to A$
for $r\in\mathbb{N}_{+}$, with $f^1=f$. Since tensor product in $\m{C}$ is exact, it follows inductively that each $f^r$ is an epimorphism. On the other hand, since $I$ is nilpotent, there exists $n \geqslant 2$ such that $I^n=Im(m_n:I^{\otimes n}\to B^{\otimes n}\to B)=0$. Hence
$f^n=m_A\circ\bigl((\iota\circ m_n)\otimes id_A\bigr)=0$,
contradicting the fact that $f^n$ is an epimorphism. Therefore, $B$ is separable.
\end{proof}

\begin{remark}
\cite[Theorem 7.1]{coulembier2025simple} is stated for exact algebras in finite tensor categories. In a fusion category over $\mathbb{C}$, an algebra is exact if and only if it is separable \cite{etingof2016tensor}\cite{kong2017semisimple}.
\end{remark}

\begin{proposition}\label{1d_condensable}
Let $\m{C}$ be a fusion category, and let $A,B \in \m{C}$ be two connected separable algebras. Let $M \in \m{C}$ be an algebra, and suppose that there exist two algebra homomorphisms $f_A:A \to M$ and $f_B:B \to M$. Denote by $(D := A \times_M B, \pi_A:D \to A, \pi_B:D \to B)$ the limit of the cospan
\begin{equation}\label{Frobenius_cospan} 
\begin{tikzcd} & M \\ A \arrow[ru, "f_A"] & & B \arrow[lu, "f_B"']. 
\end{tikzcd} 
\end{equation}
Then $D$ is also a connected separable algebra, and the morphisms $\pi_A$ and $\pi_B$ are algebra homomorphisms.
\end{proposition}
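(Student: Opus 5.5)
We will construct the fiber product concretely as a kernel and then transport the algebra structure, reducing separability to Lemma~\ref{sep_subalgebra}. Since $\m{C}$ is a fusion category, and in particular abelian, the limit of the cospan~(\ref{Frobenius_cospan}) exists. Explicitly, $D$ is the kernel of
$$
f_A\circ p_1 - f_B\circ p_2 : A\oplus B \to M,
$$
where $p_1,p_2$ are the projections, and $\pi_A,\pi_B$ are the restrictions of $p_1,p_2$ along the inclusion $k:D\to A\oplus B$.

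\textbf{Step 1: the product algebra.} The direct sum $A\oplus B$ is an algebra with componentwise multiplication $m_A\oplus m_B$; its unit is $(u_A,u_B)$. The map
$$
g=f_A\circ p_1-f_B\circ p_2
$$
is not an algebra map. However, the equalizer of the two algebra maps $f_A p_1$ and $f_B p_2$ is closed under multiplication, and we check this in two parts.

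First, the composite $A\oplus B\xrightarrow{} M$ applied to $m\circ(k\otimes k)$ vanishes. This is because $f_Ap_1 m(k\otimes k)=m_M(f_Ap_1k\otimes f_Ap_1k)$, and $f_Ap_1k=f_Bp_2k$. Hence $m\circ(k\otimes k)$ factors uniquely through $k$, which defines $m_D$.

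Second, the unit $(u_A,u_B)$ factors through $k$, since $f_Au_A=u_M=f_Bu_B$. Associativity and unitality of $D$ then follow from those of $A\oplus B$, because $k$ is a monomorphism.

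With these structures, $k$ is an algebra map, and therefore so are $\pi_A=p_1k$ and $\pi_B=p_2k$. The same argument also shows that $D$ is the limit in the category of algebras.

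\textbf{Step 2: reducing to a subalgebra of $A$.} The case $D$ (or $A$) equal to $0$ is excluded, since $D$ contains the unit; in particular $\pi_A\neq 0$ because $\pi_A\circ u_D=u_A\neq 0$.

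The key claim is that $\pi_A$ is a monomorphism. Let $K=\ker\pi_A\subseteq D$. Then $K\subseteq 0\oplus B$ and $K$ lies in $\ker(f_B)$. We want to show $K=0$.

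The algebra $B$ is connected separable and $f_B$ is an algebra map. If $f_B\neq0$, then $f_B$ is mono by Lemma~\ref{monomorphism}, so $K=0$. If $f_B=0$, then the unit condition $f_Bu_B=u_M$ forces $u_M=0$, so $M=0$. In that case $f_A=0$ as well, and $D=A\oplus B$. This degenerate case has to be treated separately: $A\oplus B$ is separable (the direct sum of separating morphisms works) but is not connected, since $\dim Hom(1,A\oplus B)=2$.

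So I will assume $M\neq 0$, which is implicit in the intended setting. Then $D$ is, via $\pi_A$, a subalgebra of the connected separable algebra $A$. Lemma~\ref{sep_subalgebra} then gives that $D$ is connected and separable.

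The main obstacle is precisely the degenerate case $M=0$, where connectedness fails. I would handle it by adding the hypothesis $M\neq 0$, or equivalently $f_A\neq 0$; everything else is a routine diagram chase. Commutativity, needed later for Theorem~\ref{main result}, follows in the same way, since $k$ is a mono into the commutative algebra $A\oplus B$.
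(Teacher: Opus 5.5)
Your proof is correct and is essentially the paper's argument. The paper gets the algebra structure on $D$ from the fact that the forgetful functor $Alg(\m{C})\to\m{C}$ preserves finite limits, whereas you build it explicitly as a kernel in $A\oplus B$; it then shows $\pi_A$ is a monomorphism because $f_B$ is mono by Lemma~\ref{monomorphism}, and concludes with Lemma~\ref{sep_subalgebra}, as you do.

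Your handling of the degenerate case is also right, and it exposes a genuine gap in the paper. Lemma~\ref{monomorphism} requires $f_B\neq 0$, which the paper never checks. For $M=0$ (with $f_A=f_B=0$) the fiber product is $A\oplus B$, which is not connected. So the extra hypothesis $M\neq 0$ (equivalently $f_A\neq 0$) is genuinely needed, both here and in Theorem~\ref{Thm:2d condense}; it does hold in all of the paper's applications.
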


\begin{proof}
Let $Alg(\m{C})$ denote the category of algebras and algebra homomorphisms in $\m{C}$. The forgetful functor $Alg(\m{C})\to \m{C}$ preserves finite limits. Therefore, the limit of the cospan~\ref{Frobenius_cospan} in $\m{C}$ carries a canonical algebra structure, with respect to which $\pi_A$ and $\pi_B$ are algebra homomorphisms.

Suppose that $x_1,x_2:X\to D$ satisfy $\pi_A\circ x_1=\pi_A\circ x_2$. Then
$f_B\circ \pi_B\circ x_1=f_A\circ \pi_A\circ x_1=f_A\circ \pi_A\circ x_2=f_B\circ \pi_B\circ x_2$. By Lemma~\ref{monomorphism}, $f_B$ is a monomorphism, and hence $\pi_B\circ x_1=\pi_B\circ x_2$. By the universal property of $D$, we obtain $x_1=x_2$. Thus, $\pi_A$ is a monomorphism. It then follows from Lemma~\ref{sep_subalgebra} that $D$ is a connected separable algebra.
\end{proof}

This proposition shows that the fiber product of two (1d) condensable algebras over an algebra is again a (1d) condensable algebra.

\begin{theorem}\label{Thm:2d condense}
    Let $\m{C}$ be a braided fusion category, and let $A,B \in \m{C}$ be two (2d) condensable algebras. Let $M \in \m{C}$ be an algebra. Suppose there exist two algebra homomorphisms $f_A:A \to M$ and $f_B:B \to M$. Denote by $(D := A \times_M B, \pi_A:D \to A, \pi_B:D \to B)$ the limit of the cospan \ref{Frobenius_cospan}, then $D$ is also a (2d)condensable algebra.
    \end{theorem}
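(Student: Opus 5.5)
By Proposition~\ref{1d_condensable}, which only needs the underlying fusion category, $D=A\times_M B$ is already a connected separable algebra. Moreover, $\pi_A$ and $\pi_B$ are algebra homomorphisms, and $\pi_A$ is a monomorphism; the same argument with $A$ and $B$ swapped shows that $\pi_B$ is a monomorphism too. So the only thing left to prove is that $D$ is commutative, i.e.\ $m_D=m_D\circ\beta_{D,D}$.

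The plan is to detect equality of the two morphisms $D\otimes D\to D$ after composing with the monomorphism $\pi_A$. Since $\pi_A$ is a monomorphism, it suffices to show
$$\pi_A\circ m_D=\pi_A\circ m_D\circ\beta_{D,D}.$$
The left side equals $m_A\circ(\pi_A\otimes\pi_A)$, because $\pi_A$ is an algebra homomorphism. By naturality of the braiding,
$$\pi_A\circ m_D\circ\beta_{D,D}=m_A\circ(\pi_A\otimes\pi_A)\circ\beta_{D,D}=m_A\circ\beta_{A,A}\circ(\pi_A\otimes\pi_A).$$
This equals $m_A\circ(\pi_A\otimes\pi_A)$ because $A$ is commutative, which proves the claim. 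Note that only the commutativity of one of the two algebras $A$, $B$ is actually used; commutativity of $M$ is never needed.

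The one point needing care is the assertion that $\pi_A$ is a monomorphism, and in particular the use of Lemma~\ref{monomorphism} for $f_B$ inside the proof of Proposition~\ref{1d_condensable}. That lemma requires $f_B\neq0$, so the degenerate case $M=0$ must be handled separately. If $M=0$, then $D=A\oplus B$ is the direct product algebra. This is commutative, but it is not connected, since $\dim Hom(1,A\oplus B)=2$. I would therefore check how the paper treats this case, and either add the assumption $M\neq0$ or note that it is implicit. When $M\neq 0$, the unit $u_M=f_B\circ u_B$ is nonzero: if $u_M=0$ then $\id_M=m_M\circ(u_M\otimes\id_M)=0$. Hence $f_B\neq0$, and the argument above goes through. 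With this settled, $D$ is connected, commutative and separable, i.e.\ a 2d condensable algebra.
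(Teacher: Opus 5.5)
Your argument is correct and is essentially the paper's own: the paper also reduces to commutativity via Proposition~\ref{1d_condensable} and verifies $\pi_A\circ m_D=\pi_A\circ m_D\circ\beta_{D,D}$ by naturality of the braiding and commutativity of $A$. It then does the same with $\pi_B$ and concludes by the joint monicity coming from the universal property of the fiber product, whereas you use that $\pi_A$ alone is monic and so need only that $A$ is commutative. Your caveat about $M=0$ is also correct. The paper's proof of Proposition~\ref{1d_condensable} applies Lemma~\ref{monomorphism} to $f_B$ without checking $f_B\neq 0$, and for $M=0$ the fiber product is the non-connected algebra $A\oplus B$. So the hypothesis $M\neq 0$, which is equivalent to $f_B\neq 0$ via $u_M=f_B\circ u_B$, is implicitly required.
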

    \begin{proof}
        By Proposition~\ref{1d_condensable}, we only need to show $D$ is commutative. There is a commutative diagram:

\begin{equation}\label{commutative}
    \begin{tikzcd}
        D\otimes D \arrow[ddd, "{\beta_{D,D}}"'] \arrow[rrr, "m_D"] \arrow[rd, "\pi_A\otimes \pi_A"] &                                                        &   & D \arrow[ld, "\pi_A"'] \\
                                                                                                     & A\otimes A \arrow[d, "{\beta_{A,A}}"] \arrow[r, "m_A"] & A &                        \\
                                                                                                     & A\otimes A \arrow[ru, "m_A"']                          &   &                        \\
        D\otimes D \arrow[rrruuu, "m_D", bend right=49] \arrow[ru, "\pi_A\otimes \pi_A"]             &                                                        &   &                       .
        \end{tikzcd}
    \end{equation}

Here, $\beta$ is the braiding of $\m{C}$.
The commutativity of the left block is due to the naturality of $\beta$.
The commutativity of the upper block and the lower-left block is due to the definition of $m_D$.
The commutativity of the middle block is because $A$ is a commutative algebra. A similar diagram holds for $B$. By the universal property of $D$, we obtain $m_D=  m_D \circ\beta_{D,D}$.
\end{proof}

    \begin{corollary}
        The limit $Z(a)\times _{Z(F)}Z(b)$ of the critical cospan \ref{critical_cospan} is a condensable algebra.
    \end{corollary}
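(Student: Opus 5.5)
The corollary will follow from Theorem~\ref{Thm:2d condense} once its hypotheses are checked for the critical cospan~\ref{critical_cospan}. Take $\m{C}\simeq Z(\m{S})$, which is non-degenerate and in particular a braided fusion category. Put $A=Z(a)$, $B=Z(b)$ and $M=Z(F)=[F,F]$, with $f_A=[F\circ -]_{[id,id]}$ and $f_B=[-\circ F]_{[id,id]}$. Once the hypotheses hold, the theorem immediately gives that $Z(a)\times_{Z(F)}Z(b)$ is a (2d) condensable algebra.

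The verification proceeds in three steps.

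\textbf{Step 1: $M$ is an algebra.} $Z(F)=[F,F]$ is an internal endo-hom in the semisimple $Z(\m{S})$-module $Fun_{\m{S}}(\m{S}_a,\m{S}_b)$, which is finite semisimple as recalled in Section~\ref{cen_and_cos}. The internal hom therefore exists, and $[F,F]$ is an algebra by the proposition on internal homs.

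\textbf{Step 2: $f_A$ and $f_B$ are algebra homomorphisms.} This is \cite[Lemma 2.9]{davydov2015functoriality}, as already quoted after Diagram~\ref{def_cospan}.

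\textbf{Step 3: $Z(a)$ and $Z(b)$ are (2d) condensable.} Here we use that $a$ and $b$ are 1d condensable algebras in $\m{S}$. The full center $Z(a)=Z(\m{S}_a)=[id_{\m{S}_a},id_{\m{S}_a}]$ is commutative by \cite[Prop 6.1]{davydov2010centre}. It is moreover a Lagrangian algebra in $Z(\m{S})$ with ${}_a\m{S}_a\simeq \m{C}_{Z(a)}$ by \cite[Lemma 7.8]{davydov2015functoriality}, as recalled in Section~\ref{section3.3}. Lagrangian algebras are condensable by definition, so $Z(a)$ is connected, commutative and separable. The same argument applies to $Z(b)$.

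The only point requiring care is Step 3, namely connectedness and separability of the full centers. I would justify it by citing the Lagrangian property from Section~\ref{section3.3}. Alternatively, connectedness can be read off from $Hom(1,Z(a))\simeq End(id_{\m{S}_a})\simeq\mathbb{C}$, using that $\m{S}_a$ is an indecomposable module because $a$ is connected. Separability then follows from semisimplicity of $\m{C}_{Z(a)}\simeq{}_a\m{S}_a$, which gives exactness. With these three steps, applying Theorem~\ref{Thm:2d condense} completes the proof.
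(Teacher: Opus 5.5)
Your proposal is correct and matches the paper, which states this corollary without proof as an immediate consequence of Theorem~\ref{Thm:2d condense}: $Z(a)$ and $Z(b)$ are Lagrangian and hence condensable (Section~\ref{section3.3}), $Z(F)=[F,F]$ is an internal endo-hom algebra, and the cospan maps are algebra homomorphisms by \cite[Lemma 2.9]{davydov2015functoriality}. The one implicit assumption, which the paper also leaves unstated, is $F\neq 0$. This guarantees $Z(F)\neq 0$ (the unit comes from $id_F$), so $f_A,f_B$ are nonzero and Lemma~\ref{monomorphism} applies; for $F=0$ the fiber product would be $Z(a)\oplus Z(b)$, which is not connected.
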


    \begin{remark}\label{rmk:condense_limit}
When $\m{C}$ is a braided fusion category and $A\in \m{C}$ is a commutative separable algebra, any subalgebra of $A$ is also commutative and separable \cite[Remark 5.7]{ostrik2026non}. Using this result, we obtain another proof of Theorem~\ref{Thm:2d condense}. Moreover, since finite products of commutative separable algebras are again commutative and separable, and any finite limit can be expressed as an equalizer of two morphisms between finite products, any finite limit of commutative separable algebras is a subalgebra of a finite product of commutative separable algebras. Therefore, commutative separable algebras are closed under finite limits. Moreover, the limits in Remarks~\ref{rmk:multi F} and~\ref{rmk:multiphase} are condensable algebras.
\end{remark}

    Now we know that any subalgebra of a condensable algebra is again condensable. We refer to such a subalgebra as a \textit{condensable subalgebra} to emphasize that it is condensable. We call an algebra monomorphism $D\to A$ between condensable algebras an \textit{embedding}. Taking a fiber product determines a common condensable subalgebra together with embeddings into the two given condensable algebras. It is then natural to ask whether every common condensable subalgebra of two condensable algebras can be obtained as a fiber product. In Proposition~\ref{Por:all_fiber_product}, we give an affirmative answer.

Consider a braided fusion category $\m{C}$ and two condensable algebras $A,B\in \m{C}$. Let $D$ be a common condensable subalgebra of $A$ and $B$. We denote the embeddings of $D$ into $A$ and $B$ by $\iota_A$ and $\iota_B$, respectively.
Consider relative tensor product $M=A\otimes_D B$, which is the coequalizer

\begin{center}
    \begin{tikzcd}
        A\otimes D\otimes B \arrow[r, "m_A^D\otimes id", shift left] \arrow[r, "id\otimes m_B^D"', shift right] & A\otimes B \arrow[r, "\rho"] & A\otimes_D B.
        \end{tikzcd} 
\end{center}

Here, $m_A^D=m_A\circ \iota_A$ is the right $D$-module structure on $A$, and $m_B^D=m_B\circ \iota_B$ is the left $D$-module structure on $B$. $A\otimes_D B$ acquires an algebra structure by the following lemma.

\begin{lemma}\cite[Lemma 4.5]{davydov2015functoriality}
    Let $\m{C}$ be a braided monoidal category. $A,B,D \in \m{C}$ are algebras. Assuming there exist algebra homomorphisms $\pi_A:D\to A$ and $\pi_B:D\to B$. Suppose that $\pi_B$ factors through $C_l(B)$ and that $\pi_A$ factors through $C_r(A)$. Then, if both A and B are separable, the relative tensor product $A \otimes_D B$ is also separable. 
\end{lemma}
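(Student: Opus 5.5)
The plan is to present $A\otimes_D B$ as a quotient algebra of the tensor product algebra $A\otimes B$ and then to use the fact that separability passes to quotients by two-sided ideals. Throughout, $R:=A\otimes B$ carries the standard braided tensor product structure
$$m_R=(m_A\otimes m_B)\circ(\id_A\otimes\beta_{B,A}\otimes\id_B),\qquad u_R=u_A\otimes u_B .$$
This convention is chosen so that the centrality hypotheses on $\pi_A$ and $\pi_B$ are exactly what is needed below.

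\textbf{Step 1: $R$ is separable.} Let $\delta_A$ and $\delta_B$ be bimodule sections of $m_A$ and $m_B$. I take $\delta_R$ to be $\delta_A\otimes\delta_B$ followed by the braiding that reshuffles $A\otimes A\otimes B\otimes B$ into $(A\otimes B)\otimes(A\otimes B)$. By naturality of $\beta$, $m_R\circ\delta_R=m_A\delta_A\otimes m_B\delta_B=\id$. The $R$-bimodule property follows from the bimodule properties of $\delta_A$ and $\delta_B$ together with naturality of the braiding; this is a routine string-diagram check.

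\textbf{Step 2: $\rho:R\to A\otimes_D B$ is a surjective algebra map.} This is the main obstacle. I would show that the two parallel maps $A\otimes D\otimes B\rightrightarrows R$ have a difference whose image $I$ is a two-sided ideal of $R$. Equivalently, I would show that $m_R\circ(\rho'\otimes\id_R)$ and $m_R\circ(\id_R\otimes\rho')$ coequalize the relations in each variable.
\begin{itemize}
\item For right multiplication by $a'\otimes b'$ on an element $(a\cdot d)\otimes b$ versus $a\otimes(d\cdot b)$, the factor $d$ in the first expression must be moved past $b$ and $a'$. Here I would use that $\pi_A$ factors through $C_r(A)$: $d$ placed to the right of $a$ may be braided past the $A$-strand $a'$ that crosses it in $m_R$.
\item Symmetrically, for left multiplication I would use that $\pi_B$ factors through $C_l(B)$, which lets $d$ acting on $b$ slide past the $B$-strand coming from the left factor.
\end{itemize}
Drawn as string diagrams, each identity is one application of the defining square of the left or right center, plus naturality of $\beta$. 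Once $I$ is an ideal, the cokernel $Q:=A\otimes_D B=R/I$ inherits a unique algebra structure making $\rho$ an algebra epimorphism. Here I use that tensor products are exact in a fusion category, so $\rho\otimes\rho$ is epi and $m_Q$ is well defined.

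\textbf{Step 3: quotients of separable algebras are separable.} Because $\rho$ is a surjective algebra map, $Q\otimes_R Q\cong Q$. Also, $Q$-bimodules are the same as $R$-bimodules on which $I$ acts trivially, with the same morphisms since $\rho$ is epi. Apply the functor $Q\otimes_R(-)\otimes_R Q$ to the split epimorphism $m_R:R\otimes R\to R$ of $R$-bimodules. Using $Q\otimes_R R\cong Q$, this yields a split epimorphism $Q\otimes Q\to Q\otimes_R Q\cong Q$ of $R$-bimodules, which one identifies with $m_Q$. Its splitting is an $R$-bimodule map between $Q$-bimodules, hence a $Q$-bimodule map. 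This gives a section $\delta_Q$ of $m_Q$, so $Q=A\otimes_D B$ is separable.
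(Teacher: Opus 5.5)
Your overall strategy is sound, but Step 2 fails as written, and it fails because of the braided tensor product you chose. This is exactly the step you flagged as the crux.

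With $m_R=(m_A\otimes m_B)\circ(\id_A\otimes\beta_{B,A}\otimes\id_B)$, naturality and the hexagon axiom give
\[
\beta_{B,A}\circ(m_B^D\otimes\id_A)=(\id_A\otimes m_B^D)\circ(\beta_{D,A}\otimes\id_B)\circ(\id_D\otimes\beta_{B,A}).
\]
Now use associativity and $\rho\circ(\id_A\otimes m_B^D)=\rho\circ(m_A^D\otimes\id_B)$. The two composites $\rho\circ m_R\circ((m_A^D\otimes\id_B)\otimes\id_R)$ and $\rho\circ m_R\circ((\id_A\otimes m_B^D)\otimes\id_R)$ then differ in one place only: the first contains $m_A\circ(\pi_A\otimes\id_A)$ where the second contains $m_A\circ(\id_A\otimes\pi_A)\circ\beta_{D,A}$.

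So right multiplication needs $m_A\circ(\pi_A\otimes\id_A)=m_A\circ(\id_A\otimes\pi_A)\circ\beta_{D,A}$. That is the defining identity of the \emph{left} center $C_l(A)$. Factoring through $C_r(A)$ only gives $m_A\circ(\id_A\otimes\pi_A)=m_A\circ(\pi_A\otimes\id_A)\circ\beta_{A,D}$. Symmetrically, left multiplication needs $\pi_B$ to factor through $C_r(B)$. With your convention the hypotheses are exactly reversed, contrary to your remark that the convention was chosen to fit them.

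This really breaks the step. Take $B=D$ commutative and $\pi_B=\id_D$.
\begin{itemize}
\item Then $A\otimes_D D\cong A$, and any product on the quotient making $\rho$ multiplicative must be $m_A$.
\item The product in $R$ of $u_A\otimes\id_D$ and $\id_A\otimes u_D$ is $\beta_{D,A}$, so multiplicativity of $\rho$ on it is precisely the left-center identity above.
\item That identity fails, for instance, in $Z(Vec_{\bZ_2})$ with $D=1\oplus e$, $N=1\oplus m$, and $A=D\otimes N$ multiplied through $\beta_{N,D}$. Here $\pi_A=\id_D\otimes u_N$ factors through $C_r(A)$ but not through $C_l(A)$, because $e$ and $m$ have monodromy $-1$.
\end{itemize}
So in this example your $I$ is not an ideal of $R$.

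The repair is to use the other braided tensor product, $m_R=(m_A\otimes m_B)\circ(\id_A\otimes\beta_{A,B}^{-1}\otimes\id_B)$, as the paper does. The same computation then produces $\beta_{A,D}^{-1}$ in place of $\beta_{D,A}$. The required identities become exactly the right-center condition on $\pi_A$ and the left-center condition on $\pi_B$.

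In Step 1 you must then take $\delta_R=(\id_A\otimes\beta_{A,B}\otimes\id_B)\circ(\delta_A\otimes\delta_B)$. The identity $m_R\circ\delta_R=\id$ holds because the crossing in $\delta_R$ is inverse to the one in $m_R$, not because of naturality.

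With these changes your proof is correct in the abelian (fusion) setting, and it takes a genuinely different route from the paper's.
\begin{itemize}
\item The paper takes the algebra structure on $A\otimes_D B$, and the fact that $\rho\otimes\rho$ is a coequalizer, from \cite{davydov2015functoriality}. It checks by a string-diagram computation that $(\rho\otimes\rho)\circ\delta_{A\otimes B}$ coequalizes the two relations, using the bimodule property of $\delta_A,\delta_B$ and the centrality of $\pi_B$. It then verifies by hand that the descended map is a bimodule section.
\item You construct the quotient algebra yourself and invoke the general fact that a quotient of a separable algebra by a two-sided ideal is separable. The descent of $\delta$ is then automatic, since $(\rho\otimes\rho)\circ\delta_R$ is an $R$-bimodule map into a bimodule annihilated by $I$.
\end{itemize}
Your version is cleaner and self-contained, but it uses cokernels, images and exactness of $\otimes$. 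The paper's argument likewise needs coequalizers preserved by $\otimes$. So neither proof literally works in an arbitrary braided monoidal category as the statement is phrased.
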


\begin{proposition}\label{Por:all_fiber_product}
    Let $\m{C}$ be a braided fusion category, let $A$ and $B$ be condensable algebras in $\m{C}$, and let $D$ be a common condensable subalgebra of $A$ and $B$. Then $D = A \times_{A \otimes_D B} B$.
    \end{proposition}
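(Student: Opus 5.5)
The plan is to work inside the category $\m{C}_D$ of right $D$-modules, where all the objects involved split into direct summands and the fiber product can be read off as an intersection of summands.

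\textbf{Setup.} Let $f_A:A\to A\otimes_D B$ and $f_B:B\to A\otimes_D B$ be the algebra maps
$$f_A=\rho\circ(\id_A\otimes u_B),\qquad f_B=\rho\circ(u_A\otimes \id_B).$$
They are algebra homomorphisms for the algebra structure of the preceding lemma. This uses that $A,B$ are commutative, so that $\iota_A,\iota_B$ factor through $C_r(A)$ and $C_l(B)$.

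\textbf{Step 1: the square commutes.} We check $f_A\circ\iota_A=f_B\circ\iota_B$. Both sides equal
$$\rho\circ(\iota_A\otimes u_B)=\rho\circ(u_A\otimes\iota_B).$$
This follows from the coequalizer relation applied to $u_A\otimes \id_D\otimes u_B$. Hence there is a canonical algebra map $D\to P:=A\times_M B$ compatible with the projections. Since $\iota_A$ is a monomorphism, this map is a monomorphism. It remains to show that it is an isomorphism.

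\textbf{Step 2: pass to $\m{C}_D$.} Since $D$ is commutative, $\m{C}_D$ is a monoidal category under $\otimes_D$. Since $D$ is separable, $\m{C}_D$ is semisimple. Via $\iota_A$ and $\iota_B$ (and commutativity), $A$ and $B$ are objects of $\m{C}_D$. Moreover $M=A\otimes_D B$ is the relative tensor product computed there, and $\iota_A,\iota_B,f_A,f_B$ are all morphisms in $\m{C}_D$.

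The forgetful functor $\m{C}_D\to\m{C}$ creates finite limits, because $-\otimes D$ is exact and kernels of module maps are modules. Therefore the pullback of $A\to M\leftarrow B$ may be computed in $\m{C}_D$.

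\textbf{Step 3: split and intersect.} By semisimplicity, the monomorphisms $\iota_A:D\to A$ and $\iota_B:D\to B$ split in $\m{C}_D$, giving
$$A\simeq D\oplus A',\qquad B\simeq D\oplus B'.$$
Since $\otimes_D$ is additive and $D$ is its unit,
$$M\simeq D\oplus A'\oplus B'\oplus (A'\otimes_D B').$$
Under this decomposition, $f_A=\id\otimes_D u_B$ is the inclusion of the summand $D\oplus A'$ (as $A\otimes_D D$). Likewise $f_B$ is the inclusion of the summand $D\oplus B'$ (as $D\otimes_D B$). The two copies of $D$ are identified in $M$ precisely by Step 1.

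The pullback of two inclusions of direct summands of $M$ is the inclusion of their common summand. Here the common summand is $D$, since $A'$, $B'$, and $A'\otimes_D B'$ are independent summands. Hence $P\simeq D$, via the canonical map of Step 1.

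\textbf{Main obstacle.} The delicate point is Step 3: making sure the decomposition of $M$ is genuinely a direct sum in which $f_A$ and $f_B$ are exactly the stated summand inclusions. In particular, one must check that the unit $u_B:1\to B$, after applying $A\otimes_D-$, matches the split inclusion $D\to B$.

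I would handle this by noting that $A\otimes_D -$ sends the $D$-module map $\iota_B:D\to B$ to $\id_A\otimes_D\iota_B:A\simeq A\otimes_D D\to A\otimes_D B$. This map equals $f_A$, because $\iota_B$ is the unit composed with the action, so $\rho\circ(\id_A\otimes u_B)$ corresponds to $\id_A\otimes_D \iota_B$. Naturality of the splittings then gives the claim. The symmetric argument applies to $f_B$.
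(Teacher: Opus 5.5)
Your proof is correct and rests on the same idea as the paper's: work over $D$ (in $\m{C}_D$) and observe that $f_A$ and $f_B$ land in summands of $A\otimes_D B$ that meet only in $D\otimes_D D$. The difference is packaging. The paper first uses the fact that the fiber product of condensable algebras is condensable to write $E=A\times_M B=D\oplus D'$, and then derives a contradiction from $D'\neq 0$ via the nine-term decomposition of $A\otimes_D B$. You instead compute the pullback of the two summand inclusions directly, and you justify more carefully two points the paper leaves implicit: that $f_A$ and $f_B$ are exactly these inclusions, and that the pullback may be computed in $\m{C}_D$.
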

\begin{proof}

We have the following two algebra homomorphisms
\begin{equation}\label{fAfB}
f_A:=(A\simeq A\otimes 1 \stackrel{id_A\otimes u_B}{\longrightarrow} A\otimes B \stackrel{\rho}{\longrightarrow} A\otimes_D B), \quad  f_B:=(B\simeq 1\otimes B \stackrel{u_A\otimes id_B}{\longrightarrow} A\otimes B \stackrel{\rho}{\longrightarrow} A\otimes_D B).
\end{equation}

We denote $A\otimes_D B$, the morphism $\rho: A\otimes B \rightarrow A\otimes_D B$, and the morphism $u_{A\otimes_D B}: 1\rightarrow A\otimes_D B$ by the following graphs:

\begin{center}

\tikzset{every picture/.style={line width=0.75pt}} 

\begin{tikzpicture}[x=0.75pt,y=0.75pt,yscale=-0.8,xscale=0.8]

\draw    (177.79,56.22) -- (177.79,115.36) ;
\draw    (189.25,56.22) -- (189.25,115.36) ;
\draw  [fill={rgb, 255:red, 184; green, 233; blue, 134 }  ,fill opacity=1 ] (177.79,59.73) -- (189.25,59.73) -- (189.25,111.85) -- (177.79,111.85) -- cycle ;
\draw    (315.17,38.56) -- (315.17,116.44) ;
\draw    (326.63,38.56) -- (326.63,116.44) ;
\draw  [fill={rgb, 255:red, 184; green, 233; blue, 134 }  ,fill opacity=1 ] (315.17,60.81) -- (326.63,60.81) -- (326.63,112.92) -- (315.17,112.92) -- cycle ;
\draw    (308.87,60.81) -- (331.79,60.81) ;
\draw    (456.5,39.04) -- (456.5,116.92) ;
\draw    (467.97,39.04) -- (467.97,116.92) ;
\draw  [fill={rgb, 255:red, 184; green, 233; blue, 134 }  ,fill opacity=1 ] (456.5,61.29) -- (467.97,61.29) -- (467.97,113.41) -- (456.5,113.41) -- cycle ;
\draw    (450.2,61.29) -- (473.12,61.29) ;
\draw  [fill={rgb, 255:red, 80; green, 227; blue, 194 }  ,fill opacity=1 ] (453.93,39.04) .. controls (453.93,37.59) and (455.08,36.41) .. (456.5,36.41) .. controls (457.93,36.41) and (459.08,37.59) .. (459.08,39.04) .. controls (459.08,40.5) and (457.93,41.68) .. (456.5,41.68) .. controls (455.08,41.68) and (453.93,40.5) .. (453.93,39.04) -- cycle ;
\draw  [fill={rgb, 255:red, 80; green, 227; blue, 194 }  ,fill opacity=1 ] (465.39,39.04) .. controls (465.39,37.59) and (466.54,36.41) .. (467.97,36.41) .. controls (469.39,36.41) and (470.55,37.59) .. (470.55,39.04) .. controls (470.55,40.5) and (469.39,41.68) .. (467.97,41.68) .. controls (466.54,41.68) and (465.39,40.5) .. (465.39,39.04) -- cycle ;

\draw (158.26,118.66) node [anchor=north west][inner sep=0.75pt]   [align=left] {$\displaystyle A\otimes _{D} B$};
\draw (299.64,14.61) node [anchor=north west][inner sep=0.75pt]   [align=left] {$\displaystyle A\otimes B$};
\draw (295.64,119.73) node [anchor=north west][inner sep=0.75pt]   [align=left] {$\displaystyle A\otimes _{D} B$};
\draw (479.61,41.75) node [anchor=north west][inner sep=0.75pt]   [align=left] {$\displaystyle A\otimes B$};
\draw (431.98,120.22) node [anchor=north west][inner sep=0.75pt]   [align=left] {$\displaystyle A\otimes _{D} B$};
\draw (457.04,10.72) node [anchor=north west][inner sep=0.75pt]   [align=left] {$\displaystyle 1$};

\end{tikzpicture}

\end{center}

We have $f_A\circ \iota_A= f_B\circ \iota_B$:

\begin{center}

    \tikzset{every picture/.style={line width=0.75pt}} 

    \begin{tikzpicture}[x=0.75pt,y=0.75pt,yscale=-0.8,xscale=0.8]
    
    \draw    (141.46,39.98) -- (141.46,139.85) ;
    \draw    (154.95,51.3) -- (154.95,139.85) ;
    \draw  [fill={rgb, 255:red, 184; green, 233; blue, 134 }  ,fill opacity=1 ] (141.46,76.6) -- (154.95,76.6) -- (154.95,135.85) -- (141.46,135.85) -- cycle ;
    \draw    (134.04,76.6) -- (161.02,76.6) ;
    \draw  [fill={rgb, 255:red, 80; green, 227; blue, 194 }  ,fill opacity=1 ] (151.91,51.3) .. controls (151.91,49.64) and (153.27,48.3) .. (154.95,48.3) .. controls (156.63,48.3) and (157.98,49.64) .. (157.98,51.3) .. controls (157.98,52.95) and (156.63,54.3) .. (154.95,54.3) .. controls (153.27,54.3) and (151.91,52.95) .. (151.91,51.3) -- cycle ;
    \draw  [fill={rgb, 255:red, 245; green, 166; blue, 35 }  ,fill opacity=1 ] (141.8,47.3) -- (145.51,50.97) -- (141.8,54.63) -- (138.09,50.97) -- cycle ;
    \draw    (393.31,92.86) -- (393.31,140.13) ;
    \draw    (406.8,51.58) -- (406.8,140.13) ;
    \draw  [fill={rgb, 255:red, 184; green, 233; blue, 134 }  ,fill opacity=1 ] (393.31,106.18) -- (406.8,106.18) -- (406.8,136.14) -- (393.31,136.14) -- cycle ;
    \draw    (386.57,105.85) -- (413.55,105.85) ;
    \draw  [fill={rgb, 255:red, 80; green, 227; blue, 194 }  ,fill opacity=1 ] (403.77,51.58) .. controls (403.77,49.93) and (405.13,48.59) .. (406.8,48.59) .. controls (408.48,48.59) and (409.84,49.93) .. (409.84,51.58) .. controls (409.84,53.24) and (408.48,54.58) .. (406.8,54.58) .. controls (405.13,54.58) and (403.77,53.24) .. (403.77,51.58) -- cycle ;
    \draw  [fill={rgb, 255:red, 80; green, 227; blue, 194 }  ,fill opacity=1 ] (390.28,92.86) .. controls (390.28,91.21) and (391.64,89.87) .. (393.31,89.87) .. controls (394.99,89.87) and (396.35,91.21) .. (396.35,92.86) .. controls (396.35,94.52) and (394.99,95.86) .. (393.31,95.86) .. controls (391.64,95.86) and (390.28,94.52) .. (390.28,92.86) -- cycle ;
    \draw  [draw opacity=0] (409.49,80.88) .. controls (409.42,80.88) and (409.35,80.88) .. (409.28,80.88) .. controls (398.1,80.88) and (389.04,71.94) .. (389.04,60.91) .. controls (389.04,60.68) and (389.05,60.46) .. (389.05,60.25) -- (409.28,60.91) -- cycle ; \draw   (409.49,80.88) .. controls (409.42,80.88) and (409.35,80.88) .. (409.28,80.88) .. controls (398.1,80.88) and (389.04,71.94) .. (389.04,60.91) .. controls (389.04,60.68) and (389.05,60.46) .. (389.05,60.25) ;  
    \draw    (389.27,40.71) -- (389.05,60.25) ;
    \draw  [fill={rgb, 255:red, 189; green, 16; blue, 224 }  ,fill opacity=1 ] (403.42,80.88) .. controls (403.42,79.22) and (404.78,77.88) .. (406.46,77.88) .. controls (408.14,77.88) and (409.49,79.22) .. (409.49,80.88) .. controls (409.49,82.53) and (408.14,83.87) .. (406.46,83.87) .. controls (404.78,83.87) and (403.42,82.53) .. (403.42,80.88) -- cycle ;
    \draw    (511.51,56.81) -- (511.51,143.37) ;
    \draw    (525,37.95) -- (525,143.37) ;
    \draw  [fill={rgb, 255:red, 184; green, 233; blue, 134 }  ,fill opacity=1 ] (511.51,80.12) -- (525,80.12) -- (525,139.37) -- (511.51,139.37) -- cycle ;
    \draw    (504.09,80.12) -- (531.07,80.12) ;
    \draw  [fill={rgb, 255:red, 80; green, 227; blue, 194 }  ,fill opacity=1 ] (508.47,56.81) .. controls (508.47,55.16) and (509.83,53.82) .. (511.51,53.82) .. controls (513.18,53.82) and (514.54,55.16) .. (514.54,56.81) .. controls (514.54,58.47) and (513.18,59.81) .. (511.51,59.81) .. controls (509.83,59.81) and (508.47,58.47) .. (508.47,56.81) -- cycle ;
    \draw  [fill={rgb, 255:red, 245; green, 166; blue, 35 }  ,fill opacity=1 ] (525.34,53.49) -- (529.05,57.15) -- (525.34,60.81) -- (521.63,57.15) -- cycle ;
    \draw    (260.33,51.67) -- (260.31,140.13) ;
    \draw    (273.8,92.86) -- (273.8,140.13) ;
    \draw  [fill={rgb, 255:red, 184; green, 233; blue, 134 }  ,fill opacity=1 ] (260.31,106.18) -- (273.8,106.18) -- (273.8,136.14) -- (260.31,136.14) -- cycle ;
    \draw    (253.57,105.85) -- (280.55,105.85) ;
    \draw  [fill={rgb, 255:red, 80; green, 227; blue, 194 }  ,fill opacity=1 ] (270.77,92.86) .. controls (270.77,91.21) and (272.13,89.87) .. (273.8,89.87) .. controls (275.48,89.87) and (276.84,91.21) .. (276.84,92.86) .. controls (276.84,94.52) and (275.48,95.86) .. (273.8,95.86) .. controls (272.13,95.86) and (270.77,94.52) .. (270.77,92.86) -- cycle ;
    \draw  [fill={rgb, 255:red, 80; green, 227; blue, 194 }  ,fill opacity=1 ] (257.3,51.67) .. controls (257.3,50.01) and (258.66,48.67) .. (260.33,48.67) .. controls (262.01,48.67) and (263.37,50.01) .. (263.37,51.67) .. controls (263.37,53.32) and (262.01,54.66) .. (260.33,54.66) .. controls (258.66,54.66) and (257.3,53.32) .. (257.3,51.67) -- cycle ;
    \draw  [draw opacity=0] (280.46,60.86) .. controls (279.95,71.41) and (271.16,79.82) .. (260.36,79.88) -- (260.24,59.91) -- cycle ; \draw   (280.46,60.86) .. controls (279.95,71.41) and (271.16,79.82) .. (260.36,79.88) ;  
    \draw    (280.67,41.32) -- (280.46,60.86) ;
    \draw  [fill={rgb, 255:red, 189; green, 16; blue, 224 }  ,fill opacity=1 ] (257.42,79.88) .. controls (257.42,78.22) and (258.78,76.88) .. (260.46,76.88) .. controls (262.14,76.88) and (263.49,78.22) .. (263.49,79.88) .. controls (263.49,81.53) and (262.14,82.87) .. (260.46,82.87) .. controls (258.78,82.87) and (257.42,81.53) .. (257.42,79.88) -- cycle ;
    
    \draw (162.93,42.3) node [anchor=north west][inner sep=0.75pt]   [align=left] {$\displaystyle u_{B}$};
    \draw (117.1,146.16) node [anchor=north west][inner sep=0.75pt]   [align=left] {$\displaystyle A\otimes _{D} B$};
    \draw (134.75,17) node [anchor=north west][inner sep=0.75pt]   [align=left] {$\displaystyle D$};
    \draw (118.91,41.63) node [anchor=north west][inner sep=0.75pt]   [align=left] {$\displaystyle \iota _{A}$};
    \draw (414.79,41.92) node [anchor=north west][inner sep=0.75pt]   [align=left] {$\displaystyle u_{B}$};
    \draw (368.95,146.45) node [anchor=north west][inner sep=0.75pt]   [align=left] {$\displaystyle A\otimes _{D} B$};
    \draw (382.6,17.28) node [anchor=north west][inner sep=0.75pt]   [align=left] {$\displaystyle D$};
    \draw (413.81,72.04) node [anchor=north west][inner sep=0.75pt]   [align=left] {$\displaystyle m_{B}^{D}$};
    \draw (486.96,46.48) node [anchor=north west][inner sep=0.75pt]   [align=left] {$\displaystyle u_{A}$};
    \draw (487.82,148.35) node [anchor=north west][inner sep=0.75pt]   [align=left] {$\displaystyle A\otimes _{D} B$};
    \draw (514.8,17.52) node [anchor=north west][inner sep=0.75pt]   [align=left] {$\displaystyle D$};
    \draw (531.1,44.49) node [anchor=north west][inner sep=0.75pt]   [align=left] {$\displaystyle \iota _{B}$};
    \draw (366.71,80.53) node [anchor=north west][inner sep=0.75pt]   [align=left] {$\displaystyle u_{A}$};
    \draw (447.35,92.71) node [anchor=north west][inner sep=0.75pt]   [align=left] {$\displaystyle =$};
    \draw (318.69,92.61) node [anchor=north west][inner sep=0.75pt]   [align=left] {$\displaystyle =$};
    \draw (283.27,81.71) node [anchor=north west][inner sep=0.75pt]   [align=left] {$\displaystyle u_{B}$};
    \draw (273.6,19.28) node [anchor=north west][inner sep=0.75pt]   [align=left] {$\displaystyle D$};
    \draw (227.81,73.04) node [anchor=north west][inner sep=0.75pt]   [align=left] {$\displaystyle m_{A}^{D}$};
    \draw (235.71,41.53) node [anchor=north west][inner sep=0.75pt]   [align=left] {$\displaystyle u_{A}$};
    \draw (238.1,146.33) node [anchor=north west][inner sep=0.75pt]   [align=left] {$\displaystyle A\otimes _{D} B$};
    \draw (186.69,92.61) node [anchor=north west][inner sep=0.75pt]   [align=left] {$\displaystyle =$};

    \end{tikzpicture}
    
\end{center}

The first and third equalities hold due to the definition $m_A^D$ and $m_B^D$. The second equality holds due to the definition of $A\otimes_D B$. 

Denote $E=A\times_{A\otimes_DB} B$ is the fiber product of the following diagram: 
\begin{center}
    \begin{tikzcd}
        E \arrow[d, "\pi_A"'] \arrow[r, "\pi_B"] & B \arrow[d, "f_B"] \\
        A \arrow[r, "f_A"]                       & A\otimes_D B .     
        \end{tikzcd}
\end{center}
By the universal property of a fiber product, there is a unique morphism $\iota_E:D\to E$. By Theorem~\ref{Thm:2d condense}, $E$ is a condensable algebra, and $\pi_A, \pi_B,$ and $\iota_E$ are all embeddings.
We denote $E=D\oplus D'$, $A=D\oplus D'\oplus A''$, and $B=D\oplus D'\oplus B''$.
If $D'\ncong 0$, then consider the images of $f_A\circ \pi_A$ and $f_B\circ \pi_B$ in $A\otimes_D B$.
We have
$
A\otimes_D B=(D\otimes_D D )  \oplus (D\otimes_D D' )  \oplus (D\otimes_D B'' )  \oplus (D'\otimes_D D )  \oplus (D'\otimes_D D' )  \oplus (D'\otimes_D B'' )  \oplus (A''\otimes_D D )  \oplus (A''\otimes_D D' )  \oplus (A''\otimes_D B'' )
$. The image of $f_A\circ \pi_A$ is in the first and fourth direct summands. The image of $f_B\circ \pi_B$ is in the first and second summands. This is a contradiction to the equality $f_A\circ \pi_A=f_B\circ \pi_B$. Thus, we have $D=E$. 
\end{proof}

\subsection{From common condensable subalgebras to critical cospans}\label{section4.bimod}

In this subsection, we show that given two condensable algebras $A$ and $B$ sharing a suitable common condensable subalgebra $D$, one can obtain a critical cospan whose limit is $D$.
To obtain the critical cospan associated with the boundary data of bulk $\mathcal{C}$, we need to show that $M = A \otimes_D B$ can be realized as the full centralizer of a 0+1D closed defect on the boundary. We show that $\mathcal{C}_M$ coincides with the category of closed $0+1$D defects between $\mathcal{C}_A$ and $\mathcal{C}_B$, assuming $M$ is indecomposable. Furthermore, we establish that $M \in \mathcal{C}$ is identified with the full centralizer of $M \in \mathcal{C}_M$.

The algebra $A\otimes_D B$ inherits various structures from $A$ and $B$. 

\begin{lemma}\label{lemma:separable}

    Let $\m{C}$ be a braided monoidal category, let $D,A,B \in \m{C}$ be algebras, and let $\pi_A : D \to A$ and $\pi_B : D \to B$ be algebra homomorphisms. Suppose that $\pi_B$ factors through $C_l(B)$ and that $\pi_A$ factors through $C_r(A)$. Then, if both $A$ and $B$ are separable, the relative tensor product $A \otimes_D B$ is also separable.

\end{lemma}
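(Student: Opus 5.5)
I will build an explicit separability idempotent for $M:=A\otimes_D B$ out of the ones for $A$ and $B$. Write $\rho:A\otimes B\to M$ for the coequalizer map. Let $\delta_A:A\to A\otimes A$ and $\delta_B:B\to B\otimes B$ be bimodule sections of the multiplications. First I will record the algebra structure on $M$, following \cite[Lemma 4.5]{davydov2015functoriality}. The multiplication is induced from
$$m_{A\otimes B}=(m_A\otimes m_B)\circ(\id_A\otimes\beta_{B,A}\otimes\id_B)$$
on $A\otimes B$. The hypotheses that $\pi_A$ factors through $C_r(A)$ and $\pi_B$ factors through $C_l(B)$ are exactly what make this multiplication descend through $\rho\otimes\rho$: moving a $D$ across the braiding can be absorbed into $A$ or $B$. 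This also makes $\rho$ an algebra homomorphism, so that $M$ becomes an $(A\otimes B)$-bimodule through $\rho$.

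The candidate splitting is
$$\delta_M:=(\rho\otimes\rho)\circ(\id_A\otimes\beta^{-1}_{B,A}\otimes\id_B)\circ(\delta_A\otimes\delta_B)\quad\text{on } A\otimes B,$$
which I will show descends to $M$. Equivalently, I will use that $A\otimes B$ is separable in the braided category: $\delta_{A\otimes B}$ is the rearranged tensor product of $\delta_A$ and $\delta_B$, and it is an $A\otimes B$-bimodule splitting. The key steps, in order, are the following.

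\begin{enumerate}
\item Check that $(\rho\otimes\rho)\circ\delta_{A\otimes B}$ coequalizes the two maps $A\otimes D\otimes B\rightrightarrows A\otimes B$. Since $\delta_A$ is a right $A$-module map, acting by $D$ on the right of $A$ moves onto the second $A$-factor. It must then be slid past the inverse braiding with $B$ to the $B\otimes_D$ slot, and there the centrality hypotheses let it pass to the left $D$-action on the $B$-factor. The same holds symmetrically for $\delta_B$. This yields $\delta_M:M\to M\otimes M$.
\item Show $m_M\circ\delta_M=\id_M$. Since $\rho$ is an epimorphism, it suffices to verify this after precomposing with $\rho$. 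There it reduces to $\rho\circ m_{A\otimes B}\circ\delta_{A\otimes B}=\rho$, which follows from $m_A\delta_A=\id$ and $m_B\delta_B=\id$ once the braiding and its inverse cancel.
\item Show $\delta_M$ is an $M$-bimodule map. Again I precompose with the epimorphisms $\rho\otimes\rho$ and use that $\delta_{A\otimes B}$ is an $A\otimes B$-bimodule map.
\end{enumerate}

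The main obstacle is step 1 together with the bimodule compatibility of step 3. The $D$-action has to be transported through a braiding to the correct side, and whether the needed half of $C_r(A)$ or $C_l(B)$ appears, with braiding versus inverse braiding, is exactly what must match the chosen convention for $m_{A\otimes B}$. I will fix conventions so that the $C_r(A)$ condition is used when $D$ crosses from $A$ to the right, and the $C_l(B)$ condition when it crosses into $B$ from the left. If this graphical bookkeeping becomes unwieldy, the fallback is to cite \cite[Lemma 4.5]{davydov2015functoriality} directly, since the statement is identical.
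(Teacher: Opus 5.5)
Your plan is the paper's proof step for step: make $A\otimes B$ separable via the braided product of $\delta_A$ and $\delta_B$, show that $(\rho\otimes\rho)\circ\delta_{A\otimes B}$ coequalizes $m_A^D\otimes\id_B$ and $\id_A\otimes m_B^D$, then check $m\circ\delta=\id$ and bimodularity after precomposing with $\rho$ and $\rho\otimes\rho$.

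\textbf{The braidings are mirrored.} The braidings you actually wrote down are the mirror of the ones compatible with the hypotheses. As stated, both the descent of $m_{A\otimes B}$ and your step 1 fail. With the paper's definitions, $\pi_B$ factoring through $C_l(B)$ means $m_B\circ(\pi_B\otimes\id_B)=m_B\circ(\id_B\otimes\pi_B)\circ\beta_{D,B}$.
\begin{itemize}
\item In your $\delta_{A\otimes B}$, built with $\beta_{B,A}^{-1}$, the $D$-strand produced by $\delta_A(ad)=\delta_A(a)d$ crosses the first $B$-output via $\beta_{B,D}^{-1}$.
\item After the coequalizer relation moves it onto the second $B$-output, carrying it through that output by the same kind of crossing requires $m_B\circ(\pi_B\otimes\id_B)\circ\beta_{B,D}=m_B\circ(\id_B\otimes\pi_B)$. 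That is the right-center condition.
\item The left-center condition only gives $m_B\circ(\pi_B\otimes\id_B)\circ\beta_{B,D}=m_B\circ(\id_B\otimes\pi_B)\circ\beta_{D,B}\circ\beta_{B,D}$. This is off by a monodromy that does not cancel in general.
\item Similarly, your $m_{A\otimes B}$ built with $\beta_{B,A}$ descends when $\pi_A$ lands in $C_l(A)$ and $\pi_B$ in $C_r(B)$, the opposite of what is assumed.
\end{itemize}
For commutative $A,B$, as in the paper's application, $C_l=C_r$ is the whole algebra and the distinction disappears, but not for the lemma as stated. The fix is the paper's choice: $m_{A\otimes B}=(m_A\otimes m_B)\circ(\id_A\otimes\beta_{A,B}^{-1}\otimes\id_B)$ and $\delta_{A\otimes B}=(\id_A\otimes\beta_{A,B}\otimes\id_B)\circ(\delta_A\otimes\delta_B)$.

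\textbf{Where the hypothesis enters in step 1.} With that choice, your sketch of step 1 compresses the one non-formal point. After using $\delta_A(ad)=\delta_A(a)d$, naturality, and the relation defining $\rho$ on the second factor, $d$ acts from the left on the second tensor factor of $\delta_B(b)$, having crossed the first. Since $\delta_B$ is not balanced in the middle, you cannot simply move $d$ to the first factor. The paper instead does the following:
\begin{enumerate}
\item push $d$ through the second factor using the $C_l(B)$ identity;
\item merge the two crossings into a single crossing of $d$ with $b$ above $\delta_B$, by naturality of the braiding;
\item absorb $d$ by right $B$-linearity, $\delta_B(b)\,d=\delta_B(bd)$;
\item apply $C_l(B)$ once more to turn the crossed $bd$ into $db$.
\end{enumerate}
This yields $(\rho\otimes\rho)\circ\delta_{A\otimes B}\circ(\id_A\otimes m_B^D)$. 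No separate symmetric argument is needed, and only $C_l(B)$ is used in this step.

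\textbf{Epimorphism of $\rho\otimes\rho$.} Step 3 requires $\rho\otimes\rho$ to be an epimorphism, which is not automatic in an arbitrary braided monoidal category. The paper takes it from \cite[Lemma~4.5]{davydov2015functoriality}, which gives that $\rho\otimes\rho$ is a coequalizer. In the rigid (e.g.\ fusion) case it holds because $\otimes$ preserves coequalizers in each variable.
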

\begin{proof}
    For separable algebras $A$ and $B$, we have an $A$-$A$ bimodule map $\delta_A$ and a $B$-$B$ bimodule map $\delta_B$ such that $m_A\circ \delta_A=id_A$ and $m_B\circ \delta_B=id_B$. The multiplication on the algebra $A\otimes B$ is defined by $(m_A\otimes m_B)\circ (id_A\otimes \beta_{A,B}^{-1}\otimes id_B).$ Denote by $\delta_{A\otimes B}:= (id_A\otimes \beta_{A,B}\otimes id_B)\circ(\delta_A\otimes \delta_B)$. It can be checked that $\delta_{A\otimes B}$ is an $A\otimes B$-$A\otimes B$ bimodule map and $m_{A\otimes B}\circ \delta_{A\otimes B}=id_{A\otimes B}$. Thus, $A\otimes B$ is separable.

    What's more, we have $(\rho\otimes \rho)\circ \delta_{A\otimes B}\circ(m_A^D\otimes id)=(\rho\otimes \rho)\circ \delta_{A\otimes B}\circ(id\otimes m_B^D)$:

    \begin{center}

        \tikzset{every picture/.style={line width=0.75pt}} 

        \begin{tikzpicture}[x=0.75pt,y=0.75pt,yscale=-0.85,xscale=0.85]
        
        \draw    (550.79,62.08) .. controls (551.07,99.24) and (567.12,123.47) .. (557.83,137.2) ;
        \draw    (533.91,102.47) .. controls (542.63,102.47) and (560.36,125.89) .. (559.24,150.93) ;
        \draw    (407.66,103.01) .. controls (416.38,103.01) and (434.12,126.43) .. (432.99,151.47) ;
        \draw    (270.47,102.2) .. controls (279.2,102.2) and (296.93,125.62) .. (295.8,150.66) ;
        \draw    (145.98,140.54) -- (111.62,181.59) ;
        \draw    (115.06,50.24) -- (149.42,91.29) ;
        \draw    (270.47,53.74) -- (270.47,102.2) ;
        \draw    (287.36,53.74) -- (287.36,61.82) ;
        \draw    (304.25,53.74) -- (304.25,102.2) ;
        \draw    (287.36,61.82) .. controls (287.64,62.62) and (287.64,86.05) .. (270.47,86.05) ;
        \draw    (253.59,150.66) .. controls (253.03,143.39) and (257.25,103.01) .. (270.47,102.2) ;
        \draw    (304.25,102.2) .. controls (313.82,103.01) and (320.57,134.51) .. (321.13,150.66) ;
        \draw  [color={rgb, 255:red, 255; green, 255; blue, 255 }  ,draw opacity=1 ][line width=1.5] [line join = round][line cap = round] (288.49,119.97) .. controls (289.61,119.97) and (290.74,119.97) .. (291.86,119.97) ;
        \draw  [fill={rgb, 255:red, 184; green, 233; blue, 134 }  ,fill opacity=1 ] (253.59,150.66) -- (278.92,150.66) -- (278.92,166.82) -- (253.59,166.82) -- cycle ;
        \draw  [fill={rgb, 255:red, 184; green, 233; blue, 134 }  ,fill opacity=1 ] (295.8,150.66) -- (321.13,150.66) -- (321.13,166.82) -- (295.8,166.82) -- cycle ;
        \draw  [fill={rgb, 255:red, 189; green, 16; blue, 224 }  ,fill opacity=1 ] (266.25,86.05) .. controls (266.25,83.82) and (268.14,82.01) .. (270.47,82.01) .. controls (272.81,82.01) and (274.7,83.82) .. (274.7,86.05) .. controls (274.7,88.28) and (272.81,90.09) .. (270.47,90.09) .. controls (268.14,90.09) and (266.25,88.28) .. (266.25,86.05) -- cycle ;
        \draw  [color={rgb, 255:red, 255; green, 255; blue, 255 }  ,draw opacity=1 ][line width=1.5] [line join = round][line cap = round] (97.31,51.06) .. controls (97.31,51.06) and (97.31,51.06) .. (97.31,51.06) ;
        \draw  [color={rgb, 255:red, 255; green, 255; blue, 255 }  ,draw opacity=1 ][fill={rgb, 255:red, 255; green, 255; blue, 255 }  ,fill opacity=1 ] (285.67,116.74) -- (290.18,116.74) -- (290.18,121.05) -- (285.67,121.05) -- cycle ;
        \draw  [fill={rgb, 255:red, 255; green, 98; blue, 118 }  ,fill opacity=1 ] (266.25,102.2) .. controls (266.25,99.97) and (268.14,98.16) .. (270.47,98.16) .. controls (272.81,98.16) and (274.7,99.97) .. (274.7,102.2) .. controls (274.7,104.43) and (272.81,106.24) .. (270.47,106.24) .. controls (268.14,106.24) and (266.25,104.43) .. (266.25,102.2) -- cycle ;
        \draw  [color={rgb, 255:red, 255; green, 255; blue, 255 }  ,draw opacity=1 ][fill={rgb, 255:red, 255; green, 255; blue, 255 }  ,fill opacity=1 ] (130.09,68.71) -- (134.38,68.71) -- (134.38,72.82) -- (130.09,72.82) -- cycle ;
        \draw  [color={rgb, 255:red, 255; green, 255; blue, 255 }  ,draw opacity=1 ][fill={rgb, 255:red, 255; green, 255; blue, 255 }  ,fill opacity=1 ] (126.65,159.01) -- (130.95,159.01) -- (130.95,163.12) -- (126.65,163.12) -- cycle ;
        \draw    (407.66,54.55) -- (407.66,103.01) ;
        \draw    (424.55,54.55) -- (424.55,62.62) ;
        \draw    (441.43,54.55) -- (441.43,103.01) ;
        \draw    (424.55,62.62) .. controls (424.83,63.43) and (425.67,93.59) .. (419.48,110.28) ;
        \draw    (390.77,151.47) .. controls (390.21,144.2) and (394.43,103.82) .. (407.66,103.01) ;
        \draw    (441.43,103.01) .. controls (451,103.82) and (457.76,135.32) .. (458.32,151.47) ;
        \draw  [color={rgb, 255:red, 255; green, 255; blue, 255 }  ,draw opacity=1 ][line width=1.5] [line join = round][line cap = round] (425.67,120.78) .. controls (426.8,120.78) and (427.92,120.78) .. (429.05,120.78) ;
        \draw  [fill={rgb, 255:red, 184; green, 233; blue, 134 }  ,fill opacity=1 ] (390.77,151.47) -- (416.1,151.47) -- (416.1,167.63) -- (390.77,167.63) -- cycle ;
        \draw  [fill={rgb, 255:red, 184; green, 233; blue, 134 }  ,fill opacity=1 ] (432.99,151.47) -- (458.32,151.47) -- (458.32,167.63) -- (432.99,167.63) -- cycle ;
        \draw  [color={rgb, 255:red, 255; green, 255; blue, 255 }  ,draw opacity=1 ][fill={rgb, 255:red, 255; green, 255; blue, 255 }  ,fill opacity=1 ] (422.86,117.55) -- (427.36,117.55) -- (427.36,121.86) -- (422.86,121.86) -- cycle ;
        \draw  [fill={rgb, 255:red, 255; green, 98; blue, 118 }  ,fill opacity=1 ] (403.44,103.01) .. controls (403.44,100.78) and (405.33,98.97) .. (407.66,98.97) .. controls (409.99,98.97) and (411.88,100.78) .. (411.88,103.01) .. controls (411.88,105.24) and (409.99,107.05) .. (407.66,107.05) .. controls (405.33,107.05) and (403.44,105.24) .. (403.44,103.01) -- cycle ;
        \draw  [fill={rgb, 255:red, 189; green, 16; blue, 224 }  ,fill opacity=1 ] (415.26,110.28) .. controls (415.26,108.05) and (417.15,106.24) .. (419.48,106.24) .. controls (421.81,106.24) and (423.7,108.05) .. (423.7,110.28) .. controls (423.7,112.51) and (421.81,114.32) .. (419.48,114.32) .. controls (417.15,114.32) and (415.26,112.51) .. (415.26,110.28) -- cycle ;
        \draw    (533.91,54.01) -- (533.91,102.47) ;
        \draw    (550.79,54.01) -- (550.79,62.08) ;
        \draw    (567.68,54.01) -- (567.68,102.47) ;
        \draw    (517.02,150.93) .. controls (516.46,143.66) and (520.68,103.28) .. (533.91,102.47) ;
        \draw    (567.68,102.47) .. controls (577.25,103.28) and (584,134.78) .. (584.57,150.93) ;
        \draw  [color={rgb, 255:red, 255; green, 255; blue, 255 }  ,draw opacity=1 ][line width=1.5] [line join = round][line cap = round] (551.92,120.24) .. controls (553.04,120.24) and (554.17,120.24) .. (555.3,120.24) ;
        \draw  [fill={rgb, 255:red, 184; green, 233; blue, 134 }  ,fill opacity=1 ] (517.02,150.93) -- (542.35,150.93) -- (542.35,167.09) -- (517.02,167.09) -- cycle ;
        \draw  [fill={rgb, 255:red, 184; green, 233; blue, 134 }  ,fill opacity=1 ] (559.24,150.93) -- (584.57,150.93) -- (584.57,167.09) -- (559.24,167.09) -- cycle ;
        \draw  [color={rgb, 255:red, 255; green, 255; blue, 255 }  ,draw opacity=1 ][fill={rgb, 255:red, 255; green, 255; blue, 255 }  ,fill opacity=1 ] (549.1,117.01) -- (553.61,117.01) -- (553.61,121.32) -- (549.1,121.32) -- cycle ;
        \draw  [fill={rgb, 255:red, 255; green, 98; blue, 118 }  ,fill opacity=1 ] (529.68,102.47) .. controls (529.68,100.24) and (531.57,98.43) .. (533.91,98.43) .. controls (536.24,98.43) and (538.13,100.24) .. (538.13,102.47) .. controls (538.13,104.7) and (536.24,106.51) .. (533.91,106.51) .. controls (531.57,106.51) and (529.68,104.7) .. (529.68,102.47) -- cycle ;
        \draw  [color={rgb, 255:red, 255; green, 255; blue, 255 }  ,draw opacity=1 ][fill={rgb, 255:red, 255; green, 255; blue, 255 }  ,fill opacity=1 ] (556.14,106.51) -- (559.52,106.51) -- (559.52,109.74) -- (556.14,109.74) -- cycle ;
        \draw  [fill={rgb, 255:red, 189; green, 16; blue, 224 }  ,fill opacity=1 ] (553.61,137.2) .. controls (553.61,134.97) and (555.5,133.16) .. (557.83,133.16) .. controls (560.16,133.16) and (562.05,134.97) .. (562.05,137.2) .. controls (562.05,139.43) and (560.16,141.24) .. (557.83,141.24) .. controls (555.5,141.24) and (553.61,139.43) .. (553.61,137.2) -- cycle ;
        \draw    (149.42,50.24) -- (115.06,91.29) ;
        \draw    (111.62,140.54) -- (145.98,181.59) ;
        \draw    (416.1,151.47) .. controls (415.54,127.24) and (433.27,103.01) .. (441.43,103.01) ;
        \draw    (278.92,150.66) .. controls (278.36,126.43) and (296.09,102.2) .. (304.25,102.2) ;
        \draw  [fill={rgb, 255:red, 255; green, 98; blue, 118 }  ,fill opacity=1 ] (300.03,102.2) .. controls (300.03,99.97) and (301.92,98.16) .. (304.25,98.16) .. controls (306.58,98.16) and (308.47,99.97) .. (308.47,102.2) .. controls (308.47,104.43) and (306.58,106.24) .. (304.25,106.24) .. controls (301.92,106.24) and (300.03,104.43) .. (300.03,102.2) -- cycle ;
        \draw  [fill={rgb, 255:red, 255; green, 98; blue, 118 }  ,fill opacity=1 ] (437.21,103.01) .. controls (437.21,100.78) and (439.1,98.97) .. (441.43,98.97) .. controls (443.76,98.97) and (445.65,100.78) .. (445.65,103.01) .. controls (445.65,105.24) and (443.76,107.05) .. (441.43,107.05) .. controls (439.1,107.05) and (437.21,105.24) .. (437.21,103.01) -- cycle ;
        \draw    (542.35,150.93) .. controls (541.79,126.7) and (559.52,102.47) .. (567.68,102.47) ;
        \draw  [fill={rgb, 255:red, 255; green, 98; blue, 118 }  ,fill opacity=1 ] (563.46,102.47) .. controls (563.46,100.24) and (565.35,98.43) .. (567.68,98.43) .. controls (570.01,98.43) and (571.9,100.24) .. (571.9,102.47) .. controls (571.9,104.7) and (570.01,106.51) .. (567.68,106.51) .. controls (565.35,106.51) and (563.46,104.7) .. (563.46,102.47) -- cycle ;
        
        \draw (256.72,35.85) node [anchor=north west][inner sep=0.75pt]   [align=left] {$\displaystyle A$};
        \draw (281.13,35.85) node [anchor=north west][inner sep=0.75pt]   [align=left] {$\displaystyle D$};
        \draw (304.16,35.85) node [anchor=north west][inner sep=0.75pt]   [align=left] {$\displaystyle B$};
        \draw (222.89,176.51) node [anchor=north west][inner sep=0.75pt]   [align=left] {$\displaystyle A\otimes _{D} B$};
        \draw (295.37,177.51) node [anchor=north west][inner sep=0.75pt]   [align=left] {$\displaystyle A\otimes _{D} B$};
        \draw (235.99,97.01) node [anchor=north west][inner sep=0.75pt]   [align=left] {$\displaystyle \delta _{A}$};
        \draw (318.06,89.01) node [anchor=north west][inner sep=0.75pt]   [align=left] {$\displaystyle \delta _{B}$};
        \draw (235.21,68.45) node [anchor=north west][inner sep=0.75pt]   [align=left] {$\displaystyle m_{A}^{D}$};
        \draw (52.12,58.48) node [anchor=north west][inner sep=0.75pt]   [align=left] {$\displaystyle \beta _{A,B} =$};
        \draw (52.12,148.51) node [anchor=north west][inner sep=0.75pt]   [align=left] {$\displaystyle \beta _{B,A}^{-1} =$};
        \draw (101.19,33.85) node [anchor=north west][inner sep=0.75pt]   [align=left] {$\displaystyle A$};
        \draw (149.43,33.85) node [anchor=north west][inner sep=0.75pt]   [align=left] {$\displaystyle B$};
        \draw (97.75,122.51) node [anchor=north west][inner sep=0.75pt]   [align=left] {$\displaystyle A$};
        \draw (145.99,122.51) node [anchor=north west][inner sep=0.75pt]   [align=left] {$\displaystyle B$};
        \draw (348.67,101.66) node [anchor=north west][inner sep=0.75pt]   [align=left] {$\displaystyle =$};
        \draw (393.91,36.66) node [anchor=north west][inner sep=0.75pt]   [align=left] {$\displaystyle A$};
        \draw (418.31,36.66) node [anchor=north west][inner sep=0.75pt]   [align=left] {$\displaystyle D$};
        \draw (441.34,36.66) node [anchor=north west][inner sep=0.75pt]   [align=left] {$\displaystyle B$};
        \draw (475.55,98.09) node [anchor=north west][inner sep=0.75pt]   [align=left] {$\displaystyle =$};
        \draw (520.15,36.12) node [anchor=north west][inner sep=0.75pt]   [align=left] {$\displaystyle A$};
        \draw (544.56,36.12) node [anchor=north west][inner sep=0.75pt]   [align=left] {$\displaystyle D$};
        \draw (567.59,36.12) node [anchor=north west][inner sep=0.75pt]   [align=left] {$\displaystyle B$};
        \draw (607.55,98.43) node [anchor=north west][inner sep=0.75pt]   [align=left] {$\displaystyle =$};

        \end{tikzpicture}

    \end{center}

    \begin{center}

        \tikzset{every picture/.style={line width=0.75pt}} 

        \begin{tikzpicture}[x=0.75pt,y=0.75pt,yscale=-0.85,xscale=0.85]
        
        \draw    (589.43,91.12) .. controls (598.2,91.12) and (616.01,115.39) .. (614.88,141.33) ;
        \draw    (466.43,91.12) .. controls (475.2,91.12) and (493.01,115.39) .. (491.88,141.33) ;
        \draw    (328.84,90.12) .. controls (337.61,90.12) and (355.42,114.39) .. (354.29,140.33) ;
        \draw    (500.43,59.24) .. controls (512.51,62.1) and (509.12,79.68) .. (500.22,77.86) ;
        \draw    (483.39,49.27) .. controls (483.68,55.41) and (490.46,57.92) .. (500.43,59.24) ;
        \draw    (345.81,48.27) .. controls (346.09,65.57) and (346.09,65.57) .. (365.6,73.1) ;
        \draw    (365.6,73.1) .. controls (389.34,80.63) and (392.74,122.2) .. (378.32,130.29) ;
        \draw    (213.74,48.27) .. controls (214.02,77.56) and (208.08,95.97) .. (234.38,101.83) ;
        \draw    (196.77,90.12) .. controls (205.54,90.12) and (223.35,114.39) .. (222.22,140.33) ;
        \draw    (234.38,101.83) .. controls (253.88,106.02) and (260.67,122.2) .. (246.25,130.29) ;
        \draw    (58.44,89.28) .. controls (67.2,89.28) and (85.01,113.55) .. (83.88,139.5) ;
        \draw    (75.4,47.43) .. controls (75.68,85.93) and (86.71,114.95) .. (107.07,122.76) ;
        \draw    (58.44,39.06) -- (58.44,89.28) ;
        \draw    (75.4,39.06) -- (75.4,47.43) ;
        \draw    (92.36,39.06) -- (92.36,89.28) ;
        \draw    (41.48,139.5) .. controls (40.91,131.96) and (45.15,90.12) .. (58.44,89.28) ;
        \draw    (92.36,89.28) .. controls (101.98,90.12) and (108.76,122.76) .. (109.33,139.5) ;
        \draw  [color={rgb, 255:red, 255; green, 255; blue, 255 }  ,draw opacity=1 ][line width=1.5] [line join = round][line cap = round] (76.53,107.69) .. controls (77.66,107.69) and (78.79,107.69) .. (79.93,107.69) ;
        \draw  [fill={rgb, 255:red, 184; green, 233; blue, 134 }  ,fill opacity=1 ] (41.48,139.5) -- (66.92,139.5) -- (66.92,156.24) -- (41.48,156.24) -- cycle ;
        \draw  [fill={rgb, 255:red, 184; green, 233; blue, 134 }  ,fill opacity=1 ] (83.88,139.5) -- (109.33,139.5) -- (109.33,156.24) -- (83.88,156.24) -- cycle ;
        \draw  [color={rgb, 255:red, 255; green, 255; blue, 255 }  ,draw opacity=1 ][fill={rgb, 255:red, 255; green, 255; blue, 255 }  ,fill opacity=1 ] (73.71,104.34) -- (78.23,104.34) -- (78.23,108.81) -- (73.71,108.81) -- cycle ;
        \draw  [fill={rgb, 255:red, 255; green, 98; blue, 118 }  ,fill opacity=1 ] (54.2,89.28) .. controls (54.2,86.97) and (56.1,85.09) .. (58.44,85.09) .. controls (60.78,85.09) and (62.68,86.97) .. (62.68,89.28) .. controls (62.68,91.59) and (60.78,93.46) .. (58.44,93.46) .. controls (56.1,93.46) and (54.2,91.59) .. (54.2,89.28) -- cycle ;
        \draw  [color={rgb, 255:red, 255; green, 255; blue, 255 }  ,draw opacity=1 ][fill={rgb, 255:red, 255; green, 255; blue, 255 }  ,fill opacity=1 ] (80.77,93.46) -- (84.17,93.46) -- (84.17,96.81) -- (80.77,96.81) -- cycle ;
        \draw  [fill={rgb, 255:red, 189; green, 16; blue, 224 }  ,fill opacity=1 ] (102.82,122.76) .. controls (102.82,120.45) and (104.72,118.57) .. (107.07,118.57) .. controls (109.41,118.57) and (111.31,120.45) .. (111.31,122.76) .. controls (111.31,125.07) and (109.41,126.94) .. (107.07,126.94) .. controls (104.72,126.94) and (102.82,125.07) .. (102.82,122.76) -- cycle ;
        \draw    (196.77,39.9) -- (196.77,90.12) ;
        \draw    (213.74,39.9) -- (213.74,48.27) ;
        \draw    (230.7,39.9) -- (230.7,90.12) ;
        \draw    (179.81,140.33) .. controls (179.25,132.8) and (183.49,90.95) .. (196.77,90.12) ;
        \draw  [color={rgb, 255:red, 255; green, 255; blue, 255 }  ,draw opacity=1 ][line width=1.5] [line join = round][line cap = round] (214.87,108.53) .. controls (216,108.53) and (217.13,108.53) .. (218.26,108.53) ;
        \draw  [fill={rgb, 255:red, 184; green, 233; blue, 134 }  ,fill opacity=1 ] (179.81,140.33) -- (205.26,140.33) -- (205.26,157.07) -- (179.81,157.07) -- cycle ;
        \draw  [fill={rgb, 255:red, 184; green, 233; blue, 134 }  ,fill opacity=1 ] (222.22,140.33) -- (247.66,140.33) -- (247.66,157.07) -- (222.22,157.07) -- cycle ;
        \draw  [color={rgb, 255:red, 255; green, 255; blue, 255 }  ,draw opacity=1 ][fill={rgb, 255:red, 255; green, 255; blue, 255 }  ,fill opacity=1 ] (212.04,105.18) -- (215.72,105.18) -- (215.72,108.81) -- (212.04,108.81) -- cycle ;
        \draw  [fill={rgb, 255:red, 255; green, 98; blue, 118 }  ,fill opacity=1 ] (192.53,90.12) .. controls (192.53,87.8) and (194.43,85.93) .. (196.77,85.93) .. controls (199.12,85.93) and (201.02,87.8) .. (201.02,90.12) .. controls (201.02,92.43) and (199.12,94.3) .. (196.77,94.3) .. controls (194.43,94.3) and (192.53,92.43) .. (192.53,90.12) -- cycle ;
        \draw  [color={rgb, 255:red, 255; green, 255; blue, 255 }  ,draw opacity=1 ][fill={rgb, 255:red, 255; green, 255; blue, 255 }  ,fill opacity=1 ] (219.11,94.3) -- (222.5,94.3) -- (222.5,97.65) -- (219.11,97.65) -- cycle ;
        \draw  [color={rgb, 255:red, 255; green, 255; blue, 255 }  ,draw opacity=1 ][fill={rgb, 255:red, 255; green, 255; blue, 255 }  ,fill opacity=1 ] (238.62,106.02) -- (243.7,106.02) -- (243.7,101) -- (238.62,101) -- cycle ;
        \draw    (328.84,39.9) -- (328.84,90.12) ;
        \draw    (345.81,39.9) -- (345.81,48.27) ;
        \draw    (311.88,140.33) .. controls (311.32,132.8) and (315.56,90.95) .. (328.84,90.12) ;
        \draw    (362.77,90.12) .. controls (372.38,90.95) and (379.17,123.59) .. (379.73,140.33) ;
        \draw  [color={rgb, 255:red, 255; green, 255; blue, 255 }  ,draw opacity=1 ][line width=1.5] [line join = round][line cap = round] (346.94,108.53) .. controls (348.07,108.53) and (349.2,108.53) .. (350.33,108.53) ;
        \draw  [fill={rgb, 255:red, 184; green, 233; blue, 134 }  ,fill opacity=1 ] (311.88,140.33) -- (337.33,140.33) -- (337.33,157.07) -- (311.88,157.07) -- cycle ;
        \draw  [fill={rgb, 255:red, 184; green, 233; blue, 134 }  ,fill opacity=1 ] (354.29,140.33) -- (379.73,140.33) -- (379.73,157.07) -- (354.29,157.07) -- cycle ;
        \draw  [color={rgb, 255:red, 255; green, 255; blue, 255 }  ,draw opacity=1 ][fill={rgb, 255:red, 255; green, 255; blue, 255 }  ,fill opacity=1 ] (344.11,105.18) -- (347.79,105.18) -- (347.79,108.81) -- (344.11,108.81) -- cycle ;
        \draw  [fill={rgb, 255:red, 255; green, 98; blue, 118 }  ,fill opacity=1 ] (324.6,90.12) .. controls (324.6,87.8) and (326.5,85.93) .. (328.84,85.93) .. controls (331.19,85.93) and (333.08,87.8) .. (333.08,90.12) .. controls (333.08,92.43) and (331.19,94.3) .. (328.84,94.3) .. controls (326.5,94.3) and (324.6,92.43) .. (324.6,90.12) -- cycle ;
        \draw  [color={rgb, 255:red, 255; green, 255; blue, 255 }  ,draw opacity=1 ][fill={rgb, 255:red, 255; green, 255; blue, 255 }  ,fill opacity=1 ] (365.33,75.67) -- (357.33,75.67) -- (357.33,67.67) -- (365.33,67.67) -- cycle ;
        \draw  [fill={rgb, 255:red, 189; green, 16; blue, 224 }  ,fill opacity=1 ] (374.08,130.29) .. controls (374.08,127.98) and (375.98,126.11) .. (378.32,126.11) .. controls (380.66,126.11) and (382.56,127.98) .. (382.56,130.29) .. controls (382.56,132.6) and (380.66,134.47) .. (378.32,134.47) .. controls (375.98,134.47) and (374.08,132.6) .. (374.08,130.29) -- cycle ;
        \draw    (466.43,40.9) -- (466.43,91.12) ;
        \draw    (483.39,40.9) -- (483.39,49.27) ;
        \draw    (449.47,141.33) .. controls (448.9,133.8) and (453.14,91.95) .. (466.43,91.12) ;
        \draw    (500.36,91.12) .. controls (509.97,91.95) and (516.75,124.59) .. (517.32,141.33) ;
        \draw  [color={rgb, 255:red, 255; green, 255; blue, 255 }  ,draw opacity=1 ][line width=1.5] [line join = round][line cap = round] (484.53,109.53) .. controls (485.66,109.53) and (486.79,109.53) .. (487.92,109.53) ;
        \draw  [fill={rgb, 255:red, 184; green, 233; blue, 134 }  ,fill opacity=1 ] (449.47,141.33) -- (474.91,141.33) -- (474.91,158.07) -- (449.47,158.07) -- cycle ;
        \draw  [fill={rgb, 255:red, 184; green, 233; blue, 134 }  ,fill opacity=1 ] (491.88,141.33) -- (517.32,141.33) -- (517.32,158.07) -- (491.88,158.07) -- cycle ;
        \draw  [color={rgb, 255:red, 255; green, 255; blue, 255 }  ,draw opacity=1 ][fill={rgb, 255:red, 255; green, 255; blue, 255 }  ,fill opacity=1 ] (481.7,106.18) -- (485.37,106.18) -- (485.37,109.81) -- (481.7,109.81) -- cycle ;
        \draw  [fill={rgb, 255:red, 255; green, 98; blue, 118 }  ,fill opacity=1 ] (462.19,91.12) .. controls (462.19,88.8) and (464.09,86.93) .. (466.43,86.93) .. controls (468.77,86.93) and (470.67,88.8) .. (470.67,91.12) .. controls (470.67,93.43) and (468.77,95.3) .. (466.43,95.3) .. controls (464.09,95.3) and (462.19,93.43) .. (462.19,91.12) -- cycle ;
        \draw  [color={rgb, 255:red, 255; green, 255; blue, 255 }  ,draw opacity=1 ][fill={rgb, 255:red, 255; green, 255; blue, 255 }  ,fill opacity=1 ] (496.8,62.92) -- (504.05,62.92) -- (504.05,55.56) -- (496.8,55.56) -- cycle ;
        \draw    (589.43,40.9) -- (589.43,91.12) ;
        \draw    (606.39,40.9) -- (606.39,49.27) ;
        \draw    (623.36,40.9) -- (623.36,91.12) ;
        \draw    (572.47,141.33) .. controls (571.9,133.8) and (576.14,91.95) .. (589.43,91.12) ;
        \draw    (623.36,91.12) .. controls (632.97,91.95) and (639.75,124.59) .. (640.32,141.33) ;
        \draw  [color={rgb, 255:red, 255; green, 255; blue, 255 }  ,draw opacity=1 ][line width=1.5] [line join = round][line cap = round] (607.53,109.53) .. controls (608.66,109.53) and (609.79,109.53) .. (610.92,109.53) ;
        \draw  [fill={rgb, 255:red, 184; green, 233; blue, 134 }  ,fill opacity=1 ] (572.47,141.33) -- (597.91,141.33) -- (597.91,158.07) -- (572.47,158.07) -- cycle ;
        \draw  [fill={rgb, 255:red, 184; green, 233; blue, 134 }  ,fill opacity=1 ] (614.88,141.33) -- (640.32,141.33) -- (640.32,158.07) -- (614.88,158.07) -- cycle ;
        \draw  [color={rgb, 255:red, 255; green, 255; blue, 255 }  ,draw opacity=1 ][fill={rgb, 255:red, 255; green, 255; blue, 255 }  ,fill opacity=1 ] (604.7,106.18) -- (608.37,106.18) -- (608.37,109.81) -- (604.7,109.81) -- cycle ;
        \draw  [fill={rgb, 255:red, 255; green, 98; blue, 118 }  ,fill opacity=1 ] (585.19,91.12) .. controls (585.19,88.8) and (587.09,86.93) .. (589.43,86.93) .. controls (591.77,86.93) and (593.67,88.8) .. (593.67,91.12) .. controls (593.67,93.43) and (591.77,95.3) .. (589.43,95.3) .. controls (587.09,95.3) and (585.19,93.43) .. (585.19,91.12) -- cycle ;
        \draw    (606.39,49.27) .. controls (607.33,72.33) and (608.33,72.33) .. (623.22,77.86) ;
        \draw  [fill={rgb, 255:red, 189; green, 16; blue, 224 }  ,fill opacity=1 ] (618.97,77.86) .. controls (618.97,75.55) and (620.87,73.68) .. (623.22,73.68) .. controls (625.56,73.68) and (627.46,75.55) .. (627.46,77.86) .. controls (627.46,80.18) and (625.56,82.05) .. (623.22,82.05) .. controls (620.87,82.05) and (618.97,80.18) .. (618.97,77.86) -- cycle ;
        \draw    (66.92,139.5) .. controls (66.35,114.39) and (84.17,89.28) .. (92.36,89.28) ;
        \draw  [fill={rgb, 255:red, 255; green, 98; blue, 118 }  ,fill opacity=1 ] (88.12,89.28) .. controls (88.12,86.97) and (90.02,85.09) .. (92.36,85.09) .. controls (94.71,85.09) and (96.61,86.97) .. (96.61,89.28) .. controls (96.61,91.59) and (94.71,93.46) .. (92.36,93.46) .. controls (90.02,93.46) and (88.12,91.59) .. (88.12,89.28) -- cycle ;
        \draw    (230.7,90.12) .. controls (240.31,90.95) and (247.1,123.59) .. (247.66,140.33) ;
        \draw    (205.26,140.33) .. controls (204.69,115.22) and (222.5,90.12) .. (230.7,90.12) ;
        \draw  [fill={rgb, 255:red, 255; green, 98; blue, 118 }  ,fill opacity=1 ] (226.46,90.12) .. controls (226.46,87.8) and (228.36,85.93) .. (230.7,85.93) .. controls (233.04,85.93) and (234.94,87.8) .. (234.94,90.12) .. controls (234.94,92.43) and (233.04,94.3) .. (230.7,94.3) .. controls (228.36,94.3) and (226.46,92.43) .. (226.46,90.12) -- cycle ;
        \draw  [fill={rgb, 255:red, 189; green, 16; blue, 224 }  ,fill opacity=1 ] (242.01,130.29) .. controls (242.01,127.98) and (243.91,126.11) .. (246.25,126.11) .. controls (248.59,126.11) and (250.49,127.98) .. (250.49,130.29) .. controls (250.49,132.6) and (248.59,134.47) .. (246.25,134.47) .. controls (243.91,134.47) and (242.01,132.6) .. (242.01,130.29) -- cycle ;
        \draw    (362.77,39.9) -- (362.77,74.33) -- (362.77,90.12) ;
        \draw    (500.36,40.9) -- (500.36,55.33) -- (500.36,91.12) ;
        \draw  [fill={rgb, 255:red, 189; green, 16; blue, 224 }  ,fill opacity=1 ] (495.97,77.86) .. controls (495.97,75.55) and (497.87,73.68) .. (500.22,73.68) .. controls (502.56,73.68) and (504.46,75.55) .. (504.46,77.86) .. controls (504.46,80.18) and (502.56,82.05) .. (500.22,82.05) .. controls (497.87,82.05) and (495.97,80.18) .. (495.97,77.86) -- cycle ;
        \draw    (337.33,140.33) .. controls (336.76,115.22) and (354.57,90.12) .. (362.77,90.12) ;
        \draw  [fill={rgb, 255:red, 255; green, 98; blue, 118 }  ,fill opacity=1 ] (358.53,90.12) .. controls (358.53,87.8) and (360.43,85.93) .. (362.77,85.93) .. controls (365.11,85.93) and (367.01,87.8) .. (367.01,90.12) .. controls (367.01,92.43) and (365.11,94.3) .. (362.77,94.3) .. controls (360.43,94.3) and (358.53,92.43) .. (358.53,90.12) -- cycle ;
        \draw    (474.91,141.33) .. controls (474.35,116.22) and (492.16,91.12) .. (500.36,91.12) ;
        \draw  [fill={rgb, 255:red, 255; green, 98; blue, 118 }  ,fill opacity=1 ] (496.12,91.12) .. controls (496.12,88.8) and (498.01,86.93) .. (500.36,86.93) .. controls (502.7,86.93) and (504.6,88.8) .. (504.6,91.12) .. controls (504.6,93.43) and (502.7,95.3) .. (500.36,95.3) .. controls (498.01,95.3) and (496.12,93.43) .. (496.12,91.12) -- cycle ;
        \draw    (597.91,141.33) .. controls (597.35,116.22) and (615.16,91.12) .. (623.36,91.12) ;
        \draw  [fill={rgb, 255:red, 255; green, 98; blue, 118 }  ,fill opacity=1 ] (619.12,91.12) .. controls (619.12,88.8) and (621.01,86.93) .. (623.36,86.93) .. controls (625.7,86.93) and (627.6,88.8) .. (627.6,91.12) .. controls (627.6,93.43) and (625.7,95.3) .. (623.36,95.3) .. controls (621.01,95.3) and (619.12,93.43) .. (619.12,91.12) -- cycle ;
        
        \draw (44.65,20.86) node [anchor=north west][inner sep=0.75pt]   [align=left] {$\displaystyle A$};
        \draw (69.17,20.86) node [anchor=north west][inner sep=0.75pt]   [align=left] {$\displaystyle D$};
        \draw (92.3,20.86) node [anchor=north west][inner sep=0.75pt]   [align=left] {$\displaystyle B$};
        \draw (114.99,104.59) node [anchor=north west][inner sep=0.75pt]   [align=left] {$\displaystyle m_{B}^{D}$};
        \draw (182.99,21.69) node [anchor=north west][inner sep=0.75pt]   [align=left] {$\displaystyle A$};
        \draw (207.51,21.69) node [anchor=north west][inner sep=0.75pt]   [align=left] {$\displaystyle D$};
        \draw (230.64,21.69) node [anchor=north west][inner sep=0.75pt]   [align=left] {$\displaystyle B$};
        \draw (140.2,82.23) node [anchor=north west][inner sep=0.75pt]   [align=left] {$\displaystyle =$};
        \draw (270.06,82.23) node [anchor=north west][inner sep=0.75pt]   [align=left] {$\displaystyle =$};
        \draw (315.06,21.69) node [anchor=north west][inner sep=0.75pt]   [align=left] {$\displaystyle A$};
        \draw (339.58,21.69) node [anchor=north west][inner sep=0.75pt]   [align=left] {$\displaystyle D$};
        \draw (362.71,21.69) node [anchor=north west][inner sep=0.75pt]   [align=left] {$\displaystyle B$};
        \draw (452.65,22.69) node [anchor=north west][inner sep=0.75pt]   [align=left] {$\displaystyle A$};
        \draw (477.17,22.69) node [anchor=north west][inner sep=0.75pt]   [align=left] {$\displaystyle D$};
        \draw (500.29,22.69) node [anchor=north west][inner sep=0.75pt]   [align=left] {$\displaystyle B$};
        \draw (404,84) node [anchor=north west][inner sep=0.75pt]   [align=left] {$\displaystyle =$};
        \draw (575.65,22.69) node [anchor=north west][inner sep=0.75pt]   [align=left] {$\displaystyle A$};
        \draw (600.17,22.69) node [anchor=north west][inner sep=0.75pt]   [align=left] {$\displaystyle D$};
        \draw (623.29,22.69) node [anchor=north west][inner sep=0.75pt]   [align=left] {$\displaystyle B$};
        \draw (11.55,165.12) node [anchor=north west][inner sep=0.75pt]   [align=left] {$\displaystyle A\otimes _{D} B$};
        \draw (78,166.12) node [anchor=north west][inner sep=0.75pt]   [align=left] {$\displaystyle A\otimes _{D} B$};
        \draw (531,83) node [anchor=north west][inner sep=0.75pt]   [align=left] {$\displaystyle =$};

        \end{tikzpicture}
        
    \end{center}

The first equality holds since $\delta_A$ is an $A$-$A$ bimodule map and hence also a $D$-$D$ bimodule map. The second equality holds by the naturality of the braiding. The third equality follows from the definition of $A\otimes_D B$. The fourth and seventh equalities hold due to the fact that $D$ factors through $C_l(B)$. The fifth equality again holds by the naturality of the braiding. The sixth equality holds since $\delta_B$ is a $B$-$B$ bimodule map and hence also a $D$-$D$ bimodule map. Therefore, by the universal property of the coequalizer, there exists a unique morphism 
$
\delta_{A\otimes_D B}: A\otimes_D B \to (A\otimes_D B)\otimes (A\otimes_D B)
$
such that 
$
(\rho\otimes \rho)\circ \delta_{A\otimes B}=\delta_{A\otimes_D B}\circ \rho.
$ We also have $\rho\circ m_{A\otimes B}\circ \delta_{A\otimes B}=m_{A\otimes_D B}\circ \delta_{A\otimes_D B}\circ \rho$. By the universal property of the coequalizer, it follows that $m_{A\otimes_D B}\circ \delta_{A\otimes_D B}=id_{A\otimes_D B}$. 

From the definition of $m_{A\otimes_D B}$ and $\delta_{A\otimes_D B}$, together with the identity $\delta_{A\otimes B}\circ m_{A\otimes B}=(m_{A\otimes B}\otimes id)\circ(id\otimes \delta_{A\otimes B})$, we obtain
$
\delta_{A\otimes_D B}\circ m_{A\otimes_D B}\circ(\rho\otimes \rho)=(m_{A\otimes_D B}\otimes id)\circ(id\otimes \delta_{A\otimes_D B})\circ (\rho\otimes \rho).
$
By \cite[Lemma~4.5]{davydov2015functoriality}, $\rho\otimes \rho$ is a coequalizer, and by its universal property we conclude that
$
\delta_{A\otimes_D B}\circ m_{A\otimes_D B}=(m_{A\otimes_D B}\otimes id)\circ(id\otimes \delta_{A\otimes_D B}).
$
Similarly, one shows that
$
\delta_{A\otimes_D B}\circ m_{A\otimes_D B}=(id\otimes m_{A\otimes_D B})\circ(\delta_{A\otimes_D B}\otimes id).
$
Thus, $\delta_{A\otimes_D B}$ is an $A\otimes_D B$-$A\otimes_D B$ bimodule map. Therefore, $A\otimes_D B$ is a separable algebra.
\end{proof}

\begin{corollary}
    Let $\m{C}$ be a braided fusion category, let $A$ and $B$ be condensable algebras in $\m{C}$, and let $D$ be a common condensable subalgebra of $A$ and $B$. Then $M = A \otimes_D B$ is a separable algebra.
\end{corollary}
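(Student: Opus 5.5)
The plan is to deduce the corollary directly from Lemma~\ref{lemma:separable}, applied with the given embeddings $\iota_A:D\to A$ and $\iota_B:D\to B$ playing the roles of $\pi_A$ and $\pi_B$. Both embeddings are algebra homomorphisms. Moreover, $A$ and $B$ are separable, since condensable algebras are by definition connected, commutative and separable. The only hypothesis left to check is the factorization condition: $\iota_B$ must factor through the left center $C_l(B)$, and $\iota_A$ must factor through the right center $C_r(A)$.

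For this I will use commutativity. Since $B$ is commutative, $m_B=m_B\circ\beta_{B,B}$. Let $f=\iota_B:D\to B$. Naturality of the braiding gives
\[
m_B\circ(\id_B\otimes f)\circ\beta_{D,B}=m_B\circ\beta_{B,B}\circ(f\otimes\id_B)=m_B\circ(f\otimes\id_B).
\]
This is exactly the commutativity of the diagram defining $C_l(B)$ for the pair $(D,\iota_B)$. By the universal (terminal) property of $C_l(B)$, the morphism $\iota_B$ factors uniquely through $C_l(B)\to B$. The same argument with $\beta_{A,D}$ shows that $\iota_A$ factors through $C_r(A)$.

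For $\iota_A$ this uses $m_A=m_A\circ\beta_{A,A}$, which is the commutativity convention fixed in the definition. Should the right-center condition instead require the inverse braiding, I would use the identity $m_A=m_A\circ\beta_{A,A}^{-1}$. This follows by precomposing $m_A=m_A\circ\beta_{A,A}$ with $\beta_{A,A}^{-1}$. So either convention is covered.

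With all hypotheses verified, Lemma~\ref{lemma:separable} shows that $M=A\otimes_D B$ is separable. There is no real obstacle here. The only point needing care is matching the braiding directions in the definitions of the left and right centers, and I have checked that this is harmless because $A$ and $B$ are commutative in both senses.
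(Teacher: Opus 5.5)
Your proposal is correct and follows the paper's route: the paper states this corollary without separate proof, as an immediate consequence of Lemma~\ref{lemma:separable} applied to the embeddings $\iota_A$ and $\iota_B$. Your naturality-of-braiding computation supplies the one implicit step, namely that commutativity of $A$ and $B$ makes $\iota_A$ and $\iota_B$ factor through $C_r(A)$ and $C_l(B)$.
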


\begin{lemma}
    Let $\m{C}$ be a braided monoidal category, and let $A, B \in \m{C}$ be commutative algebras. Then the algebra homomorphisms $id \otimes u_B : A \to A \otimes B$ and $u_A \otimes id : B \to A \otimes B$ factor through $C_l(A \otimes B)$ and $C_r(A \otimes B)$, respectively.

\end{lemma}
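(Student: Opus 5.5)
We need to show that $j_A := \id_A\otimes u_B : A\to A\otimes B$ factors through $C_l(A\otimes B)$. By the universal property of the left center, this amounts to checking a single diagram for $f=j_A$:
$$
m_{A\otimes B}\circ (j_A\otimes \id_{A\otimes B}) = m_{A\otimes B}\circ(\id_{A\otimes B}\otimes j_A)\circ \beta_{A,A\otimes B}.
$$
The symmetric claim for $u_A\otimes \id_B$ and $C_r(A\otimes B)$ is the analogous right-center diagram. Once the diagram holds, the terminal property gives the factorization directly; no algebra-structure argument is needed.

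Recall the multiplication used in Lemma~\ref{lemma:separable}:
$$
m_{A\otimes B}=(m_A\otimes m_B)\circ(\id_A\otimes\beta_{A,B}^{-1}\otimes\id_B).
$$
Evaluate the left side on $a\otimes(a'\otimes b)$. Since the unit $u_B$ sits in the $B$-slot of the first factor, the inverse braiding $\beta^{-1}_{A,B}$ acts on $B$ at the unit $1$. By naturality it is therefore trivial, and the left side reduces to $m_A(a\otimes a')\otimes b$, i.e. $(m_A\otimes\id_B)$ applied to $a\otimes a'\otimes b$.

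For the right side, use the hexagon axiom to decompose
$$
\beta_{A,A\otimes B}=(\id_A\otimes\beta_{A,B})\circ(\beta_{A,A}\otimes\id_B).
$$
Then insert $u_B$ in the $B$-slot of the second factor. The term $m_{A\otimes B}$ applies $\beta_{A,B}^{-1}$ to the strand of $A$ that $\beta_{A,B}$ has just moved past $B$, so these two crossings cancel. Drawn graphically, the $A$-strand crosses the $B$-strand once and then crosses back with the inverse braiding. The unit $u_B$ is then absorbed by $m_B$, and what remains is
$$
(m_A\circ\beta_{A,A})\otimes\id_B.
$$
Commutativity of $A$ gives $m_A\circ\beta_{A,A}=m_A$, so the two sides agree.

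The only delicate point is bookkeeping, namely matching the convention $\beta^{-1}$ in $m_{A\otimes B}$ with the convention $\beta_{X,A}$ in the left-center diagram. If the conventions mismatch, the crossings compose to a double braiding instead of cancelling. In that case one instead uses the inverse-braiding variant of the argument, which is still closed by commutativity of $A$ (equivalently $m_A=m_A\circ\beta_{A,A}^{-1}$). For $u_A\otimes\id_B$ the same computation runs with the roles reversed and uses commutativity of $B$, giving factorization through $C_r(A\otimes B)$.
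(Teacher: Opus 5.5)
Your proposal is correct and is essentially the paper's argument: a direct check of the defining diagram of the center via the hexagon decomposition of the braiding, cancellation of $\beta_{A,B}$ against the $\beta_{A,B}^{-1}$ in $m_{A\otimes B}$, absorption of the unit, and commutativity. The paper writes out the $u_A\otimes\id$ / $C_r(A\otimes B)$ case graphically (using commutativity of $B$) and says your case is similar. With the paper's conventions the crossings genuinely cancel, so your last paragraph is unnecessary, and its fallback claim is wrong: under the opposite convention the leftover double braiding of $A$ around $B$ cannot be removed by commutativity of $A$, and $\id_A\otimes u_B$ would instead factor through $C_r(A\otimes B)$.
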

\begin{proof}
    By the definition of the right center, to prove that $u_A \otimes id$ factors through the right center, it suffices to show that the following diagram is commutative:

    \begin{center}
        \begin{tikzcd}
            (A\otimes B)\otimes B \arrow[dd, "{\beta_{A\otimes B,B}}"'] \arrow[r, "id\otimes (u_A\otimes id)"] & (A\otimes B)\otimes (A\otimes B) \arrow[rd, "m_{A\otimes B}"] &            \\
                                                                                                               &                                                               & A\otimes B \\
            B\otimes (A\otimes B) \arrow[r, "(u_A\otimes id)\otimes id"]                                       & (A\otimes B)\otimes (A\otimes B). \arrow[ru, "m_{A\otimes B}"] &         
            \end{tikzcd}
    \end{center}
    This diagram commutes due to the following identities:
    \begin{center}

        \tikzset{every picture/.style={line width=0.75pt}} 

        \begin{tikzpicture}[x=0.75pt,y=0.75pt,yscale=-0.85,xscale=0.85]
        
        \draw    (541.33,91.33) .. controls (541.33,105.33) and (545.33,127.33) .. (516.22,139.23) ;
        \draw    (112.31,110.71) .. controls (112.61,124.26) and (111.71,132.1) .. (94.36,139.23) ;
        \draw    (76.41,46.52) .. controls (77.61,67.21) and (112.61,89.31) .. (112.31,110.71) ;
        \draw    (94.36,46.52) .. controls (95.56,67.21) and (130.56,89.31) .. (130.26,110.71) ;
        \draw    (94.36,103.58) -- (94.36,110.71) ;
        \draw    (76.41,110.71) .. controls (76.71,139.23) and (94.66,139.23) .. (94.36,139.23) ;
        \draw    (94.36,139.23) -- (94.36,160.63) ;
        \draw    (121.29,139.23) -- (121.29,160.63) ;
        \draw    (130.26,110.71) .. controls (130.56,111.42) and (130.56,139.23) .. (121.29,139.23) ;
        \draw  [color={rgb, 255:red, 255; green, 255; blue, 255 }  ,draw opacity=1 ][fill={rgb, 255:red, 255; green, 255; blue, 255 }  ,fill opacity=1 ] (118.29,70.77) -- (110,70.77) -- (110,80) -- (118.29,80) -- cycle ;
        \draw  [color={rgb, 255:red, 255; green, 255; blue, 255 }  ,draw opacity=1 ][fill={rgb, 255:red, 255; green, 255; blue, 255 }  ,fill opacity=1 ] (102.44,130.68) -- (107.23,130.68) -- (107.23,135.67) -- (102.44,135.67) -- cycle ;
        \draw  [color={rgb, 255:red, 255; green, 255; blue, 255 }  ,draw opacity=1 ][fill={rgb, 255:red, 255; green, 255; blue, 255 }  ,fill opacity=1 ] (100,85.01) -- (104.79,85.01) -- (104.79,90) -- (100,90) -- cycle ;
        \draw    (94.36,103.58) .. controls (94.66,81.47) and (128.77,68.63) .. (130.26,46.52) ;
        \draw    (220.33,51) .. controls (221.33,70) and (247.17,89.31) .. (246.87,110.71) ;
        \draw    (240.33,51) .. controls (240.33,70) and (265.2,82.18) .. (264.82,110.71) ;
        \draw    (257.95,138.53) -- (257.95,159.92) ;
        \draw    (264.82,110.71) .. controls (265.2,139.95) and (257.33,139) .. (257.95,138.53) ;
        \draw  [color={rgb, 255:red, 255; green, 255; blue, 255 }  ,draw opacity=1 ][fill={rgb, 255:red, 255; green, 255; blue, 255 }  ,fill opacity=1 ] (251.33,77.67) -- (257.33,77.67) -- (257.33,82.67) -- (251.33,82.67) -- cycle ;
        \draw  [color={rgb, 255:red, 255; green, 255; blue, 255 }  ,draw opacity=1 ][fill={rgb, 255:red, 255; green, 255; blue, 255 }  ,fill opacity=1 ] (246.33,92.67) -- (241.33,92.67) -- (241.33,98) -- (246.33,98) -- cycle ;
        \draw    (240,110) .. controls (240.3,87.89) and (268.84,73.11) .. (270.33,51) ;
        \draw    (246.87,110.71) .. controls (247.17,124.26) and (229.33,130) .. (230.33,160) ;
        \draw    (378.6,136.23) -- (378.6,157.63) ;
        \draw    (396.57,50.66) .. controls (397.76,129.1) and (395.96,129.1) .. (378.6,136.23) ;
        \draw    (498.28,53.66) -- (498.28,103.58) ;
        \draw    (558.33,53.84) -- (558.33,96.63) ;
        \draw    (498.28,103.58) .. controls (498.58,132.1) and (516.52,139.23) .. (516.22,139.23) ;
        \draw    (516.22,139.23) -- (516.22,160.63) ;
        \draw  [color={rgb, 255:red, 255; green, 255; blue, 255 }  ,draw opacity=1 ][fill={rgb, 255:red, 255; green, 255; blue, 255 }  ,fill opacity=1 ] (525.51,133.58) -- (533.33,133.58) -- (533.33,127.52) -- (525.51,127.52) -- cycle ;
        \draw    (539.57,139.95) .. controls (552.43,139.23) and (559.33,120) .. (558.33,96.63) ;
        \draw  [fill={rgb, 255:red, 255; green, 193; blue, 245 }  ,fill opacity=1 ] (89.47,139.23) .. controls (89.47,136.53) and (91.66,134.33) .. (94.36,134.33) .. controls (97.06,134.33) and (99.25,136.53) .. (99.25,139.23) .. controls (99.25,141.94) and (97.06,144.13) .. (94.36,144.13) .. controls (91.66,144.13) and (89.47,141.94) .. (89.47,139.23) -- cycle ;
        \draw  [fill={rgb, 255:red, 80; green, 227; blue, 194 }  ,fill opacity=1 ] (71.52,110.71) .. controls (71.52,108) and (73.71,105.81) .. (76.41,105.81) .. controls (79.11,105.81) and (81.3,108) .. (81.3,110.71) .. controls (81.3,113.41) and (79.11,115.61) .. (76.41,115.61) .. controls (73.71,115.61) and (71.52,113.41) .. (71.52,110.71) -- cycle ;
        \draw  [fill={rgb, 255:red, 255; green, 193; blue, 245 }  ,fill opacity=1 ] (511.33,139.23) .. controls (511.33,136.53) and (513.52,134.33) .. (516.22,134.33) .. controls (518.92,134.33) and (521.11,136.53) .. (521.11,139.23) .. controls (521.11,141.94) and (518.92,144.13) .. (516.22,144.13) .. controls (513.52,144.13) and (511.33,141.94) .. (511.33,139.23) -- cycle ;
        \draw    (94.36,110.71) .. controls (94.66,131.63) and (111.71,139.95) .. (121.29,139.23) ;
        \draw  [fill={rgb, 255:red, 255; green, 193; blue, 245 }  ,fill opacity=1 ] (116.4,139.23) .. controls (116.4,136.53) and (118.59,134.33) .. (121.29,134.33) .. controls (123.99,134.33) and (126.18,136.53) .. (126.18,139.23) .. controls (126.18,141.94) and (123.99,144.13) .. (121.29,144.13) .. controls (118.59,144.13) and (116.4,141.94) .. (116.4,139.23) -- cycle ;
        \draw  [color={rgb, 255:red, 255; green, 255; blue, 255 }  ,draw opacity=1 ][fill={rgb, 255:red, 255; green, 255; blue, 255 }  ,fill opacity=1 ] (245.33,121) -- (239.33,121) -- (239.33,127) -- (245.33,127) -- cycle ;
        \draw    (240,110) .. controls (240.3,138.53) and (258.25,138.53) .. (257.95,138.53) ;
        \draw  [fill={rgb, 255:red, 255; green, 193; blue, 245 }  ,fill opacity=1 ] (253.06,138.53) .. controls (253.06,135.82) and (255.25,133.63) .. (257.95,133.63) .. controls (260.65,133.63) and (262.84,135.82) .. (262.84,138.53) .. controls (262.84,141.23) and (260.65,143.43) .. (257.95,143.43) .. controls (255.25,143.43) and (253.06,141.23) .. (253.06,138.53) -- cycle ;
        \draw    (360.66,50.66) -- (360.66,100.58) ;
        \draw    (360.66,100.58) .. controls (360.96,129.1) and (378.9,136.23) .. (378.6,136.23) ;
        \draw  [fill={rgb, 255:red, 255; green, 193; blue, 245 }  ,fill opacity=1 ] (373.72,136.23) .. controls (373.72,133.53) and (375.9,131.33) .. (378.6,131.33) .. controls (381.3,131.33) and (383.49,133.53) .. (383.49,136.23) .. controls (383.49,138.94) and (381.3,141.13) .. (378.6,141.13) .. controls (375.9,141.13) and (373.72,138.94) .. (373.72,136.23) -- cycle ;
        \draw    (539.57,139.95) -- (539.33,160.67) ;
        \draw    (517.33,55.67) .. controls (518.33,130.67) and (516.33,119.33) .. (539.57,139.95) ;
        \draw  [fill={rgb, 255:red, 255; green, 193; blue, 245 }  ,fill opacity=1 ] (534.68,139.95) .. controls (534.68,137.24) and (536.87,135.05) .. (539.57,135.05) .. controls (542.27,135.05) and (544.46,137.24) .. (544.46,139.95) .. controls (544.46,142.65) and (542.27,144.85) .. (539.57,144.85) .. controls (536.87,144.85) and (534.68,142.65) .. (534.68,139.95) -- cycle ;
        \draw  [fill={rgb, 255:red, 80; green, 227; blue, 194 }  ,fill opacity=1 ] (536.44,91.33) .. controls (536.44,88.63) and (538.63,86.43) .. (541.33,86.43) .. controls (544.03,86.43) and (546.22,88.63) .. (546.22,91.33) .. controls (546.22,94.04) and (544.03,96.23) .. (541.33,96.23) .. controls (538.63,96.23) and (536.44,94.04) .. (536.44,91.33) -- cycle ;
        \draw    (340.66,50.66) -- (341.33,157.33) ;
        
        \draw (160.5,90.3) node [anchor=north west][inner sep=0.75pt]   [align=left] {$\displaystyle =$};
        \draw (448.63,88.87) node [anchor=north west][inner sep=0.75pt]   [align=left] {$\displaystyle =$};
        \draw (298.73,90.3) node [anchor=north west][inner sep=0.75pt]   [align=left] {$\displaystyle =$};
        \draw (125,30) node [anchor=north west][inner sep=0.75pt]   [align=left] {$\displaystyle B$};
        \draw (61.64,29.68) node [anchor=north west][inner sep=0.75pt]   [align=left] {$\displaystyle A\otimes B$};
        \draw (85,160.46) node [anchor=north west][inner sep=0.75pt]   [align=left] {$\displaystyle A$};
        \draw (117,160.46) node [anchor=north west][inner sep=0.75pt]   [align=left] {$\displaystyle B$};
        \draw (54,140) node [anchor=north west][inner sep=0.75pt]   [align=left] {$\displaystyle m_{A}$};
        \draw (131,134.46) node [anchor=north west][inner sep=0.75pt]   [align=left] {$\displaystyle m_{B}$};
        \draw (526.19,67.5) node [anchor=north west][inner sep=0.75pt]   [align=left] {$\displaystyle u_{A}$};

        \end{tikzpicture}
        
\end{center}
Similarly, we can prove that $id\otimes u_B$ factors through $C_r(A\otimes B)$.
\end{proof}
\begin{lemma}\label{lemma:factor through center}
    Let $\m{C}$ be a braided monoidal category, and let $D \in \m{C}$ be an algebra, while $A, B \in \m{C}$ are commutative algebras. Suppose there exist algebra homomorphisms $\pi_A : D \to A$ and $\pi_B : D \to B$. Then the algebra homomorphisms $f_A : A \to A \otimes_D B$ and $f_B : B \to A \otimes_D B$, defined in Equation~\ref{fAfB}, factor through $C_l(A \otimes_D B)$ and $C_r(A \otimes_D B)$, respectively.
\end{lemma}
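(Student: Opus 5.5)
The plan is to reduce everything to the previous lemma by pushing the centrality relation for $A\otimes B$ through the coequalizer $\rho:A\otimes B\to A\otimes_D B$. By definition $f_A=\rho\circ(\id_A\otimes u_B)$ and $f_B=\rho\circ(u_A\otimes \id_B)$, and $\rho$ is an algebra homomorphism: the multiplication of $A\otimes_D B$ is, by construction, the unique map with $m_{A\otimes_D B}\circ(\rho\otimes\rho)=\rho\circ m_{A\otimes B}$. Note that here $\rho\otimes\rho$ is itself a coequalizer, as used in the proof of Lemma~\ref{lemma:separable}, via \cite[Lemma~4.5]{davydov2015functoriality}.

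For $f_A$ and the left center, we must show
\[
m_{A\otimes_D B}\circ(f_A\otimes \id)=m_{A\otimes_D B}\circ(\id\otimes f_A)\circ\beta_{A,A\otimes_D B}
\]
as maps $A\otimes(A\otimes_D B)\to A\otimes_D B$. Since $\id_A\otimes\rho$ is an epimorphism (the tensor product is exact, and $\rho$ is a cokernel-type epi), it suffices to precompose with $\id_A\otimes\rho$. Naturality of the braiding gives
\[
\beta_{A,A\otimes_D B}\circ(\id_A\otimes\rho)=(\rho\otimes\id_A)\circ\beta_{A,A\otimes B}.
\]
Writing $f_A=\rho\circ g_A$ with $g_A=\id_A\otimes u_B$, both sides then become $m_{A\otimes_D B}\circ(\rho\otimes\rho)\circ(\cdots)$. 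Applying $m_{A\otimes_D B}\circ(\rho\otimes\rho)=\rho\circ m_{A\otimes B}$, the two sides reduce to
\[
\rho\circ m_{A\otimes B}\circ(g_A\otimes\id)\quad\text{and}\quad\rho\circ m_{A\otimes B}\circ(\id\otimes g_A)\circ\beta_{A,A\otimes B}.
\]
These agree because the previous lemma shows $g_A$ factors through $C_l(A\otimes B)$, and this is exactly the defining diagram of the left center. The argument for $f_B$ and $C_r$ is symmetric, using $g_B=u_A\otimes\id_B$ and $C_r(A\otimes B)$.

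The main obstacle is bookkeeping of conventions. The previous lemma's statement and its concluding sentence disagree on which of $\id\otimes u_B$ and $u_A\otimes\id$ lands in $C_l$ versus $C_r$. This depends on the choice of multiplication $m_{A\otimes B}=(m_A\otimes m_B)\circ(\id\otimes\beta^{-1}_{A,B}\otimes\id)$ used in Lemma~\ref{lemma:separable}. I would first recheck directly, using commutativity of $A$ and $B$, that with this multiplication $\id\otimes u_B$ satisfies the left-center diagram and $u_A\otimes\id$ the right one. If the sides come out swapped, the same descent argument still works with the labels interchanged.

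A second point to verify is that $\rho\otimes\id$ and $\id\otimes\rho$ are epimorphisms. In a fusion (or abelian, exact-tensor) setting this is immediate; in a general braided monoidal category one needs the hypothesis, implicit in defining $A\otimes_D B$, that tensor products preserve these coequalizers. I would state this explicitly. Commutativity of $D$ is not needed; only the algebra maps $\pi_A,\pi_B$ are used, to make $\rho$ an algebra map.
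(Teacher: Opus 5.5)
Your proposal is correct and is essentially the paper's proof: precompose with $\id_A\otimes\rho$, use naturality of the braiding and $m_{A\otimes_D B}\circ(\rho\otimes\rho)=\rho\circ m_{A\otimes B}$, invoke the left-centrality of $\id_A\otimes u_B$ in $A\otimes B$ from the previous lemma, and cancel $\id_A\otimes\rho$ using the coequalizer property. Your convention worry resolves in favour of the previous lemma's statement: a direct check with $m_{A\otimes B}=(m_A\otimes m_B)\circ(\id\otimes\beta^{-1}_{A,B}\otimes\id)$ shows that $\id_A\otimes u_B$ lands in $C_l(A\otimes B)$ using only commutativity of $A$, so the $C_r$ in the last sentence of that proof is just a typo.
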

\begin{proof}
    Denote by $\rho : A \otimes B \to A \otimes_D B$ the morphism in the coequalizer. Since we have  
$$
(f_A\otimes id_{A \otimes B}) \circ (id_A \otimes \rho) 
= (\rho \otimes \rho) \circ ((id_A \otimes u_B) \otimes (id_{A\otimes B})),
$$
$$
\beta_{A, A \otimes_D B} \circ (id_A \otimes \rho) 
= (\rho\otimes id_A) \circ \beta_{A, A\otimes B},
$$
$$
\rho \circ m_{A \otimes B} 
= m_{A \otimes_D B} \circ (\rho \otimes \rho),
$$
it follows that, by $m_{A \otimes B} \circ ( (id_A \otimes u_B)\otimes id_{A \otimes B})
= m_{A \otimes B} \circ (id_{A \otimes B}\otimes (id_A \otimes u_B)) \circ \beta_{A, A\otimes B}$, we obtain
$$
m_{A \otimes_D B} \circ (f_A\otimes id_{A\otimes_D B} )\circ (id_A\otimes \rho)
= m_{A \otimes_D B} \circ (id_{A \otimes_D B}\otimes f_A) \circ \beta_{A, A\otimes_D B}\circ (id_A\otimes \rho).
$$
By the universal property of the coequalizer, we have
$$
m_{A \otimes_D B} \circ (f_A\otimes id_{A \otimes_D B}) 
= m_{A \otimes_D B} \circ (id_{A \otimes B}\otimes f_A) \circ \beta_{A, A\otimes_D B}.
$$
Thus, $f_A$ factors through $C_l(A \otimes_D B)$. Similarly, $f_B$ factors through $C_r(A \otimes_D B)$.
\end{proof}

\begin{corollary}\label{cor:lagcen}
Let $\mathcal{C}$ be a braided fusion category, let $A$ and $B$ be Lagrangian algebras in $\mathcal{C}$, and let $D$ be a common condensable subalgebra of $A$ and $B$. If $M = A \otimes_D B$ is indecomposable, then $C_l(M)=A$ and $C_r(M)=B$.
\end{corollary}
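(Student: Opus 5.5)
The plan is to compare $A$ with $C_l(M)$ through the algebra homomorphism $f_A$, and then use maximality of Lagrangian algebras to force equality. The argument for $C_r(M)=B$ is symmetric, using $f_B$, so I only describe the left case.

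\emph{Step 1: an embedding $A\hookrightarrow C_l(M)$.} By Lemma~\ref{lemma:factor through center}, applied with $\pi_A=\iota_A$ and $\pi_B=\iota_B$ (both $A$ and $B$ are commutative), $f_A$ factors through $C_l(M)$. This gives a morphism $\tilde f_A:A\to C_l(M)$ with $f_A$ equal to the composite $A\to C_l(M)\to M$. Since $C_l(M)\to M$ is a monic algebra homomorphism and $f_A$ is an algebra homomorphism, $\tilde f_A$ is also an algebra homomorphism. It is nonzero because it preserves units. Lemma~\ref{monomorphism} then shows that $\tilde f_A$ is a monomorphism, so $A$ is a subalgebra of $C_l(M)$.

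\emph{Step 2: $C_l(M)$ is a condensable algebra.} Commutativity comes from \cite[Proposition 5.1]{davydov2010centre}. For connectedness I would argue that a morphism $1\to M$ factors through $C_l(M)$ exactly when it is left-central. Such morphisms correspond to $M$-$M$ bimodule endomorphisms of $M$, via $x\mapsto m_M\circ(x\otimes\id)$. Since $M$ is separable (Lemma~\ref{lemma:separable}) and indecomposable, $M$ is simple as a bimodule, so this space is one-dimensional. Separability is the delicate part: Lemma~\ref{sep_subalgebra} does not apply directly, because $M$ need not be connected. I would first try to invoke the known result that the left center of a separable algebra in a braided fusion category is étale (e.g.\ \cite{davydov2010centre} and the FRS literature). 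As a fallback, I would use \cite[Remark 5.7]{ostrik2026non} or the nilpotent-ideal argument of Lemma~\ref{sep_subalgebra}, run inside the category of $M$-bimodules where $M$ is simple. I expect this to be the main obstacle.

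\emph{Step 3: maximality of Lagrangian algebras.} For a connected étale algebra $E$ in the non-degenerate $\m{C}$, one has
\[
\fpdim(\m{C}_E^{loc})=\fpdim(\m{C})/\fpdim(E)^2\geq 1,
\]
with equality exactly when $E$ is Lagrangian. Hence $\fpdim(E)\le\sqrt{\fpdim(\m{C})}=\fpdim(A)$. Applying this to $E=C_l(M)$, which contains $A$ by Step 1, gives $\fpdim(C_l(M))\le\fpdim(A)$. Together with the monomorphism $A\hookrightarrow C_l(M)$, this forces the monomorphism to be an isomorphism, so $C_l(M)=A$. Likewise $C_r(M)=B$.
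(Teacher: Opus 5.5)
Your outline matches the paper's proof. You get $A\to C_l(M)$ from Lemma~\ref{lemma:factor through center}, show $C_l(M)$ is condensable, and conclude by maximality of Lagrangian algebras. For separability the paper simply cites \cite{frohlich2006correspondences,davydov2013structure}, which is your first option.

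Your connectedness argument differs from the paper's and is fine. The paper passes to a connected algebra $M'$ Morita equivalent to $M$ and uses Morita invariance of the left center \cite[Section 5.4]{ostrik2003module}. You instead identify $\mathrm{Hom}_{\m{C}}(1,C_l(M))$ directly with $\mathrm{End}_{M|M}(M)$. That space is $\mathbb{C}$: ${}_M\m{C}_M$ is semisimple, $\mathrm{End}_{M|M}(M)$ is commutative as the endomorphisms of the unit, and a nontrivial idempotent in it would split $M$ as a product of algebras.

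\textbf{The gap is in Step 3.} The formula $\fpdim(\m{C}_E^{loc})=\fpdim(\m{C})/\fpdim(E)^2$ needs $\m{C}$ non-degenerate, but the statement only assumes a braided fusion category. In the degenerate case your bound $\fpdim(E)\le\sqrt{\fpdim(\m{C})}$ fails even for a Lagrangian $E$. In $\m{C}=\mathrm{Rep}(G)$ every $\mathrm{Fun}(G)$-module is local and $\m{C}_{\mathrm{Fun}(G)}\simeq Vec$, so $\mathrm{Fun}(G)$ is Lagrangian in the paper's sense, yet $\fpdim(\mathrm{Fun}(G))=|G|>\sqrt{|G|}$.

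\textbf{A fix that works in general.} Set $C:=C_l(M)$; via $\tilde f_A$ it is a right $A$-module. It is local, by naturality of the braiding and commutativity of $C$:
\[
m_C(\id\otimes\tilde f_A)\beta_{A,C}\beta_{C,A}=m_C(\tilde f_A\otimes\id)\beta_{C,A}=m_C(\id\otimes\tilde f_A).
\]
Hence $C\in\m{C}_A^{loc}\simeq Vec$, so $C\cong A^{\oplus n}$ with
\[
n=\dim\mathrm{Hom}_{\m{C}_A}(A,C)=\dim\mathrm{Hom}_{\m{C}}(1,C)=1 .
\]
Then $\tilde f_A:A\to C$ is a nonzero $A$-module map between simple objects, hence an isomorphism. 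This is also a clean way to justify the paper's terse appeal to maximality.

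This route uses only commutativity and connectedness of $C_l(M)$. So the separability of $C_l(M)$, which you flagged as the main obstacle, is not needed at all, and neither is Lemma~\ref{monomorphism}.
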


\begin{proof}
Since $M$ is indecomposable, by \cite{ostrik2003module}, there exists a connected algebra $M' \in \m{C}$ that is Morita equivalent to $M$. By \cite[Section 5.4]{ostrik2003module}, we have $C_l(M) \cong C_l(M')$ and $C_r(M) \cong C_r(M')$. Since $M'$ is connected, $C_l(M')$ and $C_r(M')$ are also connected. By Lemma~\ref{lemma:separable}, $M$ is separable. Moreover, the left and right centers of a separable algebra are commutative and separable \cite{frohlich2006correspondences,davydov2013structure}. Hence, $C_l(M)$ and $C_r(M)$ are both condensable algebras.

By Lemma~\ref{lemma:factor through center}, we obtain algebra homomorphisms $A \to C_l(M)$ and $B \to C_r(M)$. Since $A$ and $B$ are Lagrangian algebras, they are simple and have maximal dimension among condensable algebras. Consequently, the algebra homomorphisms $A \to C_l(M)$ and $B \to C_r(M)$ are both isomorphisms.
\end{proof}

\begin{remark}
For a separable algebra $M$ in a braided fusion category $\m{C}$, it is known that $C_r(M)$ and $C_l(M)$ are both commutative and separable. Proposition~2.37 of \cite{frohlich2006correspondences} establishes the corresponding result for symmetric special Frobenius algebras. Corollary~3.3 of \cite{davydov2013structure} provides a different approach: one considers $Z(M)\in Z(\m{C})\simeq \m{C}\boxtimes \m{C}^{rev}$ together with the identifications
$C_l(M)\simeq Z(M)\cap (\m{C}\boxtimes 1)$ and $C_r(M)\simeq Z(M)\cap (1\boxtimes \m{C}^{rev})$.
\end{remark}

\begin{lemma}\label{lemma:left CA module}
    Let $\m{C}$ be a braided monoidal category. Let $A$ be a commutative algebra in $\m{C}$,  and let $M$ be an algebra in $\m{C}$. Assume that there exists an algebra homomorphism $f_A : A \to M$ such that $f_A$ factors through $C_l(M)$ (resp. $C_r(M)$). Then the category $\m{C}_M$ is a left (resp. right) $\m{C}_A$-module.
\end{lemma}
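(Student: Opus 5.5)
The plan is to build the left action of $\m{C}_A$ on $\m{C}_M$ directly. Given $X\in\m{C}_A$ and $N\in\m{C}_M$, I will set $X\odot N := X\otimes_A N$. Here $N$ is viewed as a left $A$-module through $f_A$, with action
$$m^{A}_N := m^M_N\circ(f_A\otimes \id_N)\circ\beta_{A,N}^{-1},$$
or with $\beta_{N,A}$, according to which convention makes this a left action. The first step is to check that this really is a left $A$-module structure commuting with the right $M$-action, so that $N$ becomes an $A$-$M$ bimodule. Associativity of the $A$-action uses that $f_A$ is an algebra map together with naturality of the braiding.

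The essential point is compatibility with the right $M$-action. Moving an $A$ strand past $N$ and then acting by $M$ must agree with acting by $M$ first. After unwinding, this reduces to the condition that $f_A(a)$ commutes with elements of $M$ through the braiding. That is exactly the defining diagram of $C_l(M)$, and by hypothesis $f_A$ factors through $C_l(M)\to M$. In the $C_r(M)$ case one uses the other braiding and gets a right $\m{C}_A$-action instead.

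Next I will show that $X\otimes_A N$ is a right $M$-module, via $X\otimes_A(N\otimes M)\to X\otimes_A N$. This is legitimate because the coequalizer defining $\otimes_A$ is preserved by $-\otimes M$: in a fusion (or abelian monoidal) category $\otimes$ is exact, and in general one uses the same argument as \cite[Lemma~4.5]{davydov2015functoriality}. Since $A$ is commutative, $\m{C}_A$ is monoidal under $\otimes_A$, with left actions obtained from right ones via the braiding. The associator $(X\otimes_A Y)\otimes_A N\simeq X\otimes_A(Y\otimes_A N)$ and the unitor $A\otimes_A N\simeq N$ are the canonical isomorphisms of relative tensor products, and both are $M$-linear. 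The pentagon and triangle axioms are then inherited from those of $\m{C}$ through the universal property of coequalizers.

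I expect the main obstacle to be the bimodule compatibility check, together with the bookkeeping of braiding conventions. The choice of $\beta$ versus $\beta^{-1}$ in the action must be made consistently with the conventions for the monoidal structure on $\m{C}_A$ and with the definition of $C_l$. I will first fix the convention by a string-diagram computation, then verify that the centrality diagram for $C_l(M)$ is exactly what appears. Everything afterwards is a formal coequalizer argument.
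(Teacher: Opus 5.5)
Your proposal is correct and follows the paper's proof: make each $N\in\m{C}_M$ an $A$-$M$ bimodule via the braiding and $f_A$, use the $C_l(M)$ condition for the bimodule compatibility, and set $X\odot N=X\otimes_A N$ with the induced right $M$-action. Two small slips to fix. First, your formula for the action does not typecheck; the choice that matches $C_l(M)$, and the one the paper uses, is $m^M_N\circ(\id_N\otimes f_A)\circ\beta_{A,N}$. Second, associativity of this action does not follow from naturality and $f_A$ being an algebra map alone: it reduces to $f_A\circ m_A\circ\beta_{A,A}=f_A\circ m_A$, which holds by commutativity of $A$.
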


\begin{proof}
    For any $P\in \mathcal{C}_M$, $P$ is automatically a left $A$-module.
Since $A$ is a commutative algebra, $P$ is also a right $A$-module via the natural construction:
$$
A\otimes P\stackrel{\beta_{A,P}}{\longrightarrow} P\otimes A \stackrel{1\otimes f_A}{\longrightarrow} P\otimes M \stackrel{m^{M}_{P}}{\longrightarrow} P.
$$
Since $f_A$ factors through $C_l(M)$, it can be checked that $P$ is an $A$-$M$-bimodule in the natural way.
Thus, for $X\in \mathcal{C}_A$ and $P\in \mathcal{C}_M$, there is a natural morphism $X\otimes_A P\otimes M\to X\otimes_A P$ induced by $m^{M}_P$, which in turn makes $X\otimes_A P$ a right $M$-module.
We can therefore define the action of $\mathcal{C}_A$ on $\mathcal{C}_M$ as $X\odot P:= X\otimes_A P$.
It can be checked that this action makes $\mathcal{C}_M$ a left $\mathcal{C}_A$-module category.
\end{proof}

\begin{corollary}\label{cor: C-C-bimodule}
Let $\m{C}$ be a braided monoidal category. Let $A$ and $B$ be commutative algebras in $\m{C}$, and let $M$ be an algebra in $\m{C}$.
Assume that there exist algebra homomorphisms $f_A : A \to M$ and $f_B:B\to M$ such that $f_A$ factors through $C_l(M)$ and $f_B$ factors through $C_r(M)$.
Then the category $\m{C}_M$ is a $\m{C}_A$-$\m{C}_B$ bimodule.
Furthermore, $\m{C}_M$ is a $\m{C}$-$\m{C}$ bimodule, and the left and right actions of $\m{C}$ on $\m{C}_M$ are naturally isomorphic.
\end{corollary}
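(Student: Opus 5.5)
The first part of the statement follows immediately from Lemma~\ref{lemma:left CA module} applied twice. Since $f_A$ factors through $C_l(M)$, the category $\m{C}_M$ is a left $\m{C}_A$-module with action $X\odot P=X\otimes_A P$. Since $f_B$ factors through $C_r(M)$, it is a right $\m{C}_B$-module with action $P\odot Y := P\otimes_B Y$. For the latter, $P$ becomes a $B$-$M$ bimodule via $B\otimes P\xrightarrow{\beta}P\otimes B\xrightarrow{1\otimes f_B}P\otimes M\to P$, using the mirror version of the lemma. What remains is the middle associativity constraint $(X\otimes_A P)\otimes_B Y\simeq X\otimes_A(P\otimes_B Y)$ and its compatibility with the pentagon-type axioms. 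I would obtain this by descending the associator of $\m{C}$ along the two coequalizers, using exactness of $\otimes$ in $\m{C}$ (relative tensor products are cokernels, so they commute with each other). The coherence diagrams hold because they hold in $\m{C}$ before passing to the quotients.

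For the $\m{C}$-$\m{C}$ bimodule structure, I would restrict along the free-module functors $\m{C}\to\m{C}_A$, $Z\mapsto Z\otimes A$, and $\m{C}\to\m{C}_B$. These are monoidal because $A$ and $B$ are commutative: $(Z\otimes A)\otimes_A(W\otimes A)\simeq Z\otimes W\otimes A$, using the braiding. This identifies the left $\m{C}$-action as $Z\odot P\simeq (Z\otimes A)\otimes_A P\simeq Z\otimes P$, with right $M$-action through the original one, and the right action as $P\odot Z\simeq P\otimes_B(B\otimes Z)\simeq P\otimes Z$. The right $M$-module structure on $P\otimes Z$ is the one induced through the braiding $\beta_{Z,M}^{\pm1}$ needed to move $M$ past $Z$. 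The natural isomorphism between the two actions is then the braiding $\beta_{Z,P}:Z\otimes P\to P\otimes Z$, up to the choice of $\beta$ or $\beta^{-1}$ dictated by the conventions of Lemma~\ref{lemma:left CA module}.

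The main obstacle is checking that $\beta_{Z,P}$ is a morphism of right $M$-modules, i.e.\ that the $M$-action transported through the left and right inductions matches under the braiding. I would verify this with a string-diagram computation. The key input is that the $A$-action on $P$ defined via $\beta_{A,P}$ and $f_A$ agrees with the tautological left action precisely because $f_A$ lands in $C_l(M)$, and similarly for $B$ with $C_r(M)$. Naturality of the braiding should then make the two $M$-actions coincide. The monoidality of this isomorphism with respect to the $\m{C}$-actions, namely compatibility with $Z\otimes Z'$, follows from the hexagon axiom.
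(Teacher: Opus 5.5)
Your proposal is correct and follows the paper's route. Like the paper, you apply Lemma~\ref{lemma:left CA module} on both sides, get the middle associativity $(X\otimes_A P)\otimes_B Y\simeq X\otimes_A(P\otimes_B Y)$ from the quotient maps being right $M$-module maps, restrict along $Z\mapsto Z\otimes A$ and $Z\mapsto B\otimes Z$, and compare the two $\m{C}$-actions by a braiding. The convention you leave open is forced: since $f_B$ factors through $C_r(M)$, the right $M$-action on $P\otimes_B Y$ must pass $M$ across $Y$ with $\beta_{M,Y}^{-1}$, so the $M$-linear comparison is $\beta_{P,Z}^{-1}\colon Z\otimes P\to P\otimes Z$ (exactly the paper's choice), and its $M$-linearity then follows from naturality and the hexagon axiom alone, because the centrality hypotheses are already used up in making the $M$-actions descend to the relative tensor products.
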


\begin{proof}
Since $f_A$ factors through $C_l(M)$ and $f_B$ factors through $C_r(M)$, Lemma \ref{lemma:left CA module} implies that $\mathcal{C}_M$ is a left $\mathcal{C}_A$-module and a right $\mathcal{C}_B$-module. For $X\in \mathcal{C}_A$, $Y\in \mathcal{C}_B$, and $P\in \mathcal{C}_M$, the morphisms $X\otimes P\to X\otimes _A P$ and $P\otimes Y\to P\otimes_B Y$ are all right $M$-module morphisms. Consequently, there is a natural right $M$-module isomorphism
$
(X\otimes_A P)\otimes_B Y\simeq X\otimes_A(P\otimes_B Y),
$
which makes $\mathcal{C}_M$ a $\mathcal{C}_A$-$\mathcal{C}_B$ bimodule.

Since there are monoidal functors $\mathcal{C}\to \mathcal{C}_A: X\mapsto X\otimes A$ and $\mathcal{C}\to \mathcal{C}_B: X\mapsto B\otimes X$, $\mathcal{C}_M$ is automatically a $\mathcal{C}$-$\mathcal{C}$ bimodule. For any $X\in \mathcal{C}$ and $P\in \mathcal{C}_M$, the left action of $\mathcal{C}$ on $\mathcal{C}_M$ is defined as $X\odot P:= (X\otimes A)\otimes_A P$, and the right action of $\mathcal{C}$ on $\mathcal{C}_M$ is $P \odot X:= P\otimes_B(B\otimes X)$. The braiding $X\otimes P\stackrel{\beta_{P,X}^{-1}}{\longrightarrow} P\otimes X$ induces a natural right $M$-module isomorphism $X\odot P \simeq P\odot X$. This isomorphism shows that the left and right actions of $\mathcal{C}$ on $\mathcal{C}_M$ are naturally isomorphic.
\end{proof}

\begin{proposition}\label{prop:inv}
    Let $\m{C}$ be a braided fusion category, let $A,B$ are Lagrangian algebras in $\m{C}$, and let $D$ is a common condensable algebra of $A$ and $B$. Assume that $M=A\otimes_D B$ is indecomposable, then $\m{C}_M$ is an invertible $\m{C}_A$-$\m{C}_B$ bimodule.
\end{proposition}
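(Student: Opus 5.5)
We reduce invertibility of $\m{C}_M$ to Morita-theoretic facts about the module categories involved. By Corollary~\ref{cor: C-C-bimodule} together with Lemma~\ref{lemma:factor through center}, $\m{C}_M$ is already a $\m{C}_A$-$\m{C}_B$ bimodule. Because $A$ and $B$ are Lagrangian, $\m{C}_A$ and $\m{C}_B$ are fusion categories whose Drinfeld centers are both identified with $\m{C}$ (more precisely $Z(\m{C}_A)\simeq \m{C}\boxtimes \m{C}_A^{loc,rev}\simeq \m{C}$ when $\m{C}$ is nondegenerate; we assume this, as in the intended application $\m{C}$ is modular). It is then a standard criterion (Etingof--Nikshych--Ostrik, and \cite{kong2015boundary}) that an indecomposable semisimple $\m{C}_A$-$\m{C}_B$ bimodule $\m{N}$ is invertible if and only if the functor $\m{C}_B^{rev}\to Fun_{\m{C}_A}(\m{N},\m{N})$ given by the right action is a monoidal equivalence. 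Equivalently, $\m{N}$ is invertible if and only if it induces a braided equivalence $Z(\m{C}_A)\simeq Z(\m{C}_B)$ compatible with the actions. The plan is to verify the first form of the criterion.

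First, we show that $\m{C}_M$ is indecomposable and semisimple as a $\m{C}_A$-module. Semisimplicity follows because $M$ is separable by Lemma~\ref{lemma:separable}. Indecomposability as a $\m{C}$-module holds by the assumption on $M$. Since the left $\m{C}$-action factors through $\m{C}\to\m{C}_A$, $X\mapsto X\otimes A$, any $\m{C}_A$-submodule decomposition would also be a $\m{C}$-submodule decomposition, so $\m{C}_M$ is indecomposable over $\m{C}_A$ as well. Next, we identify the dual category. Since $\m{C}_A\simeq {}_A\m{C}_A$-type categories act on $\m{C}_M={}_A\m{C}_M$-like objects, we rewrite $\m{C}_M$ as the category of $A$-$M$ bimodules in $\m{C}$, where $A$ acts through $f_A$. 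By Corollary~\ref{cor:lagcen}, $C_l(M)=A$. The key input is Ostrik's theorem on centers (\cite[Section 5.4]{ostrik2003module}; see also \cite{davydov2013structure}): for a $\m{C}$-module category $\m{C}_M$, the full center $Z(M)\in Z(\m{C})\simeq\m{C}\boxtimes\m{C}^{rev}$ is a Lagrangian algebra whose intersections with the two factors are $C_l(M)$ and $C_r(M)$. Because $C_l(M)=A$ and $C_r(M)=B$ are both Lagrangian in $\m{C}$, a dimension count forces $Z(M)\cong A\boxtimes B$ inside $\m{C}\boxtimes\m{C}^{rev}$. In particular, $\fpdim Z(M)=\fpdim A\cdot\fpdim B=\fpdim\m{C}$.

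We then use this to compute the dual category. The category $Fun_{\m{C}_A}(\m{C}_M,\m{C}_M)$ is equivalent to the category of $M$-$M$ bimodules in $\m{C}$ on which the induced left and right $A$-actions agree (via the braiding). Its Frobenius--Perron dimension is $\fpdim(\m{C}_A)^2/\fpdim(\m{C}_A)$, computed through the dual of an indecomposable module, and this equals $\fpdim\m{C}_A=\fpdim\m{C}_B$. The right action functor $\m{C}_B^{rev}\to Fun_{\m{C}_A}(\m{C}_M,\m{C}_M)$, $Y\mapsto -\otimes_B Y$, is therefore a tensor functor between fusion categories of equal dimension. By \cite[Proposition 6.3.4]{etingof2016tensor}, it is an equivalence as soon as it is fully faithful, or equivalently injective on simples and surjective.

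The main obstacle is showing that this functor is fully faithful. We plan to compute Hom spaces using the internal hom: $\mathrm{Hom}(-\otimes_BY,-\otimes_BY')$ corresponds to maps $Y\to Y'$ twisted by the algebra $[\id,\id]$, and this object is exactly the full centralizer. Its $B$-part is $C_r(M)=B$ by Corollary~\ref{cor:lagcen}. So the natural transformations between the functors $-\otimes_B Y$ are detected by $B$-module maps, giving $\mathrm{Hom}_{\m{C}_B}(Y,Y')$. If this direct computation becomes unwieldy, we will fall back on the alternative form of the criterion: show that the induced map $Z(\m{C}_A)\to Z(\m{C}_B)$ is braided and invertible by identifying both centers with $\m{C}$ through the $\m{C}$-$\m{C}$ bimodule structure of Corollary~\ref{cor: C-C-bimodule}, where the left and right $\m{C}$-actions agree.
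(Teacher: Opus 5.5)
Your outline is correct, and its skeleton is the same as the paper's. Both arguments reduce invertibility to showing that the right $\m{C}_B$-action functor into the dual category $Fun_{\m{C}_A}(\m{C}_M,\m{C}_M)$ is an equivalence. Both get this from full faithfulness together with $\fpdim Fun_{\m{C}_A}(\m{C}_M,\m{C}_M)=\fpdim\m{C}_A=\fpdim\m{C}_B$. And both get full faithfulness from the fact that the relevant right adjoint sends the unit to $C_r(M)=B$ (Corollary~\ref{cor:lagcen}).

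Where you differ is in how that adjoint is computed.

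\begin{itemize}
\item The paper identifies the dual with ${}_M(\m{C}_A)_M$ via $[M,M]_{\m{C}_A}=M$ and Ostrik's theorem. It then proves $\mathrm{Hom}_{{}_M(\m{C}_A)_M}(M\otimes X,M)\cong\mathrm{Hom}_{\m{C}}(X,C_r(M))$ directly from the universal property of $C_r(M)$. Finally it uses Lemma 3.5 of \cite{davydov2013witt} to pass from $\m{C}$ to $\m{C}_B$.
\item You read the same Hom space off the full center. By definition of $Z(M)=[\id,\id]$ for the $Z(\m{C})$-module $Fun_{\m{C}}(\m{C}_M,\m{C}_M)$, the $\alpha$-induction of $1\boxtimes X$ satisfies $\mathrm{Hom}(\alpha(1\boxtimes X),\id)\cong\mathrm{Hom}_{Z(\m{C})}(1\boxtimes X,Z(M))\cong\mathrm{Hom}_{\m{C}}(X,C_r(M))$. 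The last step uses $C_r(M)=Z(M)\cap(1\boxtimes\m{C}^{rev})$ from \cite{davydov2013structure}.
\end{itemize}

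Your route is valid and more conceptual, but it imports heavier results (Lagrangian-ness of $Z(M)$ and the intersection formula), whereas the paper needs only the definition of $C_r(M)$. Your dimension count giving $Z(M)\cong A\boxtimes B$ is not needed: the intersection formula plus $C_r(M)=B$ already gives the adjoint.

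When you write it up, make explicit two steps your sketch skips.
\begin{enumerate}
\item Hom spaces between $\m{C}_A$-module functors agree with those computed in $Fun_{\m{C}}(\m{C}_M,\m{C}_M)$. This holds because every object of $\m{C}_A$ is a direct summand of a free one.
\item You must pass from the right adjoint of $\m{C}\to Fun$ sending $\id$ to $B$ to full faithfulness of $G:\m{C}_B^{rev}\to Fun_{\m{C}_A}(\m{C}_M,\m{C}_M)$. For example: $G^R(\id)$ is an algebra in $\m{C}_B$ with underlying object $B$ that receives a nonzero map from the simple unit $B$, so it is the unit. Rigidity then gives $\mathrm{Hom}(G(Y),G(Y'))\cong\mathrm{Hom}_{\m{C}_B}(Y,Y')$. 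This is exactly the role Lemma 3.5 of \cite{davydov2013witt} plays in the paper.
\end{enumerate}

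Discard your fallback. Braided equivalences $Z(\m{C}_A)\simeq\m{C}\simeq Z(\m{C}_B)$ exist for any two Lagrangian algebras and say nothing about the particular bimodule $\m{C}_M$. Invertibility concerns the specific functors induced by $\m{C}_M$.

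Your explicit nondegeneracy assumption costs nothing. The paper's proof uses it implicitly, in $\fpdim A=\fpdim B$ and in Corollary~\ref{cor:lagcen}.
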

\begin{proof}
    By Lemma \ref{lemma:separable}, $M$ is separable; thus, $\m{C}_M$ is a finite semisimple category. Since $M$ is indecomposable, $\m{C}_M$ is an indecomposable $\m{C}$-module category, as well as an indecomposable $\m{C}_A$-$\m{C}_B$ bimodule category. 

According to Lemma \ref{lemma:left CA module}, the left action of $\m{C}_A$ on $\m{C}_M$ is given by $X \odot P = X \otimes_A P$ for $X \in \m{C}_A$ and $P \in \m{C}_M$. Since we have the natural isomorphism $Hom_{\m{C}_A}(X, M) \simeq Hom_{\m{C}_M}(X \otimes_A M, M)$, it follows that the internal hom $[M, M]_{\m{C}_A} \in \m{C}_A$ is identified with $M$. Consequently, by Theorem \ref{thm:int}, we have an equivalence of categories $\m{C}_M \simeq (\m{C}_A)_{[M, M]_{\m{C}_A}} = (\m{C}_A)_M$. It then follows that $Fun_{\m{C}_A}(\m{C}_M, \m{C}_M)^{\text{rev}} \simeq Fun_{\m{C}_A}((\m{C}_A)_M, (\m{C}_A)_M)^{\text{rev}} \simeq {}_M(\m{C}_A)_M$.

For $X \in \m{C}$, we endow $M \otimes X$ with a right $M$-module structure via the composition

$$
M \otimes X \otimes M
\stackrel{id_M\otimes \beta_{M,X}^{-1}}{\longrightarrow}
M \otimes M \otimes X
\stackrel{m_M\otimes id_X}{\longrightarrow}
M \otimes X.
$$

This construction yields a natural isomorphism

$$
Hom_{_M(\m{C}_A)_M}(M \otimes X, M) \simeq Hom_{\m{C}}(X, C_r(M)),
$$

given by

$$
f \mapsto \Phi(f),
$$

with inverse

$$
m_M\circ (id_M\otimes (\iota\circ g))\mapsfrom g.
$$

Here $\iota:C_r(M)\to M$ denotes the canonical inclusion appearing in the definition of the right center $C_r(M)$. To define $\Phi(f)$, consider $
\tilde{f}=f\circ (u_M\otimes id_X):X\to M$, we have $m_M\circ(\tilde{f}\otimes id_M)=m_M\circ(id_M\otimes\tilde{f})\circ\beta_{M,X}^{-1}$.
Thus, by the universal property of $C_r(M)$, there exists a unique morphism $\Phi(f):X\to C_r(M)$ such that $\iota\circ\Phi(f)=\tilde{f}$. It is straightforward to check that these two constructions are mutually inverse.

The natural isomorphism $Hom_{_M(\m{C}_A)_M}(M \otimes X, M) \simeq Hom_{\m{C}}(X, C_r(M))$ implies that the right adjoint $F^R$ of the functor $F:\m{C}\to {}_M(\m{C}_A)_M, X\mapsto M\otimes X$ satisfies $F^R(1_{{}_M(\m{C}_A)_M})=C_r(M)$. By \cite[Lemma 3.5]{davydov2013witt}, we obtain $F(\m{C})\simeq \m{C}_{C_r(M)}$ as fusion categories. By Corollary \ref{cor:lagcen}, we have $C_r(M)=B$. Therefore,
Therefore,
$$
\fpdim(F(\mathcal C))
=
\fpdim(\mathcal C_{C_r(M)})
=
\frac{\fpdim(\mathcal C)}{\fpdim(C_r(M))}
=
\frac{\fpdim(\mathcal C)}{\fpdim(B)}
$$
$$
=
\frac{\fpdim(\mathcal C)}{\fpdim(A)}
=
\fpdim(\mathcal C_A)
=
\fpdim({}_{M}(\mathcal C_A)_{M}).
$$

Since $F(\m{C})$ is a fusion subcategory of ${}_M(\m{C}_A)_M$, the equality of Frobenius--Perron dimensions implies $F(\m{C})={}_M(\m{C}_A)_M$. Consequently, ${}_M(\m{C}_A)_M\simeq \m{C}_B$ as fusion categories (A similar argument can be found in \cite[Section 3.1]{kong2014anyon}). Therefore, $\m{C}_A$ and $\m{C}_B$ are Morita equivalent, and $\m{C}_M$ is an invertible $\m{C}_A$-$\m{C}_B$ bimodule category.
\end{proof}

\begin{proposition}\label{prop:fusionbimod}
    Let $\m{C}$ be a non-degenerate braided fusion category, let $A,B$ are Lagrangian algebras in $\m{C}$, and let $D$ is a common condensable algebra of $A$ and $B$.
    Assume that $M=A\otimes_D B$ is indecomposable, then $Fun_{\m{C}_A|\m{C}_B}(\m{C}_M, \m{C}_M)$ is equivalent to $\m{C}$ as a closed fusion $\m{C}$-$\m{C}$ bimodule (see \cite{kong2018center} for the definition of a closed fusion bimodule). Here, the closed fusion $\m{C}$-$\m{C}$ bimodule structure on $\m{C}$ is induced by the tensor product $\otimes$ of $\m{C}$ in the canonical way.
\end{proposition}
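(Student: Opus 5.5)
The approach is to combine Proposition~\ref{prop:inv} with the general theory of closed multi-fusion bimodules from \cite{kong2018center}. By Proposition~\ref{prop:inv}, $\m{C}_M$ is an invertible $\m{C}_A$-$\m{C}_B$ bimodule. The bimodule functor category $Fun_{\m{C}_A|\m{C}_B}(\m{C}_M,\m{C}_M)$ can be identified with $Fun_{\m{C}_A\boxtimes\m{C}_B^{rev}}(\m{C}_M,\m{C}_M)$, the dual of $\m{C}_A\boxtimes \m{C}_B^{rev}$ with respect to $\m{C}_M$. Invertibility of $\m{C}_M$ means $\m{C}_A\boxtimes\m{C}_B^{rev}\to Fun(\m{C}_M,\m{C}_M)$ is an equivalence. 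Hence the dual is equivalent to $Z(\m{C}_A)\simeq Z(\m{C}_B)$, with the Drinfeld centers acting through the two $\alpha$-inductions. Since $A$ is Lagrangian and $\m{C}$ is non-degenerate, $Z(\m{C}_A)\simeq \m{C}$ as braided fusion categories, and similarly $Z(\m{C}_B)\simeq\m{C}$. This gives an equivalence of fusion categories $Fun_{\m{C}_A|\m{C}_B}(\m{C}_M,\m{C}_M)\simeq \m{C}$.

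The next step is to make the $\m{C}$-$\m{C}$ bimodule structure explicit. Corollary~\ref{cor: C-C-bimodule} gives left and right $\m{C}$-actions on $\m{C}_M$ commuting with the $\m{C}_A$-$\m{C}_B$ actions. These yield two monoidal functors
\[
L:\m{C}\to Fun_{\m{C}_A|\m{C}_B}(\m{C}_M,\m{C}_M),\qquad L(X)=X\odot -,
\]
and $R:\m{C}^{rev}\to Fun_{\m{C}_A|\m{C}_B}(\m{C}_M,\m{C}_M)$, $R(X)=-\odot X$, together with a half-braiding making $L$ and $R$ central. The half-braiding is the braiding isomorphism $X\odot P\simeq P\odot X$ of Corollary~\ref{cor: C-C-bimodule}. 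Central functors from $\m{C}\boxtimes\m{C}^{rev}$ are exactly the data of a closed fusion $\m{C}$-$\m{C}$ bimodule in the sense of \cite{kong2018center}. I would then check that $L$ is fully faithful and use a dimension count to conclude it is an equivalence. Under the identification of the dual with $Z(\m{C}_A)$, $L$ becomes the central structure $\m{C}\to Z(\m{C}_A)$ coming from $\m{C}\to\m{C}_A$, $X\mapsto X\otimes A$. That functor is the standard equivalence for Lagrangian $A$, since $\fpdim Z(\m{C}_A)=\fpdim(\m{C}_A)^2=\fpdim(\m{C})$.

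Finally, I must show that $R$ matches the canonical right action on $\m{C}$, that is, $R\simeq L$ compatibly with the half-braidings. This is exactly the natural isomorphism of left and right actions given by $\beta^{-1}_{P,X}$ in Corollary~\ref{cor: C-C-bimodule}. So the bimodule $Fun_{\m{C}_A|\m{C}_B}(\m{C}_M,\m{C}_M)$ is identified with $\m{C}$, with both actions given by $\otimes$ and the central structure given by the braiding.

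The main obstacle is this last coherence. The half-braiding on the target, built from $\beta^{-1}$ inside $\m{C}_M$ and from the $C_l$/$C_r$ factorizations (Lemma~\ref{lemma:factor through center}), must agree with the braiding of $\m{C}$ under $L$, and it must be a monoidal natural isomorphism. I would verify this by evaluating on the generator $M\in\m{C}_M$ and using Corollary~\ref{cor:lagcen}. That corollary gives $C_l(M)=A$ and $C_r(M)=B$, so the $\m{C}$-actions reduce to the free module functors $X\mapsto M\otimes X$ used in the proof of Proposition~\ref{prop:inv}. The comparison should then reduce to naturality of the braiding.
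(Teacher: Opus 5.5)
Your overall route is the paper's: invertibility of $\m{C}_M$ from Proposition~\ref{prop:inv}, then the identification of $Fun_{\m{C}_A|\m{C}_B}(\m{C}_M,\m{C}_M)$ with $Z(\m{C}_A)$ via $X\mapsto X\otimes_A -$ and with $Z(\m{C}_B)$ via $Y\mapsto -\otimes_B Y$. After that come the braided equivalences $\m{C}\simeq Z(\m{C}_A)$ and $\m{C}\simeq Z(\m{C}_B)$, and Corollary~\ref{cor: C-C-bimodule} to match the two actions. Your proposed check of the half-braiding on the generator $M$ goes further than the paper does, in the same direction.

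However, the step that produces $Fun_{\m{C}_A|\m{C}_B}(\m{C}_M,\m{C}_M)\simeq Z(\m{C}_A)$ rests on a false statement. Invertibility of $\m{C}_M$ does \emph{not} mean that $\m{C}_A\boxtimes\m{C}_B^{rev}\to Fun(\m{C}_M,\m{C}_M)$ is an equivalence.
\begin{itemize}
\item The target is the matrix category $Mat_n(Vec)$, where $n$ is the number of simples of $\m{C}_M$. It is not fusion when $n>1$, and its dual with respect to $\m{C}_M$ is $Vec$.
\item So if that functor were an equivalence, your ``hence'' would give $Fun_{\m{C}_A\boxtimes\m{C}_B^{rev}}(\m{C}_M,\m{C}_M)\simeq Vec$, not $Z(\m{C}_A)\simeq\m{C}$.
\item Equivalently, it would force $\m{C}\boxtimes\overline{\m{C}}\simeq Z(\m{C}_A\boxtimes\m{C}_B^{rev})\simeq Z(Mat_n(Vec))\simeq Vec$, i.e.\ a trivial bulk.
\item Concretely, take $\m{C}=Z(Vec_{\mathbb{Z}_2})$ and $A=B=D=1\oplus e$. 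Then $\m{C}_M\simeq Vec_{\mathbb{Z}_2}$, and $Vec_{\mathbb{Z}_2\times\mathbb{Z}_2}\to Fun(Vec_{\mathbb{Z}_2},Vec_{\mathbb{Z}_2})$ sends each simple object to a decomposable permutation functor.
\end{itemize}

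The correct characterization is that the action functor $\m{C}_B^{rev}\to Fun_{\m{C}_A}(\m{C}_M,\m{C}_M)$ is an equivalence; equivalently, $\m{C}_A\to Fun_{\m{C}_B^{rev}}(\m{C}_M,\m{C}_M)$ is. A $\m{C}_A$-$\m{C}_B$ bimodule functor is an object of $Fun_{\m{C}_A}(\m{C}_M,\m{C}_M)$ together with a coherent half-braiding against the image of $\m{C}_B^{rev}$. Hence the bimodule functor category is the Drinfeld center of $Fun_{\m{C}_A}(\m{C}_M,\m{C}_M)\simeq\m{C}_B^{rev}$. It is realized by $Y\mapsto -\otimes_B Y$, and symmetrically by $X\mapsto X\otimes_A -$ for $Z(\m{C}_A)$. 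This is exactly \cite[Proposition 2.4.4]{kong2018center}, which the paper invokes at this point. With that replacement, the rest of your argument goes through.

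One smaller imprecision: a central functor from $\m{C}\boxtimes\m{C}^{rev}$ only gives a fusion $\m{C}$-$\m{C}$ bimodule. Closedness is the extra requirement that the induced braided functor from $\overline{\m{C}}\boxtimes\m{C}$ to the Drinfeld center be an equivalence. Here it follows automatically once the bimodule is identified with $\m{C}$, since $\m{C}$ is non-degenerate.
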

\begin{proof}
By Proposition \ref{prop:inv}, $\m{C}_M$ is an invertible $\m{C}_A$-$\m{C}_B$ bimodule. According to \cite[Proposition 2.4.4]{kong2018center}, we have the following two equivalences of fusion categories:
$$
Z(\m{C}_A)\rightarrow Fun_{\m{C}_A|\m{C}_B}(\m{C}_M,\m{C}_M), \quad \quad X\mapsto X\otimes_A -
$$
$$
Z(\m{C}_B)\rightarrow Fun_{\m{C}_A|\m{C}_B}(\m{C}_M,\m{C}_M), \quad \quad Y\mapsto -\otimes_B Y.
$$

Furthermore, it is known that there are two braided monoidal category equivalences: $\m{C}\to Z(\m{C}_A),\quad X\mapsto X\otimes A$ and $\m{C}\to Z(\m{C}_B),\quad X\mapsto B\otimes X$. Thus, $Fun_{\m{C}_A|\m{C}_B}(\m{C}_M,\m{C}_M)$ is a closed fusion $\m{C}$-$\m{C}$ bimodule with the left action $(X\odot F)(-):= X\otimes F(-)$ and the right action $(F\otimes X)(-):= F(-)\otimes X$ for $X\in \m{C}$ and $F\in Fun_{\m{C}_A|\m{C}_B}(\m{C}_M,\m{C}_M)$. By Corollary \ref{cor: C-C-bimodule}, the left and right actions are naturally isomorphic, and these two actions are equivalent to the action of $\m{C}$ on itself. Consequently, $Fun_{\m{C}_A|\m{C}_B}(\m{C}_M, \m{C}_M)$ is equivalent to $\m{C}$ as a closed fusion $\m{C}$-$\m{C}$ bimodule.
\end{proof}

\begin{remark}

The physical meaning of Proposition \ref{prop:fusionbimod} is as follows. According to \cite[Theorem 3.3.7]{kong2018center}, there exists a fully faithful functor, called the center functor:
$$
\m{M}\m{F}us^{ind} \stackrel{Z}{\longrightarrow} \m{B}\m{F}us^{cl}
$$
$$
(\m{S}\stackrel{\m{M}}{\longrightarrow}\m{S}')\mapsto (Z(\m{S})\stackrel{Fun_{\m{S}|\m{S}'}(\m{M},\m{M})}{\longrightarrow}Z(\m{S}')).
$$
Here, $\m{M}\m{F}us^{ind}$ is the category of indecomposable multi-fusion categories, where the morphisms are the equivalence classes of non-zero semisimple bimodules. $\m{B}\m{F}us^{cl}$ is the category of non-degenerate braided fusion categories, where the morphisms are the equivalence classes of closed multi-fusion bimodules. Following \cite{kong2015boundary}, the physical interpretation of \cite[Theorem 3.3.7]{kong2018center} is that the 1+1D domain walls in the bulk correspond to the 0+1D domain walls on the boundary, as illustrated in Figure \ref{Fig:fully faithful}. Thus, Proposition \ref{prop:fusionbimod} implies that the bulk 1+1D domain wall corresponding to $\m{C}_M$ is the trivial 1+1D domain wall, which in turn means that $\m{C}_M$ is the category of closed 0+1D defects between $\m{C}_A$ and $\m{C}_B$.
    
\end{remark}
\begin{figure}
    \centering
\tikzset{every picture/.style={line width=0.75pt}} 

\begin{tikzpicture}[x=0.75pt,y=0.75pt,yscale=-0.8,xscale=0.8]

\draw  [color={rgb, 255:red, 255; green, 255; blue, 255 }  ,draw opacity=1 ][fill={rgb, 255:red, 80; green, 227; blue, 194 }  ,fill opacity=0.5 ] (70,40) -- (580,40) -- (580,180) -- (70,180) -- cycle ;
\draw [color={rgb, 255:red, 126; green, 211; blue, 33 }  ,draw opacity=1 ][line width=3]    (70,180) -- (320,180) ;
\draw [color={rgb, 255:red, 74; green, 144; blue, 226 }  ,draw opacity=1 ][line width=3]    (580,180) -- (320,180) ;
\draw [color={rgb, 255:red, 246; green, 159; blue, 236 }  ,draw opacity=0.96 ][line width=3]    (320,180) -- (320,40) ;
\draw  [color={rgb, 255:red, 245; green, 166; blue, 35 }  ,draw opacity=1 ][fill={rgb, 255:red, 245; green, 166; blue, 35 }  ,fill opacity=1 ] (315,180) .. controls (315,177.24) and (317.24,175) .. (320,175) .. controls (322.76,175) and (325,177.24) .. (325,180) .. controls (325,182.76) and (322.76,185) .. (320,185) .. controls (317.24,185) and (315,182.76) .. (315,180) -- cycle ;

\draw (171,62) node [anchor=north west][inner sep=0.75pt]   [align=left] {$\displaystyle Z(\mathcal{S})$};
\draw (161,190) node [anchor=north west][inner sep=0.75pt]   [align=left] {$\displaystyle \mathcal{S}$};
\draw (451,192) node [anchor=north west][inner sep=0.75pt]   [align=left] {$\displaystyle \mathcal{S} '$};
\draw (309,192) node [anchor=north west][inner sep=0.75pt]   [align=left] {$\displaystyle \mathcal{M}$};
\draw (461,62) node [anchor=north west][inner sep=0.75pt]   [align=left] {$\displaystyle Z(\mathcal{S} ')$};
\draw (239,132) node [anchor=north west][inner sep=0.75pt]   [align=left] {$\displaystyle Fun_{\mathcal{S} |\mathcal{S} '}(\mathcal{M} ,\mathcal{M})$};

\end{tikzpicture}
\caption{The physical meaning of center functor}
\label{Fig:fully faithful}
    \end{figure}

Consider the internal hom of the $\mathcal{C}$-module $\mathcal{C}_M$. By Lemma \ref{compute_internalhom}, we have $[M,M]\simeq (M\otimes _M M^R)^L\simeq M$.
Thus, $M=A\otimes_D B$ can be identified with the full centralizer of $M$ in the $\m{C}$-module $\m{C}_M$.
Consequently, starting from two Lagrangian algebras $A, B \in \m{C}$ and a common condensable subalgebra $D$ of $A$ and $B$, we obtain a critical cospan:
\begin{center}
    \begin{tikzcd}
        & M=A\otimes_D B=Z(M) &                   \\
A=Z(a) \arrow[ru] &                     & B=Z(b) .\arrow[lu]
\end{tikzcd}
\end{center}

\begin{remark}
    The algebra $M=A\otimes_D B$ may be decomposable, i.e., $M\simeq \bigoplus_i M_i$, where each $M_i$ is indecomposable.
In this case, the module category $\m{C}_M$ decomposes as a direct sum: $\m{C}_M\simeq \boxplus_i \m{C}_{M_i}$.
All $\m{C}_{M_i}$ must be equivalent to each other, and consequently, all $M_i$ are Morita equivalent.
Let $D_i=A\times_{M_i} B$ be the algebra corresponding to the critical cospan $A\to M_i \leftarrow B$. Then $D_i$ may not be isomorphic to $D_j$ when $i \neq j$.
The algebra $D=A\times_M B$ corresponds to the multiphase point and serves as a common condensable subalgebra of all $D_i$.
\end{remark}

\begin{remark}
    If we choose another 0+1D defect $0\neq x\in \m{C}_M$, then the internal hom $[x,x]_{\m{C}}$ may not be isomorphic to $M$. Furthermore, the resulting subalgebra $D'=A\times_{[x,x]} B$ may not be isomorphic to $A\times_M B$, and the algebra $A\otimes_{D'}B$ may not be isomorphic to $M$ or $[x,x]$. However, we still have that $[x,x]$ is Morita equivalent to $M$.
\end{remark}

\begin{example}
    Let $\mathcal{C}=Z(Vec_{\mathbb{Z}_4})$. The simple objects in $Z(Vec_{\mathbb{Z}_4})$ can be written as $e^am^b$, where $a,b\in \{0,1,2,3\}$. There are $7$ condensable algebras in $Z(Vec_{\mathbb{Z}_4})$ \cite{Chatterjee_2023}:

    $$
    1, \quad 1 \oplus e^2 , \quad 1\oplus e^2m^2 , \quad 1\oplus m^2 ,
    $$
    $$
    1\oplus e \oplus e^2 \oplus e^3 , \quad 1\oplus e^2\oplus m^2\oplus e^2m^2, \quad 1\oplus m\oplus m^2\oplus m^3.
    $$  

    The condensable algebras in the second line are Lagrangian algebras. Let $A=1\oplus e \oplus e^2 \oplus e^3$ and $B=1\oplus e^2\oplus m^2\oplus e^2m^2$. We can compute that $\mathcal{S}=\mathcal{C}_A\simeq Vec_{\mathbb{Z}_4}$. Let's denote the objects in $\mathcal{S}$ as $1, M, M^2,M^3$. Let $b=1\oplus M^2$ be a 1d condensable algebra in $\mathcal{S}$. We can compute that $\mathcal{C}_B\simeq {}_b\mathcal{S}_b\simeq Vec_{\mathbb{Z}_2\times \mathbb{Z}_2}^{\omega}$, where $\omega$ corresponds to the mixed anomaly, and we have $Z(1_{\mathcal{S}})\simeq A$ and $Z(b)\simeq B$.

    The 0+1D domain wall between $\mathcal{S}$ and ${}_b \mathcal{S}_b$ is ${}_1\mathcal{S}_b\simeq Fun_{\mathcal{S}}(\mathcal{S},\mathcal{S}_b)$. Let's denote the simple objects in $\mathcal{S}_b$ as $X:=1\oplus M^2$ and $Y:=M\oplus M^3$. Then we can compute that there are two simple objects in $Fun_{\mathcal{S}}(\mathcal{S},\mathcal{S}_b)$:
    $$
    F_1: 1\mapsto  X, \quad M\mapsto Y, \quad M^2\mapsto X,\quad M^3\mapsto  Y,
    $$
    $$
    F_2: 1\mapsto  Y, \quad M\mapsto X, \quad M^2\mapsto Y,\quad M^3\mapsto  X.
    $$

    Let us compute the full centralizers $Z(F_1)=[F_1,F_1]_{\mathcal{C}}$ and $Z(F_2)=[F_2,F_2]_{\mathcal{C}}$. The $\mathcal{C}=Z(\mathcal{S})$-module structure on $Fun_{\mathcal{S}}(\mathcal{S},\mathcal{S}_b)$ is induced by the forgetful functor $Z(\mathcal{S})\to \mathcal{S}$. The $\mathcal{S}$-module structure on $Fun_{\mathcal{S}}(\mathcal{S},\mathcal{S}_b)$ is given by the relations
$$
M\odot F_1\simeq F_2,\quad M\odot F_2\simeq F_1.
$$
Thus, by $Hom_{Fun_{\mathcal{S}}(\mathcal{S},\mathcal{S}_b)}(Z\odot F_1,F_1)\simeq Hom_{Z(\mathcal{S})}(Z,[F_1,F_1])$ and Equation \ref{com_internal_hom}, we can conclude that $[F_1,F_1]$ is the direct sum of the simple objects whose image under the forgetful functor is $1$ or $M^2$. Thus, we obtain $[F_1,F_1]=1\oplus e\oplus e^2\oplus e^3\oplus m^2\oplus em^2\oplus e^2m^2\oplus e^3m^2$. We can also obtain $[F_2,F_2]=1\oplus e\oplus e^2\oplus e^3\oplus m^2\oplus em^2\oplus e^2m^2\oplus e^3m^2$. So we have $M=[F_1,F_1]=[F_2,F_2]$.

    It can be computed that $D=A\times_M B= 1\oplus e^2$. We can reconstruct $M$ by $A\otimes_D B=(1\oplus e \oplus e^2 \oplus e^3)\otimes_{1\oplus e^2}(1\oplus e^2\oplus m^2\oplus e^2m^2)= 1\oplus e\oplus e^2\oplus e^3\oplus m^2\oplus em^2\oplus e^2m^2\oplus e^3m^2$.

\end{example}
\section{The classification of condensable algebras in $Z(Vec_G^\omega)$}\label{section5}

Let $G$ be a finite group and let $\omega\in H^3(G,U(1))$ be a 3-cohomology class. We denote by $Vec_G^\omega$ the  fusion category of finite dimensional $G$-graded vector spaces with associator $\omega$. Whenever explicit cocycle data are required, we choose a normalized representative $\hat{\omega}\in Z^3(G,U(1))$ of $\omega$. For a representative $\hat{\omega}\in Z^3(G,U(1))$ of $\omega$, we use the convention that the associator of $Vec_G^\omega$ is $X_g\otimes(Y_h\otimes Z_k)\longrightarrow (X_g\otimes Y_h)\otimes Z_k$ and acts by multiplication by $\hat{\omega}(g,h,k)$ on objects $X_g,Y_h,Z_k$ of degrees $g,h,k\in G$. 

The classification of condensable algebras in $Z(Vec_G^\omega)$ has been studied in \cite{davydov2010modular,davydov2017lagrangian}. To facilitate the identification of embedding relations, we propose a new classification method parametrizes condensable algebras. Based on this parametrization, we derive conditions to determine whether an embedding relation exists between two condensable algebras and, if so, how many inequivalent embeddings exist. 

\subsection{The quadruple description of condensable algebras in $Z(Vec_G^\omega)$}\label{section5.class}

In this section, we prove Theorem~\ref{classalgebra}. We parametrize condensable algebras in $Z(Vec_G^\omega)$ by quadruples $(H,F,\alpha,\phi)$ and determine when two such quadruples correspond to isomorphic condensable algebras.

The first two parameters $(H,F)$ are obtained in \cite{davydov2010modular,davydov2017lagrangian}. For a condensable algebra in $Z(Vec_G^\omega)$, one can show that its degree $e$ part is isomorphic to a function algebra $Fun(G/H)$ with $G$ action for some subgroup $H \leqslant G$. The algebra $Fun(G/H)$ is itself a condensable algebra in $Z(Vec_G^\omega)$, and we have
$Z(Vec_G^\omega)_{Fun(G/H)}^{loc} \simeq Z(Vec_H^{\omega|_H})$.
A condensable algebra in $Z(Vec_G^\omega)$ whose degree $e$ part is isomorphic to $Fun(G/H)$ is the \emph{transfer} (see \cite{davydov2010modular}) of a condensable algebra in $Z(Vec_G^\omega)_{Fun(G/H)}^{loc} \simeq Z(Vec_H^{\omega|_H})$ with trivial degree $e$ part. A condensable algebra with trivial degree $e$ part in $Z(Vec_H^{\omega|_H})$ is equivalent to a commutative central algebra $A\in Vec_H^{\omega|_H}$. What's more, its support is a normal subgroup $F \trianglelefteq H$, and $\dim(A_h)\leqslant 1$ for all $h\in H$ \cite[Lemma 3.4.1]{davydov2010modular} \cite[Corollary 2.13,Lemma 2.14, Properties 2.15]{davydov2017lagrangian}. In other words, as an object of $Vec_H^{\omega|_H}$, we have $A=\oplus_{f\in F} \mathbb{C}_f$.

The last two parameters $(\alpha,\phi)$ parametrize commutative central algebras in $Vec_H^{\omega|_H}$ that are isomorphic to $\oplus_{f\in F} \mathbb{C}_f$ as object. In this subsection, we first establish a classification of such commutative central algebras in terms of these parameters. We then turn to the conjugacy problem arising in this description. Since $Fun(G/H)\simeq Fun(G/gHg^{-1})$, there is a conjugacy issue in the choice of the pair $(H,F)$. Even after fixing $(H,F)$, a further conjugacy issue remains for the parameters $(\alpha,\phi)$. We show that these two levels of conjugacy can be treated in a unified manner, as summarized in Diagram~\ref{quadruples_equdig}.

\begin{definition}
    A {\it central algebra} $(A,\epsilon_A)$ in a tensor category $\m{C}$ is an algebra $A=(A,m_A,u_A)$ in $\m{C}$ equipped with a half-braiding $\epsilon$, such that $(A,\epsilon_A)$ is also an algebra in $Z(\m{C})$. $(A,\epsilon_A)$ is called {\it commutative central algebra} if $m_A=m_A \circ \epsilon_A$.

\end{definition}

\begin{proposition}\cite[Proposition 5.1]{bruguieres2011exact}\label{exact}
Let $F:\m{C}\rightarrow \m{D}$ be a tensor functor between tensor categories admitting a right adjoint $R$. Then $A := R(1)$ carries a natural structure of a commutative central algebra in $\m{C}$, with half-braiding denoted by $\epsilon$. Moreover, if $R$ is faithful and exact, then $\m{C}_{(A,\epsilon)}$ is a tensor category, and there exists a tensor equivalence
$K : \m{D} \rightarrow \m{C}_{(A,\epsilon)}$
such that the following diagram of tensor functors commutes up to tensor isomorphism.

    \begin{center}
        \begin{tikzcd}
            \mathcal{C} \arrow[r, "F"] \arrow[rd, "F_A"'] & \mathcal{D} \arrow[d, "K"]   \\
                                                          & {\mathcal{C}_{(A,\epsilon)}},
            \end{tikzcd}
    \end{center} here $F_A: \m{C}\rightarrow \m{C}_{(A,\epsilon)}, X\mapsto X\otimes A$.
\end{proposition}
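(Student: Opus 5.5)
The approach is to reconstruct $\m{D}$ from the adjunction $F\dashv R$ via the projection formula and Beck's monadicity theorem. Throughout, $\m{C}$ and $\m{D}$ are rigid.

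First, since $F$ is strong monoidal, doctrinal adjunction makes $R$ lax monoidal, with structure maps $R(Y)\otimes R(Y')\to R(Y\otimes Y')$ and $1\to R(1)$. Hence $A=R(1)$ is an algebra in $\m{C}$, and every $R(Y)$ is a right (and left) $A$-module.

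Next I establish the projection formulas. The canonical maps
$$
X\otimes R(Y)\to R(F(X)\otimes Y),\qquad R(Y)\otimes X\to R(Y\otimes F(X))
$$
are obtained by combining the unit $X\to RF(X)$ with the lax structure. They are isomorphisms, which I would show by the Yoneda lemma using rigidity: both sides corepresent or represent the same functor, since $F$ preserves duals. Taking $Y=1$ gives two isomorphisms
$$
X\otimes A\simeq R(F(X))\simeq A\otimes X .
$$
Their composite is the half-braiding $\epsilon_X$. Naturality and the hexagon axiom for $\epsilon$ follow from the coherence of the projection isomorphisms with the monoidal structure of $F$. Compatibility of $\epsilon$ with $m_A$ and $u_A$, and the commutativity $m_A=m_A\circ\epsilon_A$, come from the fact that both products $A\otimes A\to A$ factor through $R(F(A))\to R(1)$, where the latter map is induced by the counit.

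Now assume $R$ is faithful and exact. $R$ has the left adjoint $F$, preserves coequalizers (it is exact), and reflects isomorphisms (it is faithful and exact between abelian categories). So Beck's theorem identifies $\m{D}$ with the Eilenberg–Moore category of the monad $RF$. By the projection formula, $RF\simeq -\otimes A$ as monads, so this Eilenberg–Moore category is $\m{C}_A$. This gives an equivalence of categories
$$
K:\m{D}\to \m{C}_{(A,\epsilon)},\qquad K(Y)=R(Y),
$$
with the $A$-action described above. By construction $K(F(X))=RF(X)\simeq X\otimes A=F_A(X)$, which gives the triangle.

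For the monoidal part, every right $A$-module $P$ becomes an $A$-$A$-bimodule by letting $A$ act on the left through $\epsilon$. The tensor product on $\m{C}_{(A,\epsilon)}$ is then $\otimes_A$. It is exact because $A\simeq R(1)$ with $R$ exact makes $A$ flat, as is forced by the equivalence with the abelian category $\m{D}$.

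The main obstacle is showing that $K$ is monoidal. The lax map $R(Y)\otimes R(Y')\to R(Y\otimes Y')$ coequalizes the two actions of $A$, so it descends to a map
$$
R(Y)\otimes_A R(Y')\to R(Y\otimes Y').
$$
I would prove this map is an isomorphism first for free modules $Y=F(X)$, where it reduces to the projection formula. I would then extend to all $Y$ by resolving $Y$ through the coequalizer presentation $FRF(Y)\rightrightarrows FR(Y)\to Y$ and using the exactness of $R$ and of $\otimes_A$. Finally, the associativity coherence of $K$ and the monoidality of the triangle $K\circ F\simeq F_A$ are checked on free objects and transported along the same resolutions.
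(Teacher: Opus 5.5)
Your proposal is correct and follows essentially the route behind the result the paper cites from Brugui\`eres--Natale. The paper gives no proof of its own, but its accompanying remark uses exactly your lax structure $\nu$ on $R$ and your half-braiding built from the two Hopf/projection isomorphisms. The route is the projection formula, Beck monadicity of $R$, $RF\simeq -\otimes A$ as monads, and descent of $\nu$ to $R(Y)\otimes_A R(Y')\to R(Y\otimes Y')$, checked on free objects.

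Two small repairs are needed, neither of which affects the argument:
\begin{enumerate}
\item The canonical presentation of $Y$ is the coequalizer of $FR(v_Y),\,v_{FR(Y)}\colon FRFR(Y)\to FR(Y)$, not one starting from $FRF(Y)$.
\item The resolution step needs only right exactness of $\otimes_A$. This is automatic, because $\otimes_A$ is a coequalizer and $\otimes$ in the rigid category $\m{C}$ preserves colimits. Your flatness argument is vacuous, since every object of a rigid category is flat. Exactness of $\otimes_A$ and rigidity of $\m{C}_{(A,\epsilon)}$ should instead be read off afterwards from the monoidal equivalence $K$.
\end{enumerate}
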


\begin{remark}
    Additionally, $R$ is a lax monoidal functor, given by natural transformations $\nu_0:1\to R(1)$ and $\nu(X, Y): R(X) \otimes R(Y) \to R(X \otimes Y)$ for all $X, Y \in \mathcal{D}$. $\nu_0$ and $\nu$ are uniquely determined by the following diagram\cite[Proposition 3.84]{aguiar2010monoidal}:
\begin{equation}\label{lax0}
    \begin{tikzcd}
        1 \arrow[d, "id"'] \arrow[r, "u_1"] & RF(1) \arrow[d, "R(\mu _0)"] \\
        1 \arrow[r, "\nu_0"]                & R(1)                         
        \end{tikzcd}
\end{equation}
\begin{equation}\label{lax}
    \begin{tikzcd}
        R(X)\otimes R(Y) \arrow[d, "u_{R(X)\otimes R(Y)}"'] \arrow[rr, "{\nu(X,Y)}"] &  & R(X\otimes Y)                                           \\
        RF(R(X)\otimes R(Y)) \arrow[rr, "{R(\mu(R(X),R(Y))^{-1})}"]                     &  & R(FR(X)\otimes FR(Y)) \arrow[u, "R(v_{X}\otimes v_Y)"'],
        \end{tikzcd}
   \end{equation} where $\mu_0,\mu$ correspond to the monoidal structure of $F$ and $u,v$ are the unit and counit of the adjoint pair $F\dashv R$. The unit of $R(1)$ is $u_{R(1)}=\nu_0$, and the multiplication of $R(1)$ is $m_{R(1)}=\nu(1,1)$. The half-braiding $\epsilon$ of $R(1)$ is defined by the following commutative diagram:

\begin{equation}\label{halfbraiding}
    \begin{tikzcd}
        R(1)\otimes X \arrow[d, "id \otimes u_X"'] \arrow[rrd, "\simeq"] &  &         &  & X\otimes R(1) \arrow[llll, "\epsilon_X"'] \arrow[d, "u_X\otimes id"] \arrow[lld, "\simeq"'] \\
        R(1)\otimes R(F(X)) \arrow[rr, "{\nu(1,F(X))}"]                     &  & R(F(X)) &  & R(F(X))\otimes R(1) \arrow[ll, "{\nu(F(X),1)}"'].                                             
        \end{tikzcd}
\end{equation} The isomorphisms along the slanted arrows are referred to as the Hopf isomorphisms, induced by the Hopf adjunction\cite[Theorem 6.6]{bruguieres2011hopf}.
\end{remark}

It is well known that monoidal functors between $Vec_{G_1}^{\omega_1}$ and $Vec_{G_2}^{\omega_2}$ are parametrized by pairs $(f,\nu)$, where $f$ is a group homomorphism $f:G_1\to G_2$ and $\nu$ is an element in a torsor over $H^2(G_1,U(1))$ (see \cite[Proposition~2.6.1]{etingof2016tensor}). For representatives $\hat{\omega_1},\hat{\omega_2},\hat{\nu}$ of $\omega_1,\omega_2,\nu$, respectively, we have
$d\hat{\nu}\,\hat{\omega}_1=  f^*(\hat{\omega}_2)$.
The cochain $\hat{\nu}$ encodes the monoidal structure of the monoidal functor $F$, namely the natural isomorphisms
$F(\mathbb{C}_g)\otimes F(\mathbb{C}_h)\stackrel{\hat{\nu}(g,h)}{\longrightarrow} F(\mathbb{C}_g\otimes \mathbb{C}_h)$.
We denote such a monoidal functor by $F_{f,\nu}$.

\begin{remark}\label{fe rightad}
In \cite{hannah2023frobenius}, the right adjoint $R$ of the tensor functor
$F_{f,\nu} : Vec_{G_1}^{\omega_1} \rightarrow Vec_{G_2}^{\omega_2}$
is explicitly constructed. For an object $V = \bigoplus_{h \in G_2} V_h$ in $Vec_{G_2}^{\omega_2}$, define

$$
R(V) := \bigoplus_{g \in G_1} R(V)_g, \quad \text{where} \quad R(V)_g := V_{f(g)}, \quad \forall g \in G_1.
$$
For a morphism $a : \bigoplus_{h \in G_2} V_h \rightarrow \bigoplus_{h \in G_2} W_h$, define
$$
R(a)(v_g) := a(v_g) \in W_{f(g)} = R(W)_g, \quad \forall v_g \in V_{f(g)} = R(V)_g.
$$

The functor $R : Vec_{G_2}^{\omega_2} \rightarrow Vec_{G_1}^{\omega_1}$ defined above is faithful and exact if and only if the group homomorphism $f : G_1 \rightarrow G_2$ is surjective (see \cite[Lemma~3.2]{hannah2023frobenius}). 
    
\end{remark}

We present the following classification result for commutative central algebras in pointed fusion category.

\begin{proposition}\label{prop:classs_cen}
Let $H$ be a finite group and $F \trianglelefteq H$ a normal subgroup. Let $\sigma \in H^3(H, U(1))$ be a $3$-cohomology class. Then there is a bijection between the following sets:
\begin{itemize}
    \item $X$: the set of isomorphism classes of commutative separable central algebras in $Vec_H^{\sigma}$ whose underlying objects are isomorphic to $\oplus_{f \in F} \mathbb{C}_f$.
    \item $Y$: the set of pairs $(\alpha, [\phi])$, where $\alpha \in H^3(H/F, U(1))$ and $[\phi]$ is a natural isomorphism class of monoidal functors $\phi: \text{Vec}_H^\sigma \to \text{Vec}_{H/F}^\alpha$ whose underlying group map is the quotient map $\pi: H \to H/F$, taken modulo monoidal autoequivalences of $\text{Vec}_{H/F}^\alpha$ induced by the identity map on $H/F$.
\end{itemize}
 \end{proposition}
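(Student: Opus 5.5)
The bijection will come from Proposition~\ref{exact} together with the explicit right adjoint of Remark~\ref{fe rightad}. For the map $Y\to X$, take $(\alpha,[\phi])$ with $\phi=F_{\pi,\nu}:Vec_H^\sigma\to Vec_{H/F}^\alpha$. Since $\pi$ is surjective, the right adjoint $R$ of $\phi$ is faithful and exact by Remark~\ref{fe rightad}. Proposition~\ref{exact} then makes $A_\phi:=R(1)$ a commutative central algebra in $Vec_H^\sigma$, with multiplication $\nu(1,1)$, unit $\nu_0$, and half-braiding given by Diagram~\ref{halfbraiding}. By the formula $R(V)_g=V_{\pi(g)}$, its underlying object is $\oplus_{f\in F}\mathbb{C}_f$. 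Separability should follow from $\m{C}_{A_\phi}\simeq Vec_{H/F}^\alpha$ being semisimple; alternatively, one can check directly that the multiplication $\mathbb{C}_f\otimes\mathbb{C}_{f'}\to\mathbb{C}_{ff'}$ is invertible in each degree, which gives an explicit splitting. A natural isomorphism $\phi\simeq\phi'$ induces an isomorphism of right adjoints compatible with the lax structures, hence an isomorphism of central algebras $A_\phi\simeq A_{\phi'}$. Composing $\phi$ with an autoequivalence $\theta$ of $Vec_{H/F}^\alpha$ lying over $\id_{H/F}$ replaces $R$ by $R\circ\theta^{-1}$, and $R\theta^{-1}(1)\simeq R(1)$ as algebras because $\theta$ is monoidal. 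So the map is well defined on classes.

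For the map $X\to Y$, start with a commutative separable central algebra $(A,\epsilon)$ with $A\simeq\oplus_{f\in F}\mathbb{C}_f$. Since $A$ is commutative central, $\m{C}_{(A,\epsilon)}$ is a fusion category and $F_A:X\mapsto X\otimes A$ is a tensor functor. Every simple right $A$-module is induced, of the form $\mathbb{C}_h\otimes A$, and $\mathbb{C}_h\otimes A\simeq\mathbb{C}_{h'}\otimes A$ exactly when $hF=h'F$. I would verify this using $\dim A_f=1$ and the invertibility of the multiplication maps. Hence $\m{C}_{(A,\epsilon)}$ is pointed with group $H/F$, so $\m{C}_{(A,\epsilon)}\simeq Vec_{H/F}^\alpha$ for a unique $\alpha$. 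Choosing such an equivalence turns $F_A$ into a monoidal functor $\phi$ whose underlying group map is $\pi$. A different choice of equivalence changes $\phi$ by an autoequivalence over $\id_{H/F}$, so $[\phi]$ is well defined.

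It remains to show the two maps are mutually inverse. Starting from $\phi$, Proposition~\ref{exact} gives $K:Vec_{H/F}^\alpha\simeq\m{C}_{R(1)}$ with $K\phi\simeq F_{R(1)}$, so the pair $(\alpha,[\phi])$ is recovered. Conversely, starting from $A$, the right adjoint of $F_A$ is the forgetful functor $\m{C}_A\to\m{C}$, which sends the unit $A$ to $A$. The central algebra structure that Proposition~\ref{exact} puts on this object agrees with the original $(m_A,u_A,\epsilon)$, because the Hopf isomorphisms for $F_A$ are built from $\epsilon$.

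I expect the main obstacle to be exactly this last compatibility: checking that the half-braiding of Diagram~\ref{halfbraiding} reproduces $\epsilon$, and that the lax structure reproduces $m_A$. I would first try the uniqueness statement in Diagrams~\ref{lax0} and~\ref{lax}, since the unit and counit of $F_A\dashv U$ are $\id\otimes u_A$ and the action map, so the comparison is a short diagram chase. The second delicate point is the pointedness claim, which needs the invertibility of the multiplication $A_f\otimes A_{f'}\to A_{ff'}$. This holds because the nonzero maps from a one-dimensional space form an ideal-free structure: the image of the multiplication is a nonzero subobject, hence a two-sided ideal, so it equals all of $A$ since $A$ is a separable connected algebra and thus simple as a bimodule.
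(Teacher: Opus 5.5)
Your proposal follows the paper's proof essentially step for step. In one direction you send $A\mapsto T\circ F_A$; in the other you send $\phi\mapsto\phi^R(\mathbb{C}_e)$ via Proposition~\ref{exact} and Remark~\ref{fe rightad}; and you prove the maps are mutually inverse using the equivalence $K$. You also supply details the paper leaves implicit:
\begin{itemize}
\item separability of $\phi^R(\mathbb{C}_e)$;
\item the list of simple $A$-modules;
\item the check that the structure Proposition~\ref{exact} puts on the forgetful image of $A$ is the original $(m_A,u_A,\epsilon)$. This is indeed a short chase, with unit $\id\otimes u_A$ and counit the action.
\end{itemize}
One small precision: when you choose $T:\m{C}_{(A,\epsilon)}\simeq Vec_{H/F}^\alpha$, require it to induce the canonical bijection $hF\mapsto \mathbb{C}_h\otimes A$ on simples. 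Only then is the underlying group map of $T\circ F_A$ equal to $\pi$ and $\alpha$ unique; otherwise both are determined only up to an automorphism of $H/F$.

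The one step that does not work as written is your final justification that $A_f\otimes A_{f'}\to A_{ff'}$ is invertible. The image of a single graded component of $m_A$ is not a two-sided ideal in general. The image of the full multiplication is trivially all of $A$, so simplicity of $A$ as a bimodule gives nothing here.

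A correct argument runs as follows. The right $A$-module map $A_f\otimes A\to A$ given by restricting $m_A$ is nonzero: precompose with $\id\otimes u_A$. Both objects are simple in the semisimple category $\m{C}_A$, since
\[
\mathrm{Hom}_{\m{C}_A}(\mathbb{C}_f\otimes A,\mathbb{C}_f\otimes A)\cong\mathrm{Hom}(\mathbb{C}_f,\mathbb{C}_f\otimes A)\cong\mathbb{C}
\quad\text{and}\quad
\mathrm{Hom}_{\m{C}_A}(A,A)\cong\mathrm{Hom}(1,A)\cong\mathbb{C}.
\]
Hence the map is an isomorphism, and its degree-$ff'$ component is exactly the map you want. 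This is the same argument as in Lemma~\ref{sep_subalgebra}.

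In any case, pointedness does not need this invertibility. Your computation $\mathrm{Hom}_{\m{C}_A}(\mathbb{C}_h\otimes A,\mathbb{C}_{h'}\otimes A)\cong\mathrm{Hom}(\mathbb{C}_h,\mathbb{C}_{h'}\otimes A)$, together with semisimplicity and monoidality of $F_A$, already shows that the simples are indexed by $H/F$ with fusion given by the group law.
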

\begin{remark}
    For given $H, F, \sigma, \alpha$, the monoidal functor $\phi: Vec_H^\sigma \to Vec_{H/F}^{\alpha}$ exists if and only $\pi^*\alpha = \sigma$. 
\end{remark}
\begin{remark}
    An element in $Y$ can be represented by a pair $(\alpha,\phi)$, where $\alpha \in H^3(H/F,U(1))$ and $\phi$ is a monoidal functor $\phi: Vec_H^\sigma \to Vec_{H/F}^\alpha$ wohse underlying group is the quotient map $\pi:H\to H/F$. Two representing pairs $(\alpha,\phi)$ and $(\alpha',\phi')$ are equivalent if and only if $\alpha=\alpha'$ and there exists a monoidal lifting
$\theta: Vec_{H/F}^\alpha \simeq Vec_{H/F}^{\alpha}$
of the identity functor on $Vec_{H/F}$ such that the following diagram commutes up to monoidal natural isomorphism:
    \begin{center}
        \begin{tikzcd}
            Vec_H^\sigma \arrow[d, "="] \arrow[r, "\phi"] & Vec_{H/F}^\alpha \arrow[d, "\theta"] \\
            Vec_H^\sigma \arrow[r, "\phi'"]               & Vec_{H/F}^\alpha .                   
            \end{tikzcd}
    \end{center}
\end{remark}

\begin{proof}
    Let $A$ be a commutative separable central algebra in $Vec_H^{\sigma}$ with support on a normal subgroup $F \trianglelefteq H$. Then the category of right $A$-modules $(Vec_H^\sigma)_A$ is a semisimple, pointed fusion category whose fusion rules coincide with the group multiplication of $H/F$. By the classification theorem of pointed fusion categories, there exists $\alpha \in H^3(H/F, U(1))$ such that we have an equivalence of fusion categories $T: (Vec_H^{\sigma})_A\simeq Vec_{H/F}^\alpha$. This equivalence yields a tensor functor $\phi: Vec_H^{\sigma} \to Vec_{H/F}^\alpha$. The group homomorphism underlying $\phi$ is the quotient map $\pi: H \to H/F$. Thus, we have a map 
    $$
    \mathbb{F}: X \to Y, \quad \quad A\mapsto  (\phi: Vec_H^\sigma \stackrel{F_A}{\longrightarrow} (Vec_H^\sigma)_A\stackrel{T,\simeq}{\longrightarrow} Vec_{H/F}^\alpha).
    $$

    Consider a monoidal functor $\phi=F_{\pi,\nu}: Vec_H^{\sigma} \to Vec_{H/F}^\alpha$. By Remark \ref{fe rightad}, $\phi$ admits a faithful and exact right adjoint $\phi^R$, and $\phi^R(\mathbb{C}_e)$ is a commutative central algebra in $Vec_H^{\sigma}$ whose underlying objects is isomorphic to $\oplus_{f\in F}\mathbb{C}_f$. If two monoidal functors $\phi_1, \phi_2: Vec_H^{\sigma} \rightarrow Vec_{H/F}^{\alpha}$ are isomorphic, then the commutative central algebras they induce are also isomorphic. Moreover, suppose $\psi$ is a monoidal autoequivalence of $Vec_{H/F}^\alpha$ such that the corresponding group homomorphism is the identity map. If $\phi_1 = \psi \circ \phi_2$, then the corresponding right adjoints $\phi_1^R, \phi_2^R$ satisfy $\phi_1^R = \phi_2^R \circ \psi^{-1}$. By Diagram~\ref{lax}, such a $\psi$ does not affect the lax monoidal structure of the adjunction, and hence $\phi_1$ and $\phi_2$ induce isomorphic commutative central algebras. Thus, we have a map
    $$
    \mathbb{A}: Y \to X ,\quad \quad(\phi: Vec_H^\sigma \to Vec_{H/F}^\alpha) \mapsto \phi^R(\mathbb{C}_e).
    $$

    By definition, $\mathbb{A}(\mathbb{F}(A)) = (T\circ F_A)^R(\mathbb{C}_e)\simeq F_A^R(T^{-1}(\mathbb{C}_e)) \simeq A$. Thus, we have $\mathbb{A}\circ \mathbb{F}=\mathrm{id}_X$. Conversely, for $(\phi:Vec_H^\sigma \to Vec_{H/F}^\alpha) \in Y$, we have $\mathbb{F}(\mathbb{A}(\phi))= T\circ F_{\phi^R(\mathbb{C}_e)}$, and we have the following commutative diagram up to monoidal natural isomorphism
    \begin{center}
        \begin{tikzcd}
            Vec_H^{\sigma} \arrow[r, "\phi"] \arrow[rd, "F_{\phi^R(\mathbb{C}_e)}"'] & Vec_{H/F}^{\alpha} \arrow[d, "{\simeq}"]                        &                    \\
                                                                                     & (Vec_{H}^{\sigma})_{\phi^R(\mathbb{C}_e)} \arrow[r, "{T,\simeq}"] & Vec_{H/F}^{\alpha}.
            \end{tikzcd}
    \end{center}
    where $K$ is the equivalence in Proposition \ref{exact}. Thus, $\phi$ and $\mathbb{F}(\mathbb{A}(\phi))$ differ by a monoidal autoequivalence of $Vec_{H/F}^\alpha$, and this autoequivalence is induced by the identity map on $H/F$. Therefore, we have $\mathbb{F}\circ \mathbb{A}=\mathrm{id}_Y$. This completes the proof of the bijection between $X$ and $Y$.
\end{proof}

\begin{remark}\label{remark:classs_cen}
    An element in the set $Y$ can also be represented by a pair $(\alpha, \nu)$, where $F_{\pi,\nu} : Vec_H^{\sigma} \to Vec_{H/F}^\alpha$ is the corresponding monoidal functor. According to \cite[Proposition~2.6.1]{etingof2016tensor}, the set of natural isomorphism classes of such functors (for fixed $\pi$, $\sigma$, and $\alpha$) is a torsor over $H^2(H, U(1))$. The monoidal autoequivalences of $Vec_{H/F}^\alpha$ induced by the identity on $H/F$ are classified by $H^2(H/F, U(1))$. Therefore, for a fixed $\alpha$, the set of equivalence classes of $\nu$ is a torsor over $\coker(\pi^*: H^2(H/F, U(1)) \to H^2(H, U(1)))$.
\end{remark}

We have now established a description of condensable algebras in $Z(Vec_G^\omega)$ in terms of quadruples $(H,F,\alpha,\phi)$. It remains to determine when two quadruples $(H,F,\alpha,\phi)$ and $(H',F',\alpha',\phi')$ correspond to isomorphic condensable algebras in $Z(Vec_G^\omega)$.

\begin{lemma}\label{lemma:send}

    Let $A$ and $A'$ be commutative separable central algebras in $Vec_H^{\sigma}$ and $Vec_{H'}^{\sigma'}$, with underlying objects $\bigoplus_{f \in F} \mathbb{C}_f$ and $\bigoplus_{f' \in F'} \mathbb{C}_{f'}$, corresponding to the pairs $(\alpha, \nu)$ and $(\alpha', \nu')$, respectively.
Given a monoidal equivalence
$F_{l,\zeta}: Vec_H^\sigma \simeq Vec_{H'}^{\sigma'}$,
where $l: H\simeq H'$ is a group isomorphism satisfying $l(F)=F'$, then the algebra $A'$ is isomorphic, as a commutative central algebra, to the image of $A$ under $F_{l,\zeta}$ if and only if there exists a monoidal lifting
$\theta: Vec_{H/F}^\alpha \to Vec_{H'/F'}^{\alpha'}$
of the induced equivalence $Vec_{H/F}\simeq Vec_{H'/F'}$ such that the following diagram commutes up to a monoidal natural isomorphism:

    \begin{equation}\label{quadruples_equ}
        \begin{tikzcd}
            Vec_H^{\sigma} \arrow[d, "F_{l,\zeta}"] \arrow[r, "{F_{\pi,\nu}}"] & Vec_{H/F}^\alpha \arrow[d, "\theta"] \\
            Vec_{H'}^{\sigma'}    \arrow[r, "{F_{\pi,\nu'}}"]           & Vec_{H'/F'}^{\alpha'} .              
            \end{tikzcd}
    \end{equation}

\end{lemma}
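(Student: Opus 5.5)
The plan is to reduce the statement to Proposition~\ref{prop:classs_cen}, transported along $F_{l,\zeta}$, using the correspondence $A\mapsto (T\circ F_A)$ and its inverse $\phi\mapsto \phi^R(\mathbb{C}_e)$ from that proof. First I would check that $F_{l,\zeta}(A)$ is again a commutative separable central algebra in $Vec_{H'}^{\sigma'}$ with support $l(F)=F'$. This holds because a monoidal equivalence $F_{l,\zeta}$ induces a braided equivalence $Z(Vec_H^\sigma)\simeq Z(Vec_{H'}^{\sigma'})$ carrying half-braidings along. Next I would identify its pair. Since $F_{l,\zeta}$ is an equivalence with inverse $F_{l,\zeta}^{-1}$, the right adjoint of the composite $F_{\pi,\nu}\circ F_{l,\zeta}^{-1}: Vec_{H'}^{\sigma'}\to Vec_{H/F}^\alpha$ is $F_{l,\zeta}\circ F_{\pi,\nu}^R$. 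Evaluating at $\mathbb{C}_e$ gives $F_{l,\zeta}(A)$, and the lax monoidal structures match by Diagram~\ref{lax}, because equivalences transport units and counits. Composing further with the induced group isomorphism $\bar l: H/F\to H'/F'$ (lifted to any monoidal equivalence $\theta_0: Vec_{H/F}^\alpha\to Vec_{H'/F'}^{\theta_0\text{-twisted}}$) gives a monoidal functor $Vec_{H'}^{\sigma'}\to Vec_{H'/F'}^{\bar l_*\alpha}$ over the quotient map. Its right adjoint at $\mathbb{C}_e$ is still $F_{l,\zeta}(A)$, since $\theta_0^R(\mathbb{C}_e)=\mathbb{C}_e$. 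So $F_{l,\zeta}(A)$ corresponds in $Y$ to the pair $(\bar l_*\alpha,\ \theta_0\circ F_{\pi,\nu}\circ F_{l,\zeta}^{-1})$.

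For ($\Leftarrow$): if $\theta$ makes Diagram~\eqref{quadruples_equ} commute, then $F_{\pi,\nu'}\simeq \theta\circ F_{\pi,\nu}\circ F_{l,\zeta}^{-1}$. Taking right adjoints at $\mathbb{C}_e$ gives $A'\simeq F_{\pi,\nu'}^R(\mathbb{C}_e)\simeq F_{l,\zeta}(F_{\pi,\nu}^R(\theta^{R}(\mathbb{C}_e)))\simeq F_{l,\zeta}(A)$. I would still need to check that this is an isomorphism of commutative central algebras, that is, compatible with $\nu_0$, $\nu(1,1)$ and the half-braiding~\eqref{halfbraiding}. That follows by naturality of the adjunction data under composition with equivalences.

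For ($\Rightarrow$): if $A'\simeq F_{l,\zeta}(A)$, then both $(\alpha',\nu')$ and $(\bar l_*\alpha, \theta_0\circ F_{\pi,\nu}\circ F_{l,\zeta}^{-1})$ represent the same element of $Y$ for $(H',F',\sigma')$, by the bijection of Proposition~\ref{prop:classs_cen}. Then $\alpha'=\bar l_*\alpha$ as classes, and there is an autoequivalence $\psi$ of $Vec_{H'/F'}^{\alpha'}$ over the identity with $F_{\pi,\nu'}\simeq \psi\circ\theta_0\circ F_{\pi,\nu}\circ F_{l,\zeta}^{-1}$. Setting $\theta=\psi\circ\theta_0$ gives a monoidal lifting of $\bar l$ making~\eqref{quadruples_equ} commute.

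I expect the main obstacle to be the bookkeeping in the first step. Concretely, one must show that the commutative central algebra structure on $\phi^R(\mathbb{C}_e)$, including the half-braiding built from Hopf isomorphisms, is transported correctly under precomposition with $F_{l,\zeta}^{-1}$ and postcomposition with $\theta$. I would handle this by noting that all structure maps in Diagrams~\eqref{lax0}, \eqref{lax} and \eqref{halfbraiding} are defined functorially from the adjunction, and that adjunctions compose with adjoint equivalences.
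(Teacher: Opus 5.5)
Your proposal is correct and follows essentially the same route as the paper. The converse is the identical right-adjoint computation $A'\simeq F_{\pi,\nu'}^R(\mathbb{C}_e)\simeq F_{l,\zeta}(F_{\pi,\nu}^R(\theta^R(\mathbb{C}_e)))\simeq F_{l,\zeta}(A)$. For the forward direction, the paper applies Proposition~\ref{exact} directly to $F_{\pi,\nu}$ and to $F_{\pi,\nu'}\circ F_{l,\zeta}$, obtains equivalences $K,K'$ onto $(Vec_H^\sigma)_A$, and sets $\theta=K'^{-1}\circ K$; you instead normalize with an auxiliary lift $\theta_0$ of $\bar l$ and invoke the injectivity in Proposition~\ref{prop:classs_cen}, which is the same reconstruction argument repackaged and makes explicit that $\alpha'=\bar l_*\alpha$ and that $\theta=\psi\circ\theta_0$ covers $\bar l$.
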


\begin{proof}
    If $F_{l,\zeta}(A)\simeq A'$ as commutative central algebra, then $(F_{\pi,\nu'}\circ F_{l,\zeta})^R(\mathbb{C}_e)\simeq F_{l,\zeta}^{-1}(F_{\pi,\nu'}^R(\mathbb{C}_e))\simeq A$. By Proposition \ref{exact}, there exist monoidal equivalences $K:Vec_{H/F}^\alpha\to (Vec_H^\sigma)_A$ and $K':Vec_{H'/F'}^{\alpha'}\to (Vec_H^\sigma)_A$. Thus, we have the following commutative diagram up to a monoidal natural isomorphism:
    \begin{center}
        \begin{tikzcd}
            Vec_H^{\sigma} \arrow[dd, "{F_{l,\zeta}}"'] \arrow[r, "{F_{\pi,\nu}}"] \arrow[rd, "F_A"] & Vec_{H/F}^{\alpha} \arrow[d, "{K,\simeq}"] \\
                                                                                                    & (Vec_H^\sigma)_A \arrow[d, "{K^{'-1},\simeq}"] \\
            Vec_{H'}^{\sigma'} \arrow[r, "{F_{\pi,\nu'}}"]                                          & Vec_{H'/F'}^{\alpha'} .                        
            \end{tikzcd}
    \end{center}
    Thus, the monoidal structure of $\theta$ is given by $K^{'-1}\circ K$.

    Conversely, if such a monoidal structure on $\theta$ exists, then by Proposition~\ref{exact}, the induced commutative central algebra of $\theta\circ F_{\pi,\nu}\simeq F_{\pi,\nu'}\circ F_{l,\zeta}$ is unique, thus $ A\simeq F_{\pi,\nu}^R(\theta^{-1}(\mathbb{C}_e))\simeq F_{l,\zeta}^{R}(F_{\pi,\nu'}(\mathbb{C}_e))\simeq F_{l,\zeta}^{-1}(A')$.
\end{proof}

\begin{lemma}\label{lemma:zetag}
   Let $H, H'$ be subgroups of $G$. Then an isomorphism $Fun(G/H) \simeq Fun(G/H')$ is encoded by an element $g \in G$ such that $H' = {}^gH := gHg^{-1}$. Moreover, the isomorphism encoded by $g$ induces the following commutative diagram:
    \begin{equation}\label{com_zetag}
        \begin{tikzcd}
Z(Vec_G^\omega)_{Fun(G/H)}^{loc} \arrow[d, "\simeq"] \arrow[r, "\simeq"] & Z(Vec_H^{\omega|_H}) \arrow[d, "\tilde{F}_g"] \arrow[r, "Forget"] & Vec_H^{\omega|_H} \arrow[d, "{F_{l_g,\zeta_g|_H}}"] \\
Z(Vec_G^\omega)_{Fun(G/H')}^{loc} \arrow[r, "\simeq"]                    & Z(Vec_{H'}^{\omega|_{H'}}) \arrow[r, "Forget"]                       & Vec_{H'}^{\omega|_H'}.                           
\end{tikzcd}
    \end{equation}
    Here, $l_g: H \to {}^g H$ given by $h \mapsto ghg^{-1}$ is the conjugation isomorphism, and $\tilde{F}_g$ denotes the lift of $F_{l_g, \zeta_g|_H}$. The cocycle class $\zeta_g$ is defined in Appendix~\ref{appendix:A}.
\end{lemma}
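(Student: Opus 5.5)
The plan is to reduce both claims to the $G$-set picture of the degree-$e$ part of $Z(Vec_G^\omega)$, and then track the local-module equivalence through an explicit fibre functor.

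\textbf{Step 1: algebra isomorphisms.} An object of $Z(Vec_G^\omega)$ supported in degree $e$ is a $G$-representation. The half-braiding of a degree-$e$ object with $\mathbb{C}_x$ is a linear map $V_e\otimes\mathbb{C}_x\to\mathbb{C}_x\otimes V_e$, which is exactly an action of $x$. Since $\hat\omega$ is normalized, every associator involving a degree-$e$ factor is trivial. So degree-$e$ condensable algebras are commutative separable connected $G$-algebras, and $Fun(G/H)$ is the function algebra of the transitive $G$-set $G/H$. By Gelfand duality for finite sets, $G$-equivariant algebra isomorphisms $Fun(G/H)\simeq Fun(G/H')$ are the same as $G$-set isomorphisms $G/H'\simeq G/H$. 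Such an isomorphism is determined by the image of the base point $eH'$. That image must be a coset $xH$ whose stabilizer ${}^xH$ equals $H'$. Writing $g=x$ gives $H'={}^gH$, and conversely each such $g$ yields an isomorphism. Two elements $g,g'$ give the same map iff $g'\in gH$, so "encoded by $g$" is read modulo the right $H$-action.

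\textbf{Step 2: the fibre functor.} I would make the equivalence $Z(Vec_G^\omega)^{loc}_{Fun(G/H)}\simeq Z(Vec_H^{\omega|_H})$ explicit as a fibre functor. A local $Fun(G/H)$-module $M$ sends $M$ to $M\cdot\delta_{eH}$, the summand on which $Fun(G/H)$ acts through evaluation at $eH$. This summand is an object of $Vec_H^{\omega|_H}$: locality forces its grading into $H$. The half-braiding of $M$ with $\mathbb{C}_h$, $h\in H$, preserves this summand and restricts to an $H$-half-braiding. Along the $g$-induced isomorphism, $M\cdot\delta_{eH}$ is carried to the fibre over the corresponding base point of $G/H'$. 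Identifying that fibre with $M\cdot\delta_{eH'}$ uses the half-braiding with $\mathbb{C}_g$. This identification conjugates degrees by $l_g$, so the left square commutes with $\tilde F_g$ defined as this transport.

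\textbf{Step 3: monoidal structure and the right square.} It remains to show that the monoidal structure of the transported functor on underlying graded objects is $\zeta_g|_H$. The half-braiding $c_{\mathbb{C}_g,-}$ fails to be strictly monoidal in the second variable by $\omega$-factors. By the hexagon axiom in $Vec_G^\omega$, these factors are exactly the standard 2-cochain $\zeta_g(h,k)$ built from $\hat\omega(g,h,k)$, $\hat\omega(ghg^{-1},g,k)$ and $\hat\omega(ghg^{-1},gkg^{-1},g)$, which satisfies $d\zeta_g=l_g^*\hat\omega/\hat\omega$ (the definition in Appendix~\ref{appendix:A}). Once this is established, $\tilde F_g$ is the unique lift of $F_{l_g,\zeta_g|_H}$ through the forgetful functors, and the right square commutes by construction.

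\textbf{Main obstacle.} I expect the cocycle bookkeeping in Step 3 to be the hard part. The transported monoidal structure must match the appendix convention for $\zeta_g$ on the nose, and not merely up to a coboundary, because the square is claimed to commute up to monoidal isomorphism. The check must also be independent of the representative $g\in gH$. I would handle the independence by showing that changing $g$ to $gh_0$ alters $\zeta_g$ by the coboundary implementing conjugation by $h_0$, which is monoidally isomorphic to the identity on $Z(Vec_H^{\omega|_H})$ via the half-braiding with $\mathbb{C}_{h_0}$.
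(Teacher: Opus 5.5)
Your strategy is the paper's. Step 1 is exactly its $G$-set argument. Steps 2--3 are what Appendix~A does: take the fibre $M\mapsto M\delta_H$, transport along $f_g$ so that the new base fibre is $M\delta_{gH}$, and read off the monoidal structure on underlying graded objects. Your handling of $g\mapsto gh_0$ also matches the appendix's closing remark. The only difference is how the fibre over $gH$ is identified with $V=M\delta_H$. You use the action of $g$ (the half-braiding $c_{\mathbb{C}_g,M}$). The paper instead works in the induced-function model $E(V)$ of the inverse equivalence and identifies $E(V)\delta_{gH}$ with $V$ by evaluation at $g$.

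That difference is where Step 3, as written, goes wrong.

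\textbf{The cochain you identify is not the appendix's.} The appendix defines $\zeta_g(x,y)=\omega(g^{-1}\,|\,gxg^{-1},gyg^{-1})$. This is not the cochain $\omega(g\,|\,x,y)=\omega(g,x,y)\,\omega({}^gx,g,y)^{-1}\,\omega({}^gx,{}^gy,g)$ that you describe.

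\textbf{The exponent is inverted.} With the paper's conventions, $d\bigl(\omega(g\,|\,\cdot,\cdot)\bigr)=\omega/l_g^*\omega$. Since $g.(v\otimes w)=\omega(g\,|\,x,y)\,g.v\otimes g.w$, the monoidal structure your identification actually produces is $\omega(g\,|\,x,y)^{-1}$.

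\textbf{Comparison of the two identifications.} In $E(V)$, the element $g$ sends the function with value $v$ at $e$ to $\psi(x)^{-1}$ times the function with value $v$ at $g$. Here $x=|v|$ and $\psi(x)=\omega(g^{-1},g\,|\,x)$. Correspondingly, comparing the actions of $g^{-1}g=e$ on a tensor product gives
$\omega(g^{-1}\,|\,{}^gx,{}^gy)=\omega(g\,|\,x,y)^{-1}\psi(x)\psi(y)\psi(xy)^{-1}$.
So your cochain and the paper's agree only up to the coboundary of $\psi$.

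\textbf{Why this is harmless, and why your criterion is backwards.} A monoidal natural isomorphism $F_{l_g,\nu}\cong F_{l_g,\nu'}$ is precisely a $1$-cochain whose coboundary is $\nu'/\nu$. Hence ``commutes up to monoidal isomorphism'' means ``agrees up to a coboundary'', which is all the lemma needs. Insisting on an on-the-nose match would fail in general with your identification.

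\textbf{Repair.} Replace ``exactly the appendix's $\zeta_g$'' by ``cohomologous to it via $\psi$''. Also drop the claim that $\tilde F_g$ is the \emph{unique} lift. You do not need it, since $\tilde F_g$ is defined as the transport, just as the paper defines it as $D'\circ F_g\circ E$. With these changes the argument is complete.
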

\begin{proof}
    An isomorphism between $\text{Fun}(G/H)$ and $\text{Fun}(G/H')$ is induced by a $G$-set isomorphism of $G/H$ and $G/H'$. The $G$-sets $G/H$ and $G/H'$ are isomorphic if and only if $H$ and $H'$ are conjugate in $G$. Specifically, for $g \in G$ such that $H' = {}^gH$, the isomorphism $\text{Fun}(G/H) \xrightarrow{\simeq} \text{Fun}(G/H')$ is defined on the basis of characteristic functions by $\delta_{tH} \mapsto \delta_{tg^{-1}H'}$, where $\delta_{tH}$ denotes the characteristic function of the coset $tH$.
    
    The reason that $F_{l_g, \zeta_g|_H}$ makes the above diagram commutative is presented in Appendix \ref{appendix:A}. 
\end{proof}

Based on the above discussion, we obtain the following classification theorem.

\begin{theorem}\label{class-algebra}

    Let $G$ be a finite group, $\omega \in H^3(G, U(1))$. Then the isomorphic classes of condensable algebras in $Z(Vec_G^\omega)$ are classified by the equivalent class of quadruples $(H,F,\alpha,\phi)$ where $F\trianglelefteq H \leqslant G $, $\alpha \in H^3(H/F, U(1))$ and $\phi: Vec_H^{\omega|_H}\rightarrow Vec_{H/F}^\alpha$ is a monoidal lifting of the forgetful functor $Vec_H \rightarrow Vec_{H/F}$.

    Two quadruples $(H,F,\alpha,\phi)$ and $(H',F',\alpha',\phi')$ are equivalent if and only if there exists a group element $g\in G$ such that $H' = {}^gH$, $F' = {}^gF$, and exists a monoidal lifting $\theta: Vec_{H/F}^\alpha \simeq Vec_{H'/F'}^{\alpha'}$ of the induced equivalence $Vec_{H/F} \simeq Vec_{H'/F'}$ such that the following diagram commutes up to monoidal natural isomorphism:
    \begin{equation}\label{quadruples_equdig}
        \begin{tikzcd}
            Vec_H^{\omega|_H} \arrow[d, "F_{l_g,\zeta_g|_H}"] \arrow[r, "\phi"] & Vec_{H/F}^\alpha \arrow[d, "\theta"] \\
            Vec_{H'}^{\omega|_{H'}}    \arrow[r, "{\phi'}"]           & Vec_{H'/F'}^{\alpha'} .              
            \end{tikzcd}
    \end{equation}
    
\end{theorem}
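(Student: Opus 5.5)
The proof will assemble three ingredients already in place: the transfer reduction recalled at the start of Section~\ref{section5.class}, Proposition~\ref{prop:classs_cen}, and Lemmas~\ref{lemma:send} and~\ref{lemma:zetag}.

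\emph{Step 1: from an algebra to a quadruple.} Let $A$ be a condensable algebra in $Z(Vec_G^\omega)$. Its degree $e$ part is $Fun(G/H)$ for some subgroup $H$, determined up to conjugacy. By the transfer correspondence of \cite{davydov2010modular}, $A$ is then the same thing as a condensable algebra with trivial degree $e$ part in $Z(Vec_G^\omega)^{loc}_{Fun(G/H)}\simeq Z(Vec_H^{\omega|_H})$. Such an algebra is a commutative central algebra in $Vec_H^{\omega|_H}$ supported on a normal subgroup $F\trianglelefteq H$, with one-dimensional graded pieces. Proposition~\ref{prop:classs_cen} (with $\sigma=\omega|_H$) converts this central algebra into a pair $(\alpha,[\phi])$. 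Conversely, any quadruple $(H,F,\alpha,\phi)$ yields $\phi^R(\mathbb{C}_e)$ by Proposition~\ref{exact}, and transferring this gives a condensable algebra. So quadruples surject onto isomorphism classes, and it remains to identify the fibers.

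\emph{Step 2: the equivalence relation.} Suppose two condensable algebras $A$ and $A'$, coming from $(H,F,\alpha,\phi)$ and $(H',F',\alpha',\phi')$, are isomorphic.
\begin{enumerate}
\item An isomorphism $A\simeq A'$ restricts to an isomorphism of degree $e$ parts $Fun(G/H)\simeq Fun(G/H')$. By Lemma~\ref{lemma:zetag}, this is encoded by some $g$ with $H'={}^gH$.
\item Diagram~\ref{com_zetag} in Lemma~\ref{lemma:zetag} says that, under the identifications of local modules, the induced equivalence is $\tilde F_g$, which lies over $F_{l_g,\zeta_g|_H}$ on $Vec_H^{\omega|_H}$.
\item The two central algebras therefore correspond under $F_{l_g,\zeta_g|_H}$. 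Comparing supports forces $F'={}^gF$.
\item Lemma~\ref{lemma:send} now gives exactly the lifting $\theta$ making Diagram~\ref{quadruples_equdig} commute.
\end{enumerate}
The converse reverses these steps. Given $g$ and $\theta$, Lemma~\ref{lemma:send} makes the central algebras correspond under $F_{l_g,\zeta_g|_H}$. Its lift $\tilde F_g$ then carries the condensable algebra in $Z(Vec_H^{\omega|_H})$ to the one in $Z(Vec_{H'}^{\omega|_{H'}})$, and transferring along the isomorphism $Fun(G/H)\simeq Fun(G/H')$ gives $A\simeq A'$. The case $g=e$ recovers the internal equivalence relation of Proposition~\ref{prop:classs_cen}, so the two levels of conjugacy are handled uniformly.

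\emph{Main obstacle.} The delicate point is in Step 2: showing that an arbitrary isomorphism $A\simeq A'$ really factors as the transfer of an isomorphism of central algebras along $\tilde F_g$. For this I will use that any algebra isomorphism preserves the degree $e$ subalgebra. It therefore induces the compatible equivalence of local module categories in Diagram~\ref{com_zetag}, and transfer is functorial with respect to such equivalences.

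A second check is that isomorphism of central algebras in $Z(Vec_H^{\omega|_H})$ matches isomorphism of the corresponding objects there, i.e.\ that the half-braiding data is respected. This is where the forgetful square of Diagram~\ref{com_zetag} is needed, together with the fact that $\phi^R(\mathbb{C}_e)$ carries its canonical half-braiding from Diagram~\ref{halfbraiding}.
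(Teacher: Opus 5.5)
Your proposal is correct and follows essentially the same route as the paper. The paper likewise treats the parametrization (the transfer reduction plus Proposition~\ref{prop:classs_cen}) as already established. It proves the equivalence criterion by restricting an isomorphism to the degree-$e$ parts, invoking Lemma~\ref{lemma:zetag} to obtain $g$ and the compatible square of local module categories, and then applying Lemma~\ref{lemma:send} in both directions. Your explicit support comparison forcing $F'={}^gF$, and your check that the half-braiding is transported along the lift $\tilde F_g$, make precise what the paper compresses into a single ``without loss of generality'' step.
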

\begin{proof}
    
    We only need to prove the equivalence condition for two quadruples. Suppose that two quadruples $(H,F,\alpha,\phi)$ and $(H',F',\alpha',\phi')$ are equivalent. Then their degree-$e$ parts is isomorphic, i.e., $\text{Fun}(G/H) \simeq \text{Fun}(G/H')$. Thus, by Lemma \ref{lemma:zetag}, there exists $g \in G$ such that $H' = {}^g H$ and the commutative diagram \ref{com_zetag} holds. Consider the commutative separable central algebra $A \in Vec_H^{\omega|_H}$ represented by $(F, \alpha, \phi)$. Let $A' = F_{l_g, \zeta_g|_H}(A) \in Vec_{H'}^{\omega|_{H'}}$. Without loss of generality, we can assume that $A'$ is represented by $(F'={}^g F, \alpha', \phi')$. By Lemma \ref{lemma:send}, there exists a monoidal lift $\theta: \text{Vec}_{H/F}^\alpha \to \text{Vec}_{H'/F'}^{\alpha'}$ of the induced equivalence $\text{Vec}_{H/F} \simeq \text{Vec}_{H'/F'}$ such that Diagram \ref{quadruples_equdig} commutes up to monoidal natural isomorphism.
    
    On the other hand, suppose there exists $g \in G$ such that $H' = {}^gH$, $F' = {}^gF$, and there is a monoidal lift $\theta: Vec_{H/F}^\alpha \to Vec_{H'/F'}^{\alpha'}$ of the induced equivalence $Vec_{H/F} \simeq Vec_{H'/F'}$ such that Diagram \ref{quadruples_equdig} commutes up to monoidal natural isomorphism. Then, by Lemma \ref{lemma:send}, the commutative separable central algebra represented by $(F', \alpha', \phi')$ is isomorphic to the image of the algebra represented by $(F, \alpha, \phi)$ under the monoidal equivalence $F_{l_g, \zeta_g|_H}$. It follows from Diagram \ref{com_zetag} that the two quadruples $(H, F, \alpha, \phi)$ and $(H', F', \alpha', \phi')$ are equivalent. 
\end{proof}
\begin{remark}
    Note that the monoidal lifting $\phi$ exists if and only if $\pi^*\alpha=\omega|_H$ and, if exists, the liftings form a torsor over $Z^2(H
    ,U(1))$. The monoidal lifting $\theta$ exists if and only if $l_g^*(\alpha')=\alpha$ and, if exists, the liftings form a torsor over $Z^2(H/F,U(1))$. 
\end{remark}
\begin{remark}
    We also use the quadruple $(H,F,\alpha,\nu)$ to denote a condensable algebra in $Z(Vec_G^\omega)$, where $\nu$ is the monoidal structure of $\phi=F_{\pi,\nu}$.
\end{remark}

\begin{remark}\label{remark:conact} 
    We can describe the conjugacy action of $G$ on quadruples more precisely. For $g\in G$, define the action of $g$ on a quadruple $(H,F,\alpha,\phi)$ by
$$
g.(H,F,\alpha,\phi):=({}^gH,\, {}^gF,\, \alpha^g,\, L_g(\phi)).
$$
Here $\alpha^g\in H^3({}^gH/{}^gF,U(1))$ is defined by $\alpha^{g}(gxg^{-1},gyg^{-1},gzg^{-1})=\alpha(x,y,z)$ for $x,y,z\in H/F$. In fact, $\alpha^g=(l_g^*)^{-1}(\alpha)$. The action of $g$ on $\phi = F_{\pi, \nu}$ is given by $L_g(\phi) = F_{\pi, \nu'}$, where $\nu' = (l_g^*)^{-1}(\nu \cdot (\zeta_g|_H)^{-1})$.

The relevant part of Theorem \ref{class-algebra} is equivalent to saying that two quadruples $(H,F,\alpha,\phi)$ and $(H',F',\alpha',\phi')$ are equivalent if and only if there exists $g \in G$ such that $({}^gH, {}^gF, \alpha^g, [L_g(\phi)]) = (H', F', \alpha', [\phi'])$. For simplicity, we shall henceforth identify $\phi$ with its equivalence class, allowing the equivalence condition to be written more compactly as $g.(H,F,\alpha,\phi) = (H', F', \alpha', \phi')$.
\end{remark}

\begin{remark}\label{rmk:conj}
    When $G$ is Abelian, the conjugacy problem for $(H,F,\alpha)$ is vacuous, and the classification becomes much easier. Two quadruples $(H,F,\alpha,\phi)$ and $(H',F',\alpha',\phi')$ are equivalent only if $H=H'$, $F=F'$, and $\alpha=\alpha'$. Moreover, if $\omega=1$, then the conjugacy problem for $\phi$ is also vacuous.
    
    When $G$ is not Abelian, the group 
$N = \{g \in G \mid H = {}^gH, \, F = {}^gF\}$ 
may act nontrivially on the set 
$A = \{\alpha \in H^3(H/F, U(1)) \, | \pi^*\alpha = \omega|_H\}$. 
The equivalence classes of quadruples are classified by the conjugacy classes of $(H,F)$, then by the set $A/N$ for fixed $(H,F)$, then by the torsor over $\coker(H^2(H/F,U(1)) \to H^2(H,U(1)))$ quotient the stabilizer subgroup $N_\alpha \subset N$ for fixed $(H,F,\alpha)$. 

\end{remark}

\begin{remark}\label{remark:class}
    We have the following useful special cases:

    \begin{enumerate}
        \item When $H = F$, $\alpha$ must be trivial and the quadruple $(H, H, 1,\phi)$ represents a Lagrangian
        algebra. In this case, Lagrangian algebras of the form $(H, H, 1,\phi)$ are classified by a torsor over $H^2(H,U(1))$ quotient $N_G(H)$. 
        \item When $F = 1$ is the trivial group, the equivalence class of the quadruples for a fixed $H$ is unique. A canonical choice in this case is $\alpha = \omega|_H$ and $\phi$ as the identity functor.
    \end{enumerate}
\end{remark}

\begin{remark}\label{gammaepslion}
    In \cite[Proposition~2.15]{davydov2017lagrangian}, commutative central algebras $A=\oplus_f\mathbb{C}_f$ in $Vec_H^\omega$ are parametrized by pairs $(\gamma,\epsilon)$. Here $\gamma \in C^2(F,U(1))$ specifies the multiplication of $A$ and satisfies
$d\gamma=\omega|_F$, and $\epsilon$ is a function
$\epsilon:H\times F\rightarrow U(1)$ encoding the half-braiding of $A$, namely
$\epsilon_V: V\otimes A\to A\otimes V$. The data $(\gamma,\epsilon)$ satisfy the following conditions:

        \begin{equation}\label{da1}
\epsilon_{gh}(f)=\epsilon_g(hfh^{-1})\epsilon_h(f)\omega(g,h|f), \quad g,h\in H,f\in F
\end{equation}
\begin{equation}\label{da2}
    \gamma(f,g)\epsilon_h(fg)=\omega(h|f,g)\epsilon_h(f)\epsilon_h(g)\gamma(hfh^{-1}, hgh^{-1})\quad h\in H,f,g \in F
\end{equation}
\begin{equation}\label{da3}
    \gamma(f,g)=\epsilon_f(g)\gamma(fgf^{-1},f).
\end{equation}
Here 
$$
\omega(g,h|f):= \omega(g,h,f)^{-1}\omega(g,hfh^{-1},h)\omega(ghf(gh)^{-1},g,h)^{-1},
$$
$$
\omega(h|f,g):= \omega(h,f,g)\omega(hfh^{-1},h,g)^{-1}\omega(hfh^{-1},hgh^{-1},h).
$$
By Propositions~\ref{exact} and~\ref{prop:classs_cen}, we can choose a representative $\hat{\nu}$ of $\nu$ in a torsor over
$\coker(\pi^*: H^2(H/F, U(1)) \rightarrow H^2(H, U(1)))$
to express $\gamma$ and $\epsilon$ as follows:

$$
\gamma(f_1,f_2)=\hat{\nu}(f_1,f_2)^{-1},
$$
$$
\epsilon_h(f)=\hat{\nu}(hfh^{-1},h)\hat{\nu}(h,f)^{-1}.
$$

\end{remark}

\subsection{The embedding relationships}\label{section5.embedding}

As discussed in Section~\ref{section4.bimod}, embeddings between condensable algebras play an important role. We will classify these embeddings using the classification of condensable algebras developed in Section~\ref{section5.class}.

\begin{remark}
By \cite[Proposition 2.3.2]{davydov2010modular} and \cite[Lemma 3.1]{davydov2013structure}, the existence of an embedding of a condensable algebra $A$ into a condensable algebra $B$ is equivalent to the existence of a condensable algebra $B'\in \m{C}_A^{loc}$ such that $B$ is isomorphic, as an algebra in $\m{C}$, to the image of $B'$ under the lax forgetful functor $\m{C}_A^{loc}\to \m{C}$.
\end{remark}

\begin{definition}
    Two embeddings of condensable algebras $A \to B$ and $A \to C$ in a braided fusion category $\m{C}$ are {\it equivalent} if there is an algebra isomorphism $B \simeq C$ over $A$ or, equivalently, if the corresponding condensable algebras $B'$ and $C'$ are isomorphic in $\m{C}_A^{loc}$ as algebras.
\end{definition}
\begin{remark}
    For brevity, we sometimes simply say "embedding" instead of "equivalence class of embeddings". 
\end{remark}

\begin{definition}
    For condensable algebras $A$ and $B$ in a braided fusion category $\m{C}$, if there is more than one equivalent embedding class from $A$ into $B$ (i.e., $B$ corresponds to more than one condensable algebra in $\m{C}_A^{loc}$), then we say that $B$ is {\it split} under the condensation of $A$.
\end{definition}

\begin{proposition}\cite{frohlich2006correspondences}\label{twostep}
    Let $\m{C}$ be a braided fusion category, and let $A$ and $B$ be condensable algebras in $\m{C}$ such that $A$ can be embedded into $B$. Then we have $\m{C}_B^{loc} \simeq (\m{C}_A^{loc})_B^{loc}$ as braided fusion categories.
\end{proposition}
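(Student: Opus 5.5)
Since $A$ embeds into $B$, fix an embedding $\iota:A\to B$. By the Remark preceding the definition of equivalent embeddings, $B$ is the image under the lax forgetful functor $\m{C}_A^{loc}\to\m{C}$ of a condensable algebra $B'\in\m{C}_A^{loc}$. Concretely, $B$ is a right $A$-module via $m_B\circ(\id_B\otimes\iota)$, and $B'$ is $B$ with this structure. The plan is to identify both categories with the same subcategory of $\m{C}_B$: local $B$-modules in $\m{C}$ on one side, and local $B'$-modules in $\m{C}_A^{loc}$ on the other.

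First I would establish the two-step identification $(\m{C}_A)_{B'}\simeq \m{C}_B$ as categories, at the level of plain modules. A right $B$-module $X$ in $\m{C}$ restricts along $\iota$ to an $A$-module. Its $B$-action $X\otimes B\to X$ coequalizes the two $A$-actions on $X\otimes B$, so it factors through $X\otimes_A B'$, giving a $B'$-module in $\m{C}_A$. Conversely, a $B'$-module $Y$ in $\m{C}_A$ yields a $B$-action $Y\otimes B\to Y\otimes_A B\to Y$. These assignments are mutually inverse and are clearly functorial, because $\rho:X\otimes B\to X\otimes_A B$ is an epimorphism. To make sense of $X\otimes_A B'$ as the tensor product in $\m{C}_A$, I would use that $B'$ is local, hence commutativity of $B$ together with $\iota$ landing in the center of $B$ allows $A$-modules to be viewed as $A$-bimodules. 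This is the standard equivalence $\m{C}_B\simeq(\m{C}_A)_{B'}$ for $A\subset B$ commutative.

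Next I would restrict to local objects. Take $X\in\m{C}_B^{loc}$, so that $m_X^B\circ\beta_{B,X}\circ\beta_{X,B}=m_X^B$. Precomposing with $\id\otimes\iota$ and using naturality of the braiding shows that $X$ is local as an $A$-module, so $X$ lies in $\m{C}_A^{loc}$. The braiding of $\m{C}_A^{loc}$ is induced from $\beta$ on $X\otimes_A Y$, and the double braiding of $B'$ with $X$ in $\m{C}_A^{loc}$ is the descent of $\beta_{B,X}\beta_{X,B}$ through $\rho$. Hence $B'$-locality of $X$ in $\m{C}_A^{loc}$ is equivalent to $B$-locality of $X$ in $\m{C}$, again because $\rho$ is epi. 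Conversely, a local $B'$-module in $\m{C}_A^{loc}$ is in particular an $A$-local object, and the same computation shows it is $B$-local. This gives an equivalence of categories $\m{C}_B^{loc}\simeq(\m{C}_A^{loc})_{B'}^{loc}$.

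Finally I would check that the tensor structures and braidings match. On $\m{C}_B^{loc}$ the product is $X\otimes_B Y$. On the other side it is $(X\otimes_A Y)\otimes_{B'}$, and by associativity of relative tensor products $(X\otimes_A Y)\otimes_{B'}$ is naturally isomorphic to $X\otimes_B Y$, since both are coequalizers of $X\otimes B\otimes Y\rightrightarrows X\otimes Y$ after factoring through $X\otimes_A Y$. Both braidings descend from $\beta_{X,Y}$, so the equivalence is braided. The step I expect to be the main obstacle is this last one. Verifying that the associativity constraints coincide under the iterated coequalizer identifications requires care with the $A$-bimodule structures induced by the braiding, in particular with the choice of $\beta$ versus $\beta^{-1}$ for the left action. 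My first approach would be to reduce everything to statements about the epimorphisms $X\otimes Y\to X\otimes_B Y$ and apply the universal property of coequalizers at each stage.
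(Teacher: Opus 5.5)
Your proposal is correct. The paper does not prove this proposition itself. It only cites \cite{frohlich2006correspondences}, which works with symmetric special Frobenius algebras in ribbon categories, so using it for an arbitrary braided fusion category tacitly requires translating between the two settings. Your argument instead proves the statement directly at the generality in which it is stated: a braided fusion category with connected commutative separable algebras. It uses only naturality of $\beta$, commutativity of $A$ and $B$, and exactness of $\otimes$. You identify $\mathcal{C}_B$ with $(\mathcal{C}_A)_{B'}$, note that $B$-locality forces $A$-locality, match the two locality conditions by precomposing with the epimorphism $X\otimes B\to X\otimes_A B'$, and transport $\otimes_B$ and the braiding through iterated coequalizers.

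Two points to tidy up. First, what lets you regard right $A$-modules as $A$-bimodules is commutativity of $A$ together with the braiding, not locality of $B'$. Commutativity of $B$ is what makes the braiding-induced left $A$-action on $B$ agree with left multiplication through $\iota$, so that $X\otimes_A B'$ is the correct coequalizer. Locality of $B'$ is needed only to place $B'$ in $\mathcal{C}_A^{loc}$.

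Second, the step you flag as the main obstacle goes away once you fix one convention, say $l_X=r_X\circ\beta_{A,X}$, uniformly for $A$, $B$ and $B'$. Naturality of $\beta$ then shows that the induced left $B$-action on $Y$ restricts along $\iota$ to the induced left $A$-action. That is exactly what makes $X\otimes Y\to X\otimes_A Y\to X\otimes_{B'}Y$ a coequalizer of the two maps $X\otimes B\otimes Y\to X\otimes Y$. (Your ``$(X\otimes_A Y)\otimes_{B'}$'' should read $X\otimes_{B'}Y$.) Projections such as $X\otimes B\otimes Y\to X\otimes_A B'\otimes_A Y$ remain epimorphisms by exactness of $\otimes$. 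So every associator, unitor, braiding and comparison map on either side is the unique one compatible with these projections, and all coherence conditions are inherited from $\mathcal{C}$.
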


This proposition implies that the result of a single-step condensation of $B$ coincides with the result of a two-step condensation: first condensing $A$ and then $B$.

Given two condensable algebras in $Z(Vec_{G}^{\omega})$ represented by $(H,F,\alpha,\phi)$ and $(K,E,\beta,\psi)$, our goal is to determine when an embedding from one into the other exists, and if so, to count the number of equivalence classes of such embeddings. We begin by investigating the case when $F=1$.

\begin{lemma}\label{lemma:epartembed}
    Let $A,B$ be two condensable algebras in $Z(Vec_G^\omega)$ represented by $(H,1,\omega|_H,id_{Vec_H^{\omega|_H}})$ and $(K,E,\beta,\psi)$, respectively. Then an embedding from $A$ to $B$ is given by a condensable algebra in $Z(Vec_H^{\omega|_H})$ represented by $(K',E',\beta',\psi')$ such that $B\simeq (K',E',\beta', \psi')$ in $Z(Vec_G^\omega)$. Namely, there exists $g\in G$ such that ${}^gK\leqslant H$ and $g.(K,E,\beta,\psi)=(K',E',\beta',\psi')$.
\end{lemma}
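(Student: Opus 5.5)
Take $A=Fun(G/H)$, which is what the quadruple $(H,1,\omega|_H,\mathrm{id})$ represents (Remark~\ref{remark:class}(2)), and reduce everything to the local module category $Z(Vec_G^\omega)_{Fun(G/H)}^{loc}\simeq Z(Vec_H^{\omega|_H})$.

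\textbf{Step 1: reduce to the local category.} By the remark preceding the definition of equivalent embeddings (via \cite[Proposition 2.3.2]{davydov2010modular} and \cite[Lemma 3.1]{davydov2013structure}), an embedding $A\to B$ is the same datum, up to equivalence, as a condensable algebra $B'\in Z(Vec_G^\omega)_{A}^{loc}$ whose image under the lax forgetful functor is isomorphic to $B$. Through the equivalence above, $B'$ is a condensable algebra in $Z(Vec_H^{\omega|_H})$. Applying Theorem~\ref{class-algebra} with the group $H$ in place of $G$, $B'$ is represented by some quadruple $(K',E',\beta',\psi')$ with $E'\trianglelefteq K'\leqslant H$.

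\textbf{Step 2: compute the forgetful image.} We need to know what algebra in $Z(Vec_G^\omega)$ the quadruple $(K',E',\beta',\psi')$ becomes under the forgetful functor. By the discussion at the start of Section~\ref{section5.class}, $B'$ is again a transfer, namely of a central algebra in $Vec_{K'}^{\omega|_{K'}}$. Transfer is compatible with composition of the inclusions $K'\leqslant H\leqslant G$, since $Fun(G/H)\otimes$ induction from $H$ to $G$ composed with induction from $K'$ to $H$ is induction from $K'$ to $G$. So the forgetful image of $B'$ has degree-$e$ part $Fun(G/K')$ and is the transfer of the same central algebra. It is therefore represented in $Z(Vec_G^\omega)$ by the same quadruple $(K',E',\beta',\psi')$, now read with $K'\leqslant G$.

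\textbf{Step 3: match with $B$.} Compare this image with $B=(K,E,\beta,\psi)$ using the equivalence criterion of Theorem~\ref{class-algebra} in the form of Remark~\ref{remark:conact}. The image is isomorphic to $B$ if and only if there is $g\in G$ with $g.(K,E,\beta,\psi)=(K',E',\beta',\psi')$. In particular ${}^gK=K'\leqslant H$, which gives both directions of the statement.

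\textbf{Main obstacle.} The hard point is Step 2: checking that the monoidal data $\psi'$ is preserved by the forgetful functor and by the identification $Z(Vec_G^\omega)_{Fun(G/H)}^{loc}\simeq Z(Vec_H^{\omega|_H})$. Concretely, the restriction of $\omega$ along $K'\leqslant H\leqslant G$ must agree with restriction along $K'\leqslant G$ with no twisting cocycle like $\zeta_g$ appearing. The plan is to handle this by choosing the identification to be the identity on the coset $eH$ in diagram~\eqref{com_zetag}, which is the $g=e$ case. There $\zeta_e$ is trivial (Appendix~\ref{appendix:A}), so no extra twist arises.
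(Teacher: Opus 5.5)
Your proposal is correct and follows the paper's proof: identify $A$ with $Fun(G/H)$ so that $Z(Vec_G^\omega)_A^{loc}\simeq Z(Vec_H^{\omega|_H})$, note that the forgetful image of $(K',E',\beta',\psi')$ in $Z(Vec_G^\omega)$ is represented by the same quadruple, and then apply the conjugation criterion of Theorem~\ref{class-algebra}. The paper only asserts the Step 2 compatibility, citing Davydov--Simmons, so your transitivity-of-induction argument with the trivial twist $\zeta_e$ at the identity coset is slightly more explicit than the original.
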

\begin{proof}
    According to \cite[Corollary 2.13, Lemma 2.14, and Property 2.15]{davydov2017lagrangian} and Theorem \ref{class-algebra}, we have $A = Fun(G/H)$ and $Z(Vec_G^\omega)_{A}^{loc} \simeq Z(Vec_H^{\omega|_H})$, and the image of $(K',E',\beta',\psi') \in Z(Vec_H^{\omega|_H})$ under the forgetful functor $Z(Vec_G^\omega)_{A}^{loc} \to Z(Vec_G^\omega)$ can also be represented by the same quadruple $(K',E',\beta',\psi')$. Thus, an embedding from $A$ to $B$ is determined by an element $g \in G$ such that ${}^gK \leqslant H$ and $g.(K,E,\beta,\psi) = (K',E',\beta',\psi')$.
\end{proof}

We now turn to the investigation of embeddings between condensable algebras sharing the isomorphic degree $e$ part.

\begin{lemma}\label{lemma:eta}
    Let $H$ be a finite group and $\sigma\in H^3(H,U(1))$. Let $A$ and $B$ be commutative central algebras in $Vec_H^\sigma$ with underlying objects $\oplus_{f\in F}\mathbb{C}_f$ and $\oplus_{f\in E}\mathbb{C}_f$, corresponding to the pairs $(\alpha,\phi)$ and $(\beta,\psi)$, respectively. Then, there exists an embedding from $A$ to $B$ in $Z(Vec_H^\sigma)$ if and only if $F \trianglelefteq E \trianglelefteq  H$ and there exists a monoidal functor $\eta$ lifting the functor $Vec_{H/F}^{\alpha} \to Vec_{H/E}^{\beta}$ such that the following diagram commutes up to a monoidal natural isomorphism:
    \begin{equation}\label{eta}
        \begin{tikzcd}
            Vec_H^{\sigma} \arrow[r, "\phi"] \arrow[rd, "\psi"'] & Vec_{H/F}^{\alpha} \arrow[d, "\eta"] \\
                                                                 & Vec_{H/E}^{\beta}            .       
            \end{tikzcd}
    \end{equation} 
\end{lemma}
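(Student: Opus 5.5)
We identify embeddings with monoidal functors between the module categories, using Proposition~\ref{exact} and the bijection of Proposition~\ref{prop:classs_cen}. Here ``embedding in $Z(Vec_H^\sigma)$'' means an algebra monomorphism $A\to B$ that is also a morphism of central algebras, i.e.\ it respects the half-braidings.

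\textbf{Forward direction.} Suppose $\iota: A\to B$ is such an embedding.

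\begin{itemize}
\item On supports, $\iota$ maps each graded piece $\mathbb{C}_f$ injectively. Hence $F\subseteq E$, and since $F$ is normal in $H$, we get $F\trianglelefteq E\trianglelefteq H$.
\item Via $\iota$, $B$ becomes an $A$-$A$ bimodule. By commutativity and centrality of $A$, together with the fact that $\iota$ respects half-braidings, $B$ is a commutative central algebra in the tensor category $(Vec_H^\sigma)_A$.
\item Using $K:Vec_{H/F}^\alpha\simeq (Vec_H^\sigma)_A$ from Proposition~\ref{exact}, transport $B$ to a commutative central algebra $\bar B$ in $Vec_{H/F}^\alpha$, with underlying object $\oplus_{x\in E/F}\mathbb{C}_x$.
\item Taking $\bar B$-modules gives a tensor functor
$$
\eta: Vec_{H/F}^\alpha \to (Vec_{H/F}^\alpha)_{\bar B}.
$$
\item We have $(Vec_{H/F}^\alpha)_{\bar B}\simeq ((Vec_H^\sigma)_A)_B\simeq (Vec_H^\sigma)_B\simeq Vec_{H/E}^\beta$. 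The last equivalence is the one fixed by $(\beta,\psi)$, and the middle one is the standard identification of $B$-modules with $B$-modules in $A$-modules.
\item Composing the free-module functors gives $\eta\circ\phi\simeq\psi$ as monoidal functors, because free $B$-modules over free $A$-modules are free $B$-modules. This yields diagram~\eqref{eta}.
\item On groups, $\eta$ covers the quotient $H/F\to H/E$, so it is a lift of the induced functor.
\end{itemize}

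\textbf{Converse.} Suppose $\eta$ is given with $\eta\circ\phi\simeq\psi$.

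\begin{itemize}
\item The underlying group map $H/F\to H/E$ is surjective. By Remark~\ref{fe rightad}, the right adjoints $\eta^R$, $\phi^R$ and $\psi^R$ are all faithful and exact.
\item Then $\psi^R\simeq \phi^R\circ\eta^R$, so
$$
B\simeq\psi^R(\mathbb{C}_e)=\phi^R(\eta^R(\mathbb{C}_e)),
$$
using $A\simeq\phi^R(\mathbb{C}_e)$ from Proposition~\ref{prop:classs_cen}.
\item The unit $\nu_0:\mathbb{C}_e\to\eta^R(\mathbb{C}_e)$ is a morphism of commutative central algebras. Applying the lax monoidal functor $\phi^R$ gives an algebra map
$$
A=\phi^R(\mathbb{C}_e)\to\phi^R\eta^R(\mathbb{C}_e)\simeq B.
$$
\item This map is nonzero, and $A$ is connected separable as an object of $Z(Vec_H^\sigma)$ (its degree-$e$ part is one-dimensional). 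So it is a monomorphism by Lemma~\ref{monomorphism}.
\end{itemize}

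\textbf{Main obstacle.} The delicate point is compatibility with half-braidings. We must check that the composite lax structure on $\phi^R\eta^R$ induces on $\phi^R\eta^R(\mathbb{C}_e)$ the same half-braiding, given by diagram~\eqref{halfbraiding}, as $\psi^R$ induces on $B$. We must also check that $\phi^R(\nu_0)$ respects half-braidings.

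\begin{itemize}
\item First I would verify that the Hopf isomorphisms for $F_1\dashv R_1$ and $F_2\dashv R_2$ compose to the Hopf isomorphism of the composite adjunction. With that, the half-braidings coincide.
\item The comparison $\psi\simeq\eta\phi$ is monoidal, so the algebra structures agree via diagram~\eqref{lax}, and the statement about $\phi^R(\nu_0)$ follows from the same diagram.
\item Since all objects here are pointed, the fallback is a direct cocycle check in the $(\gamma,\epsilon)$ coordinates of Remark~\ref{gammaepslion}. There, $\eta$ corresponds to a 2-cochain on $H/F$ whose pullback relates $\hat\nu_\psi$ and $\hat\nu_\phi$.
\end{itemize}
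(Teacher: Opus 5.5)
Your proposal is correct and follows essentially the same route as the paper. For the converse, both apply $\phi^R$ to the unit $\mathbb{C}_e\to\eta^R(\mathbb{C}_e)$; for the forward direction, both regard $B$ as a commutative central algebra in $(Vec_H^\sigma)_A\simeq Vec_{H/F}^\alpha$ and extract $\eta$ from it. The differences are minor: you construct $\eta$ explicitly as the free $\bar B$-module functor, whereas the paper invokes the bijection of Proposition~\ref{prop:classs_cen}, which amounts to the same construction. You also add two checks the paper omits: injectivity via Lemma~\ref{monomorphism}, and an account of the half-braiding compatibility that the paper leaves as ``it can be verified''.
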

\begin{proof}
    When $\eta$ exists, we obtain an algebra homomorphism 
$$
A=\phi^R(\mathbb{C}_e) \stackrel{\phi^R(u_{\eta^R(\mathbb{C}_e)})}{\longrightarrow} \phi^R(\eta^R(\mathbb{C}_e)) =B.
$$
Here, $u_{\eta^R(\mathbb{C}_e)}$ is the unit of the algebra $\eta^R(\mathbb{C}_e) \in Vec_{H/F}^{\alpha}$. It can be verified that $\phi^R(u_{\eta^R(\mathbb{C}_e)})$ is also a morphism in $Z(Vec_H^\sigma)$; thus, we obtain an algebra homomorphism from $A$ to $B$ in $Z(Vec_H^\sigma)$.

Conversely, the existence of an embedding from $A$ to $B$ implies that $F \trianglelefteq E \trianglelefteq H$ and that $B$ is the image of a commutative algebra $B'$ in $(Vec_H^\sigma)_A \simeq Vec_{H/F}^{\alpha}$ under the forgetful functor. Furthermore, $B' \in Vec_{H/F}^{\alpha}$, whose underlying object is $\oplus_{\bar{f} \in E/F} \mathbb{C}_{\bar{f}}$, corresponds to the pair $(\beta', \eta)$ associated with $Vec_{H/F}^{\alpha}$. Since $\phi^R(\eta^R(\mathbb{C}_e)) \simeq B = \psi^R(\mathbb{C}_e)$, it follows from Proposition \ref{prop:classs_cen} that Diagram \ref{eta} commutes.
\end{proof}

\begin{corollary}\label{sameepart}
    Let $A$ and $B$ be condensable algebras in $Z(Vec_G^{\omega})$ represented by $(H,F,\alpha,\phi)$ and $(H,E,\beta,\psi)$, respectively. Then an embedding from $A$ to $B$ is encoded by a pair $(g,\eta)$, where $g \in G$ satisfies $F \trianglelefteq {}^g E\trianglelefteq H$ and ${}^gH=H$, and $\eta$ is a monoidal functor such that the following diagram commutes up to a monoidal natural isomorphism:
    \begin{equation}\label{eta2}
        \begin{tikzcd}
            Vec_H^{\omega|_H} \arrow[r, "\phi"] \arrow[rd, "L_g(\psi)"'] & Vec_{H/F}^\alpha \arrow[d, "\eta"] \\
                                                                         & Vec_{H/{}^{g} E}^{\beta^g}      .  
            \end{tikzcd}
    \end{equation}
    
    Two such embeddings, encoded by $(g,\eta)$ and $(g',\eta')$, are equivalent if and only if the quadruples $(H/F, {}^{g}E/F, \beta^g, \eta)$ and $(H/F, {}^{g'}E/F, \beta^{g'}, \eta')$ are equivalent in $Z(Vec_{H/F}^\alpha)$.
\end{corollary}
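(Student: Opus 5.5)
Both $A$ and $B$ have degree $e$ part $Fun(G/H)$, so I would reduce the statement to Lemma~\ref{lemma:eta} after passing through the condensation of $Fun(G/H)$.

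\textbf{Step 1: restrict embeddings to degree $e$.} An algebra embedding $\iota:A\to B$ in $Z(Vec_G^\omega)$ maps the degree $e$ part to the degree $e$ part. By Lemma~\ref{monomorphism}, the restriction is a monomorphism. Both degree $e$ parts are isomorphic to $Fun(G/H)$ and have equal dimension, so the restriction is an isomorphism $Fun(G/H)\simeq Fun(G/H)$. By Lemma~\ref{lemma:zetag}, this isomorphism is encoded by some $g\in G$ with ${}^gH=H$.

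\textbf{Step 2: transport $B$.} Replace $B$ by the isomorphic algebra $g.(H,E,\beta,\psi)=(H,{}^gE,\beta^g,L_g(\psi))$ (Remark~\ref{remark:conact}). After this, $\iota$ restricts to the identity on $Fun(G/H)$. Both algebras then lie over $Fun(G/H)$, and $\iota$ is a morphism of algebras under $Fun(G/H)$.

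\textbf{Step 3: reduce to Lemma~\ref{lemma:eta}.} By the remark preceding the definition of equivalent embeddings, algebras over $Fun(G/H)$ correspond to condensable algebras in $Z(Vec_G^\omega)^{loc}_{Fun(G/H)}\simeq Z(Vec_H^{\omega|_H})$. Under this correspondence, $\iota$ becomes an algebra embedding between the trivial-degree-$e$ algebras encoded by $(F,\alpha,\phi)$ and $({}^gE,\beta^g,L_g(\psi))$. One must check that the forgetful/transfer functor is compatible with morphisms, i.e.\ that it is fully faithful on algebras over $Fun(G/H)$. This holds because condensing $Fun(G/H)$ and forgetting are adjoint, and morphisms under $Fun(G/H)$ are exactly morphisms of modules (Proposition~\ref{twostep}).

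Via the forgetful functor to $Vec_H^{\omega|_H}$ from Diagram~\ref{com_zetag}, these become commutative central algebras. Lemma~\ref{lemma:eta} then gives $F\trianglelefteq{}^gE\trianglelefteq H$ and a monoidal $\eta$ making Diagram~\ref{eta2} commute. Conversely, given $(g,\eta)$, Lemma~\ref{lemma:eta} supplies the embedding $\phi^R(u_{\eta^R(\mathbb{C}_e)})$. Composing with the isomorphism induced by $g$ yields an embedding $A\to B$.

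\textbf{Step 4: equivalence of embeddings.} By definition, two embeddings are equivalent iff the corresponding algebras $B'$ in $Z(Vec_G^\omega)^{loc}_A$ are isomorphic. I compute this category in two stages (Proposition~\ref{twostep}): first condense $Fun(G/H)$ to get $Z(Vec_H^{\omega|_H})$, then condense the trivial-degree-$e$ algebra $(F,\alpha,\phi)$. Since $(Vec_H^{\omega|_H})_A\simeq Vec_{H/F}^\alpha$ by Proposition~\ref{exact}, the local modules form $Z(Vec_{H/F}^\alpha)$.

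In the proof of Lemma~\ref{lemma:eta}, $B'$ is the algebra in $Vec^\alpha_{H/F}$ with support ${}^gE/F$ and data $(\beta^g,\eta)$. So $B'$ is the condensable algebra in $Z(Vec^\alpha_{H/F})$ with quadruple $(H/F,{}^gE/F,\beta^g,\eta)$, and Theorem~\ref{class-algebra} applied in $Z(Vec_{H/F}^\alpha)$ gives the stated criterion.

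\textbf{Main obstacle.} The hard part is the identification in Step 4: that the local modules over $A$ in $Z(Vec_H^{\omega|_H})$ are $Z(Vec_{H/F}^\alpha)$ \emph{compatibly} with the half-braiding data, so that $B'$ really corresponds to the quadruple with $\eta$ as its monoidal lift. I would verify this by applying Proposition~\ref{exact} to $\phi$ and checking that the half-braiding in Diagram~\ref{halfbraiding} is transported correctly.
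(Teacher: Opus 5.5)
Your proposal is correct and follows essentially the paper's route: the paper also condenses $Fun(G/H)$ first, packaging your Steps 1--2 as an application of Lemma~\ref{lemma:epartembed} with $K=H$. It then applies Lemma~\ref{lemma:eta} inside $Z(Vec_H^{\omega|_H})$ and reads off equivalence of embeddings from the classification in $Z(Vec_{H/F}^\alpha)$. The identification you flag as the main obstacle is not checked by hand in the paper; it is taken directly from \cite[Corollary 4.5]{schauenburg2001monoidal}, namely $Z(Vec_H^{\omega|_H})^{loc}_{A'}\simeq Z((Vec_H^{\omega|_H})_{A'})\simeq Z(Vec_{H/F}^\alpha)$. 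Since Proposition~\ref{exact} alone only gives the fusion-category equivalence $(Vec_H^{\omega|_H})_{A'}\simeq Vec_{H/F}^\alpha$, you should cite Schauenburg here rather than try to extract the braided identification from the half-braiding of $\phi^R(\mathbb{C}_e)$.
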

\begin{proof}
    By Lemma \ref{lemma:epartembed}, $A$ and $B$ are images under the forgetful functor of the condensable algebras $A'$ and $B'$ in $Z(Vec_H^{\omega|_H})$, represented by the quadruples $(H,F,\alpha,\phi)$ and $(H,E,\beta,\psi)$, respectively. According to \cite[Corollary 4.5]{schauenburg2001monoidal}, we have $Z(Vec_H^{\omega|_H})_{A'}^{loc} \simeq Z((Vec_H^{\omega|_H})_{A'})$, whereby the proof of Lemma \ref{lemma:eta} implies that an embedding from $A'$ to $B'$ is given by $(H/F, E'/F, \beta', \eta) \in Z(Vec_{H/F}^{\alpha})$ such that $B' \simeq (H,E',\beta',\eta\circ \phi)$ in $Z(Vec_H^{\omega|_H})$. It then follows from Lemma \ref{lemma:epartembed} that an embedding from $A$ to $B$ is determined by $(H/F,E'/F,\beta',\eta) \in Z(Vec_{H/F}^{\alpha})$ and an element $g \in G$ satisfying $g.(H,E,\beta,\psi) = (H,E', \beta', \eta\circ \phi)$ in $Z(Vec_G^\omega)$. Consequently, Diagram \ref{eta2} commutes.
\end{proof}

\begin{lemma}\label{Lemma: for_fun}

Let $H$ be a finite group and $\sigma\in H^3(H,U(1))$. Let $A$ be a condensable algebra in $Z(Vec_H^\sigma)$ represented by $(H,F,\alpha,\phi)$. Suppose that $K$ is a subgroup satisfying $F \trianglelefteq K \leqslant  H$, and let $B=Fun((H/F)/(K/F))$ be the condensable algebra in $Z(Vec_{H/F})$. Then, under the lax braided monoidal functor
$$
U: Z(Vec_{H/F}^\alpha)\simeq Z(Vec_H^\sigma)_A^{loc}\stackrel{Forget}{\rightarrow}Z(Vec_H^\sigma),
$$
the image algebra $U(B)$ is represented by $(K,F,\alpha|_K,\phi|_K)$, where $\phi|_K$ serves as shorthand for $\phi|_{Vec_K^{\omega|_K}}$.

\end{lemma}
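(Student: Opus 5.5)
The plan is to identify $U(B)$ by computing its two invariants from the quadruple description (Theorem~\ref{class-algebra}): the degree-$e$ part, which determines the subgroup, and the commutative central algebra in $Vec_K^{\sigma|_K}$ obtained after condensing that degree-$e$ part, which determines $(F,\alpha,\phi)$.

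\textbf{Step 1: the underlying object of $U(B)$.} Write $A=\phi^R(\mathbb{C}_e)=\oplus_{f\in F}\mathbb{C}_f$. The equivalence $Z(Vec_H^\sigma)_A^{loc}\simeq Z((Vec_H^\sigma)_A)\simeq Z(Vec_{H/F}^\alpha)$, via \cite[Corollary 4.5]{schauenburg2001monoidal} and Proposition~\ref{exact}, followed by the forgetful functor, is computed on underlying objects of $Vec_H^\sigma$ by the right adjoint $\phi^R$. For $B=Fun((H/F)/(K/F))$, whose underlying object is $\oplus_{\bar x}\mathbb{C}_{\bar e}\otimes\mathbb{C}[\,(H/F)/(K/F)\,]$, we get $\phi^R(B)\simeq Fun(H/K)\otimes\big(\oplus_{f\in F}\mathbb{C}_f\big)$. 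This is because $(H/F)/(K/F)\cong H/K$ as $H$-sets, and by Remark~\ref{fe rightad}, $\phi^R$ places a copy of each fiber over every $f\in F$. Hence the degree-$e$ part of $U(B)$ is $Fun(H/K)$, with its $H$-action inflated from the $H/F$-action. This forces the first entry of the quadruple to be $K$ (up to conjugacy), and the support of the degree-$e$-trivial piece to be $F$.

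\textbf{Step 2: commute the two condensations.} $U(B)$ contains both $A$ and the algebra $Fun(H/K)$ represented by $(K,1,\sigma|_K,\mathrm{id})$. By Lemma~\ref{lemma:epartembed}, $U(B)$ is the image of a condensable algebra $C\in Z(Vec_H^\sigma)^{loc}_{Fun(H/K)}\simeq Z(Vec_K^{\sigma|_K})$ with trivial degree-$e$ part, supported on $F$. It remains to identify the commutative central algebra $C\in Vec_K^{\sigma|_K}$. The claim is that $C=(\phi|_K)^R(\mathbb{C}_e)$. To see this, restrict the algebra $A$ along $Vec_K\hookrightarrow Vec_H$, i.e. restrict the half-braiding $\epsilon$ to $K$. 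By the formulas in Remark~\ref{gammaepslion} (with $\gamma=\hat\nu^{-1}$ and $\epsilon_h(f)$ given in terms of $\hat\nu$ for $h\in K$), this restriction is exactly the algebra associated to $\phi|_K=F_{\pi|_K,\hat\nu|_K}:Vec_K^{\sigma|_K}\to Vec_{K/F}^{\alpha|_K}$. Proposition~\ref{prop:classs_cen} then gives the pair $(\alpha|_K,\phi|_K)$.

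\textbf{Step 3: the main obstacle.} The delicate point is justifying that the algebra $C$ obtained in $Z(Vec_K^{\sigma|_K})$ is the restriction of $A$ and not a twist of it. Equivalently, one must check that the square formed by the two equivalences $Z(Vec_H^\sigma)^{loc}_{Fun(H/K)}\simeq Z(Vec_K)$ and $Z(Vec_{H/F}^\alpha)^{loc}_{B}\simeq Z(Vec_{K/F}^{\alpha|_K})$ commutes with the forgetful and restriction functors. I would handle this by the explicit transfer description of \cite{davydov2010modular}: the degree-$e$ summand of $U(B)$ at the coset $K$ is $\oplus_{f\in F}\mathbb{C}_f\otimes\delta_K$. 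The multiplication and the half-braiding for $k\in K$ are inherited directly from $A$, because the forgetful functor from local modules is lax monoidal with structure given by the multiplication of $A$. Reading off $\gamma$ and $\epsilon|_{K\times F}$ then matches the data of $\phi|_K$. The possible ambiguity modulo $H^2(K/F)$ in Remark~\ref{remark:classs_cen} is absorbed by the equivalence relation on quadruples.
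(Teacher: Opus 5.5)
Your outline follows the paper's own argument. You identify $U(B)$ with $\phi^R(B)$, read off $\phi^R(B)_e\cong Fun(H/K)$ with the inflated $H$-action, pass to the $\delta_K$-component via $Z(Vec_H^\sigma)^{loc}_{Fun(H/K)}\simeq Z(Vec_K^{\sigma|_K})$, and match the resulting $(\gamma,\epsilon)$ against Remark~\ref{gammaepslion}. Two small corrections. The object you want is $U(B)\delta_K$, which is supported in degrees $f\in F$; it is not a ``degree-$e$ summand''. And once you compute that component directly, the commuting square of equivalences you mention in Step 3 is not needed.

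The one real gap is the justification in Step 3. Lax monoidality of the forgetful functor $Z(Vec_H^\sigma)^{loc}_A\to Z(Vec_H^\sigma)$ does determine the multiplication of $U(B)$, and hence gives $\gamma=\hat\nu^{-1}$ on the $\delta_K$-component. It says nothing about the half-braiding of $U(B)$ in $Z(Vec_H^\sigma)$. That structure is produced by the equivalence $Z(Vec_{H/F}^\alpha)\simeq Z((Vec_H^\sigma)_A)\simeq Z(Vec_H^\sigma)^{loc}_A$, and it has to be computed.

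The paper gets it from \cite[Corollary 4.11, Corollary 6.28]{flake2024projection}. There, the half-braiding of $\phi^R(B)$ with $X$ is the composite $X\otimes\phi^R(B)\to\phi^R(\phi(X)\otimes B)\to\phi^R(B\otimes\phi(X))\to\phi^R(B)\otimes X$. It is built from the unit of $\phi\dashv\phi^R$, the monoidal structure $\nu$ of $\phi$, and the half-braiding of $B$. For $k\in K$, the half-braiding of $B$ with $\phi(\mathbb{C}_k)$ fixes the basis function $\delta_{K/F}\in Fun((H/F)/(K/F))$. So on $U(B)_f\delta_K$ the composite is the scalar $\hat\nu(kfk^{-1},k)\hat\nu(k,f)^{-1}=\epsilon_k(f)$. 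This is what actually proves your claim that the half-braiding is ``inherited from $A$''.

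Alternatively, you could unwind the equivalence of \cite{schauenburg2001monoidal} by hand. Use the central structure $\epsilon$ of $A$ to make $X\otimes A$ an $A$-bimodule, then transport the half-braiding of $B$ through $K$. Either way, this computation is the actual content of the lemma and has to be supplied; as written, it is only asserted.
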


\begin{proof} 
    Denote by $\phi^R$ the right adjoint of $\phi$, and let $\nu:\phi(X)\otimes \phi(Y)\to \phi(X\otimes Y)$ be the monoidal structure of $\phi$. By \cite[Corollary 4.11,Corollary 6.28]{flake2024projection}, we have $U(B)\simeq \phi^R(B)$ in $Z(Vec_H^\sigma)$, with multiplication given by $\phi^R(B)\otimes \phi^R(B)\stackrel{\nu^{-1}}{\rightarrow}\phi^R(B\otimes B)\stackrel{\phi^R(m_B)}{\rightarrow}\phi^R(B)$, where $m_B$ is the multiplication of $B$. The half-braiding of $\phi^R(B)$ (i.e., the $H$ action on $\phi^R(B)$) is given by 
    $$ 
    X\otimes \phi^R(B)\stackrel{(u\otimes id)\circ \nu^{-1}}{\longrightarrow} \phi^R(\phi(X)\otimes B)\stackrel{\phi^R(\beta_{\phi(X)})}{\rightarrow}\phi^R(B\otimes \phi(X))\stackrel{((\nu^{-1})\circ(id\otimes u))^{-1}}{\longrightarrow} \phi^R(B)\otimes X 
    $$ 
    where $u$ is the unit of the adjoint pair $\phi\dashv \phi^R$ and $\beta$ is the half-braiding of $B$. The $H$ action on $\phi^R(B)_e$ agrees with the $H/F$ action on $B$. Therefore, we obtain $\phi^R(B)_e\simeq Fun(H/K)$.  
     
    Consider the equivalence $D:Z(Vec_H^\sigma)_{Fun(H/K)}^{loc}\simeq Z(Vec_K^{\sigma|_K})$. We have $D(\phi^R(B))= \oplus_{f\in F}\mathbb{C}_f$, with multiplication $D(\phi^R(B))_{f_1}\otimes D(\phi^R(B))_{f_2}\stackrel{\nu^{-1}(f_1,f_2)}{\longrightarrow} \mathbb{C}_{f_1f_2}$ and half-braiding $\mathbb{C}_k\otimes D(\phi^R(B))_f\stackrel{\nu(kfk^{-1},k)\nu^{-1}(k,f)}{\longrightarrow} D(\phi^R(B))_{kfk^{-1}}\otimes \mathbb{C}_k$. Thus, by Remark~\ref{gammaepslion}, the algebra $\phi^R(B)\in Z(Vec_H^\sigma)$ is represented by $(K,F,\alpha|_K,\phi|_K)$. 
\end{proof}

\begin{proposition}\label{prop:transrep}
    Let $A$ be a condensable algebra in $Z(Vec_G^\omega)$ represented by $(H,F,\alpha,\phi)$. If a condensable algebra $D \in Z(Vec_G^\omega)_A^{loc} \simeq Z(Vec_{H/F}^{\alpha})$ is represented by $(K/F,E/F,\beta,\eta)$, then its image $B=U(D)$ in $Z(Vec_G^\omega)$ under the forgetful functor $U: Z(Vec_G^\omega)_A^{loc} \to Z(Vec_G^\omega)$ can be represented by the quadruple $(K,E,\beta,\eta \circ \phi|_{{Vec_K}^{\omega|_K}})$.
\end{proposition}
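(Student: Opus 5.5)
The plan is to factor the forgetful functor $U$ through an intermediate condensation and apply Lemma~\ref{Lemma: for_fun} and Corollary~\ref{sameepart} in sequence. Since $F\trianglelefteq E\trianglelefteq K$ and $K/F\leqslant H/F$, the degree $e$ part of $D$ in $Z(Vec_{H/F}^\alpha)$ is $D_0:=Fun((H/F)/(K/F))$. By Lemma~\ref{lemma:epartembed} applied in $Z(Vec_{H/F}^\alpha)$, $D_0$ embeds into $D$. By Proposition~\ref{twostep}, $D$ then corresponds to a condensable algebra $D'\in Z(Vec_{H/F}^\alpha)^{loc}_{D_0}\simeq Z(Vec_{K/F}^{\alpha|_{K/F}})$. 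This $D'$ has trivial degree $e$ part and is represented by $(K/F,E/F,\beta,\eta)$.

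The first step is to identify the image of $D_0$. By Lemma~\ref{Lemma: for_fun}, transported along the equivalence $Z(Vec_G^\omega)^{loc}_A\simeq Z(Vec_{H/F}^\alpha)$ and first working inside $Z(Vec_H^{\omega|_H})$, the algebra $U(D_0)$ is represented by $(K,F,\alpha|_K,\phi|_K)$. Call this algebra $C$. To pass from $Z(Vec_H^{\omega|_H})$ to $Z(Vec_G^\omega)$, we compose with the forgetful functor from $Z(Vec_G^\omega)^{loc}_{Fun(G/H)}$. By Lemma~\ref{lemma:epartembed}, this functor preserves the representing quadruple, so $C$ is again $(K,F,\alpha|_K,\phi|_K)$ in $Z(Vec_G^\omega)$.

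The second step uses Proposition~\ref{twostep} and the composite of lax forgetful functors. This gives
\[
Z(Vec_G^\omega)^{loc}_C\simeq (Z(Vec_G^\omega)^{loc}_A)^{loc}_{D_0}\simeq Z(Vec_{K/F}^{\alpha|_{K/F}}),
\]
and $U(D)$ is the image of $D'$ under the forgetful functor $Z(Vec_G^\omega)^{loc}_C\to Z(Vec_G^\omega)$. Now $C$ and $U(D)$ share the degree $e$ part $Fun(G/K)$. Therefore both lie, via $Z(Vec_G^\omega)^{loc}_{Fun(G/K)}\simeq Z(Vec_K^{\omega|_K})$, in the situation of Lemma~\ref{lemma:eta} and Corollary~\ref{sameepart} with $H$ replaced by $K$. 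The proof of Lemma~\ref{lemma:eta} shows that the image in $Vec_K^{\omega|_K}$ of the algebra $\eta^R(\mathbb{C}_e)\in Vec_{K/F}^{\alpha|_{K/F}}$ is $(\phi|_K)^R(\eta^R(\mathbb{C}_e))=(\eta\circ\phi|_K)^R(\mathbb{C}_e)$. Under the bijection of Proposition~\ref{prop:classs_cen}, this central algebra corresponds to the pair $(\beta,\eta\circ\phi|_K)$, with support $E$. Hence $U(D)$ is represented by $(K,E,\beta,\eta\circ\phi|_K)$.

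The main obstacle is the compatibility in step two. We need the equivalence $Z(Vec_G^\omega)^{loc}_C\simeq Z(Vec_{K/F}^{\alpha|_{K/F}})$ obtained through the two-step route ($A$, then $D_0$) to agree with the one obtained from $(K,F,\alpha|_K,\phi|_K)$ directly. That direct route is $Fun(G/K)$ followed by Schauenburg's equivalence $Z(Vec_K)^{loc}_{A'}\simeq Z((Vec_K)_{A'})$ and $(Vec_K^{\omega|_K})_{A'}\simeq Vec_{K/F}^{\alpha|_{K/F}}$ via $\phi|_K$. Without this agreement, the quadruple of $D'$ could be twisted by a conjugation or by a monoidal autoequivalence of $Vec_{K/F}^{\alpha|_{K/F}}$.

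I would handle this using the explicit right-adjoint description from the proof of Lemma~\ref{Lemma: for_fun} (via \cite{flake2024projection}), in which the multiplication and half-braiding are given by $\nu$. Restricting to $K$ and checking degreewise that the two lax functors agree on objects of the form $\eta^R(\mathbb{C}_e)$ should suffice. Any residual autoequivalence lifts the identity of $K/F$, and so is absorbed by the equivalence relation on $\phi$ in Proposition~\ref{prop:classs_cen}.
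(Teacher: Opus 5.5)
Your proposal is correct and follows essentially the paper's route. The paper reduces to $Z(Vec_H^{\omega|_H})$ via $U_1$ and identifies $C=U_2(Fun((H/F)/(K/F)))$ as $(K,F,\alpha|_K,\phi|_K)$ by Lemma~\ref{Lemma: for_fun}. It then reads off the answer from the square $U_2\circ U_4\simeq U_3\circ U_5$ in Diagram~\ref{4forget}, using Lemma~\ref{lemma:epartembed} for $U_3,U_4$ and Corollary~\ref{sameepart} for $U_5$. This is exactly your sequence: $D_0$, then $C$, then two-step condensation, then the same-degree-$e$ argument over $K$.

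The compatibility you single out as the main obstacle is precisely the commutativity of that square, which the paper asserts without proof. So your sketch of checking it via the explicit right-adjoint description in Lemma~\ref{Lemma: for_fun} is slightly more careful than the paper's own argument.
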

\begin{proof}
    Denote by $A'$ and $B'$ the condensable algebras in $Z(Vec_H^{\omega|_H})$ such that $A \simeq U_1(A')$ and $B \simeq U_1(B')$, where $U_1: Z(Vec_H^{\omega|_H}) \to Z(Vec_G^\omega)$ is the forgetful functor. By the proof of Lemma \ref{lemma:epartembed}, finding the quadruple representation of $B$ reduces to finding the quadruple representation of $B'$.
    
    Consider $Fun((H/F)/(K/F)) \in Z(Vec_{H/F}^\alpha)$ and denote by $C$ its image $U_2(Fun((H/F)/(K/F))) \in Z(Vec_H^{\omega|_H})$, where $U_2: Z(Vec_{H/F}^{\alpha}) \simeq Z(Vec_H^{\omega|_H})_{A'}^{loc} \to Z(Vec_H^{\omega|_H})$ is the forgetful functor. By Lemma~\ref{Lemma: for_fun} $C$ is represented by $(K,F,\alpha|_K,\phi|_{K})$. We have the following commutative diagram:

    \begin{equation}\label{4forget}
        \begin{tikzcd}
Z(Vec_H^{\omega|_H})                                                              & Z(Vec_{H/F}^\alpha)\simeq Z(Vec_{H}^{\omega|_H})_{A'}^{loc} \arrow[l, "U_2"'] \\
Z(Vec_K^{\omega|_K})\simeq Z(Vec_H^{\omega|_H})_{Fun(H/K)}^{loc} \arrow[u, "U_3"] & Z(Vec_{K/F}^{\alpha|_{K/F}})\simeq Z(Vec_H^{\omega|_H})_C^{loc}\simeq Z(Vec_{H/F}^{\alpha})_{Fun((H/F)/(K/F))}^{loc} \arrow[u, "U_4"'] \arrow[l, "U_5"'].                   
\end{tikzcd}
    \end{equation}
Here, $U_2, U_3, U_4$, and $U_5$ are the forgetful functors. By Lemma \ref{lemma:epartembed} and Corollary \ref{sameepart}, we obtain the following commutative diagram:
\begin{center}
    \begin{tikzcd}
{(K,E,\beta,\eta\circ \phi|_K)=U_2(D)}  & {D=(K/F,E/F,\beta,\eta)} \arrow[l, "U_2"', maps to]                            \\
{(K,E,\beta,\eta\circ\phi|_K)} \arrow[u, "U_3", maps to] & {(K/F,E/F,\beta,\eta)} \arrow[u, "U_4"', maps to] \arrow[l, "U_5"', maps to].
\end{tikzcd}
\end{center}
Thus, $B = U(D) \simeq U_2 \circ U_1(D)$ can be represented by $(K,E,\beta,\eta \circ \phi|_K)$.
\end{proof}

Based on the above discussion, we obtain the following theorem describing the embedding relationships between condensable algebras in $Z(Vec_G^\omega)$.

\begin{theorem}\label{emb}
    Assume $A,B\in Z(Vec_G^\omega)$ are condensable algebras represented by quadruples $(H,F,\alpha,\phi)$ and $(K,E,\beta,\psi)$, respectively. An algebra embedding $A\to B$ is encoded by a pair $(g,\eta)$ where $g\in G$ is such that $F \trianglelefteq {}^gE \trianglelefteq {}^gK \leqslant H$, and $\eta$ is a monoidal structure on the induced functor $Vec_{{}^gK/F}^{\alpha|_{^g K/F}}\to Vec_{^gK/^gE}^{\beta^g}$ that makes the following diagram commutative up to a monoidal natural isomorphism
    \begin{equation}\label{diagram_emb}
        \begin{tikzcd}
            Vec_{{}^gK}^{\omega|_{{}^gK}} \arrow[r, "\phi|_{{}^gK}"] \arrow[rd, "L_{g}(\psi)"'] & Vec_{{}^gK/F}^{\alpha|_{{}^gK/F}} \arrow[d, "\eta"] \\
                                                                                                 & Vec_{{}^gK/{}^gE}^{\beta^g}       .                 
            \end{tikzcd}
    \end{equation} 

    The algebra embeddings encoded by $(g, \eta)$ and $(g', \eta')$ are equivalent if and only if the two quadruples $({}^gK/F, {}^gE/F,\beta^g, \eta)$ and $({}{}^{g'}K/F, {}^{g'}E/F, \beta^{g'}, \eta')$ associated to $Z(Vec_{H/F}^{\alpha})$ are equivalent.

\end{theorem}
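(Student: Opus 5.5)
The plan is to reduce the general embedding problem to the two special cases already treated: Lemma~\ref{lemma:epartembed}, where the source has trivial $F$, and Corollary~\ref{sameepart}, where source and target share the same degree-$e$ part. The bridge between them is Proposition~\ref{twostep}, together with the identification of the forgetful images in Proposition~\ref{prop:transrep}.

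\textbf{Step 1: reduce to a single subgroup $H$.} Suppose an embedding $A\to B$ exists. Since $A=(H,F,\alpha,\phi)$ contains its degree-$e$ part $Fun(G/H)$, the composite $Fun(G/H)\to A\to B$ is an embedding. By Lemma~\ref{lemma:epartembed}, applied with source $(H,1,\omega|_H,\mathrm{id})$, there is $g\in G$ with ${}^gK\leqslant H$. Replacing $B$ by its conjugate representative $g.(K,E,\beta,\psi)=({}^gK,{}^gE,\beta^g,L_g(\psi))$, we may view both $A$ and $B$ as coming from condensable algebras $A',B'\in Z(Vec_H^{\omega|_H})\simeq Z(Vec_G^\omega)^{loc}_{Fun(G/H)}$. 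The embedding $A\to B$ over $Fun(G/H)$ then corresponds to an embedding $A'\to B'$ there.

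\textbf{Step 2: reinterpret $B'$ inside $Z(Vec_{H/F}^\alpha)$.} By Proposition~\ref{twostep} and the remark preceding the definition of equivalent embeddings, an embedding $A'\to B'$ is the same as a condensable algebra $D\in Z(Vec_H^{\omega|_H})^{loc}_{A'}\simeq Z(Vec_{H/F}^\alpha)$ whose forgetful image is $B'$. By Theorem~\ref{class-algebra}, $D$ has a quadruple $(K'/F,E'/F,\beta',\eta)$. By Proposition~\ref{prop:transrep}, $U(D)$ is represented by $(K',E',\beta',\eta\circ\phi|_{K'})$. Matching this with $B'$ via Theorem~\ref{class-algebra} inside $Z(Vec_H^{\omega|_H})$, possibly after further conjugation by an element of $H$ (which is absorbed into $g$), forces
\[K'={}^gK,\qquad E'={}^gE,\qquad \beta'=\beta^g,\qquad \eta\circ\phi|_{{}^gK}\simeq L_g(\psi).\]
This is exactly the condition $F\trianglelefteq{}^gE\trianglelefteq{}^gK\leqslant H$ together with commutativity of Diagram~\ref{diagram_emb}.

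Conversely, given $(g,\eta)$ with these properties, the quadruple $({}^gK/F,{}^gE/F,\beta^g,\eta)$ defines a condensable algebra $D\in Z(Vec_{H/F}^\alpha)$. Its forgetful image is $B$ by Proposition~\ref{prop:transrep}, and the unit of $D$ yields the embedding $A\to B$.

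\textbf{Step 3: equivalence of embeddings.} By definition, two embeddings are equivalent exactly when the corresponding $D$'s are isomorphic in $Z(Vec_G^\omega)^{loc}_A\simeq Z(Vec_{H/F}^\alpha)$. By Theorem~\ref{class-algebra}, this holds exactly when the two quadruples are equivalent, which gives the stated criterion.

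\textbf{Main obstacle.} The delicate point is the bookkeeping in Step~2. One must check that the residual conjugation ambiguity is consistent with how $L_g$ acts. The choice of $g$ in Step~1 is only determined up to elements normalizing the data, and an additional conjugation by $h\in H$ arises when matching $U(D)$ with $B'$. I would verify that $L_{hg}=L_h\circ L_g$ up to monoidal isomorphism, using the cocycle property of $\zeta_g$ from the Appendix, and that conjugation by $h\in H$ is compatible with the induced action on $Vec_{H/F}^\alpha$. This compatibility ensures the final criterion is stated purely in terms of quadruples in $Z(Vec_{H/F}^\alpha)$, with all ambiguity in $g$ accounted for there.
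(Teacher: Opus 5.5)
Your proposal is correct and follows essentially the paper's proof. You identify the embedding with a condensable lift $D\in Z(Vec_G^\omega)_A^{loc}\simeq Z(Vec_{H/F}^\alpha)$, compute its forgetful image via Proposition~\ref{prop:transrep}, match it with $(K,E,\beta,\psi)$ through Theorem~\ref{class-algebra}, and read off equivalence of embeddings as isomorphism of lifts. The only difference is that the paper skips your Step~1 and does the matching directly in $Z(Vec_G^\omega)$. That produces $g$ in one step and avoids the $L_{hg}\simeq L_h\circ L_g$ bookkeeping you flag as the main obstacle; this compatibility is in any case implicit in the paper's treatment of $g.(-)$ as an action.
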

\begin{proof}

By definition, an embedding from $A$ to $B$ is given by a condensable algebra $B' \in Z(Vec_G^\omega)_A^{loc} \simeq Z(Vec_{H/F}^{\alpha})$ such that $B$ is isomorphic to the image of $B'$ under the forgetful functor $U: Z(Vec_{H/F}^{\alpha}) \to Z(Vec_G^\omega)$. If $B'$ is represented by $(K'/F,E'/F,\beta',\eta)$, then by Proposition \ref{prop:transrep}, its image $U(B')$ can be represented by $(K',E',\beta',\eta \circ \phi|_{K'})$. Consequently, an embedding from $A$ to $B$ is determined by the data $(K'/F,E'/F,\beta',\eta) \in Z(Vec_{H/F}^{\alpha})$ together with an element $g \in G$ satisfying $g.(K,E,\beta,\psi) = (K',E',\beta',\eta \circ \phi|_{K'})$ in $Z(Vec_G^\omega)$. Therefore, Diagram \ref{diagram_emb} commutes.
\end{proof}

\begin{corollary}\label{embcor}
    For the embeddings from $A$ represented by $(H, F, \alpha, \phi)$ to $B$ represented by $(K, E, \beta, \psi)$ encoded by $(g,\eta)$, we fix $g$. We have the following cases:
    
    \begin{enumerate}
        \item If $F = {}^g E$, then the equivalent class of $\eta$ is unique.
    \item If $F = 1$, then $\phi$ is trivial. In this case, $\eta$ is completely determined by $\psi$, so $\eta$ is unique.
    
    \end{enumerate}
    \end{corollary}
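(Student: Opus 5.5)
Both cases follow by specializing Theorem~\ref{emb} with $g$ fixed. In each case I will check two things: what the commutative triangle (Diagram~\ref{diagram_emb}) forces on $\eta$, and what the equivalence criterion of Theorem~\ref{emb} says about the resulting quadruple in $Z(Vec_{H/F}^{\alpha})$. The only inputs I need are the special cases in Remark~\ref{remark:class} and the description of equivalence classes of $\phi$ in Proposition~\ref{prop:classs_cen}.

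\emph{Case $F={}^gE$.} Here the target quotient ${}^gK/{}^gE$ equals ${}^gK/F$. So $\eta$ is a monoidal lifting of the identity functor of $Vec_{{}^gK/F}$, going from $Vec_{{}^gK/F}^{\alpha|}$ to $Vec_{{}^gK/F}^{\beta^g}$. By Theorem~\ref{emb}, the class of the embedding is the class of the quadruple $({}^gK/F,\,{}^gE/F,\,\beta^g,\,\eta)=({}^gK/F,\,1,\,\beta^g,\,\eta)$ in $Z(Vec_{H/F}^{\alpha})$. Its second entry is the trivial group, so Remark~\ref{remark:class}(2) applies (with ambient group $H/F$ and fixed first entry ${}^gK/F$). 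It says the equivalence class of such a quadruple is unique, with canonical representative $\beta^g=\alpha|_{{}^gK/F}$ and $\eta$ the identity functor. Hence any two admissible choices of $\eta$ for this fixed $g$ give equivalent quadruples, and therefore equivalent embeddings. One subtlety needs checking: the equivalence in Remark~\ref{remark:class}(2) may use conjugation by an element of $H/F$, but this is permitted by the equivalence relation of Theorem~\ref{class-algebra}, so nothing is lost.

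\emph{Case $F=1$.} Now $H/F=H$, and by Remark~\ref{remark:class}(2) we may take $\alpha=\omega|_H$ and $\phi=\mathrm{id}$. Then $\phi|_{{}^gK}$ is the identity of $Vec_{{}^gK}^{\omega|_{{}^gK}}$. Commutativity of Diagram~\ref{diagram_emb} up to monoidal natural isomorphism then reads $\eta\simeq L_g(\psi)$. So $\eta$ is determined up to monoidal isomorphism by $\psi$ and $g$, and it exists automatically. For this fixed $g$ there is a single quadruple $({}^gK,{}^gE,\beta^g,L_g(\psi))$ in $Z(Vec_{H}^{\omega|_H})$, hence a single embedding class.

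The one step needing care is making sure that ``unique class of $\eta$'' means uniqueness modulo the correct relation. I would phrase both conclusions through the equivalence criterion of Theorem~\ref{emb}, together with the fact (Proposition~\ref{prop:classs_cen}) that monoidal functors are taken modulo autoequivalences lifting the identity of the target quotient. This avoids having to compare the cochains $\hat\nu$ directly.
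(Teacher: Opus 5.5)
Your proposal is correct and matches the argument the paper leaves implicit (Corollary~\ref{embcor} is stated without proof as a direct specialization of Theorem~\ref{emb}). For $F={}^gE$, the relevant quadruple $({}^gK/F,1,\beta^g,\eta)$ in $Z(Vec_{H/F}^{\alpha})$ has trivial second entry, so Remark~\ref{remark:class}(2) gives a unique class; for $F=1$, taking $\phi=\mathrm{id}$ forces $\eta\simeq L_g(\psi)$. Your insistence on reading ``unique'' modulo the equivalence relation of Theorem~\ref{emb} is exactly right in case 1. Distinct monoidal isomorphism classes of $\eta$ can satisfy Diagram~\ref{diagram_emb} whenever $\pi^*\colon H^2({}^gK/F,U(1))\to H^2({}^gK,U(1))$ has nontrivial kernel, yet they all yield equivalent embeddings.
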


\begin{lemma}
    For a short exact sequence of finite groups $1 \to N \stackrel{\iota}{\longrightarrow } G \stackrel{\pi}{\longrightarrow} K \to 1$, there is a natural exact sequence
    \begin{center}
        \begin{tikzcd}
            1 \arrow[r] & {H^2(K,\mathbb{Z})} \arrow[r, "\pi^*"] & {H^2(G,\mathbb{Z})} \arrow[r, "\iota^*"] & {H^2(N,\mathbb{Z})} \arrow[r, "d_3"] & {H^3(K,\mathbb{Z})} \arrow[r, "\pi^*"] & {H^3(G,\mathbb{Z})}.
            \end{tikzcd}
    \end{center} 
\end{lemma}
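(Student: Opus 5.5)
The plan is to read the sequence off the Lyndon--Hochschild--Serre spectral sequence of the extension $1\to N\to G\to K\to 1$ with coefficients in the trivial $G$-module $\mathbb{Z}$:
$$
E_2^{p,q}=H^p\bigl(K,H^q(N,\mathbb{Z})\bigr)\Longrightarrow H^{p+q}(G,\mathbb{Z}).
$$
The point is that, because $N$ is finite, $H^1(N,\mathbb{Z})=Hom(N,\mathbb{Z})=0$. So the whole row $q=1$ vanishes: $E_2^{p,1}=0$ for all $p$. The row $q=0$ is $E_2^{p,0}=H^p(K,\mathbb{Z})$, and the column $p=0$ is $E_2^{0,q}=H^q(N,\mathbb{Z})^K$. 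The edge maps $E_2^{p,0}\to H^p(G,\mathbb{Z})$ are $\pi^*$, and $H^q(G,\mathbb{Z})\to E_2^{0,q}\subseteq H^q(N,\mathbb{Z})$ is $\iota^*$.

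I would then analyse total degrees $2$ and $3$ in turn, with the following steps.
\begin{enumerate}
\item In total degree $2$, the only nonzero terms are $E_2^{2,0}$ and $E_2^{0,2}$, since $E_2^{1,1}=0$.
\item The incoming differentials into $E^{2,0}$ come from $E_2^{0,1}=0$, and the outgoing ones land in positive rows. Hence $E_\infty^{2,0}=E_2^{2,0}$, and $\pi^*:H^2(K,\mathbb{Z})\to H^2(G,\mathbb{Z})$ is injective. This gives exactness at the first two spots.
\item For $E^{0,2}$, the differential $d_2:E_2^{0,2}\to E_2^{2,1}=0$ vanishes, so $E_3^{0,2}=E_2^{0,2}$. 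The only remaining differential is $d_3:E_3^{0,2}\to E_3^{3,0}$.
\item We have $E_3^{3,0}=H^3(K,\mathbb{Z})$, because $d_2:E_2^{1,1}=0\to E_2^{3,0}$.
\item Therefore $E_\infty^{0,2}=\ker d_3$. The filtration $0\subseteq E_\infty^{2,0}\subseteq H^2(G)$ with quotient $E_\infty^{0,2}$ gives exactness at $H^2(G,\mathbb{Z})$: the image of $\iota^*$ is $\ker d_3$, and $\ker\iota^*=\operatorname{im}\pi^*$.
\item In total degree $3$, the term $E_\infty^{3,0}=H^3(K)/\operatorname{im} d_3$ is the image of the edge map $\pi^*:H^3(K)\to H^3(G)$. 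This gives exactness at $H^3(K,\mathbb{Z})$.
\end{enumerate}
Naturality in the extension follows from naturality of the spectral sequence.

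The one point needing care, and which I regard as the main obstacle, is the middle term. The transgression $d_3$ is a priori defined only on $E_3^{0,2}=H^2(N,\mathbb{Z})^K$, the $K$- (equivalently $G$-) invariant classes, and $\iota^*$ automatically lands there. So I would state and prove the sequence with $H^2(N,\mathbb{Z})^{K}$ in the middle, and read the statement as intending this. Where the conjugation action is trivial, for instance when the extension is central, the invariants are all of $H^2(N,\mathbb{Z})$ and the sequence holds literally as written.

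For the applications here it would also be convenient to note $H^{n}(-,\mathbb{Z})\simeq H^{n-1}(-,U(1))$ for finite groups. This translates the sequence into $H^1(-,U(1))$ and $H^2(-,U(1))$ terms, which is the form used to compute $\coker(\pi^*)$ in Remark~\ref{remark:classs_cen}.
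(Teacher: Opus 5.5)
Your argument is correct and is the same as the paper's: the Lyndon--Hochschild--Serre spectral sequence, with the row $q=1$ killed by $H^1(N,\mathbb{Z})=0$, read off through the edge maps as $E_\infty^{2,0}=H^2(K,\mathbb{Z})$, $E_\infty^{0,2}=\ker d_3$ and $E_\infty^{3,0}=H^3(K,\mathbb{Z})/\image d_3$ (one wording slip: the outgoing differentials from $E^{2,0}$ land in negative rows, not positive ones). Your caveat is also right, and the paper's proof glosses over it by writing $E^{0,2}$ as $H^2(N)$ instead of $H^2(N,\mathbb{Z})^K$; as literally stated the sequence fails, e.g.\ for $1\to\mathbb{Z}_3\to S_3\to\mathbb{Z}_2\to 1$ one has $\iota^*=0$ while $H^3(\mathbb{Z}_2,\mathbb{Z})=0$, so the invariants (or a trivial action) are needed, and this holds in the abelian case where the lemma is actually applied.
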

\begin{proof}
    Since $N$ is finite, $H^1(N,\mathbb{Z})=1$. The $E_3$-page of the spectral sequence associated to the short exact sequence is as follows

    \begin{center}
        \begin{tikzpicture}

    \matrix [matrix of math nodes,row sep=6mm]
    {
    2 & [5mm]  |(a)| H^2(N) &  [5mm]  & [5mm] & [5mm]  \\
    1&  1 & 1  & 1  & 1  \\
    0& \mathbb{Z} &  H^1(K) & H^2(K) & |(b)| H^3(K) \\
    & 0 & 1 & 2 & 3 \\
    };
    \tikzstyle{every node}=[midway,auto,font=\scriptsize]
    \draw[thick] (-2.7,-2) -- (-2.7,2.3) ;
    \draw[thick] (-3.5,-1.2) -- (4.0,-1.2) ;
    \draw[-stealth] (a) -- node {$d_3$} (b);
    \end{tikzpicture}
    \end{center}

    We have $E_{\infty}^{2,0}=H^2(K,\mathbb{Z})$, and we have $\ker (d_3)= E_4^{0,2}=E_{\infty}^{0,2}= \image (\pi^*)$. By the edge morphism, we have $\image(d_3)=\ker (\pi^*)$.
\end{proof}

\begin{corollary}
        For a short exact sequence $1\to N \stackrel{\iota}{\longrightarrow } G \stackrel{\pi}{\longrightarrow} K \to 1$ of finite Abelian groups, the induced homomorphism $\pi^*: H^3(K,\mathbb{Z})\to H^3(G,\mathbb{Z})$ is injective.
    \end{corollary}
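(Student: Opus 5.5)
The plan is to read injectivity off the exact sequence of the preceding lemma. By that lemma, $\ker\bigl(\pi^*:H^3(K,\mathbb{Z})\to H^3(G,\mathbb{Z})\bigr)=\image(d_3)$, and $\ker(d_3)=\image\bigl(\iota^*:H^2(G,\mathbb{Z})\to H^2(N,\mathbb{Z})\bigr)$. So it suffices to show that $\iota^*$ is surjective on $H^2(-,\mathbb{Z})$. Then $d_3=0$, hence $\ker(\pi^*)=\image(d_3)=0$.

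To prove this surjectivity I will first identify $H^2(-,\mathbb{Z})$ with the Pontryagin dual. Consider the short exact sequence of trivial coefficient modules $0\to\mathbb{Z}\to\mathbb{Q}\to\mathbb{Q}/\mathbb{Z}\to 0$. For a finite group, $H^i(-,\mathbb{Q})=0$ for $i>0$, since these groups are killed by the group order and are also $\mathbb{Q}$-vector spaces. The long exact sequence then shows that the Bockstein gives an isomorphism $H^1(-,\mathbb{Q}/\mathbb{Z})\simeq H^2(-,\mathbb{Z})$. Moreover $H^1(-,\mathbb{Q}/\mathbb{Z})=Hom(-,\mathbb{Q}/\mathbb{Z})$ for trivial coefficients. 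The Bockstein is natural with respect to group homomorphisms, so restriction along $\iota$ becomes the restriction map of characters
$$Hom(G,\mathbb{Q}/\mathbb{Z})\longrightarrow Hom(N,\mathbb{Q}/\mathbb{Z}),\qquad \chi\mapsto \chi\circ\iota .$$

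The remaining step is that this restriction map is surjective. This is exactly where the hypothesis that the groups are Abelian enters. Since $\mathbb{Q}/\mathbb{Z}$ is a divisible Abelian group, it is an injective $\mathbb{Z}$-module. Hence every homomorphism $N\to\mathbb{Q}/\mathbb{Z}$ extends along the inclusion $\iota:N\hookrightarrow G$ of Abelian groups. Equivalently, the functor $Hom(-,\mathbb{Q}/\mathbb{Z})$ is exact on finite Abelian groups.

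I expect no real obstacle here. The only point needing care is naturality: the identification $H^2(-,\mathbb{Z})\simeq Hom(-,\mathbb{Q}/\mathbb{Z})$ must be compatible with the map $\iota^*$ in the lemma's exact sequence, which is the restriction map coming from the edge morphism. That compatibility holds because both maps are induced by $\iota$ on cochains, and the Bockstein commutes with such maps. Combining the steps, $\image(\iota^*)=H^2(N,\mathbb{Z})=\ker(d_3)$, so $d_3=0$ and $\pi^*$ is injective on $H^3$.
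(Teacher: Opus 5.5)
Your proof is correct. It shares the paper's reduction: by the lemma, $\ker\pi^*=\image d_3$ and $\ker d_3=\image\iota^*$, so it suffices that $\iota^*$ is onto $H^2(N,\mathbb{Z})$. However, you prove that surjectivity by a different mechanism.

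The paper's one-line proof only invokes the abstract isomorphism $H^2(X,\mathbb{Z})\simeq X$ for finite Abelian $X$, which amounts to a counting argument. Exactness of $1\to H^2(K,\mathbb{Z})\to H^2(G,\mathbb{Z})\to H^2(N,\mathbb{Z})$ gives $|\image\iota^*|=|G|/|K|=|N|=|H^2(N,\mathbb{Z})|$. That route needs no naturality at all, only orders and the injectivity of $\pi^*$ on $H^2$ that the lemma already supplies.

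You instead identify $\iota^*$ with restriction of characters, naturally via the Bockstein of $0\to\mathbb{Z}\to\mathbb{Q}\to\mathbb{Q}/\mathbb{Z}\to 0$, and then use that $\mathbb{Q}/\mathbb{Z}$ is an injective $\mathbb{Z}$-module. This costs the compatibility check with the edge map, which you correctly flag; it holds because the edge map $H^2(G,\mathbb{Z})\to E_2^{0,2}\subseteq H^2(N,\mathbb{Z})$ is restriction. In exchange, your argument uses neither cardinalities nor the left end of the sequence. It also shows exactly where commutativity enters: characters of $N$ extend to $G$ because $N$ embeds in $G^{ab}=G$.

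Commutativity is also what makes the lemma's $E_2^{0,2}$ term equal to all of $H^2(N,\mathbb{Z})$, rather than only the invariants $H^2(N,\mathbb{Z})^K$ as in a general extension. So your appeal to the lemma's exactness at $H^2(N,\mathbb{Z})$ is legitimate in this setting.
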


\begin{proof}
    One has $H^2(G,\bZ)\simeq G$. Thus $d_3=0$.
\end{proof}
\begin{corollary}\label{cor_abelian}
    Let $G$ be a finite abelian group, and let
    $A, B \in Z(\mathrm{Vec}_G^\omega)$ be condensable
    algebras represented by the quadruples
    $(H, F, \alpha, \phi)$ and $(K, E, \beta, \psi)$,
    respectively. For each fixed $g$, there is at
    most one  embeddings
    $A \hookrightarrow B$ represented by $(g,\eta)$.
\end{corollary}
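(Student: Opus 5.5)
The plan is to reduce the statement to an injectivity property of inflation on second cohomology, using Theorem~\ref{emb} together with the preceding corollary on $H^3(-,\mathbb{Z})$.

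Since $G$ is abelian, conjugation by $g$ is trivial. Hence ${}^gK=K$, ${}^gE=E$ and $\beta^g=\beta$. Moreover $L_g(\psi)$ equals $\psi$ up to the twist by $\zeta_g|_K$, and this twist is fixed once $g$ is fixed. By Theorem~\ref{emb}, an embedding with this fixed $g$ is encoded by a monoidal structure $\eta$ on the functor $Vec_{K/F}^{\alpha|_{K/F}}\to Vec_{K/E}^{\beta}$ induced by the quotient map $p:K/F\to K/E$. This $\eta$ must satisfy
\[
\eta\circ\phi|_K\simeq L_g(\psi)
\]
as monoidal functors. Two such $\eta,\eta'$ give equivalent embeddings as soon as the quadruples $(K/F,E/F,\beta,\eta)$ and $(K/F,E/F,\beta,\eta')$ are equivalent in $Z(Vec_{H/F}^\alpha)$. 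In particular they give equivalent embeddings if $\eta\simeq\eta'$ as monoidal functors, which is the case $\theta=\mathrm{id}$ in Theorem~\ref{class-algebra}. So it suffices to show that any two solutions $\eta,\eta'$ are monoidally isomorphic.

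By \cite[Proposition~2.6.1]{etingof2016tensor}, the monoidal liftings of the group map $p$ form, up to monoidal natural isomorphism, a torsor over $H^2(K/F,U(1))$. Write $\eta'=\eta\cdot c$ with $c\in H^2(K/F,U(1))$. Precomposing with $\phi|_K=F_{\pi,\nu}$, where $\pi:K\to K/F$, gives $\eta'\circ\phi|_K=(\eta\circ\phi|_K)\cdot\pi^*(c)$. Since both composites are isomorphic to $L_g(\psi)$, we get $\pi^*(c)=0$ in $H^2(K,U(1))$.

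The key step, and the only non-formal one, is the injectivity of
\[
\pi^*:H^2(K/F,U(1))\to H^2(K,U(1)).
\]
I would deduce it from the preceding corollary. For finite groups, the exponential sequence $0\to\mathbb{Z}\to\mathbb{R}\to U(1)\to 0$ and the vanishing of the cohomology of $\mathbb{R}$ in positive degrees give natural isomorphisms $H^2(-,U(1))\simeq H^3(-,\mathbb{Z})$. These isomorphisms are compatible with inflation. Apply the preceding corollary to the short exact sequence $1\to F\to K\to K/F\to 1$ of finite abelian groups: it gives injectivity of $\pi^*$ on $H^3(-,\mathbb{Z})$, and hence on $H^2(-,U(1))$.

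Therefore $c=0$, so $\eta'\simeq\eta$, and the two embeddings are equivalent. Thus for each fixed $g$ there is at most one embedding class. The only points needing care are the compatibility of the exponential-sequence isomorphism with $\pi^*$, and the fact that the torsor action commutes with precomposition by $\phi|_K$. Both are standard.
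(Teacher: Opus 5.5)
Your proposal is correct and follows the paper's proof. The paper also uses Theorem~\ref{emb} together with injectivity of $\pi^*:H^2(K/F,U(1))\to H^2(K,U(1))$ for abelian $K$ to show that, for fixed $g$, the monoidal structure $\eta$ is unique up to isomorphism; you simply spell out the torsor and precomposition steps that the paper's one-line argument leaves implicit, and you correctly derive the injectivity from the preceding $H^3(-,\mathbb{Z})$ corollary via $H^2(-,U(1))\simeq H^3(-,\mathbb{Z})$.
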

\begin{proof}
    According to Theorem \ref{emb}, when $G$ is Abelian, an algebra embedding $A \to B$ exists only if $F \trianglelefteq E \trianglelefteq K \leqslant H$ and $\pi^*\beta = \alpha|_{K/F}$. Since $K$ is abelian, $\pi^*: H^2(K/F,U(1)) \to H^2(K,U(1))$
    is injective. Hence, for each fixed $g$,
    the embedding, if it exists, is unique
    up to equivalence.
\end{proof}

\begin{remark}\label{rmk:nosplit}
If $G$ is abelian and $\omega=1$, then by Remark~\ref{rmk:conj} and Corollary~\ref{cor_abelian}, there is at most one embedding between any two condensable algebras in $Z(Vec_G^\omega)$. In other words, the splitting phenomenon does not occur.
\end{remark}

\begin{remark}

    The splitting phenomenon also occurs in non-group theoretical cases (see, for example, \cite{chen2020topological}). Consider the condensable algebras in the double Ising category $Z(\textbf{Ising})\simeq \textbf{Ising}\boxtimes \textbf{Ising}$. The simple objects in the $\textbf{Ising}$ category are denoted by $1$, $\sigma$, and $\psi$. The category $Z(\textbf{Ising})$ has a condensable algebra $A =(1 \boxtimes 1) \oplus (\psi \boxtimes \psi)$, for which we have  $Z(\textbf{Ising})_A^{loc}\simeq Z(Vec_{\mathbb{Z}_2})$. Furthermore, the Lagrangian algebra $(1 \boxtimes 1) \oplus (\psi \boxtimes \psi)\oplus (\sigma\boxtimes \sigma)\in Z(\textbf{Ising})$ splits into two distinct Lagrangian algebras $1\oplus e$ and $1\oplus m$ in $Z(Vec_{\mathbb{Z}_2})$.
\end{remark}

\begin{example}\label{common-subalgebras}

    We provide an example where a pair of condensable algebras may have more than one maximal common subalgebra. Let $G=S_5$. There are two Lagrangian algebras in $Z(Vec_{S_5})$ represented by $(\langle (12)\rangle, \langle (12)\rangle,1,1)$ and $(\langle (123)\rangle, \langle (123)\rangle,1,1)$, respectively. Here we use $1$ to denote the trivial $\alpha$ and $\phi$. The condensable algebras represented by $(\langle (12), (123) \rangle,1,1,1)$ and $(\langle (12), (345)\rangle,1,1,1)$ are common subalgebras. Since $\langle (12), (123) \rangle\simeq S_3 $ and $\langle (12), (345)\rangle\simeq \mathbb{Z}_6$, these two common subalgebras are distinct and maximal.

\end{example}
\section{Examples}\label{section6}

In this section, we classify the condensable algebras in $Z(Vec_G^\omega)$
for several groups $G$ and describe their embedding relations. To facilitate
the classification, we first perform a coarse classification according to their Frobenius--Perron dimensions. For a non-degenerate braided fusion category
$\mathcal C$ and a condensable algebra $A$ in $\mathcal C$, we have $
\fpdim(\mathcal C_A^{loc})
=
\fpdim(\mathcal C)/\fpdim(A)^2$\cite{davydov2013witt}.

It is well known that $\fpdim(Z(Vec_G^\omega))=|G|^2$. Let $A$ be a condensable
algebra in $Z(Vec_G^\omega)$ represented by $(H,F,\alpha,\phi)$. Then
$Z(Vec_G^\omega)_A^{loc}\simeq Z(Vec_{H/F}^\alpha)$, and hence
$$
\fpdim(A)^2
=
\frac{\fpdim(Z(Vec_G^\omega))}
{\fpdim(Z(Vec_{H/F}^\alpha))}
=
([G:H]\cdot |F|)^2.
$$

Given $F\trianglelefteq H\leq G $, the map $\pi^*: H^l(H/F,U(1)) \to H^l(H,U(1))$ can be computed using the Lyndon-Hochschild-Serre spectral sequence. By the natural isomorphism $H^l(G, U(1)) \simeq H^{l+1}(G, \mathbb{Z})$, problems concerning group cohomology with $U(1)$-coefficients can be translated into problems with $\mathbb{Z}$-coefficients. The structure of group cohomology rings with $\mathbb{Z}$-coefficients is relatively well understood. Therefore, in the computations of this section, we will primarily use group cohomology with $\mathbb{Z}$-coefficients.

\subsection{Examples of  Abelian group $G$}

When $G$ is an abelian group and $\omega=1$, we do not need to consider the conjugacy problem. By Remark~\ref{rmk:nosplit}, the splitting phenomenon does not occur in the example considered in this subsection.

\subsubsection{$G=\bZ_{p^2}$}

Let $p$ be a prime number. Let $1$ denote the trivial group. According to Theorem~\ref{class-algebra} and Corollary~\ref{embcor}, we list all condensable algebras in $Z(Vec_{\mathbb{Z}_{p^2}})$ in Table~\ref{tab:Zp2}. The case $p=2$ was calculated in \cite{Chatterjee_2023}.

\begin{table}[!h]
    \centering
    
\begin{tabular}{|p{0.3\textwidth}|p{0.3\textwidth}|p{0.3\textwidth}|}
\hline 
dimension & $\displaystyle ( H,F)$ & number \\
\hline 
$\displaystyle 1$ & $\displaystyle (\mathbb{Z}_{P^{2}} ,1)$ & $\displaystyle 1$ \\
\hline 
\multirow{2}{*}{$\displaystyle p$} & $\displaystyle (\mathbb{Z}_{p^{2}} ,\mathbb{Z}_{p})$ & $\displaystyle p$ \\
\cline{2-3} 
& $\displaystyle (\mathbb{Z}_{p} ,1)$ & $\displaystyle 1$ \\
\hline 
\multirow{3}{*}{$\displaystyle p^{2}$} & $\displaystyle (\mathbb{Z}_{p^{2}} ,\mathbb{Z}_{p^{2}})$ & $\displaystyle 1$ \\
\cline{2-3} 
& $\displaystyle (\mathbb{Z}_{p} ,\mathbb{Z}_{p})$ & $\displaystyle 1$ \\
\cline{2-3} 
& $\displaystyle ( 1,1)$ & $\displaystyle 1$ \\
\hline
\end{tabular}
    \caption{Condensable algebras in $Z(Vec_{\mathbb{Z}_{p^2}})$}
    \label{tab:Zp2}
    \end{table}

    The only nontrivial case arises when $(H,F)=(\mathbb{Z}_{p^2},\mathbb{Z}_p)$. We need to compute 
    $
    \ker(\pi^*: H^3(\mathbb{Z}_p,U(1))\to H^3(\mathbb{Z}_{p^2},U(1)))
    $
    and
    $
    \coker(\pi^*: H^2(\mathbb{Z}_p,U(1))\to H^2(\mathbb{Z}_{p^2},U(1))).
    $
    This is equivalent to computing
    $
    \ker(\pi^*: H^4(\mathbb{Z}_p,\mathbb{Z})\to H^4(\mathbb{Z}_{p^2},\mathbb{Z}))
    $
    and
    $
    \coker(\pi^*: H^3(\mathbb{Z}_p,\mathbb{Z})\to H^3(\mathbb{Z}_{p^2},\mathbb{Z})).
    $
    
    It is well known that the group cohomology ring of $\mathbb{Z}_n$ with integer coefficients is
    $$
    H^*(\mathbb{Z}_n,\mathbb{Z}) \cong \mathbb{Z}[x]/(n x), \quad \text{with } |x|=2.
    $$
    We use the same notation for spectral sequences as in \cite{hatcher2002algebraic}. Consider the short exact sequence
$$
1 \to \mathbb{Z}_p \stackrel{\iota}{\to} \mathbb{Z}_{p^2} \stackrel{\pi}{\to} \mathbb{Z}_p \to 1.
$$
This gives rise to a convergent first quadrant spectral sequence
$
E_2^{m,n} = H^m(\mathbb{Z}_p, H^n(\mathbb{Z}_p, \mathbb{Z})),
$
and by computation we have
$
E_4^{m,n} = E_{\infty}^{m,n}.
$

Consider the filtration on $H^2(\mathbb{Z}_{p^2}, \mathbb{Z})$ induced by the spectral sequence. We obtain the short exact sequence
$$
1 \to \mathbb{Z}_p = E_{\infty}^{2,0} \to H^2(\mathbb{Z}_{p^2}, \mathbb{Z}) \to E_{\infty}^{0,2} = \mathbb{Z}_p \to 1.
$$
Let $x$ denote a generator of $H^2(\mathbb{Z}_p, \mathbb{Z})$, and $z$ a generator of $H^2(\mathbb{Z}_{p^2}, \mathbb{Z})$. Then we have
$$
\pi^*: H^2(\mathbb{Z}_p, \mathbb{Z})\to H^2(\mathbb{Z}_{p^2}, \mathbb{Z}), \quad x\mapsto p z.
$$
Using the ring structure of $H^*(\bZ_n,\bZ)$, we have

$$
\pi^*: H^4(\mathbb{Z}_p, \mathbb{Z})\to H^4(\mathbb{Z}_{p^2}, \mathbb{Z}), \quad x^2\mapsto p^2 z^2.
$$

So $
    \ker(\pi^*: H^4(\mathbb{Z}_p,\mathbb{Z})\to H^4(\mathbb{Z}_{p^2},\mathbb{Z})) = \bZ_p
    $
    and
    $
    \coker(\pi^*: H^3(\mathbb{Z}_p,\mathbb{Z})\to H^3(\mathbb{Z}_{p^2},\mathbb{Z})) = 1.
    $ This implies that for $(\mathbb{Z}_{p^2}, \mathbb{Z}_p, \alpha, \phi)$, there are $p$ choices for $\alpha$, and for each $\alpha$, there is a unique choice of $\phi$.

     Using Theorem~\ref{emb} and Corollary~\ref{embcor}, we obtain the embedding relations of condensable algebras as shown in Figure~\ref{Fig:Zp2}.

\begin{figure}[!h]
    \centering
    \tikzset{every picture/.style={line width=0.75pt}} 

    \begin{tikzpicture}[x=0.75pt,y=0.75pt,yscale=-1,xscale=1]
    
    \draw   (280,30) -- (360,30) -- (360,70) -- (280,70) -- cycle ;
    
    \draw   (430,190) -- (510,190) -- (510,230) -- (430,230) -- cycle ;
    
    \draw   (280,190) -- (360,190) -- (360,230) -- (280,230) -- cycle ;
    
    \draw   (430,110) -- (510,110) -- (510,150) -- (430,150) -- cycle ;
    
    \draw   (270,110) -- (380,110) -- (380,150) -- (270,150) -- cycle ;
    \draw   (140,110) -- (210,110) -- (210,150) -- (140,150) -- cycle ;
    \draw   (140,190) -- (210,190) -- (210,230) -- (140,230) -- cycle ;
    \draw    (180,150) -- (180,190) ;
    \draw    (320,70) -- (320,110) ;
    \draw    (470,150) -- (470,190) ;
    \draw    (320,150) -- (320,190) ;
    \draw    (320,70) -- (180,110) ;
    \draw    (320,70) -- (470,110) ;
    \draw    (180,150) -- (320,190) ;
    \draw    (470,150) -- (320,190) ;
    
    \draw (291,42) node [anchor=north west][inner sep=0.75pt]   [align=left] {$\displaystyle (\mathbb{Z}_{p^{2}} ,1)$};
    \draw (441,202) node [anchor=north west][inner sep=0.75pt]   [align=left] {$\displaystyle (\mathbb{Z}_{p^{2}} ,\mathbb{Z}_{p^{2}})$};
    \draw (291,202) node [anchor=north west][inner sep=0.75pt]   [align=left] {$\displaystyle (\mathbb{Z}_{p} ,\mathbb{Z}_{p})$};
    \draw (441,122) node [anchor=north west][inner sep=0.75pt]   [align=left] {$\displaystyle (\mathbb{Z}_{p^{2}} ,\mathbb{Z}_{p})$};
    \draw (281,122) node [anchor=north west][inner sep=0.75pt]   [align=left] {$\displaystyle (\mathbb{Z}_{p^{2}} ,\mathbb{Z}_{p} ,\alpha ,1)$};
    \draw (151,122) node [anchor=north west][inner sep=0.75pt]   [align=left] {$\displaystyle (\mathbb{Z}_{p} ,1)$};
    \draw (151,202) node [anchor=north west][inner sep=0.75pt]   [align=left] {$\displaystyle ( 1,1)$};

    \end{tikzpicture}
    
    \caption{The embedding relations of condensable algebras in $Z(Vec_{p^2})$}
    \label{Fig:Zp2}
\end{figure}

 We use $1$ to denote both the trivial $\alpha$ and the trivial $\phi$. If a condensable algebra has both $\alpha$ and $\phi$ trivial, we omit these labels in the diagram. Here, $\phi$ being trivial means that it is parametrized by a torsor over the trivial group. Each line in the diagram represents a unique embedding from the condensable algebra above to the one below.  We call two condensable algebras {\it adjacent} if there is no other condensable algebra between them. We only draw the embedding relations of adjacent condensable algebras. For convenience in drawing the diagram, we only depict one of the $p - 1$ nontrivial $(\mathbb{Z}_{p^2}, \mathbb{Z}_p, \alpha, 1)$.

\subsubsection{$G=\bZ_{p}\times \bZ_p $}

Let $p$ be a prime number. $\bZ_p\times \bZ_p$ has $p+1$ subgroups isomorphic to $\bZ_p$. In the following diagrams, we denote them as $\bZ_{p,(i)}, i\in \{1,2,...p+1\}$. We list all condensable algebras in $Z(Vec_{\mathbb{Z}_{p}\times \mathbb{Z}_p})$ in the Table~\ref{Tab:ZpZp}. The case $p=2$ was calculated in \cite{Chatterjee_2023,bhardwaj2403hasse}.

\begin{table}[!h]
    \centering
    
\begin{tabular}{|p{0.33\textwidth}|p{0.33\textwidth}|p{0.33\textwidth}|}
\hline 
dimension & $\displaystyle ( H,F)$ & number \\
\hline 
$\displaystyle 1$ & $\displaystyle (\mathbb{Z}_{p} \times \mathbb{Z}_{p} ,1)$ & $\displaystyle 1$ \\
\hline 
\multirow{2}{*}{$\displaystyle p$} & $\displaystyle (\mathbb{Z}_{p} \times \mathbb{Z}_{p} ,\mathbb{Z}_{p,(i)})$ & $\displaystyle  p$ \\
\cline{2-3} 
& $\displaystyle (\mathbb{Z}_{p,(i)} ,1)$ & $\displaystyle  1$ \\
\hline 
\multirow{3}{*}{$\displaystyle p^{2}$} & $\displaystyle (\mathbb{Z}_{p} \times \mathbb{Z}_{p} ,\mathbb{Z}_{p} \times \mathbb{Z}_{p})$ & $\displaystyle p$ \\
\cline{2-3} 
& $\displaystyle (\mathbb{Z}_{p,(i)} ,\mathbb{Z}_{p,(i)})$ & $\displaystyle   1$ \\
\cline{2-3} 
& $\displaystyle ( 1,1)$ & $\displaystyle 1$ \\
\hline
\end{tabular}
    \caption{Condensable algebras in $Z(Vec_{\mathbb{Z}_{p}\times \mathbb{Z}_p})$}
    \label{Tab:ZpZp}
    \end{table}

    The only nontrivial case arises when $(H,F)=(\mathbb{Z}_{p}\times \mathbb{Z}_p,\mathbb{Z}_p)$.  Using the Künneth formula, we know that for the generator $x^2 \in H^4(\mathbb{Z}_p,\bZ)$, its pullback $\pi^*(x^2)$ is also a generator of $H^4(\mathbb{Z}_p \times \mathbb{Z}_p, \bZ)$. So we have
    $$
    \ker(\pi^*: H^4(\mathbb{Z}_p,\mathbb{Z})\to H^4(\mathbb{Z}_{p}\times \bZ_p,\mathbb{Z}))=1
    $$
    and
    $$
    \coker(\pi^*: 1=H^3(\mathbb{Z}_p,\mathbb{Z})\to H^3(\mathbb{Z}_{p}\times \bZ_p,\mathbb{Z})=\bZ_p) =\bZ_p.
    $$

    Using Theorem~\ref{emb} and Corollary~\ref{embcor}, we obtain the embedding relations of condensable algebras as shown in Figure~\ref{Fig:ZpZp}.

\begin{figure}[!h]
\centering

\tikzset{every picture/.style={line width=0.75pt}} 

\begin{tikzpicture}[x=0.75pt,y=0.75pt,yscale=-0.9,xscale=0.9]

\draw   (290,50) -- (390,50) -- (390,90) -- (290,90) -- cycle ;

\draw   (300,210) -- (393.34,210) -- (393.34,250) -- (300,250) -- cycle ;
\draw   (70,210) -- (250,210) -- (250,250) -- (70,250) -- cycle ;
\draw   (370,130) -- (450,130) -- (450,170) -- (370,170) -- cycle ;

\draw   (190,130) -- (341,130) -- (341,170) -- (190,170) -- cycle ;
\draw   (470,210) -- (540,210) -- (540,250) -- (470,250) -- cycle ;
\draw    (420,170) -- (510,210) ;
\draw    (340,90) -- (280,130) ;
\draw    (270,170) -- (354,210) ;
\draw    (260,170) -- (170,210) ;
\draw    (340,90) -- (410,130) ;
\draw    (410,170) -- (354,210) ;

\draw (202,142) node [anchor=north west][inner sep=0.75pt]   [align=left] {$\displaystyle (\mathbb{Z}_{p} \times \mathbb{Z}_{p} ,\mathbb{Z}_{p,( i)} ,1,\phi )$};
\draw (482,222) node [anchor=north west][inner sep=0.75pt]   [align=left] {$\displaystyle ( 1,1)$};
\draw (82,220) node [anchor=north west][inner sep=0.75pt]   [align=left] {$\displaystyle (\mathbb{Z}_{p} \times \mathbb{Z}_{p} ,\mathbb{Z}_{p} \times \mathbb{Z}_{p} ,1,\phi )$};
\draw (381,142) node [anchor=north west][inner sep=0.75pt]   [align=left] {$\displaystyle (\mathbb{Z}_{p,(i)} ,1)$};
\draw (301,62) node [anchor=north west][inner sep=0.75pt]   [align=left] {$\displaystyle (\mathbb{Z}_{p} \times \mathbb{Z}_{p} ,1)$};
\draw (308,222) node [anchor=north west][inner sep=0.75pt]   [align=left] {$\displaystyle (\mathbb{Z}_{p,( i)} ,\mathbb{Z}_{p,( i)})$};

\end{tikzpicture}

    \caption{The embedding relations of condensable algebras in $Z(Vec_{\mathbb{Z}_{p}\times \mathbb{Z}_p})$}
    \label{Fig:ZpZp}
\end{figure}

For simplicity, the diagram only shows one example from each type of similar condensable algebra.

\subsubsection{$G=\bZ_{pq}$}

Let $p,q$ be prime numbers and $p \neq q$.  Similar to the cases discussed earlier, we can compute using the spectral sequence that
$$
\pi^*:H^2(\mathbb{Z}_p,\mathbb{Z}) \to H^2(\mathbb{Z}_{pq},\mathbb{Z}),\quad x \mapsto qz,
$$
where $x$ and $z$ are generators. Using the structure of the cohomology ring, we obtain
$$
\pi^*:H^4(\mathbb{Z}_p,\mathbb{Z}) \to H^4(\mathbb{Z}_{pq},\mathbb{Z}),\quad x^2 \mapsto q^2 z^2,
$$
thus
$$
\ker(\pi^*: H^4(\mathbb{Z}_p,\mathbb{Z}) \to H^4(\mathbb{Z}_{pq},\mathbb{Z})) = 1,\quad
\coker(\pi^*: H^3(\mathbb{Z}_p,\mathbb{Z}) \to H^3(\mathbb{Z}_{pq},\mathbb{Z})) = 1.
$$

We list all condensable algebras in $Z(Vec_{\mathbb{Z}_{p}\times \mathbb{Z}_q})$ in Table~\ref{Tab:Zpq} and draw the diagram of embedding relations in Figure~\ref{Fig:Zpq}.

\begin{table}[!h]
    \centering
    
\begin{tabular}{|p{0.33\textwidth}|p{0.33\textwidth}|p{0.33\textwidth}|}
\hline 
dimension & $\displaystyle ( H,F)$ & number \\
\hline 
$\displaystyle 1$ & $\displaystyle (\mathbb{Z}_{pq} ,1)$ & $\displaystyle 1$ \\
\hline 
\multirow{2}{*}{$\displaystyle p$} & $\displaystyle (\mathbb{Z}_{pq} ,\mathbb{Z}_{p})$ & $\displaystyle 1$ \\
\cline{2-3} 
& $\displaystyle (\mathbb{Z}_{q} ,1)$ & $\displaystyle 1$ \\
\hline 
\multirow{2}{*}{$\displaystyle q$} & $\displaystyle (\mathbb{Z}_{pq} ,\mathbb{Z}_{q})$ & $\displaystyle 1$ \\
\cline{2-3} 
& $\displaystyle (\mathbb{Z}_{p} ,1)$ & $\displaystyle 1$ \\
\hline 
\multirow{4}{*}{$\displaystyle pq$} & $\displaystyle (\mathbb{Z}_{pq} ,\mathbb{Z}_{pq})$ & $\displaystyle 1$ \\
\cline{2-3} 
& $\displaystyle (\mathbb{Z}_{p} ,\mathbb{Z}_{p})$ & $\displaystyle 1$ \\
\cline{2-3} 
& $\displaystyle (\mathbb{Z}_{q} ,\mathbb{Z}_{q})$ & $\displaystyle 1$ \\
\cline{2-3} 
& $\displaystyle ( 1,1)$ & $\displaystyle 1$ \\
\hline
\end{tabular}
    \caption{Condensable algebras in $Z(Vec_{\mathbb{Z}_{pq}})$}
    \label{Tab:Zpq}
    \end{table}

\begin{figure}[!h]
    \centering
   \tikzset{every picture/.style={line width=0.75pt}} 

    \begin{tikzpicture}[x=0.75pt,y=0.75pt,yscale=-1,xscale=1]
    
    \draw   (300,50) -- (380,50) -- (380,90) -- (300,90) -- cycle ;
    \draw   (390,220) -- (470,220) -- (470,260) -- (390,260) -- cycle ;
    
    \draw   (240,220) -- (320,220) -- (320,260) -- (240,260) -- cycle ;
    
    \draw   (390,130) -- (470,130) -- (470,170) -- (390,170) -- cycle ;
    \draw   (240,120) -- (320,120) -- (320,160) -- (240,160) -- cycle ;
    \draw   (110,120) -- (190,120) -- (190,160) -- (110,160) -- cycle ;
    
    \draw   (520,130) -- (600,130) -- (600,170) -- (520,170) -- cycle ;
    
    \draw   (110,220) -- (190,220) -- (190,260) -- (110,260) -- cycle ;
    \draw   (520,220) -- (600,220) -- (600,260) -- (520,260) -- cycle ;
    \draw    (340,90) -- (150,120) ;
    \draw    (340,90) -- (280,120) ;
    \draw    (150,160) -- (150,220) ;
    \draw    (150,160) -- (430,220) ;
    \draw    (280,160) -- (560,220) ;
    \draw    (280,160) -- (280,220) ;
    \draw    (560,170) -- (560,220) ;
    \draw    (430,170) -- (280,220) ;
    \draw    (430,170) -- (150,220) ;
    \draw    (560,170) -- (430,220) ;
    \draw    (340,90) -- (560,130) ;
    \draw    (340,90) -- (430,130) ;
    
    \draw (121,132) node [anchor=north west][inner sep=0.75pt]   [align=left] {$\displaystyle (\mathbb{Z}_{pq} ,\mathbb{Z}_{p})$};
    \draw (251,232) node [anchor=north west][inner sep=0.75pt]   [align=left] {$\displaystyle (\mathbb{Z}_{q} ,\mathbb{Z}_{q})$};
    \draw (401,232) node [anchor=north west][inner sep=0.75pt]   [align=left] {$\displaystyle (\mathbb{Z}_{p} ,\mathbb{Z}_{p})$};
    \draw (311,62) node [anchor=north west][inner sep=0.75pt]   [align=left] {$\displaystyle (\mathbb{Z}_{pq} ,1)$};
    \draw (251,132) node [anchor=north west][inner sep=0.75pt]   [align=left] {$\displaystyle (\mathbb{Z}_{q} ,1)$};
    \draw (401,142) node [anchor=north west][inner sep=0.75pt]   [align=left] {$\displaystyle (\mathbb{Z}_{pq} ,\mathbb{Z}_{q})$};
    \draw (531,142) node [anchor=north west][inner sep=0.75pt]   [align=left] {$\displaystyle (\mathbb{Z}_{p} ,1)$};
    \draw (121,232) node [anchor=north west][inner sep=0.75pt]   [align=left] {$\displaystyle (\mathbb{Z}_{pq} ,\mathbb{Z}_{pq})$};
    \draw (541,232) node [anchor=north west][inner sep=0.75pt]   [align=left] {$\displaystyle ( 1,1)$};

    \end{tikzpicture}
    \caption{The embedding relations of condensable algebras in $Z(Vec_{\mathbb{Z}_{pq}})$}
    \label{Fig:Zpq}
\end{figure}

\subsection{Examples of non-Abelian group $G$}

Since $G$ is a non-Abelian group, we should consider the conjugacy problem. Our method is to first classify the pairs $(H,F)$ up to conjugacy, and then for a fixed $(H,F)$, we compute $\ker(\pi^*: H^3(H/F,U(1))\to H^3(H,U(1)))$ and $\coker(\pi^*: H^2(H/F,U(1))\to H^2(H,U(1)))$. Lastly, if these two groups do not have trivial elements (i.e., there are no trivial $\alpha$ or $\phi$ corresponding to $(H,F)$), then we try to find $g$ and $\theta$ that satisfy Diagram \ref{quadruples_equ} to classify the $\alpha$ and $\phi$ up to equivalence.
\subsubsection{$G=D_8$}

Let $D_8$ denote the dihedral group 
$
D_8 = \langle x, a \mid x^2 = a^4 = (ax)^2 = 1 \rangle.
$
It has the following subgroups:
$$
D_8,\ \langle a \rangle,\ \langle xa, a^2 \rangle,\ \langle x, a^2 \rangle,\ \langle a^2 \rangle,\ \langle x \rangle,\ \langle xa^2 \rangle,\ \langle xa \rangle,\ \langle xa^3 \rangle,\ 1.
$$
Among them, $\langle x \rangle$ and $\langle xa^2 \rangle$ are conjugate, and $\langle xa \rangle$ and $\langle xa^3 \rangle$ are conjugate. We list all condensable algebras in $Z(Vec_{D_8})$ in the Table~\ref{tableD8}. \cite{bhardwaj2403hasse} also computed the same case.

\begin{table}[!h]
    \centering      
\begin{tabular}{|p{0.3\textwidth}|p{0.37\textwidth}|p{0.3\textwidth}|}
\hline 
 dimension & $\displaystyle ( H,F)$ & number \\
\hline 
 $\displaystyle 1$ & $\displaystyle ( D_{8} ,1)$ & $\displaystyle 1$ \\
\hline 
 \multirow{4}{*}{$\displaystyle 2$} & $\displaystyle \left( D_{8} ,\langle a^{2} \rangle \right)$ & $\displaystyle 4$ \\
\cline{2-3} 
   & $\displaystyle ( \langle a \rangle ,1)$ & $\displaystyle 1$ \\
\cline{2-3} 
   & $\displaystyle \left( \langle xa,a^{2} \rangle ,1\right)$ & $\displaystyle 1$ \\
\cline{2-3} 
   & $\displaystyle \left( \langle x,a^{2} \rangle ,1\right)$ & $\displaystyle 1$ \\
\hline 
 \multirow{11}{*}{$\displaystyle 4$} & $\displaystyle ( D_{8} ,\langle a \rangle )$ & $\displaystyle 2$ \\
\cline{2-3} 
   & $\displaystyle \left( D_{8} ,\langle x,a^{2} \rangle \right)$ & $\displaystyle 2$ \\
\cline{2-3} 
   & $\displaystyle \left( D_{8} ,\langle xa,a^{2} \rangle \right)$ & $\displaystyle 2$ \\
\cline{2-3} 
   & $\displaystyle \left( \langle a^{2} \rangle ,1\right)$ & $\displaystyle 1$ \\
\cline{2-3} 
   & $\displaystyle ( \langle x \rangle ,1)$ or $\displaystyle \left( \langle xa^{2} \rangle ,1\right)$ & $\displaystyle 1$ \\
\cline{2-3} 
   & $\displaystyle ( \langle xa \rangle ,1)$ or $\displaystyle \left( \langle xa^{3} \rangle ,1\right)$ & $\displaystyle 1$ \\
\cline{2-3} 
   & $\displaystyle \left( \langle a \rangle ,\langle a^{2} \rangle \right)$ & $\displaystyle 2$ \\
\cline{2-3} 
   & $\displaystyle \left( \langle x,a^{2} \rangle ,\langle a^{2} \rangle \right)$ & $\displaystyle 2$ \\
\cline{2-3} 
   & $\displaystyle \left( \langle x,a^{2} \rangle ,\langle x \rangle \right)$ or \ $\displaystyle \left( \langle x,a^{2} \rangle ,\langle xa^{2} \rangle \right)$ & $\displaystyle 2$ \\
\cline{2-3} 
   & $\displaystyle \left( \langle xa,a^{2} \rangle ,\langle a^{2} \rangle \right)$ & $\displaystyle 2$ \\
\cline{2-3} 
   & $\displaystyle \left( \langle xa,a^{2} \rangle ,\langle xa \rangle \right)$or $\displaystyle \left( \langle xa,a^{2} \rangle ,\langle xa^{3} \rangle \right)$ & $\displaystyle 2$ \\
\hline 
 \multirow{8}{*}{$\displaystyle 8$} & $\displaystyle ( D_{8} ,D_{8})$ & $\displaystyle 2$ \\
\cline{2-3} 
   & $\displaystyle ( \langle a \rangle ,\langle a\rangle )$ & $\displaystyle 1$ \\
\cline{2-3} 
   & $\displaystyle \left( \langle x,a^{2} \rangle ,\langle x,a^{2} \rangle \right)$ & $\displaystyle 2$ \\
\cline{2-3} 
   & $\displaystyle \left( \langle xa,a^{2} \rangle ,\langle xa,a^{2} \rangle \right)$ & $\displaystyle 2$ \\
\cline{2-3} 
   & $\displaystyle \left( \langle a^{2} \rangle ,\langle a^{2} \rangle \right)$ & $\displaystyle 1$ \\
\cline{2-3} 
   & $\displaystyle ( \langle x \rangle ,\langle x\rangle )$ or $\displaystyle \left( \langle xa^{2} \rangle ,\langle xa^{2} \rangle \right)$ & $\displaystyle 1$ \\
\cline{2-3} 
   & $\displaystyle ( \langle xa \rangle ,\langle xa\rangle )$ or $\displaystyle \left( \langle xa^{3} \rangle ,\langle xa^{3} \rangle \right)$ & $\displaystyle 1$ \\
\cline{2-3} 
   & $\displaystyle ( 1,1)$ & $\displaystyle 1$ \\
 \hline
\end{tabular}
    \caption{The condensable algebras in $Z(Vec_{D_8})$}
    \label{tableD8}
    \end{table}

    For $(H,F)=(D_8,\langle a^2\rangle)$, we need to compute $\pi^*: H^l(D_8/\langle a^2\rangle,\bZ)\to H^l(D_8,\bZ), l=3,4$. Consider the short exact sequence: 
    $$
    1 \to \langle a^2\rangle=\mathbb{Z}_2 \stackrel{\iota}{\to} D_8 \stackrel{\pi}{\to} D_8/\langle a^2\rangle=\bZ_2\times \bZ_2 \to 1.
    $$
    This gives rise to a convergent first quadrant spectral sequence
    $
    E_2^{m,n} = H^m(\bZ_2\times \bZ_2, H^n(\bZ_2, \mathbb{Z})).
    $
    
    We have the following $E_2^{m,n}$ page in the spectral sequence:

    \begin{center}
        \begin{tikzpicture}

    \matrix [matrix of math nodes,row sep=6mm]
    {
    4 & [4mm] \bZ_2 &  [4mm]\bZ_2^2  & [4mm]\bZ_2^3 & [4mm] \bZ_2^4 & [4mm] \bZ_2^5 & [4mm] \bZ_2^6 \\
    3& 1     &   1     &  1      &   1         &   1      &  1  \\
    2 & \bZ_2 & \bZ_2^2 & \bZ_2^3 &   \bZ_2^4   & \bZ_2^5  &  \bZ_2^6  \\
    1 & 1     & 1       &  1      &   1         &   1      &  1  \\
    0& \bZ   & 1       & \bZ_2^2 &   \bZ_2     & \bZ_2^3  &  \bZ_2^2    \\
    & 0 & 1 & 2 & 3 & 4 & 5\\
    };
    \tikzstyle{every node}=[midway,auto,font=\scriptsize]
    \draw[thick] (-2.7,-3.3) -- (-2.7,3.3) ;
    \draw[thick] (-3.5,-2.4) -- (4.0,-2.4) ;
    \end{tikzpicture}
    \end{center}
    
    All differentials $d_2$ in this page are trivial. So $E_2=E_3$, $E_{\infty}^{2,0}=\bZ_2^2$. The group cohomology ring of $D_8$ is given in \cite{handel1993products}: 
    $$
    H^*(D_8,\bZ)= \bZ[a_2,b_2,c_3,d_4]/(2a_2,2b_2,2c_3,4d_4,b_2^2+a_2b_2,c_3^2+a_2d_4)
    $$
    where $|a_2|=|b_2|=2,|c_3|=3,|d_4|=4$. 
    
    We have the edge morpgism:
\begin{center}

    \begin{tikzcd}
        {E_2^{2,0}} \arrow[r, two heads]                                                         & {E_{\infty}^{2,0}=\mathbb{Z}_2^2} \arrow[d, hook] \\
        {\mathbb{Z}_2^2=H^2(D_8/\mathbb{Z}_2,\mathbb{Z})} \arrow[u, "\simeq"] \arrow[r, "\pi^*"] & {H^2(D_8,\mathbb{Z})=\mathbb{Z}_2^2}.            
        \end{tikzcd}
    
    \end{center}

Using the ring structure of $H^*(D_8,\bZ)$, we have
$$
\ker(\pi^*:\bZ_2^3=H^4(D_8/\bZ_2,\bZ)\to H^4(D_8,\bZ)=\bZ_2^2 \times \bZ_4)=\bZ_2.
$$

Due to the exact sequence
$$
1 \to E_{\infty}^{2,0} \to H^2(D_8,\mathbb{Z}) \to E_{\infty}^{0,2} \to 1,
$$
we obtain that $E_{\infty}^{0,2} = 1$. Therefore, the differential $d_3: E_3^{0,2} \to E_3^{3,0}$ on the $E_3$ page must be an isomorphism, which implies $E_{\infty}^{3,0} = 1$. As a result, we have the following edge morphism:
\begin{center}
    \begin{tikzcd}
        {\mathbb{Z}_2=E_2^{3,0}} \arrow[r, two heads]                                          & {E_{\infty}^{3,0}=1} \arrow[d, hook] \\
        {\mathbb{Z}_2=H^3(D_8/\mathbb{Z}_2,\mathbb{Z})} \arrow[u, "\simeq"] \arrow[r, "\pi^*"] & {H^3(D_8,\mathbb{Z})}.            
        \end{tikzcd}
\end{center}
So we have
$$
\coker(\pi^*:\bZ_2=H^3(D_8/\bZ_2,\bZ)\to H^3(D_8,\bZ)=\bZ_2)=\bZ_2.
$$
Thus, for $(H,F)=(D_8,\langle a^2\rangle)$, there are $4$ condensable algebras.

For $(H,F)=(D_8,\langle x,a^2\rangle)$, we consider the exact sequence
$$
1\to \langle x,a^2\rangle =\bZ_2\times \bZ_2 \to D_8 \to D_8/\langle x,a^2\rangle \to 1.
$$
This gives rise to a convergent first quadrant spectral sequence
$
E_2^{m,n} = H^m(\bZ_2 , H^n(\bZ_2\times \bZ_2, \mathbb{Z})).
$
It is important to note that the conjugation action of $D_8/\langle x,a^2\rangle$ on $\langle x,a^2\rangle$ is nontrivial. Therefore, $H^m(\mathbb{Z}_2 , H^n(\mathbb{Z}_2 \times \mathbb{Z}_2, \mathbb{Z}))$ is a group cohomology with local coefficients. A similar analysis of the spectral sequence as above yields:
$$
\ker(\pi^*:\bZ_2=H^4(D_8/\langle x,a^2\rangle,\bZ)\to H^4(D_8,\bZ)=\bZ_2^2\times \bZ_4 )=1,
$$
$$
\coker(\pi^*:1=H^3(D_8/\langle x,a^2\rangle,\bZ)\to H^3(D_8,\bZ)=\bZ_2)=\bZ_2.
$$
Thus, for $(H,F)=(D_8,\langle x,a^2\rangle)$, there are $2$ condensable algebras.

For $(H,F)=(D_8,\langle a\rangle)$, we need to consider the spectral sequence $E_2^{m,n}=H^m(\bZ_2 , H^n(\bZ_4, \mathbb{Z}))$. We have $E_2=E_{\infty}$\cite{chen2013symmetry}. A similar analysis of the spectral sequence as above yields:
$$
\ker(\pi^*:\bZ_2=H^4(D_8/\langle a\rangle,\bZ)\to H^4(D_8,\bZ)=\bZ_2^2\times  \bZ_4)=1,
$$
$$
\coker(\pi^*:1=H^3(D_8/\langle a\rangle,\bZ)\to H^3(D_8,\bZ)=\bZ_2)=\bZ_2.
$$
Thus, for $(H,F)=(D_8,\langle a\rangle)$, there are $2$ condensable algebras. The classification of the remaining types of condensable algebras is either trivial or has already been discussed in previous subsections.

\begin{remark}
    The condensable algebras discussed above all have $H=D_8$. Thus, by Remark \ref{remark:class}, the conjugacy problem is trivial. For a fixed pair $(H,F)$, if $\ker(\pi^*:H^4(H/F,\mathbb{Z})\to H^4(H,\mathbb{Z}))$ or $\coker(\pi^*:H^3(H/F,\mathbb{Z})\to H^3(H,\mathbb{Z}))$ is isomorphic to $\mathbb{Z}_2$, then the conjugacy actions of $G$ on these sets are trivial. This follows from the fact that the group of automorphisms of $\mathbb{Z}_2$ is only the trivial one. Consequently, when $(H,F)$ is fixed, the conjugacy problems for $Z(Vec_{D_8})$  are all trivial.
\end{remark}

\cite{bhardwaj2403hasse} obtained the embedding relations for the condensable algebras in $Z(Vec_{D_8})$.
However, they did not point out the splitting phenomenon.
We list the adjacent condensables which have more than one embedding in Table~\ref{tableD8adj}. In fact, these adjacent condensable algebras all have 2 embeddings. We write $1$ and $\phi$ to distinguish whether the fourth element of the quadruple is trivial or non-trivial. If the splitting phenomenon is due to conjugation of subgroups, we write "and" in the table to show the split.

\begin{table}[!h]
    \centering
    
\begin{tabular}{|p{0.43\textwidth}|p{0.57\textwidth}|}
\hline 
$\displaystyle ( H,F,\alpha ,\phi )$ & $\displaystyle ( H',F',\alpha ',\phi ')$ \\
\hline 
$\displaystyle \left( D_{8} ,\langle a^{2} \rangle ,1,1\right)$ & $\displaystyle ( D_{8} ,\langle a \rangle ,1,1)$ \\
\hline 
$\displaystyle \left( D_{8} ,\langle a^{2} \rangle ,1,1\right)$ & $\displaystyle \left( D_{8} ,\langle x,a^{2} \rangle ,1,1\right)$ \\
\hline 
$\displaystyle \left( D_{8} ,\langle a^{2} \rangle ,1,1\right)$ & $\displaystyle \left( D_{8} ,\langle xa,a^{2} \rangle ,1,1\right)$ \\
\hline 
$\displaystyle \left( D_{8} ,\langle a^{2} \rangle ,1,\phi \right)$ & $\displaystyle ( D_{8} ,\langle a \rangle ,1,\phi )$ \\
\hline 
$\displaystyle \left( D_{8} ,\langle a^{2} \rangle ,1,\phi \right)$ & $\displaystyle \left( D_{8} ,\langle x,a^{2} \rangle ,1,\phi \right)$ \\
\hline 
$\displaystyle \left( D_{8} ,\langle a^{2} \rangle ,1,\phi \right)$ & $\displaystyle \left( D_{8} ,\langle xa,a^{2} \rangle ,1,\phi \right)$ \\
\hline 
$\displaystyle \left( \langle xa,a^{2} \rangle ,1,1,1\right)$ & $\displaystyle ( \langle xa \rangle ,1,1,1)$ and $\displaystyle \left( \langle xa^{3} \rangle ,1,1,1\right)$ \\
\hline 
$\displaystyle \left( \langle xa,a^{2} \rangle ,1,1,1\right)$ & $\displaystyle \left( \langle xa,a^{2} \rangle ,\langle xa \rangle ,1,1\right)$ and $\displaystyle \left( \langle xa,a^{2} \rangle ,\langle xa^{3} \rangle ,1,1\right)$ \\
\hline 
$\displaystyle \left( \langle xa,a^{2} \rangle ,1,1,1\right)$ & $\displaystyle \left( \langle xa,a^{2} \rangle ,\langle xa \rangle ,1,\phi \right)$ and $\displaystyle \left( \langle xa,a^{2} \rangle ,\langle xa^{3} \rangle ,1,\phi \right)$ \\
\hline 
$\displaystyle \left( \langle x,a^{2} \rangle ,1,1,1\right)$ & $\displaystyle ( \langle x \rangle ,1,1,1)$ and $\displaystyle \left( \langle xa^{2} \rangle ,1,1,1\right)$ \\
\hline 
$\displaystyle \left( \langle x,a^{2} \rangle ,1,1,1\right)$ & $\displaystyle \left( \langle x,a^{2} \rangle ,\langle x \rangle ,1,1\right)$ and $\displaystyle \left( \langle x,a^{2} \rangle ,\langle xa^{2} \rangle ,1,1\right)$ \\
\hline 
$\displaystyle \left( \langle x,a^{2} \rangle ,1,1,1\right)$ & $\displaystyle \left( \langle x,a^{2} \rangle ,\langle x \rangle ,1,\phi \right)$ and $\displaystyle \left( \langle x,a^{2} \rangle ,\langle xa^{2} \rangle ,1,\phi \right)$ \\
\hline
\end{tabular}
\caption{The adjacent condensables in $Z(Vec_{D_8})$ with more than one embedding}
    \label{tableD8adj}
    
    \end{table}

\subsubsection{$G=\bZ_p\rtimes \bZ_2$}

Let $p \neq 2$ be a prime number. We list all condensable algebras in $Z(Vec_{\mathbb{Z}_{p}\rtimes \mathbb{Z}_2})$ in Table~\ref{Tab:ZpZ2}. The case $p=3$ was calculated in \cite{Chatterjee_2023,bhardwaj2403hasse}.

\begin{table}[!h]
    \centering
    
\begin{tabular}{|p{0.33\textwidth}|p{0.33\textwidth}|p{0.33\textwidth}|}
\hline 
dimension & $\displaystyle ( H,F)$ & number \\
\hline 
$\displaystyle 1$ & $\displaystyle (\mathbb{Z}_{p} \rtimes \mathbb{Z}_{2} ,1)$ & $\displaystyle 1$ \\
\hline 
$\displaystyle 2$ & $\displaystyle (\mathbb{Z}_{p} ,1)$ & $\displaystyle 1$ \\
\hline 
\multirow{2}{*}{$\displaystyle p$} & $\displaystyle (\mathbb{Z}_{p} \rtimes \mathbb{Z}_{2} ,\mathbb{Z}_{p})$ & $\displaystyle 1$ \\
\cline{2-3} 
& $\displaystyle (\mathbb{Z}_{2} ,1)$ & $\displaystyle 1$ \\
\hline 
\multirow{4}{*}{$\displaystyle 2p$} & $\displaystyle (\mathbb{Z}_{p} \rtimes \mathbb{Z}_{2} ,\mathbb{Z}_{p} \rtimes \mathbb{Z}_{2})$ & $\displaystyle 1$ \\
\cline{2-3} 
& $\displaystyle (\mathbb{Z}_{p} ,\mathbb{Z}_{p})$ & $\displaystyle 1$ \\
\cline{2-3} 
& $\displaystyle (\mathbb{Z}_{2} ,\mathbb{Z}_{2})$ & $\displaystyle 1$ \\
\cline{2-3} 
& $\displaystyle ( 1,1)$ & $\displaystyle 1$ \\
\hline
\end{tabular}
\caption{Condensable algebras in $Z(Vec_{\mathbb{Z}_{p}\rtimes \mathbb{Z}_2})$}
    \label{Tab:ZpZ2}
    \end{table}

    The group cohomology ring of $\bZ_p\rtimes \bZ_2$ is given in \cite{handel1993products}: 

    $$
    H^*(\bZ_p\rtimes \bZ_2,\bZ)=\bZ[a_2,d_4]/(2a_2,pd_4).
    $$

    The only potentially nontrivial case is when $(H,F) = (\mathbb{Z}_p \rtimes \mathbb{Z}_2, \mathbb{Z}_p)$. A similar spectral sequence analysis as above yields:

    $$
    \ker(\pi^*:\bZ_2=H^4(\bZ_2,\bZ)\to H^4(\bZ_p \rtimes \bZ_2,\bZ)=\bZ_2 \times\bZ_p)=1,
    $$
    $$
    \coker(\pi^*:1=H^3(\bZ_2,\bZ)\to H^3(\bZ_p \rtimes \bZ_2,\bZ)=\bZ_2)=1.
    $$

    Thus, for $(H,F)=(\mathbb{Z}_p \rtimes \mathbb{Z}_2, \mathbb{Z}_p)$, there is only $1$ condensable algebra.  Using Theorem~\ref{emb} and Corollary~\ref{embcor}, we obtain the embedding relations of condensable algebras as shown in Figure~\ref{Fig:ZpZ2}.

\begin{figure}[!h]
    \centering

\tikzset{every picture/.style={line width=0.75pt}} 

\begin{tikzpicture}[x=0.75pt,y=0.75pt,yscale=-1,xscale=1]

\draw   (320,31) -- (420,31) -- (420,71) -- (320,71) -- cycle ;
\draw   (410,200) -- (490,200) -- (490,240) -- (410,240) -- cycle ;

\draw   (260,200) -- (340,200) -- (340,240) -- (260,240) -- cycle ;

\draw   (390,110) -- (500,110) -- (500,150) -- (390,150) -- cycle ;
\draw   (210,100) -- (290,100) -- (290,140) -- (210,140) -- cycle ;

\draw   (540,110) -- (620,110) -- (620,150) -- (540,150) -- cycle ;

\draw   (90,200) -- (230,200) -- (230,240) -- (90,240) -- cycle ;
\draw   (540,201) -- (620,201) -- (620,241) -- (540,241) -- cycle ;
\draw    (360,71) -- (250,100) ;
\draw    (250,140) -- (300,200) ;
\draw    (250,140) -- (580,201) ;
\draw    (580,151) -- (580,201) ;
\draw    (450,151) -- (300,200) ;
\draw    (450,151) -- (170,200) ;
\draw    (580,151) -- (450,200) ;
\draw    (360,71) -- (580,110) ;
\draw    (360,71) -- (450,110) ;

\draw (331,43) node [anchor=north west][inner sep=0.75pt]   [align=left] {$\displaystyle (\mathbb{Z}_{p} \rtimes \mathbb{Z}_{2} ,1)$};
\draw (401,122) node [anchor=north west][inner sep=0.75pt]   [align=left] {$\displaystyle (\mathbb{Z}_{p} \rtimes \mathbb{Z}_{2} ,\mathbb{Z}_{p})$};
\draw (561,212) node [anchor=north west][inner sep=0.75pt]   [align=left] {$\displaystyle ( 1,1)$};
\draw (551,122) node [anchor=north west][inner sep=0.75pt]   [align=left] {$\displaystyle (\mathbb{Z}_{2} ,1)$};
\draw (221,112) node [anchor=north west][inner sep=0.75pt]   [align=left] {$\displaystyle (\mathbb{Z}_{p} ,1)$};
\draw (271,212) node [anchor=north west][inner sep=0.75pt]   [align=left] {$\displaystyle (\mathbb{Z}_{p} ,\mathbb{Z}_{p})$};
\draw (421,212) node [anchor=north west][inner sep=0.75pt]   [align=left] {$\displaystyle (\mathbb{Z}_{2} ,\mathbb{Z}_{2})$};
\draw (101,212) node [anchor=north west][inner sep=0.75pt]   [align=left] {$\displaystyle (\mathbb{Z}_{p} \rtimes \mathbb{Z}_{2} ,\mathbb{Z}_{p} \rtimes \mathbb{Z}_{2})$};

\end{tikzpicture}
\caption{The embedding relations of condensable algebras in $Z(Vec_{\mathbb{Z}_{p}\rtimes \mathbb{Z}_2})$}
    \label{Fig:ZpZ2}

\end{figure}

\subsubsection{$G=Q_8$}

Let $Q_8 = \{1, -1, i, -i, j, -j, k, -k\}$ be the quaternion group. The subgroups of $Q_8$ are $Q_8$, $I = \langle i \rangle$, $J = \langle j \rangle$, $K = \langle k \rangle$, $\langle -1 \rangle$, and ${1}$. All of these subgroups are normal. We list all condensable algebras in $Z(Vec_{Q_8})$ in the Table~\ref{Tab:Q8}.

\begin{table}[!h]
    \centering
    
\begin{tabular}{|p{0.33\textwidth}|p{0.33\textwidth}|p{0.33\textwidth}|}
\hline 
dimension & $\displaystyle ( H,F)$ & number \\
\hline 
$\displaystyle 1$ & $\displaystyle ( Q_{8} ,1)$ & $\displaystyle 1$ \\
\hline 
\multirow{4}{*}{$\displaystyle 2$} & $\displaystyle ( Q_{8} ,\langle -1\rangle )$ & $\displaystyle 4$ \\
\cline{2-3} 
& $\displaystyle ( I,1)$ & $\displaystyle 1$ \\
\cline{2-3} 
& $\displaystyle ( J,1)$ & $\displaystyle 1$ \\
\cline{2-3} 
& $\displaystyle ( K,1)$ & $\displaystyle 1$ \\
\hline 
\multirow{7}{*}{$\displaystyle 4$} & $\displaystyle ( Q_{8} ,I)$ & $\displaystyle 2$ \\
\cline{2-3} 
& $\displaystyle ( Q_{8} ,J)$ & $\displaystyle 2$ \\
\cline{2-3} 
& $\displaystyle ( Q_{8} ,K)$ & $\displaystyle 2$ \\
\cline{2-3} 
& $\displaystyle ( I,\langle -1\rangle )$ & $\displaystyle 2$ \\
\cline{2-3} 
& $\displaystyle ( J,\langle -1\rangle )$ & $\displaystyle 2$ \\
\cline{2-3} 
& $\displaystyle ( K,\langle -1\rangle )$ & $\displaystyle 2$ \\
\cline{2-3} 
& $\displaystyle ( \langle -1\rangle ,1)$ & $\displaystyle 1$ \\
\hline 
\multirow{6}{*}{$\displaystyle 8$} & $\displaystyle ( Q_{8} ,Q_{8})$ & $\displaystyle 1$ \\
\cline{2-3} 
& $\displaystyle ( I,I)$ & $\displaystyle 1$ \\
\cline{2-3} 
& $\displaystyle ( J,J)$ & $\displaystyle 1$ \\
\cline{2-3} 
& $\displaystyle ( K,K)$ & $\displaystyle 1$ \\
\cline{2-3} 
& $\displaystyle ( \langle -1\rangle ,\langle -1\rangle )$ & $\displaystyle 1$ \\
\cline{2-3} 
& $\displaystyle ( 1,1)$ & $\displaystyle 1$ \\
\hline
\end{tabular}
\caption{Condensable algebras in $Z(Vec_{Q_8})$}
    \label{Tab:Q8}
    
    \end{table}

    The group cohomology ring of $Q_8$ is given in \cite{tomoda2008remarks} Theorem 3.6:

    $$
    H^*(Q_8,\bZ)=\bZ[\alpha_2,\beta_2,\gamma_4]/(2\alpha_2,2\beta_2,8\gamma_4,\alpha_2\beta_2- 4\gamma_4,\alpha_2^2,\beta_2^2).
    $$
    Here, $|\alpha_2|=|\beta_2|=2,|\gamma_4|=4$. 

    For $(H,F)=(Q_8,\langle -1\rangle)$, we consider the exact sequence 
    $$
    1\to \bZ_2=\langle -1\rangle\stackrel{\iota}{\to} Q_8 \stackrel{\pi}{\to} Q_8/\langle -1\rangle=\bZ_2\times \bZ_2 \to 1
    $$
    and the spectral sequence $E_2^{m,n}=H^m(Q_8/\langle -1\rangle,H^n(\langle -1\rangle,\bZ))$. Similar to the previous discussion about $D_8$, all differentials $d_2$ in $E_2$ page are trivial. So $E_2=E_3$, $E_{\infty}^{2,0}=\bZ_2^2$. 

    We write $\mathbb{Z}_2^{(1)} \times \mathbb{Z}_2^{(2)}$ to distinguish the two components of $\mathbb{Z}_2 \times \mathbb{Z}_2$. Let $x, y$ be the generators of $H^2(\mathbb{Z}_2^{(1)} \times \mathbb{Z}_2^{(2)}, \mathbb{Z})$, corresponding to the images under $\pi^*$ of the generators of 
$
H^2(\mathbb{Z}_2^{(1)} \times \mathbb{Z}_2^{(2)}/\mathbb{Z}_2^{(2)}, \mathbb{Z})
$
and
$
H^2(\mathbb{Z}_2^{(1)} \times \mathbb{Z}_2^{(2)}/\mathbb{Z}_2^{(1)}, \mathbb{Z}),
$
respectively.

From the discussion above, we have the following:
$$
\pi^*: H^2(Q_8 / \langle -1 \rangle, \mathbb{Z}) \to H^2(Q_8, \mathbb{Z}), \quad x \mapsto \alpha_2, \quad y \mapsto \beta_2.
$$

Using the structure of the cohomology ring, we obtain:
$$
\pi^*: H^4(Q_8 / \langle -1 \rangle, \mathbb{Z}) \to H^4(Q_8, \mathbb{Z}), \quad x^2 \mapsto \alpha_2^2, \quad y^2 \mapsto \beta_2^2, \quad xy \mapsto \alpha_2 \beta_2 = 4\gamma_4.
$$

Therefore,
$$
\ker(\pi^*: H^4(Q_8 / \langle -1 \rangle, \mathbb{Z}) \to H^4(Q_8, \mathbb{Z})) = \mathbb{Z}_2 \times \mathbb{Z}_2 = \langle x^2, y^2 \rangle.
$$

We also have
$$
\coker(\pi^*: H^3(Q_8 / \langle -1 \rangle, \mathbb{Z}) \to H^3(Q_8, \mathbb{Z})=1) =1.
$$
Thus, for $(H,F)=(Q_8,\langle -1\rangle)$, there are $4$ condensable algebras.

For $(H,F)=(Q_8,J)$, we have 
$$
\pi^*: H^2(Q_8/J, \mathbb{Z}) \to H^2(Q_8, \mathbb{Z}), \quad x \mapsto \alpha_2,
$$
where $x$ is the generator of $H^2(Q_8/J, \mathbb{Z})=H^2(\bZ_2,\bZ)$. We use $x$ to denote the generator because we view $J/\langle -1 \rangle$ as the subgroup $\mathbb{Z}_2^{(2)}$ inside $Q_8 / \langle -1 \rangle \simeq \mathbb{Z}_2^{(1)} \times \mathbb{Z}_2^{(2)}$. Similarly, in the computation of embedding relations later on, we regard $I/\langle -1 \rangle$ as the subgroup $\mathbb{Z}_2^{(1)}$ of $Q_8 / \langle -1 \rangle$, and $K/\langle -1 \rangle$ as the diagonal subgroup $\mathbb{Z}_2^{(12)} = \{(0,0), (1,1)\} \subset \mathbb{Z}_2^{(1)} \times \mathbb{Z}_2^{(2)}$.

Using the structure of the cohomology ring, we obtain:
$$
\ker(\pi^*: H^4(Q_8 / J, \mathbb{Z}) \to H^4(Q_8, \mathbb{Z})) = \mathbb{Z}_2=\langle x^2 \rangle.
$$

We also have
$$
\coker(\pi^*: H^3(Q_8 /J, \mathbb{Z}) \to H^3(Q_8, \mathbb{Z})=1) =1.
$$
Thus for $(H,F)=(Q_8,J)$, there are $2$ condensable algebras. The classification of the remaining types of condensable algebras is either trivial or has already been discussed in previous subsections.

To obtain the embedding diagram of $Z(Vec_{Q_8})$, it is helpful to consider the condensable algebras in $Z(Vec_{\mathbb{Z}_2^{(1)} \times \mathbb{Z}_2^{(2)}}^{x^2})$. We list all condensable algebras in $Z(Vec_{\mathbb{Z}_2^{(1)} \times \mathbb{Z}_2^{(2)}}^{x^2})$ in the Table~\ref{tab:Z2Z2x2}. The diagram of embedding relations are shown in Figure~\ref{Fig:Z2Z2x2}. We use red boxes to indicate that the condensable algebra has $\alpha = x^2$ (we abuse notation and also denote the generator of $H^4(\mathbb{Z}_2^{(1)} \times \mathbb{Z}_2^{(2)}/\mathbb{Z}_2^{(2)}, \mathbb{Z})$ by $x^2$), and green boxes to indicate that $\alpha$ corresponds to the nontrivial element in $H^4(\mathbb{Z}_2, \mathbb{Z})$. All other condensable algebras, either uncolored or without specified $\alpha$ or $\phi$, have trivial $\alpha$ and $\phi$.

\begin{center}
    
\begin{table}[!h]
    \centering
    
\begin{tabular}{|p{0.33\textwidth}|p{0.33\textwidth}|p{0.33\textwidth}|}
\hline 
dimension & $\displaystyle ( H,F)$ & number \\
\hline 
$\displaystyle 1$ & $\displaystyle \left(\mathbb{Z}_{2}^{( 1)} \times \mathbb{Z}_{2}^{( 2)} ,1\right)$ & $\displaystyle 1$ \\
\hline 
\multirow{6}{*}{$\displaystyle 2$} & $\displaystyle \left(\mathbb{Z}_{2}^{( 1)} \times \mathbb{Z}_{2}^{( 2)} ,\mathbb{Z}_{2}^{( 1)}\right)$ & $\displaystyle 0$ \\
\cline{2-3} 
& $\displaystyle \left(\mathbb{Z}_{2}^{( 1)} \times \mathbb{Z}_{2}^{( 2)} ,\mathbb{Z}_{2}^{( 2)}\right)$ & $\displaystyle 2$ \\
\cline{2-3} 
& $\displaystyle \left(\mathbb{Z}_{2}^{( 1)} \times \mathbb{Z}_{2}^{( 2)} ,\mathbb{Z}_{2}^{( 12)}\right)$ & $\displaystyle 0$ \\
\cline{2-3} 
& $\displaystyle \left(\mathbb{Z}_{2}^{( 1)} ,1\right)$ & $\displaystyle 1$ \\
\cline{2-3} 
& $\displaystyle \left(\mathbb{Z}_{2}^{( 2)} ,1\right)$ & $\displaystyle 1$ \\
\cline{2-3} 
& $\displaystyle \left(\mathbb{Z}_{2}^{( 12)} ,1\right)$ & $\displaystyle 1$ \\
\hline 
\multirow{5}{*}{$\displaystyle 4$} & $\displaystyle \left(\mathbb{Z}_{2}^{( 1)} \times \mathbb{Z}_{2}^{( 2)} ,\mathbb{Z}_{2}^{( 1)} \times \mathbb{Z}_{2}^{( 2)}\right)$ & $\displaystyle 0$ \\
\cline{2-3} 
& $\displaystyle \left(\mathbb{Z}_{2}^{( 1)} ,\mathbb{Z}_{2}^{( 1)}\right)$ & $\displaystyle 0$ \\
\cline{2-3} 
& $\displaystyle \left(\mathbb{Z}_{2}^{( 2)} ,\mathbb{Z}_{2}^{( 2)}\right)$ & $\displaystyle 1$ \\
\cline{2-3} 
& $\displaystyle \left(\mathbb{Z}_{2}^{( 12)} ,\mathbb{Z}_{2}^{( 12)}\right)$ & $\displaystyle 0$ \\
\cline{2-3} 
& $\displaystyle ( 1,1)$ & $\displaystyle 1$ \\
\hline
\end{tabular}
    \caption{The condensable algebras in $Z(Vec_{\mathbb{Z}_2^{(1)} \times \mathbb{Z}_2^{(2)}}^{x^2})$}
    \label{tab:Z2Z2x2}
    \end{table}
\end{center}

\begin{figure}
    \centering
\tikzset{every picture/.style={line width=0.75pt}} 

\begin{tikzpicture}[x=0.75pt,y=0.75pt,yscale=-0.9,xscale=0.9]

\draw  [fill={rgb, 255:red, 208; green, 2; blue, 27 }  ,fill opacity=0.4 ] (300,50) -- (410,50) -- (410,90) -- (300,90) -- cycle ;
\draw   (430,220) -- (480,220) -- (480,260) -- (430,260) -- cycle ;
\draw  [fill={rgb, 255:red, 184; green, 233; blue, 134 }  ,fill opacity=0.8 ] (370,130) -- (450,130) -- (450,170) -- (370,170) -- cycle ;
\draw  [fill={rgb, 255:red, 208; green, 2; blue, 27 }  ,fill opacity=0.4 ] (20,130) -- (150,130) -- (150,170) -- (20,170) -- cycle ;
\draw   (220,220) -- (310,220) -- (310,260) -- (220,260) -- cycle ;
\draw    (260,170) -- (260,220) ;
\draw    (360,90) -- (260,130) ;
\draw    (360,90) -- (510,130) ;
\draw    (360,90) -- (80,130) ;
\draw    (360,90) -- (410,130) ;
\draw  [fill={rgb, 255:red, 208; green, 2; blue, 27 }  ,fill opacity=0.4 ] (176,130) -- (340,130) -- (340,170) -- (176,170) -- cycle ;
\draw   (470,130) -- (550,130) -- (550,170) -- (470,170) -- cycle ;
\draw  [fill={rgb, 255:red, 184; green, 233; blue, 134 }  ,fill opacity=0.8 ] (570,130) -- (650,130) -- (650,170) -- (570,170) -- cycle ;
\draw    (360,90) -- (610,130) ;
\draw    (80,170) -- (260,220) ;
\draw    (420,170) -- (460,220) ;
\draw    (520,170) -- (460,220) ;
\draw    (610,170) -- (460,220) ;

\draw (381,140) node [anchor=north west][inner sep=0.75pt]   [align=left] {$\displaystyle \left(\mathbb{Z}_{2}^{( 1)} ,1\right)$};
\draw (27,140) node [anchor=north west][inner sep=0.75pt]   [align=left] {$\displaystyle \left(\mathbb{Z}_{2}^{( 1)} \times \mathbb{Z}_{2}^{( 2)} ,\mathbb{Z}_{2}^{( 2)}\right)$};
\draw (231,230) node [anchor=north west][inner sep=0.75pt]   [align=left] {$\displaystyle \left(\mathbb{Z}_{2}^{( 2)} ,\mathbb{Z}_{2}^{( 2)}\right)$};
\draw (311,60) node [anchor=north west][inner sep=0.75pt]   [align=left] {$\displaystyle \left(\mathbb{Z}_{2}^{( 1)} \times \mathbb{Z}_{2}^{( 2)} ,1\right)$};
\draw (184,140) node [anchor=north west][inner sep=0.75pt]   [align=left] {$\displaystyle \left(\mathbb{Z}_{2}^{( 1)} \times \mathbb{Z}_{2}^{( 2)} ,\mathbb{Z}_{2}^{( 2)} ,x^{2} ,\phi \right)$};
\draw (481,140) node [anchor=north west][inner sep=0.75pt]   [align=left] {$\displaystyle \left(\mathbb{Z}_{2}^{( 2)} ,1\right)$};
\draw (580,140) node [anchor=north west][inner sep=0.75pt]   [align=left] {$\displaystyle \left(\mathbb{Z}_{2}^{( 12)} ,1\right)$};
\draw (437.31,232) node [anchor=north west][inner sep=0.75pt]   [align=left] {$\displaystyle ( 1,1)$};

\end{tikzpicture}
\caption{The embedding relations of condensable algebras in $Z(Vec_{\mathbb{Z}_2^{(1)} \times \mathbb{Z}_2^{(2)}}^{x^2})$}
\label{Fig:Z2Z2x2}
\end{figure}

The diagram of embedding relations of $Z(Vec_{Q_8})$ is shown in Figure~\ref{Fig:Q_8}. We use red, blue, and yellow boxes to indicate that the condensable algebra has $\alpha = x^2$, $y^2$, and $x^2 + y^2$, respectively. Green boxes indicate that the corresponding $\alpha$ is the nontrivial element in $H^4(\mathbb{Z}_2, \mathbb{Z})$. All other condensable algebras, either uncolored or without specified $\alpha$ or $\phi$, have trivial $\alpha$ and $\phi$. A bold line represents that there are two distinct embeddings from the condensable algebra above the line to the one below.

\begin{sidewaysfigure}
    \centering

    \tikzset{every picture/.style={line width=0.75pt}} 

    \begin{tikzpicture}[x=0.75pt,y=0.75pt,yscale=-1,xscale=1]
    
    \draw   (490,10) -- (560,10) -- (560,50) -- (490,50) -- cycle ;
    \draw   (100,210) -- (170,210) -- (170,250) -- (100,250) -- cycle ;
    \draw   (590,210) -- (640,210) -- (640,250) -- (590,250) -- cycle ;
    \draw   (690,210) -- (740,210) -- (740,250) -- (690,250) -- cycle ;
    \draw   (799,210) -- (849,210) -- (849,250) -- (799,250) -- cycle ;
    \draw   (50,410) -- (110,410) -- (110,450) -- (50,450) -- cycle ;
    \draw   (180,410) -- (240,410) -- (240,450) -- (180,450) -- cycle ;
    \draw   (310,410) -- (370,410) -- (370,450) -- (310,450) -- cycle ;
    \draw   (450,410) -- (510,410) -- (510,450) -- (450,450) -- cycle ;
    \draw   (124,600) -- (194,600) -- (194,640) -- (124,640) -- cycle ;
    \draw   (258,600) -- (314,600) -- (314,640) -- (258,640) -- cycle ;
    \draw   (384,600) -- (440,600) -- (440,640) -- (384,640) -- cycle ;
    \draw   (504,600) -- (560,600) -- (560,640) -- (504,640) -- cycle ;
    \draw   (620,600) -- (706,600) -- (706,640) -- (620,640) -- cycle ;
    \draw   (786,600) -- (842,600) -- (842,640) -- (786,640) -- cycle ;
    \draw  [fill={rgb, 255:red, 74; green, 144; blue, 226 }  ,fill opacity=0.6 ] (115,410) -- (175,410) -- (175,450) -- (115,450) -- cycle ;
    \draw  [fill={rgb, 255:red, 208; green, 2; blue, 27 }  ,fill opacity=0.4 ] (246,410) -- (306,410) -- (306,450) -- (246,450) -- cycle ;
    \draw  [fill={rgb, 255:red, 248; green, 231; blue, 28 }  ,fill opacity=0.5 ] (375.67,410) -- (435.67,410) -- (435.67,450) -- (375.67,450) -- cycle ;
    \draw  [fill={rgb, 255:red, 184; green, 233; blue, 134 }  ,fill opacity=0.8 ] (514.67,410) -- (574.67,410) -- (574.67,450) -- (514.67,450) -- cycle ;
    \draw   (580,410) -- (640,410) -- (640,450) -- (580,450) -- cycle ;
    \draw  [fill={rgb, 255:red, 184; green, 233; blue, 134 }  ,fill opacity=0.8 ] (644.67,410) -- (704.67,410) -- (704.67,450) -- (644.67,450) -- cycle ;
    \draw   (710,410) -- (770,410) -- (770,450) -- (710,450) -- cycle ;
    \draw  [fill={rgb, 255:red, 184; green, 233; blue, 134 }  ,fill opacity=0.8 ] (774.67,410) -- (834.67,410) -- (834.67,450) -- (774.67,450) -- cycle ;
    \draw   (846,410) -- (916,410) -- (916,450) -- (846,450) -- cycle ;
    \draw  [fill={rgb, 255:red, 208; green, 2; blue, 27 }  ,fill opacity=0.4 ] (220,210) -- (290,210) -- (290,250) -- (220,250) -- cycle ;
    \draw  [fill={rgb, 255:red, 74; green, 144; blue, 226 }  ,fill opacity=0.6 ] (340,210) -- (410,210) -- (410,250) -- (340,250) -- cycle ;
    \draw  [fill={rgb, 255:red, 248; green, 231; blue, 28 }  ,fill opacity=0.5 ] (470,210) -- (540,210) -- (540,250) -- (470,250) -- cycle ;
    \draw    (530,50) -- (140,210) ;
    \draw    (530,50) -- (260,210) ;
    \draw    (530,50) -- (380,210) ;
    \draw    (530,50) -- (510,210) ;
    \draw    (530,50) -- (620,210) ;
    \draw    (530,50) -- (720,210) ;
    \draw    (530,50) -- (830,210) ;
    \draw [line width=3]    (130,250) -- (80,410) ;
    \draw [line width=3]    (130,250) -- (210,410) ;
    \draw [line width=3]    (130,250) -- (340,410) ;
    \draw    (130,250) -- (480,410) ;
    \draw    (130,250) -- (610,410) ;
    \draw    (130,250) -- (740,410) ;
    \draw [line width=3]    (260,250) -- (280,410) ;
    \draw    (260,250) -- (540,410) ;
    \draw    (260,250) -- (610,410) ;
    \draw    (260,250) -- (810,410) ;
    \draw    (280,450) -- (410,600) ;
    \draw    (550,450) -- (670,600) ;
    \draw    (610,450) -- (670,600) ;
    \draw    (810,450) -- (670,600) ;
    \draw [line width=3]    (380,250) -- (150,410) ;
    \draw    (380,250) -- (480,410) ;
    \draw    (380,250) -- (680,410) ;
    \draw    (380,250) -- (810,410) ;
    \draw    (150,450) -- (290,600) ;
    \draw    (480,450) -- (670,600) ;
    \draw    (680,450) -- (670,600) ;
    \draw [line width=3]    (510,250) -- (410,410) ;
    \draw    (510,250) -- (740,410) ;
    \draw    (510,250) -- (540,410) ;
    \draw    (510,250) -- (680,410) ;
    \draw    (400,450) -- (530,600) ;
    \draw    (750,450) -- (670,600) ;
    \draw    (620,250) -- (480,410) ;
    \draw    (620,250) -- (540,410) ;
    \draw    (620,250) -- (880,410) ;
    \draw    (480,450) -- (290,600) ;
    \draw    (890,450) -- (820,600) ;
    \draw    (890,450) -- (670,600) ;
    \draw    (720,250) -- (610,410) ;
    \draw    (720,250) -- (680,410) ;
    \draw    (720,250) -- (880,410) ;
    \draw    (610,450) -- (410,600) ;
    \draw    (830,250) -- (740,410) ;
    \draw    (830,250) -- (810,410) ;
    \draw    (830,250) -- (880,410) ;
    \draw    (750,450) -- (530,600) ;
    \draw    (80,450) -- (160,600) ;
    \draw    (80,450) -- (290,600) ;
    \draw    (220,450) -- (160,600) ;
    \draw    (220,450) -- (410,600) ;
    \draw    (340,450) -- (160,600) ;
    \draw    (340,450) -- (530,600) ;
    
    \draw (501,22) node [anchor=north west][inner sep=0.75pt]   [align=left] {$\displaystyle ( Q_{8} ,1)$};
    \draw (110,222) node [anchor=north west][inner sep=0.75pt]   [align=left] {$\displaystyle ( Q_{8} ,\langle -1\rangle )$};
    \draw (600,222) node [anchor=north west][inner sep=0.75pt]   [align=left] {$\displaystyle ( I,1)$};
    \draw (701,222) node [anchor=north west][inner sep=0.75pt]   [align=left] {$\displaystyle ( J,1)$};
    \draw (808,222) node [anchor=north west][inner sep=0.75pt]   [align=left] {$\displaystyle ( K,1)$};
    \draw (61,422) node [anchor=north west][inner sep=0.75pt]   [align=left] {$\displaystyle ( Q_{8} ,I)$};
    \draw (190,422) node [anchor=north west][inner sep=0.75pt]   [align=left] {$\displaystyle ( Q_{8} ,J)$};
    \draw (315,422) node [anchor=north west][inner sep=0.75pt]   [align=left] {$\displaystyle ( Q_{8} ,K)$};
    \draw (451,422) node [anchor=north west][inner sep=0.75pt]   [align=left] {$\displaystyle ( I,\langle -1\rangle )$};
    \draw (135,612) node [anchor=north west][inner sep=0.75pt]   [align=left] {$\displaystyle ( Q_{8} ,Q_{8})$};
    \draw (269,612) node [anchor=north west][inner sep=0.75pt]   [align=left] {$\displaystyle ( I,I)$};
    \draw (397,612) node [anchor=north west][inner sep=0.75pt]   [align=left] {$\displaystyle ( J,J)$};
    \draw (515,612) node [anchor=north west][inner sep=0.75pt]   [align=left] {$\displaystyle ( K,K)$};
    \draw (631,612) node [anchor=north west][inner sep=0.75pt]   [align=left] {$\displaystyle ( \langle -1\rangle ,\langle -1\rangle )$};
    \draw (797,612) node [anchor=north west][inner sep=0.75pt]   [align=left] {$\displaystyle ( 1,1)$};
    \draw (126,422) node [anchor=north west][inner sep=0.75pt]   [align=left] {$\displaystyle ( Q_{8} ,I)$};
    \draw (257,422) node [anchor=north west][inner sep=0.75pt]   [align=left] {$\displaystyle ( Q_{8} ,J)$};
    \draw (386.67,422) node [anchor=north west][inner sep=0.75pt]   [align=left] {$\displaystyle ( Q_{8} ,K)$};
    \draw (521,422) node [anchor=north west][inner sep=0.75pt]   [align=left] {$\displaystyle ( I,\langle -1\rangle )$};
    \draw (581,422) node [anchor=north west][inner sep=0.75pt]   [align=left] {$\displaystyle ( J,\langle -1\rangle )$};
    \draw (651,422) node [anchor=north west][inner sep=0.75pt]   [align=left] {$\displaystyle ( J,\langle -1\rangle )$};
    \draw (714,422) node [anchor=north west][inner sep=0.75pt]   [align=left] {$\displaystyle ( K,\langle -1\rangle )$};
    \draw (781,422) node [anchor=north west][inner sep=0.75pt]   [align=left] {$\displaystyle ( K,\langle -1\rangle )$};
    \draw (857,422) node [anchor=north west][inner sep=0.75pt]   [align=left] {$\displaystyle ( \langle -1\rangle ,1)$};
    \draw (230,222) node [anchor=north west][inner sep=0.75pt]   [align=left] {$\displaystyle ( Q_{8} ,\langle -1\rangle )$};
    \draw (351,222) node [anchor=north west][inner sep=0.75pt]   [align=left] {$\displaystyle ( Q_{8} ,\langle -1\rangle )$};
    \draw (480,222) node [anchor=north west][inner sep=0.75pt]   [align=left] {$\displaystyle ( Q_{8} ,\langle -1\rangle )$};

    \end{tikzpicture}
    
        \caption{The embedding relations of condensable algebras in $Z(Vec_{Q_8})$}
        \label{Fig:Q_8}
    \end{sidewaysfigure}

    The splitting phenomenon can be observed by counting the number of condensable algebras. For instance, as shown in Figure~\ref{Fig:Q_8}, there are six condensable algebras of dimension four in $Z(Vec_{Q_8})$ that contain the algebra $A$ represented by $(Q_8, \langle -1 \rangle, 1, 1)$. On the other hand, $Z(Vec_{Q_8})_A^{loc} \simeq Z(Vec_{\bZ_2 \times \bZ_2})$ contains nine condensable algebras of dimension two. Therefore, some four-dimensional condensable algebras in $Z(Vec_{Q_8})$ must split when we condense $A$.

\appendix
\section{The canonical 2-cochain $\zeta_g$}\label{appendix:A} 

Let $G$ be a finite group and let $\omega \in H^3(G,U(1))$ be a 3-cocycle class. 
Setting $H' = gHg^{-1}$ for $g \in G$, we consider the algebra isomorphism 
$f_g: Fun(G/H) \to Fun(G/H')$ defined by $\delta_{tH} \mapsto \delta_{tg^{-1}H'}$. 
This isomorphism induces a braided monoidal functor 
$F_g: Z(Vec_G^\omega)_{Fun(G/H)}^{loc} \simeq Z(Vec_G^\omega)_{Fun(G/H')}^{loc}: (M,m_M) \mapsto (M, m_M \circ (\mathrm{id} \otimes f_g^{-1}) )$, 
where $m_M$ is the right $Fun(G/H)$-module structure on $M$.

Following \cite[Theorem 2.1]{davydov2017lagrangian}, we introduce the following notations:
$$
\omega(a,b | f) := \omega(a,b,f)^{-1} \omega(a,{}^bf,b) \omega({}^{ab}f,a,b)^{-1},
$$
$$
\omega(a | f,g) := \omega(a,f,g) \omega({}^{a}f,a,g)^{-1} \omega({}^{a}f,{}^{a}g,a),
$$
where ${}^{b}f = bfb^{-1}$. In this Appendix, for brevity, the symbols $\omega, \zeta, \ldots$ all denote chosen representatives of their respective cohomology or torsor classes. There are two braided monoidal functors:
$$
D: Z(Vec_G^{\omega})_{Fun(G/H)}^{loc} \simeq Z(Vec_H^{\omega|_H}), \quad M \mapsto M\delta_{H},
$$
$$
E: Z(Vec_H^{\omega|_H}) \simeq Z(Vec_G^{\omega})_{Fun(G/H)}^{loc}, \quad V \mapsto E(V),
$$
which satisfy $E \circ D \simeq \mathrm{id}$ and $D \circ E \simeq \mathrm{id}$. Here, the functor $E(V)$ is explicitly given by
$$
E(V) := \{f: G \to V | f(gh) = \omega(h^{-1},g^{-1} | s) h^{-1}.f(g), |f|=s, \, g \in G, \, h \in H\}.
$$
The notation $|f|$ denotes the degree of $f$, where $|f|=s$ if and only if $|f(x)| = x^{-1}sx$ for all $x \in G$. The $G$-action on $E(V)$ is defined by 
$$
(g.f)(x) = \omega(x^{-1},g | |f|)^{-1} f(g^{-1}x).
$$
Furthermore, we have $E(\mathbb{C}) = Fun(G/H)$, and the right $Fun(G/H)$-module structure on $E(V)$ is given by 
$$
m_{E(V)}(f \otimes \delta_{tH})(x) = \delta_{tH}(x) f(x).
$$

Consider the composition $\tilde{F}_g := D'\circ F_g\circ E : Z(Vec_H^{\omega|_H})\to Z(Vec_{H'}^{\omega|_{H'}})$ such that the following diagram commutes up to braided monoidal natural isomorphism:

\begin{equation}
    \begin{tikzcd}
Z(Vec_G^\omega)_{Fun(G/H)}^{loc} \arrow[d, "F_g"] \arrow[r, "D"] & Z(Vec_H^{\omega|_H}) \arrow[d, "\tilde{F}_g"] \\
Z(Vec_G^\omega)_{Fun(G/H')}^{loc} \arrow[r, "D'"]                & Z(Vec_{H'}^{\omega|_{H'}})                    
\end{tikzcd}
\end{equation}

For $V \in Z(Vec_H^{\omega|_H})$, we have $\tilde{F}_g(V) = E(V)\delta_{H'} = E(V)\delta_{gH}$. Our goal is to determine the monoidal structure on $\tilde{F}_g$. To this end, the monoidal structure of the functor $E$ is explicitly given as follows:
$$
E(V)\otimes_{E(\mathbb{C})} E(W)\stackrel{\mu}{\longrightarrow} E(V\otimes W): f_1\otimes f_2\mapsto \mu(f_1\otimes f_2).
$$
Here, $\mu(f_1\otimes f_2)(x)= \omega(x^{-1}|s_1,s_2)f_1(x)\otimes f_2(x)$ for $x\in G$, where $|f_1|=s_1$ and $|f_2|=s_2$. One can readily verify that $\mu(f_1\otimes f_2)\in E(V\otimes W)$ and that $\mu$ defines a morphism in $Z(Vec_G^\omega)_{Fun(G/H)}^{loc}$. Consequently, the monoidal structure of $\tilde{F}_g$ is also given as follows:
$$
E(V)\delta_{gH}\otimes_{E(\mathbb{C})} E(W)\delta_{gH}\stackrel{\mu}{\longrightarrow} E(V\otimes W)\delta_{gH}: f_1\otimes f_2\mapsto \mu(f_1\otimes f_2).
$$
Here, $f_1$ and $f_2$ are supported on $gH$, and so they are uniquely determined by their values at $g$. For a chosen basis $\{v_i \otimes w_j\}$ of $V \otimes W$, we obtain a basis for $E(V)\delta_{gH} \otimes_{E(\mathbb{C})} E(W)\delta_{gH}$ given by $\{f_{v_i} \otimes f_{w_j}\}$, which satisfy $f_{v_i}(g) = v_i$ and $f_{w_j}(g) = w_j$. It then follows that $\mu(f_{v_i} \otimes f_{w_j})(g) = \omega(g^{-1} | s_1, s_2) v_i \otimes w_j$. Thus, the monoidal structure of $\tilde{F}_g$ is explicitly given by $\zeta_g(x,y) := \omega(g^{-1} | gxg^{-1}, gyg^{-1})$ for $x,y \in H$. A direct calculation verifies that $d\zeta_g(x,y,z) = \omega(gxg^{-1}, gyg^{-1}, gzg^{-1}) / \omega(x,y,z)$.

\begin{remark}
    If we choose a different basis ${f_{v_i} \otimes f_{w_j}}$ for $E(V)\delta_{gH} \otimes_{E(\mathbb{C})} E(W)\delta_{gH}$ satisfying $f_{v_i}(gh) = v_i$ and $f_{w_j}(gh) = w_j$ for some $h \in H$, then the monoidal structure of $\tilde{F}_g$ is given by $\zeta_{gh}(x,y) = \omega(h^{-1}g^{-1} | ghxh^{-1}g^{-1}, ghyh^{-1}g^{-1})$. The braided monoidal functors obtained from different choices are braided monoidally naturally isomorphic. More precisely, they differ by a braided monoidal autoequivalence of $Z(Vec_H^{\omega|_H})$ which is a lift of inner tensor autoequivalence $F_{l_h,\zeta_h|_H}: Vec_H^{\omega|_H}\to Vec_H^{\omega|_H}$. By \cite{galindo2017tensor,natale2017equivalence}, this braided monoidal autoequivalence is braided monoidally naturally isomorphic to the identity functor.
\end{remark}

\bibliographystyle{alpha}
\bibliography{reference}  

@article{kong2014anyon,
  title={Anyon condensation and tensor categories},
  author={Kong, Liang},
  journal={Nuclear Physics B},
  volume={886},
  pages={436--482},
  year={2014},
  publisher={Elsevier}
}

@article{kitaev2006anyons,
  title={Anyons in an exactly solved model and beyond},
  author={Kitaev, Alexei},
  journal={Annals of Physics},
  volume={321},
  number={1},
  pages={2--111},
  year={2006},
  publisher={Elsevier}
}

@article{kong2018center,
  title={The center functor is fully faithful},
  author={Kong, Liang and Zheng, Hao},
  journal={Advances in Mathematics},
  volume={339},
  pages={749--779},
  year={2018},
  publisher={Elsevier}
}

@book{etingof2016tensor,
  title={Tensor categories},
  author={Etingof, Pavel and Gelaki, Shlomo and Nikshych, Dmitri and Ostrik, Victor},
  volume={205},
  year={2016},
  publisher={American Mathematical Soc.}
}

@article{etingof2004finite,
  title={Finite tensor categories},
  author={Etingof, Pavel  and Ostrik, Victor},
  journal={Mosc.Math.J},
  volume={4},
  number={3},
  pages={627--654,782--783},
  year={2004}
}

@article{kitaev2012models,
  title={Models for gapped boundaries and domain walls},
  author={Kitaev, Alexei and Kong, Liang},
  journal={Communications in Mathematical Physics},
  volume={313},
  number={2},
  pages={351--373},
  year={2012},
  publisher={Springer}
}

@article{kong2015boundary,
  title={Boundary-bulk relation for topological orders as the functor mapping higher categories to their centers},
  author={Kong, Liang and Wen, Xiao-Gang and Zheng, Hao},
  journal={arXiv preprint arXiv:1502.01690},
  year={2015}
}

@article{kong2018gapless,
  title={Gapless edges of 2d topological orders and enriched monoidal categories},
  author={Kong, Liang and Zheng, Hao},
  journal={Nuclear Physics B},
  volume={927},
  pages={140--165},
  year={2018},
  publisher={Elsevier}
}

@article{kong2017semisimple,
  title={Semisimple and separable algebras in multi-fusion categories},
  author={Kong, Liang and Zheng, Hao},
  journal={arXiv preprint arXiv:1706.06904},
  year={2017}
}

@article{chen2020topological,
  title={Topological phase transition on the edge of two-dimensional $\mathbb{Z}_2$ topological order},
  author={Chen, Wei-Qiang and Jian, Chao-Ming and Kong, Liang and You, Yi-Zhuang and Zheng, Hao},
  journal={Physical Review B},
  volume={102},
  number={4},
  pages={045139},
  year={2020},
  publisher={APS}
}

@article{davydov2013structure,
  title={On the structure of the Witt group of braided fusion categories},
  author={Davydov, Alexei and Nikshych, Dmitri and Ostrik, Victor},
  journal={Selecta Mathematica},
  volume={19},
  number={1},
  pages={237--269},
  year={2013},
  publisher={Springer}
}

@article{ostrik2026non,
  title={A non-semisimple Witt class},
  author={Ostrik, Victor and Utiralova, Alexandra},
  journal={arXiv preprint arXiv:2602.10519},
  year={2026}
}

@article{coulembier2025simple,
  title={Simple algebras and exact module categories},
  author={Coulembier, Kevin and Stroi{\'n}ski, Mateusz and Zorman, Tony},
  journal={arXiv preprint arXiv:2501.06629},
  year={2025}
}

@article{davydov2013witt,
  title={The Witt group of non-degenerate braided fusion categories},
  author={Davydov, Alexei and M{\"u}ger, Michael and Nikshych, Dmitri and Ostrik, Victor},
  journal={Journal f{\"u}r die reine und angewandte Mathematik (Crelles Journal)},
  volume={2013},
  number={677},
  pages={135--177},
  year={2013},
  publisher={De Gruyter}
}

@article{lu2023boundary,
  title={The boundary phase transitions of the 2+ 1D ℤN topological order via topological Wick rotation},
  author={Lu, Yalei and Yang, Holiverse},
  journal={Journal of High Energy Physics},
  volume={2023},
  number={6},
  pages={1--49},
  year={2023},
  publisher={Springer}
}

@article{Chatterjee_2023,
   title={Holographic theory for continuous phase transitions: Emergence and symmetry protection of gaplessness},
   volume={108},
   ISSN={2469-9969},
   url={http://dx.doi.org/10.1103/PhysRevB.108.075105},
   DOI={10.1103/physrevb.108.075105},
   number={7},
   journal={Physical Review B},
   publisher={American Physical Society (APS)},
   author={Chatterjee, Arkya and Wen, Xiao-Gang},
   year={2023},
   month={aug}
}

@article{frohlich2006correspondences,
  title={Correspondences of ribbon categories},
  author={Fr{\"o}hlich, J{\"u}rg and Fuchs, J{\"u}rgen and Runkel, Ingo and Schweigert, Christoph},
  journal={Advances in Mathematics},
  volume={199},
  number={1},
  pages={192--329},
  year={2006},
  publisher={Elsevier}
}

@article{davydov2010modular,
  title={Modular invariants for group-theoretical modular data. I},
  author={Davydov, Alexei},
  journal={Journal of Algebra},
  volume={323},
  number={5},
  pages={1321--1348},
  year={2010},
  publisher={Elsevier}
}

@article{davydov2017lagrangian,
  title={On Lagrangian algebras in group-theoretical braided fusion categories},
  author={Davydov, Alexei and Simmons, Darren},
  journal={Journal of Algebra},
  volume={471},
  pages={149--175},
  year={2017},
  publisher={Elsevier}
}

@article{hannah2023frobenius,
  title={Frobenius monoidal functors of Dijkgraaf-Witten categories and rigid Frobenius algebras},
  author={Hannah, Samuel and Laugwitz, Robert and Ros Camacho, Ana and others},
  journal={SIGMA. Symmetry, Integrability and Geometry: Methods and Applications},
  volume={19},
  pages={075},
  year={2023},
  publisher={SIGMA. Symmetry, Integrability and Geometry: Methods and Applications}
}

@article{bruguieres2011exact,
  title={Exact sequences of tensor categories},
  author={Bruguieres, Alain and Natale, Sonia},
  journal={International Mathematics Research Notices},
  volume={2011},
  number={24},
  pages={5644--5705},
  year={2011},
  publisher={OUP}
}

@article{bruguieres2011hopf,
  title={Hopf monads on monoidal categories},
  author={Bruguieres, Alain and Lack, Steve and Virelizier, Alexis},
  journal={Advances in Mathematics},
  volume={227},
  number={2},
  pages={745--800},
  year={2011},
  publisher={Elsevier}
}

@article{schauenburg2001monoidal,
  title={The monoidal center construction and bimodules},
  author={Schauenburg, Peter},
  journal={Journal of Pure and Applied Algebra},
  volume={158},
  number={2-3},
  pages={325--346},
  year={2001},
  publisher={Elsevier}
}

@book{aguiar2010monoidal,
  title={Monoidal functors, species and Hopf algebras},
  author={Aguiar, Marcelo and Mahajan, Swapneel Arvind},
  volume={29},
  year={2010},
  publisher={Citeseer}
}

@article{flake2024projection,
  title={Projection formulas and induced functors on centers of monoidal categories},
  author={Flake, Johannes and Laugwitz, Robert and Posur, Sebastian},
  journal={arXiv preprint arXiv:2402.10094},
  year={2024}
}

@article{chen2013symmetry,
  title={Symmetry protected topological orders and the group cohomology of their symmetry group},
  author={Chen, Xie and Gu, Zheng-Cheng and Liu, Zheng-Xin and Wen, Xiao-Gang},
  journal={Physical Review B—Condensed Matter and Materials Physics},
  volume={87},
  number={15},
  pages={155114},
  year={2013},
  publisher={APS}
}

@article{bhardwaj2403hasse,
  title={Hasse diagrams for gapless SPT and SSB phases with non-invertible symmetries,(arXiv preprint) doi: 10.48550},
  author={Bhardwaj, L and Pajer, D and Sch{\"a}fer-Nameki, S and Warman, A},
  journal={arXiv preprint arXiv.2403.00905}
}

@book{hatcher2002algebraic,
  title={Algebraic Topology},
  author={Hatcher, Allen},
  year={2002},
  publisher={Cambridge University Press}
}

@article{handel1993products,
  title={On products in the cohomology of the dihedral groups},
  author={Handel, David},
  journal={Tohoku Mathematical Journal, Second Series},
  volume={45},
  number={1},
  pages={13--42},
  year={1993},
  publisher={Mathematical Institute, Tohoku University}
}

@article{tomoda2008remarks,
  title={Remarks on the cohomology of finite fundamental groups of 3-manifolds},
  author={Tomoda, Satoshi and Zvengrowski, Peter},
  journal={Geom. Topol. Monogr},
  volume={14},
  pages={519--556},
  year={2008}
}

@article{ji2020categorical,
  title={Categorical symmetry and noninvertible anomaly in symmetry-breaking and topological phase transitions},
  author={Ji, Wenjie and Wen, Xiao-Gang},
  journal={Physical Review Research},
  volume={2},
  number={3},
  pages={033417},
  year={2020},
  publisher={APS}
}

@article{kong2020mathematical,
  title={A mathematical theory of gapless edges of 2d topological orders. Part I},
  author={Kong, Liang and Zheng, Hao},
  journal={Journal of High Energy Physics},
  volume={2020},
  number={2},
  pages={1--62},
  year={2020},
  publisher={Springer}
}

@article{kong2021mathematical,
  title={A mathematical theory of gapless edges of 2d topological orders. Part II},
  author={Kong, Liang and Zheng, Hao},
  journal={Nuclear Physics B},
  volume={966},
  pages={115384},
  year={2021},
  publisher={Elsevier}
}

@article{fuchs2002tft,
  title={TFT construction of RCFT correlators I: Partition functions},
  author={Fuchs, J{\"u}rgen and Runkel, Ingo and Schweigert, Christoph},
  journal={Nuclear Physics B},
  volume={646},
  number={3},
  pages={353--497},
  year={2002},
  publisher={Elsevier}
}

@article{davydov2010centre,
  title={Centre of an algebra},
  author={Davydov, Alexei},
  journal={Advances in Mathematics},
  volume={225},
  number={1},
  pages={319--348},
  year={2010},
  publisher={Elsevier}
}

@article{davydov2015functoriality,
  title={Functoriality of the center of an algebra},
  author={Davydov, Alexei and Kong, Liang and Runkel, Ingo},
  journal={Advances in Mathematics},
  volume={285},
  pages={811--876},
  year={2015},
  publisher={Elsevier}
}

@article{ostrik2003module,
  title={Module categories, weak Hopf algebras and modular invariants},
  author={Ostrik, Victor},
  journal={Transformation groups},
  volume={8},
  number={2},
  pages={177--206},
  year={2003},
  publisher={Springer}
}

@article{kong2020algebraic,
  title={Algebraic higher symmetry and categorical symmetry: A holographic and entanglement view of symmetry},
  author={Kong, Liang and Lan, Tian and Wen, Xiao-Gang and Zhang, Zhi-Hao and Zheng, Hao},
  journal={Physical Review Research},
  volume={2},
  number={4},
  pages={043086},
  year={2020},
  publisher={APS}
}

@article{galindo2017tensor,
  title={Tensor functors between Morita duals of fusion categories},
  author={Galindo, C{\'e}sar and Plavnik, Julia Yael},
  journal={Letters in Mathematical Physics},
  volume={107},
  number={3},
  pages={553--590},
  year={2017},
  publisher={Springer}
}

@article{natale2017equivalence,
  title={On the equivalence of module categories over a group-theoretical fusion category},
  author={Natale, Sonia and others},
  journal={SIGMA. Symmetry, Integrability and Geometry: Methods and Applications},
  volume={13},
  pages={042},
  year={2017},
  publisher={SIGMA. Symmetry, Integrability and Geometry: Methods and Applications}
}

\end{document}